\def\X{{\mathbb X}}
\newtheorem{exam}{Example}
\newtheorem{proposition}{Proposition}
\newtheorem{definition}{Definition}
\newtheorem{lemma}{Lemma}
\newtheorem{conjecture}{Conjecture}
\newtheorem{corollary}{Corollary}	
\newtheorem{question}{Question}
\newcommand{\ocap}{\textcircled{$\scriptstyle{\cap}$}}
\newcommand{\ocup}{\textcircled{$\scriptstyle{\cup}$}}
\newcommand{\oyager}{\textcircled{$\scriptstyle{Y}$}}
\newtheorem{theorem}{Theorem}
\newtheorem{axiom}{Axiom}
\title{Reasoning with random sets: An agenda for the future}
\author{
    Fabio Cuzzolin\\
    Visual Artificial Intelligence Laboratory (VAIL)\\
    Oxford Brookes University, Oxford, UK
    \\
    fabio.cuzzolin@brookes.ac.uk
}
\begin{document}

\maketitle

\begin{abstract}
In this paper, we discuss a potential agenda for future work in the theory of random sets and belief functions, touching upon a number of focal issues: the development of a fully-fledged theory of statistical reasoning with random sets, including the generalisation of logistic regression and of the classical laws of probability; the further development of the geometric approach to uncertainty, to include general random sets, a wider range of uncertainty measures and alternative geometric representations; the application of this new theory to high-impact areas such as climate change, machine learning and statistical learning theory.
\end{abstract}

\section{Introduction}

The theory of belief functions \cite{Shafer76,dempster67multivalued} is a modelling language for representing and combining elementary items of evidence, which do not necessarily come in the form of sharp statements, with the goal of maintaining a mathematical representation of an agent's beliefs about those aspects of the world which the agent is unable to predict with reasonable certainty.
While arguably a more appropriate mathematical description of uncertainty than classical probability theory, for the reasons we have thoroughly explored in \cite{cuzzolin2021springer},
the theory of evidence is relatively simple to understand and implement, and does not require one to abandon the notion of an event, as is the case, for instance, for Walley's imprecise probability theory \cite{walley00towards}.
It is grounded in the beautiful mathematics of random sets, 
and exhibits strong relationships with many other theories of uncertainty.
As mathematical objects, belief functions have fascinating properties in terms of their geometry,
algebra \cite{Zhou2012} and combinatorics. 

Despite initial concerns about the computational complexity of a naive implementation of the theory of evidence, evidential reasoning can actually be implemented on large sample spaces \cite{Reineking2014} and in situations involving the combination of
numerous pieces of evidence \cite{denoeux2021distributed}. Elementary items of evidence often induce simple belief functions, which can be combined very efficiently with complexity $O(n+1)$. We do not need to assign mass to all subsets, but {we do need to be allowed to do so} when necessary (e.g., in the case of missing data). 
Most relevantly, the most plausible hypotheses can be found without actually computing the overall belief function \cite{pan2018new}. At any rate, Monte Carlo approximations \cite{wilson1991monte} can easily be implemented when an explicit result for the combination is required. Last but not least, local propagation schemes \cite{4306979} allow for the parallelisation of belief function reasoning, similarly to what happens with Bayesian networks.

Statistical evidence can be represented in belief function theory in several ways, for example: 
\begin{itemize}
\item using likelihood-based belief functions \cite{Shafer76,Aickin2000}, in a way that generalises both likelihood-based and Bayesian inference;
\item via Dempster's inference approach, which makes use of auxiliary variables in a fiducial setting, and its natural evolution, weak belief \cite{martin2010,Zhang11weak};
\item in the framework of the generalised Bayesian theorem, proposed by Smets \cite{smets93belief,dezert2018total}.
\end{itemize}

Decision-making strategies based on intervals of expected utilities can be formulated which allow more cautious decisions than those based on traditional approaches, and are able to explain the empirical aversion to second-order uncertainty highlighted in Ellsberg's paradox \cite{Eichberger1999}.
A straightforward extension of the theory, originally formulated for finite sample spaces, to continuous domains \cite{wang92continuous} can be achieved via the Borel interval representation initially put forward by Strat \cite{Strat84} and Smets, when the analysis is restricted to intervals of real values \cite{Vannobel2012}. In the more general case of arbitrary subsets of the real domain, the theory of random sets \cite{nguyen21belief,molchanov1997statistical} is, in our view, the natural mathematical framework to adopt. Finally, an extensive array of estimation, classification and regression tools based on the theory of belief functions has already been developed \cite{ristic04ipmu,elouedi00classification}, and further contributions can be envisaged.

\subsection{Open issues}

According to Dempster and Shafer themselves \cite{Yager:2010:CWD:1951793},
although flourishing by some measures, belief function theory had still not addressed questions such as deciding whether bodies of evidence were independent, or what to do if they were dependent. 
They lamented the ongoing confusion and disagreement about how to interpret the theory, and its limited acceptance in the field of mathematical statistics, where it first began. In response, they proposed an agenda to move the theory forward, centred on the following three elements:
\begin{itemize}
\item 
a richer understanding of the uses of probability, for they believed the theory was best regarded as a way of using probability \cite{Shafer81,Dempster01informs,Shafer2008};
\item 
a deeper understanding of statistical modelling itself \cite{DEMPSTER2008365}, which would go beyond a traditional statistical analysis which begins by specifying probabilities that are supposed to be known except for certain parameters;
\item 
in-depth examples of sensible Dempster--Shafer analyses of a variety of problems of real scientific and technological importance.
\end{itemize}

As for the last point, interesting examples in which the use of belief functions provides sound and elegant solutions to real-life problems, essentially characterised by `missing' information, had been given by Smets in \cite{Smets:1999:PUB:2073796.2073865}. 
These include classification problems in which the training set is such that the classes are only partially known,
an information retrieval system handling interdocument relationships,
the combination of data from sensors providing data about partially overlapping frames, and
the determination of the number of sources in a multisensor environment. 

The correct epistemic interpretation of belief function theory certainly needs to be clarified once and for all. In \cite{cuzzolin2021springer} we have argued that belief measures should be seen as \emph{random variables for set-valued observations}. 
In an interesting, although rather overlooked, recent publication \cite{SHAFER2011127}, instead,
Dubucs and Shafer highlighted two ways of interpreting numerical degrees of belief in terms of betting, in a manner that echoes Shafer and Vovk's game-theoretical interpretation of probability \cite{Shafer07game,shafer01book}: 
(i)
you can offer to bet at the odds defined by the degrees of belief, or (ii)
you can make a judgement that a strategy for taking advantage of such betting offers will not multiply the capital it risks by a large factor.
Both interpretations, the authors argue, can be applied to ordinary probabilities and used to justify updating by conditioning, whereas only the second can be applied to belief functions and used to justify Dempster's rule of combination.

The most appropriate mechanism for evidence combination is also still being debated \cite{wang08-reliable}. The reason is that the choice seems to depend on meta-information about the reliability and independence of the sources involved, which is hardly accessible. As argued in \cite{cuzzolin2021springer},
\emph{working with intervals of belief functions} \cite{denoeux99reasoning} may be the way forward, as this acknowledges the meta-uncertainty about the nature of the sources generating the evidence.
The same holds for conditioning \cite{xu96reasoning,yu94conditional,kohlas88b}, as opposed to combining, belief functions.


\subsection{A research programme}

Thus, in this paper we outline
a potentially interesting research agenda for the future development of random set and belief function theory. For obvious reasons, we will only be able to touch on a few of the most interesting avenues of investigation, and even those will not be discussed beyond a certain level of detail. We hope, however, that this brief survey will stimulate the reader to pursue some of these research directions and to contribute to the further maturation of the theory in the near future.

Although random set theory as a mathematical formalism is quite well developed, thanks in particular to the work of Ilya Molchanov \cite{molchanov1997statistical,Molchanov05}, a theory of {statistical inference with random sets} is not yet in sight. 
Therefore, in Section \ref{sec:future-agenda} 
We briefly consider 
the following questions:
\begin{itemize}
\item
the introduction of the notions of lower and upper likelihoods (Section \ref{sec:future-likelihoods}), in order to move beyond belief function inference approaches which take the classical likelihood function at face value;
\item
the formulation of a framework for (generalised) logistic regression \cite{wright1995logistic} with belief functions, making use of such generalised lower and upper likelihoods (Section \ref{sec:future-logistic}) and their decomposition properties;
\item
the generalisation of the law of total probability to random sets (Section \ref{sec:future-total}), starting with belief functions \cite{cuzzolin14lap,dezert2018total};
\item
the generalisation of classical limit theorems (central limit theorem, law of large numbers) to the case of random sets (Section \ref{sec:future-theorems}): this would allow, for instance, a rigorous definition of Gaussian random sets and belief functions alternative to that proposed by Molchanov \cite{Molchanov05};
\item
the introduction (somewhat despite Dempster's aversion to them) of parametric models based on random sets (Section \ref{sec:future-families}),
which would potentially allow us to perform robust hypothesis testing (Section \ref{sec:future-hypothesis}),
thus laying the foundations for a theory of
frequentist inference with random sets \cite{martin2021imprecise} (Section \ref{sec:future-frequentist});
\item
the development of a theory of random variables and processes in which the underlying probability space is replaced by a random-set space (Section \ref{sec:future-random-variables}): in particular, this requires the generalisation of the notion of the Radon--Nikodym derivative to belief measures.
\end{itemize}

The geometric approach to uncertainty, proposed by the author in \cite{cuzzolin2021springer},
is also open to a number of further developments (Section \ref{sec:future-geometric}), including:
\begin{itemize}
\item
the geometry of combination rules other than Dempster's (Section \ref{sec:future-combination}), and the associated conditioning operators  (\ref{sec:future-conditioning});
\item
an exploration of the geometry of continuous extensions of belief functions,
starting from the geometry of belief functions on Borel intervals, to later tackle the general random-set representation;
\item
a geometric analysis of uncertainty measures beyond those presented in \cite{cuzzolin2021springer},
including e.g. capacities \cite{Goubault-Larrecq2007} and gambles \cite{bradley2019imprecise} (Section \ref{sec:future-uncertainty});
\item
newer geometric representations (Section \ref{sec:future-fancier}), based on (among others) isoperimeters of convex bodies or exterior algebras \cite{yokonuma1992tensor}.
\end{itemize}
More speculatively, 
the possibility exists of conducting inference in a purely geometric fashion, by finding a common representation for both belief measures and the data that drive the inference.

More theoretical advances are necessary, in our view, including for instance: 
\begin{itemize}
\item
a set of prescriptions for reasoning with intervals of belief functions, as \cite{cuzzolin2021springer}
this seems to be a natural way to avoid the tangled issue of choosing a combination rule and handling metadata;
\item
the further development of machine learning tools based on belief and random-set theory (see \cite{cuzzolin2021springer}, Chapter 5), 
demonstrating the potential of these methods to model the epistemic uncertainty induced by forcibly limited training sets.
\end{itemize}

Last but not least, future work will need to be directed towards tackling high-impact applications by means of random set/belief function theory (Section \ref{sec:future-applications}). Here we will briefly consider, in particular: 
\begin{itemize}
\item
the possible creation of a framework for climate change predictions based on random sets (Section \ref{sec:future-climate}), able to overcome the limitations of existing (albeit neglected) Bayesian approaches; 
\item
the generalisation of max-entropy classifiers \cite{nigam1999using} and log-linear models to the case of belief measures (Section \ref{sec:future-maxentropy}), as an example of the fusion of machine learning with uncertainty theory;
\item
new robust foundations for machine learning itself (Section \ref{sec:future-slt}), obtained by generalising Vapnik's \emph{probably approximately correct} (PAC) analysis \cite{vapnik1998statistical,vapnik2013nature} to the case in which the training and test distributions, rather than coinciding, come from the same random set.
\end{itemize}

\section{Belief functions} \label{sec:measures}

We first recall the basic definitions of belief function theory.

\subsection{Belief and plausibility measures}

\begin{definition}\label{def:bpa}
A \emph{basic probability assignment} (BPA) \cite{Augustin96} over a finite domain $\Theta$ is a set function \cite{denneberg99interaction} $m : 2^\Theta\rightarrow[0,1]$ defined on the collection $2^\Theta$ of all subsets of $\Theta$ s.t.:
\[
m(\emptyset)=0, \; \sum_{A\subset\Theta} m(A)=1.
\]
\end{definition}
The quantity $m(A)$ is called the \emph{basic probability number} or `mass' \cite{kruse91tool,kruse91reasoning} assigned to $A$. 
The elements of the power set $2^\Theta$ associated with non-zero values of $m$ are called the \emph{focal elements} (FEs) of $m$.
\begin{definition} \label{def:bel2}
The \emph{belief function} (BF) associated with a basic probability assignment $m : 2^\Theta\rightarrow[0,1]$ is the set function $Bel : 2^\Theta\rightarrow[0,1]$ defined as:
\begin{equation}\label{eq:belief} Bel(A) = \sum_{B\subseteq A} m(B). \end{equation}
\end{definition}
The domain $\Theta$ on which a belief function is defined is usually interpreted as the set of possible answers to a given problem, exactly one of which is the correct one. For each subset (`event') $A\subset \Theta$ the quantity $Bel(A)$ takes on the meaning of \emph{degree of belief} that the truth lies in $A$,
and represents the {total} belief committed to a set of possible outcomes $A$ by the available evidence $m$.

Another mathematical expression of the evidence generating a belief function $Bel$ 
is the \emph{upper probability} or \emph{plausibility} of an event $A$: $Pl(A) \doteq 1 - Bel(\bar{A})$,
as opposed to its \emph{lower probability} $Bel(A)$ \cite{cuzzolin10ida}. 
The {corresponding} \emph{plausibility function} $Pl : 2^\Theta \rightarrow [0,1]$ 
can be expressed as:
\[
Pl(A) = \sum_{B\cap A\neq \emptyset} m(B) \geq Bel(A).
\]

\subsection{Evidence combination} \label{sec:combination}

The issue of combining the belief function representing our current knowledge state with a new one encoding new evidence is central in belief theory. After an initial proposal by Dempster, other aggregation operators have been proposed, based on different assumptions on the nature of the sources of evidence to combine.

\begin{definition} \label{def:dempster}
The \emph{orthogonal sum} or \emph{Dempster's combination} $Bel_1 \oplus Bel_2 : 2^\Theta \rightarrow [0,1]$ of two belief functions $Bel_1 : 2^\Theta \rightarrow [0,1]$, $Bel_2 : 2^\Theta \rightarrow [0,1]$ defined on the same domain $\Theta$ is the unique BF on $\Theta$ with as focal elements all the {non-empty} intersections of FEs of $Bel_1$ and $Bel_2$, and basic probability assignment:
\begin{equation} \label{eq:dempster}
\displaystyle m_{\oplus}(A) = \frac{m_\cap(A)} {1- m_\cap(\emptyset)},
\end{equation}
where $m_i$ denotes the BPA of the input BF $Bel_i$, and:
\[
m_\cap(A) = \sum_{B \cap C = A} m_1(B) m_2(C).
\]
\end{definition}

Rather than normalising (as in (\ref{eq:dempster})), 
Smets' \emph{conjunctive rule} leaves the conflicting mass $m(\emptyset)$ with the empty set:
\begin{equation} \label{eq:combination-smets-conjunctive}
m_{\ocap}(A) = \left \{ 
\begin{array}{ll}
m_\cap (A) 
& \emptyset \subseteq A \subseteq \Theta, \\ 
m_\cap(\emptyset) & A = \emptyset,
\end{array}
\right .
\end{equation}
and is thus applicable to `{unnormalised}' beliefs \cite{ubf}.

In Dempster's rule, consensus between two sources is expressed by the intersection of the supported events. When the \emph{union} 
is taken to express consensus, we obtain the \emph{disjunctive} rule of combination \cite{Kramosil02probabilistic-analysis,YAMADA20081689}:
\begin{equation} \label{eq:combination-disjunctive}
m_{\ocup}(A) = \sum_{B \cup C = A} m_1(B) m_2(C).
\end{equation}
This yields more cautious inferences than conjunctive rules, by producing belief functions that are less `committed', i.e., have larger focal sets.
Under disjunctive combination, $Bel_1 \ocup Bel_2 (A) = Bel_1(A) \cdot Bel_2(A)$, i.e., input belief values are simply multiplied.
Such a `dual' combination rule was mentioned by Kramosil as well in \cite{Kramosil02probabilistic-analysis}. The disjunctive rule is called `combination by union' by Yamada (2008) \cite{YAMADA20081689}.

\subsubsection{Conditioning} \label{sec:conditioning}

Belief functions can also be conditioned, rather than combined, whenever we are presented hard evidence of the form `$A$ is true' 
\cite{Chateauneuf89,fagin91new,Jaffray92,gilboa1993updating,denneberg1994conditioning,yu94conditional,itoh95new}.

In particular,
Dempster's combination naturally induces a conditioning operator.
Given a conditioning event $A \subset \Theta$, the `logical' (or `categorical', in Smets' terminology) belief function $Bel_A$ such that $m(A)=1$ is combined via Dempster's rule with the a-priori belief function $Bel$. The resulting BF $Bel \oplus Bel_A$ is the {conditional belief function given $A$} \emph{a la Dempster}, denoted by $Bel_\oplus(A|B)$.

\subsection{Multivariate analysis} \label{sec:multivariate}

In many applications, we need to express uncertain information about a number of distinct variables (e.g., $X$ and $Y$) taking values in different domains ($\Theta_X$ and $\Theta_Y$, respectively). The reasoning process needs then to take place in the Cartesian product of the domains associated with each individual variable. 

Let then $\Theta_X$ and $\Theta_Y$ be two sample spaces associated with two distinct variables, and let $m^{XY}$ be a mass function on $\Theta_{XY} = \Theta_X \times \Theta_Y$.
The latter can be expressed in the coarser domain $\Theta_X$ by transferring each mass $m^{XY}(A)$ to the {projection} $A\downarrow \Theta_X$ of $A$ on $\Theta_X$.
We obtain a \emph{marginal} mass function on $\Theta_X$, denoted by: 
\begin{equation}
\label{eq:marginal}    
m^{XY}_{\downarrow X}(B) \doteq \sum_{\{A \subseteq \Theta_{XY}, A\downarrow \Theta_X=B\}}  m^{XY}(A), \forall B \subseteq \Theta_X.
\end{equation}
Conversely, a mass function $m^X$ on $\Theta_X$ can be expressed in $\Theta_X\times \Theta_Y$  by transferring each mass $m^X(B)$ to the {cylindrical extension} $B^{\uparrow XY} \doteq B \times \Omega_Y$ of $B$.
The \emph{vacuous extension} of $m^X$ onto $\Theta_X\times \Theta_Y$ will then be:
\begin{equation} \label{eq:vacuous-extension}
m_X^{\uparrow XY}(A) \doteq
\begin{cases}
m^X(B) & \text{if } A=B \times \Omega_Y,\\
0 & \text{otherwise}.
\end{cases}
\end{equation}
The associated BF is denoted by $Bel_X^{\uparrow XY}$.

\subsection{Refinings} 

An even more general treatment of belief functions over different but related sample spaces is provided by the notion of \emph{refining}.

\begin{definition} \label{def:refining}
Given two frames of discernment $\Theta$ and $\Omega$, a map $\rho :2^\Theta \rightarrow2^\Omega$ is said to be a \emph{refining} if it
satisfies the following conditions:
\begin{enumerate}
\item $\rho(\{\theta\})\neq\emptyset\;\forall \theta\in\Theta$.
\item $\rho(\{\theta\})\cap\rho(\{\theta'\})=\emptyset$ if
$\theta\neq\theta'$.
\item $\displaystyle \cup_{\theta\in\Theta}\rho(\{\theta\})=\Omega$.
\end{enumerate}
\end{definition}
\noindent
The \emph{outer reduction} associated with a refining $\rho$ is the map $\bar{\rho} : 2^\Omega \rightarrow 2^\Theta$
given by
\begin{equation} \label{eq:outer-reduction}
\bar{\rho}(A) = \Big \{ \theta\in\Theta \Big | \rho(\{\theta\}) \cap A \neq \emptyset \Big \}.
\end{equation}

\section{A statistical random set theory} \label{sec:future-agenda}

\subsection{Lower and upper likelihoods} \label{sec:future-likelihoods}

The traditional likelihood function 
is a conditional probability of the data given a parameter $\theta \in \Theta$, i.e., a  family of PDFs over the measurement space $\X$ parameterised by $\theta$. Most of the work on belief function inference just takes the notion of a likelihood as a given, and constructs belief functions from an input likelihood function (see \cite{cuzzolin2021springer}, Chapter 4). 

However, there is no reason why we should not formally define a `belief likelihood function' mapping a sample observation $x \in \X$ to a real number, rather than
use the conventional likelihood to construct belief measures.
It is natural to define such a belief likelihood function as a family of belief functions on $\X$, $Bel_\X(.|\theta)$, parameterised by $\theta \in \Theta$. An expert in belief theory will 
note that such a parameterised family is the input to Smets's generalised Bayesian theorem \cite{smets93belief}, a collection of `conditional' belief functions.
Such a belief likelihood takes values on \emph{sets} of outcomes, $A \subset \Theta$, of which individual outcomes are just a special case.

This seems to provide a {natural setting for computing likelihoods of set-valued observations}, in accordance with the random-set philosophy. 

\subsubsection{Belief likelihood function of repeated trials}

Let $Bel_{\X_i}(A|\theta)$, for $i=1,2, \ldots, n$ be a parameterised family of belief functions on $\X_i$, the space of quantities that can be observed at time $i$, depending on a parameter $\theta \in \Theta$. A series of repeated trials then assumes values in $\X_1 \times \cdots \times \X_n$, whose elements are tuples of the form $\vec{x} = (x_1, \ldots, x_n) \in \X_1 \times \cdots \times \X_n$.
We call such tuples `sharp' samples, as opposed to arbitrary subsets $A \subset \X_1 \times \cdots \times \X_n$ of the space of trials. Note that we are not assuming the trials to be equally distributed at this stage, nor do we assume that they come from the same sample space.

\begin{definition} \label{def:belief-likelihood}
The belief likelihood function $Bel_{\X_1 \times \cdots \times \X_n} : 2^{\X_1 \times \cdots \times \X_n} \rightarrow [0,1]$ of a series of repeated trials is defined as
\begin{equation} \label{eq:belief-likelihood}
Bel_{\X_1 \times \cdots \times \X_n}(A|\theta) \doteq Bel_{\X_1}^{\uparrow \times_i \X_i} \odot \cdots \odot Bel_{\X_n}^{\uparrow \times_i \X_i} (A|\theta),
\end{equation}
where $ Bel_{\X_j}^{\uparrow \times_i \X_i}$ is the vacuous extension (\ref{eq:vacuous-extension})
of $Bel_{\X_j}$ to the Cartesian product $\X_1 \times \cdots \times \X_n$ where the observed tuples live, 
$A \subset \X_1 \times \cdots \times \X_n$ is an arbitrary subset of series of trials
and $\odot$ is an arbitrary combination rule.
\end{definition}

In particular, when the subset $A$ reduces to a sharp sample, $A = \{\vec{x}\}$, we can define the following generalisations of the notion of a likelihood. 
\begin{definition} \label{def:lower-upper-likelihoods}
We call the quantities 
\begin{equation} \label{eq:lower-upper-likelihoods}
\begin{array}{lll}
\underline{L}(\vec{x}) & \doteq & Bel_{\X_1 \times \cdots \times \X_n}(\{(x_1, \ldots, x_n)\}|\theta),
\\ \overline{L}(\vec{x}) & \doteq & Pl_{\X_1 \times \cdots \times \X_n}(\{(x_1, \ldots, x_n)\}|\theta) 
\end{array}
\end{equation}
the \emph{lower likelihood} and \emph{upper likelihood}, respectively, of the sharp sample $A = \{ \vec{x} \} = \{(x_1, \ldots, x_n)\}$.
\end{definition}

\subsubsection{Binary trials: The conjunctive case} \label{sec:belief-likelihood-conjunctive}

Belief likelihoods factorise into simple products whenever conjunctive combination is employed (as a generalisation of classical stochastic independence) in Definition \ref{def:belief-likelihood}, and the case of trials with binary outcomes is considered.

\paragraph{Focal elements of the belief likelihood}

Let us first analyse the case $n=2$. We seek the Dempster sum $Bel_{\X_1} \oplus Bel_{\X_2}$, where $\X_1 = \X_2 = \{T,F\}$.

Figure \ref{fig:casen2}(a) is a diagram of all the intersections of focal elements of the two input belief functions on  $\X_1 \times \X_2$. 
\begin{figure}[ht!]
\begin{center}
\begin{tabular}{cc}
\includegraphics[width = 0.45\textwidth]{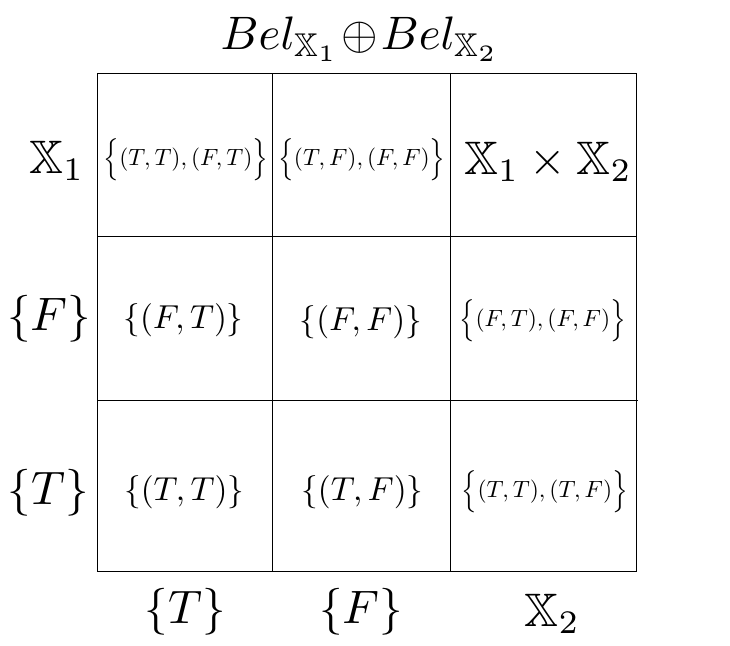}
&
\includegraphics[width = 0.45\textwidth]{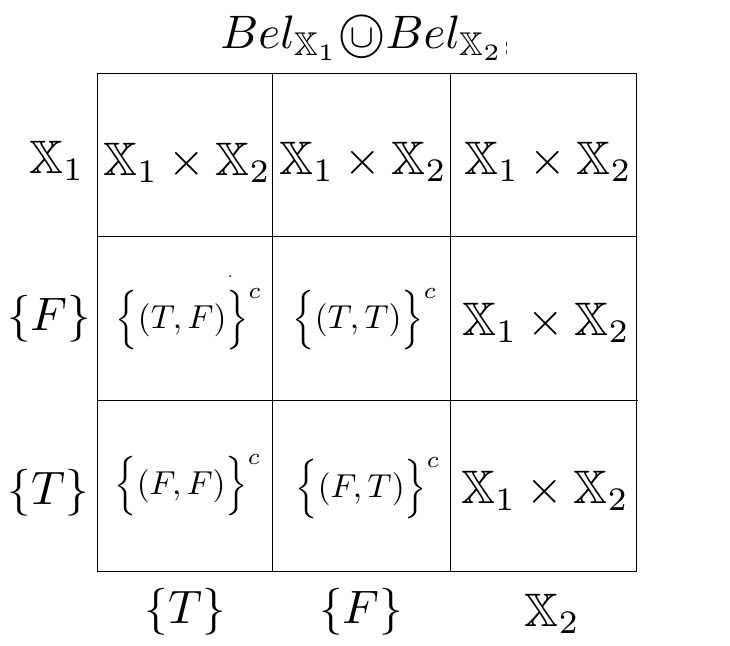}
\\
(a) & (b)
\end{tabular}
\end{center}
\caption{(a) Graphical representation of the Dempster combination $Bel_{\X_1} \oplus Bel_{\X_2}$ on $\X_1 \times \X_2$, in the binary case in which $\X_1 = \X_2 = \{T,F\}$. (b) Graphical representation of the disjunctive combination $Bel_{\X_1}$\ocup $Bel_{\X_2}$ there.} \label{fig:casen2} 
\end{figure}
There are $9 = 3^2$ distinct non-empty intersections, which correspond to the focal elements of $Bel_{\X_1} \oplus Bel_{\X_2}$. 

According to (\ref{eq:dempster}), the mass of the focal element $A_1 \times A_2$, $A_1 \subseteq \X_1$, $A_2 \subseteq \X_2$, is then
\begin{equation} \label{eq:lemma-n2}
m_{Bel_{\X_1} \oplus Bel_{\X_2}} (A_1 \times A_2) = m_{{\X_1}} (A_1) \cdot m_{{\X_2}} (A_2).
\end{equation}
Note that this result holds when the conjunctive rule (\ref{eq:combination-smets-conjunctive}) \ocap\hspace{0.5mm} is used as well, for none of the intersections are empty, and hence no normalisation is required. Nothing is assumed about the mass assignment of the two belief functions $Bel_{\X_1}$ and $Bel_{\X_2}$. 

We can now prove the following lemma.
\begin{lemma} \label{lem:factorisation-conjunctive}
For any $n \in \mathbb{Z}$, the belief function $Bel_{\X_1} \oplus \cdots \oplus Bel_{\X_n}$, where $\X_i = \X = \{T,F\}$, has $3^n$ focal elements, namely all possible Cartesian products $A_1\times \cdots \times A_n$ of $n$ non-empty subsets $A_i$ of $\X$, with BPA
\[
m_{Bel_{\X_1} \oplus \cdots \oplus Bel_{\X_n}} (A_1\times \cdots \times A_n) = \prod_{i=1}^n m_{\X_i}(A_i).
\]
\end{lemma}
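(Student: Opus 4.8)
The plan is to prove this by induction on $n$, using the $n=2$ case (established in equation~(\ref{eq:lemma-n2})) as the base case and leveraging the associativity of Dempster's rule together with the structure of the vacuous extensions on Cartesian products. The key observation that makes the induction go through is that combining on $\X_1 \times \cdots \times \X_n$ is really combining vacuous extensions, and vacuous extension interacts cleanly with Cartesian-product focal elements.

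First I would set up the inductive hypothesis: assume $Bel_{\X_1} \oplus \cdots \oplus Bel_{\X_{n-1}}$ has exactly $3^{n-1}$ focal elements, all of the form $A_1 \times \cdots \times A_{n-1}$ with each $A_i$ a non-empty subset of $\X = \{T,F\}$ (there are exactly $3$ such subsets, hence $3^{n-1}$ products), and with mass $\prod_{i=1}^{n-1} m_{\X_i}(A_i)$. Next I would write $Bel_{\X_1} \oplus \cdots \oplus Bel_{\X_n} = \big(Bel_{\X_1} \oplus \cdots \oplus Bel_{\X_{n-1}}\big) \oplus Bel_{\X_n}$, where all terms are understood as vacuously extended to $\X_1 \times \cdots \times \X_n$. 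The focal elements of the first factor, vacuously extended, are $A_1 \times \cdots \times A_{n-1} \times \X$; the focal elements of $Bel_{\X_n}^{\uparrow}$ are $\X \times \cdots \times \X \times A_n$. Their intersection is $A_1 \times \cdots \times A_{n-1} \times A_n$, which is non-empty since each factor is non-empty — so, exactly as in the $n=2$ analysis, no normalisation is triggered and the conjunctive rule \ocap\ gives the same answer as $\oplus$. The mass combines multiplicatively: $m_\cap(A_1 \times \cdots \times A_n) = \big(\prod_{i=1}^{n-1} m_{\X_i}(A_i)\big) \cdot m_{\X_n}(A_n) = \prod_{i=1}^{n} m_{\X_i}(A_i)$, provided no two distinct pairs of focal elements intersect in the same set. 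This last point — that the map $(A_1 \times \cdots \times A_{n-1} \times \X,\, \X \times \cdots \times \X \times A_n) \mapsto A_1 \times \cdots \times A_n$ is injective — is immediate because one recovers each $A_i$ as the projection onto the $i$-th factor, so the sum defining $m_\cap$ has a single term. Counting gives $3^{n-1} \cdot 3 = 3^n$ focal elements.

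I would also remark that the statement as written should read $n \in \mathbb{Z}$, $n \geq 1$ (or $n \in \mathbb{N}$); the case $n=1$ is trivial, and negative $n$ is meaningless here.

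The main obstacle — though it is more bookkeeping than genuine difficulty — is handling the vacuous extensions carefully: one must check that the Cartesian-product structure is preserved under the extension $\uparrow \times_i \X_i$ and that associativity of $\oplus$ is being applied legitimately (Dempster's rule is associative, so $Bel_{\X_1} \oplus \cdots \oplus Bel_{\X_n}$ is well-defined regardless of bracketing). A secondary subtlety worth a sentence is confirming that every product $A_1 \times \cdots \times A_n$ with all $A_i$ non-empty actually arises — i.e. that all $3^n$ combinations have strictly positive mass only when all the input masses are positive, but as a set of \emph{potential} focal elements the count $3^n$ is the right bound and is attained generically; the cleaner statement is that the focal elements are contained in this family of $3^n$ products and the mass formula holds for each. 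Everything else is the routine multiplicativity already verified for $n=2$, now propagated one coordinate at a time.
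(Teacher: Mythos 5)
Your proof is correct and follows essentially the same route as the paper's: induction on $n$ with the $n=2$ case as base, vacuous extension of both factors to the product frame, observing that the pairwise intersections of focal elements are exactly the Cartesian products (all non-empty and distinct, so no normalisation and the masses multiply). Your added remarks on injectivity, on $n\geq 1$, and on the count $3^n$ being attained only when every $A_i\subseteq\X$ carries positive input mass are fair clarifications of points the paper treats implicitly.
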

\begin{proof}
The proof is by induction.
The thesis was shown to be true for $n=2$ in (\ref{eq:lemma-n2}).
In the induction step, we assume that the thesis is true for $n$, and prove it for $n+1$.

If $Bel_{\X_1} \oplus \cdots \oplus Bel_{\X_{n}}$, defined on $\X_1 \times \cdots \times \X_{n}$, has as focal elements the $n$-products $A_1\times \cdots \times A_n$ with $A_i \in \big \{ \{T\}, \{F\}, \X \big \}$ for all $i$, its vacuous extension to $\X_1 \times \cdots \times \X_n \times \X_{n+1}$ will have as focal elements the $n+1$-products of the form $A_1\times \cdots \times A_n \times \X_{n+1}$,
with $A_i \in \big \{ \{T\}, \{F\}, \X \big \}$ for all $i$.

The belief function $Bel_{\X_{n+1}}$ is defined on $\X_{n+1} = \X$, with three focal elements $\{T\}$, $\{F\}$ and $\X = \{T,F\}$. Its vacuous extension to $\X_1 \times \cdots \times \X_n \times \X_{n+1}$ thus has the following three focal elements:
$\X_1 \times \cdots \times \X_n \times \{T\}$, 
$\X_1 \times \cdots \times \X_n \times \{F\}$ and 
$\X_1 \times \cdots \times \X_n \times \X_{n+1}$.
When computing $(Bel_{\X_1} \oplus \cdots \oplus Bel_{\X_{n}}) \oplus Bel_{\X_{n+1}}$ on the common refinement $\X_1 \times \cdots \times \X_n \times \X_{n+1}$, we need to compute the intersection of their focal elements, namely
\[
\begin{array}{c}
\big ( A_1\times \cdots \times A_n \times \X_{n+1} \big ) \cap \big ( \X_1 \times \cdots \times \X_n \times A_{n+1} \big )
=
A_1\times \cdots \times A_n \times A_{n+1}
\end{array}
\]
for all $A_{i} \subseteq \X_{i}$, $i=1, \ldots, n+1$. All such intersections are distinct for distinct focal elements of the two belief functions to be combined, and there are no empty intersections. By Dempster's rule (\ref{eq:dempster}), their mass is equal to the product of the original masses:
\[
\begin{array}{lll}
& & \displaystyle
m_{Bel_{\X_1} \oplus \cdots \oplus Bel_{\X_{n+1}}}(A_1\times \cdots \times A_n \times A_{n+1}) 
\\
& = & \displaystyle
m_{Bel_{\X_1} \oplus \cdots \oplus Bel_{\X_n}}(A_1\times \cdots \times A_n) \cdot m_{Bel_{\X_{n+1}}}(A_{n+1}).
\end{array}
\]

Since we have assumed that the factorisation holds for $n$, the thesis easily follows.
\end{proof}

As no normalisation is involved in the combination $Bel_{\X_1} \oplus \cdots \oplus Bel_{\X_n}$, Dempster's rule coincides with the conjunctive rule, and Lemma \ref{lem:factorisation-conjunctive} holds for \ocap \hspace{0.3mm} as well.

\paragraph{Factorisation for `sharp' tuples} 

The following then becomes a simple corollary.
\begin{theorem} \label{the:belief-likelihood-decomposition-ocap}
When either \ocap \hspace{0.3mm} or $\oplus$ is used as a combination rule in the definition of the belief likelihood function, the following decomposition holds for tuples $(x_1, \ldots, x_n)$, $x_i \in \X_i$, which are the singleton elements of $\X_1 \times \cdots \times \X_n$, with $\X_1 = \cdots = \X_n = \{T,F\}$:
\begin{equation} \label{eq:belief-likelihood-decomposition-ocap}
\begin{array}{c}
\displaystyle
Bel_{\X_1 \times \cdots \times \X_n}(\{(x_1, \ldots, x_n)\}|\theta) = 
\prod_{i=1}^n Bel_{\X_i} (\{x_i\}|\theta).
\end{array}
\end{equation}
\end{theorem}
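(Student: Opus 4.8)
The plan is to derive Theorem \ref{the:belief-likelihood-decomposition-ocap} directly from Lemma \ref{lem:factorisation-conjunctive} by evaluating the belief function at a singleton and observing how the mass assignment over Cartesian-product focal elements collapses. First I would fix a sharp tuple $\vec{x} = (x_1, \ldots, x_n)$ with $x_i \in \X_i = \{T,F\}$ and write, using Definition \ref{def:bel2},
\[
Bel_{\X_1 \times \cdots \times \X_n}(\{\vec{x}\}|\theta) = \sum_{B \subseteq \{\vec{x}\}} m_{Bel_{\X_1} \oplus \cdots \oplus Bel_{\X_n}}(B|\theta).
\]
Since $\{\vec{x}\}$ is a singleton, the only nonempty $B \subseteq \{\vec{x}\}$ is $\{\vec{x}\}$ itself (and $m(\emptyset)=0$), so the sum reduces to the single term $m_{Bel_{\X_1} \oplus \cdots \oplus Bel_{\X_n}}(\{\vec{x}\}|\theta)$.

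Next I would use Lemma \ref{lem:factorisation-conjunctive} to identify that term. The focal elements of $Bel_{\X_1} \oplus \cdots \oplus Bel_{\X_n}$ are exactly the Cartesian products $A_1 \times \cdots \times A_n$ with each $A_i \in \{\{T\},\{F\},\X\}$ nonempty, and the singleton $\{\vec{x}\} = \{x_1\} \times \cdots \times \{x_n\}$ is precisely such a product (with the choice $A_i = \{x_i\}$ for every $i$). Hence the factorisation in the lemma applies and gives
\[
m_{Bel_{\X_1} \oplus \cdots \oplus Bel_{\X_n}}(\{\vec{x}\}|\theta) = \prod_{i=1}^n m_{\X_i}(\{x_i\}|\theta).
\]
Finally, for each individual binary frame $\X_i$, the singleton $\{x_i\}$ contains no proper nonempty subset, so $Bel_{\X_i}(\{x_i\}|\theta) = m_{\X_i}(\{x_i\}|\theta)$ by Definition \ref{def:bel2}; substituting this identity term by term into the product yields \eqref{eq:belief-likelihood-decomposition-ocap}. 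The remark preceding the theorem (that $\oplus$ and \ocap\ coincide here because no intersection is empty and no normalisation occurs) covers the two stated choices of combination rule at once.

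I do not expect a genuine obstacle: all the real combinatorial work — counting the $3^n$ focal elements, checking the intersections are distinct and nonempty, and propagating the product of masses through the induction — was already discharged in Lemma \ref{lem:factorisation-conjunctive}. The only point requiring a moment's care is the bookkeeping that a singleton in the product space $\X_1 \times \cdots \times \X_n$ really is a product of singletons in the factors, so that it qualifies as a focal element of the combined belief function and the lemma is applicable; once that is noted, the theorem is indeed, as the paper says, a simple corollary.
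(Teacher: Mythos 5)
Your proposal is correct and follows exactly the paper's own route: reduce $Bel$ at the singleton to the mass of that singleton, invoke Lemma \ref{lem:factorisation-conjunctive} on $\{\vec{x}\} = \{x_1\}\times\cdots\times\{x_n\}$ to factorise the mass, and use the coincidence of mass and belief on singletons in each factor. You are merely more explicit than the paper about the two bookkeeping points (the subset sum collapsing to one term, and the singleton qualifying as a product focal element), which is fine.
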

\begin{proof}
For the singleton elements of $\X_1 \times \cdots \times \X_n$, since $\{(x_1, \ldots, x_n)\} = \{x_1\} \times \cdots \times \{x_n\}$, (\ref{eq:belief-likelihood-decomposition-ocap}) becomes
\[
\begin{array}{lll}
& & \displaystyle
Bel_{\X_1 \times \cdots \times \X_n}(\{(x_1, \ldots, x_n)\})
\\
& = & \displaystyle
m_{Bel_{\X_1} \oplus \cdots \oplus Bel_{\X_n}}(\{ (x_1, \ldots, x_n) \}) 
=
\prod_{i=1}^n m_{\X_i}(\{x_i \}) = \prod_{i=1}^n Bel_{\X_i} (\{x_i\}),
\end{array}
\]
where the mass factorisation follows from Lemma \ref{lem:factorisation-conjunctive},
as on singletons mass and belief values coincide.
\end{proof}

There is evidence to support the following conjecture as well.
\begin{conjecture} \label{con:plausibility-likelihood-decomposition-ocap}
When either \ocap$\hspace{0.3mm}$ or $\oplus$ is used as a combination rule in the definition of the belief likelihood function, the following decomposition holds for the associated plausibility values on tuples $(x_1, \ldots, x_n)$, $x_i \in \X_i$, which are the singleton elements of $\X_1 \times \cdots \times \X_n$, with $\X_1 = \ldots = \X_n = \{ T, F \}$:
\begin{equation} \label{eq:plausibility-likelihood-decomposition-ocap}
\begin{array}{c}
\displaystyle
Pl_{\X_1 \times \cdots \times \X_n}(\{(x_1, \ldots, x_n)\}|\theta) = \prod_{i=1}^n Pl_{\X_i} (\{x_i\}|\theta).
\end{array}
\end{equation}
\end{conjecture}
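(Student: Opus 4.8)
The plan is to mirror the derivation of Theorem~\ref{the:belief-likelihood-decomposition-ocap}, replacing the observation ``mass and belief coincide on singletons'' by the fact that a sum of masses over a family of Cartesian products factorises. First I would invoke Lemma~\ref{lem:factorisation-conjunctive}: when either Dempster's rule $\oplus$ or the conjunctive rule \ocap{} is used, $Bel_{\X_1 \times \cdots \times \X_n}(\cdot|\theta) = Bel_{\X_1}(\cdot|\theta) \oplus \cdots \oplus Bel_{\X_n}(\cdot|\theta)$ has as its focal elements \emph{exactly} the Cartesian products $A_1 \times \cdots \times A_n$ with each $A_i$ a non-empty subset of $\X = \{T,F\}$, and its mass factorises as $m(A_1 \times \cdots \times A_n|\theta) = \prod_{i=1}^n m_{\X_i}(A_i|\theta)$. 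Since none of the relevant intersections is empty, $m(\emptyset|\theta)=0$ and no normalisation is triggered, so the two rules genuinely coincide here, as already noted after Lemma~\ref{lem:factorisation-conjunctive}.

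Next I would expand $Pl_{\X_1 \times \cdots \times \X_n}(\{(x_1,\dots,x_n)\}|\theta)$ via $Pl(A) = \sum_{B \cap A \neq \emptyset} m(B)$. The only combinatorial point is that, since every focal element $B$ is a product $A_1 \times \cdots \times A_n$ and $\{(x_1,\dots,x_n)\} = \{x_1\} \times \cdots \times \{x_n\}$ is a single tuple, the condition $B \cap \{(x_1,\dots,x_n)\} \neq \emptyset$ collapses to $(x_1,\dots,x_n) \in B$, i.e.\ to $x_i \in A_i$ for every $i = 1,\dots,n$. Substituting the factorised mass then gives
\[
Pl_{\X_1 \times \cdots \times \X_n}(\{(x_1,\dots,x_n)\}|\theta) = \sum_{\substack{A_1, \dots, A_n \subseteq \X \\ x_i \in A_i \ \forall i}} \; \prod_{i=1}^n m_{\X_i}(A_i|\theta).
\]

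Finally I would apply the distributive law to turn the multiple sum of a product into a product of sums,
\[
\sum_{\substack{A_1, \dots, A_n \subseteq \X \\ x_i \in A_i \ \forall i}} \prod_{i=1}^n m_{\X_i}(A_i|\theta) = \prod_{i=1}^n \Bigg( \sum_{\substack{A_i \subseteq \X \\ x_i \in A_i}} m_{\X_i}(A_i|\theta) \Bigg),
\]
and recognise each factor as $Pl_{\X_i}(\{x_i\}|\theta)$: for $x_i = T$ the admissible subsets are $\{T\}$ and $\{T,F\}$, for $x_i = F$ they are $\{F\}$ and $\{T,F\}$, and in either case the inner sum is, by definition, the marginal plausibility of the singleton $\{x_i\}$. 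This establishes~\eqref{eq:plausibility-likelihood-decomposition-ocap}.

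I do not anticipate a real obstacle: the argument is essentially the distributive law applied to the mass factorisation of Lemma~\ref{lem:factorisation-conjunctive}, and the two apparently delicate points (that all focal elements are Cartesian products, so ``intersects'' collapses to ``contains'', and that $m(\emptyset|\theta)=0$ so no normalisation interferes) are already settled by that lemma. The statement is presumably still flagged as a conjecture only because the short computation above has not been carried out in print; once it is, it should be upgraded to a corollary of Lemma~\ref{lem:factorisation-conjunctive}, exactly as Theorem~\ref{the:belief-likelihood-decomposition-ocap} is. A natural follow-up is to ask whether the same product structure, and hence the decomposition, persists for marginal frames $\X_i$ larger than $\{T,F\}$, and, more interestingly, what happens under disjunctive or other non-conjunctive rules, where one would instead expect belief (rather than plausibility) values to multiply.
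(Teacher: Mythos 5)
Your argument is correct, and it is genuinely different from (and stronger than) what the paper does: the paper does not actually prove this statement, which is why it is labelled a conjecture. The paper's supporting evidence goes through the complement, writing $Pl(\{\vec{x}\}) = 1 - Bel(\{\vec{x}\}^c)$ and then enumerating which focal elements $A_1 \times \cdots \times A_n$ lie inside the complement of a singleton; that enumeration is combinatorially awkward, and the authors only carry it out for the tuple $(T,\ldots,T)$ under the extra assumption that the $Bel_{\X_i}$ are equally distributed, closing the computation with two applications of Newton's binomial theorem. You instead work directly from $Pl(A) = \sum_{B \cap A \neq \emptyset} m(B)$, observe that for a singleton $A$ the condition ``$B$ intersects $A$'' collapses to ``$B$ contains the tuple'', i.e.\ $x_i \in A_i$ for every $i$, and then factorise the resulting multiple sum by the distributive law. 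This buys a lot: your proof needs no equidistribution assumption, works for every sharp tuple rather than just $(T,\ldots,T)$, and --- if you replace Lemma~\ref{lem:factorisation-conjunctive} by Theorem~\ref{the:factorisation-conjunctive} --- extends verbatim to arbitrary finite frames $\X_1,\ldots,\X_n$, thereby also answering the paper's Question~1 in the affirmative. The one point worth stating explicitly when you write this up is that summing over \emph{all} Cartesian products of non-empty subsets (rather than only over the actual focal elements) is harmless, since products involving a set with $m_{\X_i}(A_i)=0$ contribute nothing; with that remark in place the conjecture is indeed upgraded to a corollary of the factorisation lemma, exactly as Theorem~\ref{the:belief-likelihood-decomposition-ocap} is.
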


In fact, we can write
\begin{equation} \label{eq:plausibilities-singletons}
\begin{array}{lll}
Pl_{\X_1 \times \cdots \times \X_n}(\{(x_1, \ldots, x_n)\}) 
& = & \displaystyle
1 - Bel_{\X_1 \times \cdots \times \X_n}(\{(x_1, \ldots, x_n)\}^c) 
\\ \\
& = & \displaystyle
1 - \sum_{B \subseteq \{(x_1,...,x_n)\}^c} m_{Bel_{\X_1} \oplus \cdots \oplus Bel_{\X_n}}(B).
\end{array}
\end{equation}

By Lemma \ref{lem:factorisation-conjunctive}, all the subsets $B$ with non-zero mass are Cartesian products of the form $A_1 \times \cdots \times A_n$, $\emptyset \neq A_i \subseteq \X_i$.
We then need to understand the nature of the focal elements of $Bel_{\X_1 \times \cdots \times \X_n}$ which are subsets of an arbitrary singleton complement $\{(x_1, \ldots, x_n)\}^c$. 

For binary spaces $\X_i = \X = \{T,F\}$, by the definition of the Cartesian product, each such $A_1 \times \cdots \times A_n \subseteq \{(x_1, \ldots, x_n)\}^c$ is obtained by replacing a number $1 \leq k \leq n$ of components of the tuple $(x_1, \ldots, x_n) = \{x_1\} \times \cdots \times \{x_n\}$ with a different subset of $\X_i$ (either $\{{x}_i\}^c = \X_i \setminus \{x_i\}$ or $\X_i$). 
There are $\binom{n}{k}$ such sets of $k$ components in a list of $n$.
Of these $k$ components, in general $1 \leq m \leq k$ will be replaced by $\{{x}_i\}^c$, while the other $1 \leq k-m < k$ will be replaced by $\X_i$. Note that not all $k$ components can be replaced by $\X_i$, since the resulting focal element would contain the tuple $\{(x_1, \ldots, x_n)\} \in \X_1 \times \cdots \times \X_n$.

The following argument can be proved for $(x_1,...,x_n) = (T,...,T)$, under the additional assumption that $Bel_{\X_1} \cdots Bel_{\X_n}$ are equally distributed with $p \doteq Bel_{\X_i}(\{T\})$, $q \doteq Bel_{\X_i}(\{F\})$ and $r \doteq Bel_{\X_i}(\X_i)$.
\\
If this is the case, for fixed values of $m$ and $k$ all the resulting focal elements have the same mass value, namely $p^{n-k} q^m r^{k-m}$. As there are exactly $\binom{k}{m}$ such focal elements, (\ref{eq:plausibilities-singletons}) can be written as
\[
1 - \sum_{k=1}^n \binom{n}{k} \sum_{m=1}^k \binom{k}{m} p^{n-k} q^{m} r^{k-m},
\]
which can be rewritten as
\[
1 - \sum_{m=1}^n q^m \sum_{k=m}^n \binom{n}{k} \binom{k}{m} p^{n-k} r^{k-m}.
\]
A change of variable $l=n-k$, where $l=0$ when $k=n$, and $l=n-m$ when $k=m$, allows us to write this as
\[
1 - \sum_{m=1}^n q^m \sum_{l=0}^{n-m} \binom{n}{n-l} \binom{n-l}{m} p^{l} 
r^{(n-m)-l},
\]
since $k-m = n-l-m$, $k=n-l$. Now, as
\[
\binom{n}{n-l} \binom{n-l}{m} = \binom{n}{m} \binom{n-m}{l},
\]
we obtain
\[
1 - \sum_{m=1}^n q^m \binom{n}{m} \sum_{l=0}^{n-m} \binom{n-m}{l} p^{l} 
r^{(n-m)-l}.
\]
By Newton's binomial theorem, the latter is equal to
\[
1 - \sum_{m=1}^n q^m \binom{n}{m} (p+r)^{n-m} = 1 - \sum_{m=1}^n q^m \binom{n}{m} (1-q)^{n-m},
\]
since $p+r = 1-q$.
As $\sum_{m=0}^n \binom{n}{m} q^m (1-q)^{n-m} = 1$, again by Newton's binomial theorem, we get
\[
Pl_{\X_1 \times \cdots \times \X_n}(\{(T, \ldots, T)\}) = 1 - [1 - (1-q)^n ] = (1-q)^n = \prod_{i=1}^n Pl_{\X_i}(\{ T\}).
\]

\begin{question}
Does the decomposition (\ref{eq:plausibilities-singletons}) hold for any sharp sample and arbitrary belief functions $Bel_{\X_1}, \ldots, Bel_{\X_n}$ defined on arbitrary frames of discernment $\X_1, \ldots, \X_n$?
\end{question}

\paragraph{Factorisation for Cartesian products}

The decomposition (\ref{eq:belief-likelihood-decomposition-ocap}) is equivalent to what Smets calls \emph{conditional conjunctive independence} in his general Bayes theorem.

In fact, for binary spaces the factorisation (\ref{eq:belief-likelihood-decomposition-ocap}) generalises to all subsets $A \subseteq \X_1 \times \cdots \times \X_n$ which are Cartesian products of subsets of $\X_1, \ldots, \X_n$, respectively: $A = A_1 \times \cdots \times A_n$, $A_i \subseteq \X_i$ for all $i$.

\begin{corollary} \label{cor:cci} 
Whenever $A_i \subseteq \X_i$, $i = 1, \ldots, n$, and $\X_i = \X = \{T,F\}$ for all $i$, under conjunctive combination we have that
\begin{equation} \label{eq:corollary-cci}
Bel_{\X_1 \times \cdots \times \X_n}(A_1 \times \cdots \times A_n | \theta) = 
\prod_{i=1}^n Bel_{\X_i} (A_i | \theta).
\end{equation}
\end{corollary}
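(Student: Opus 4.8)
The plan is to read the statement straight off Lemma~\ref{lem:factorisation-conjunctive}, the definition of a belief function (Definition~\ref{def:bel2}), and one elementary fact about Cartesian products. First I would note that under conjunctive combination (whether Dempster's rule $\oplus$ or Smets' rule \ocap) no normalisation occurs, so $Bel_{\X_1 \times \cdots \times \X_n}(\cdot\,|\theta) = Bel_{\X_1} \oplus \cdots \oplus Bel_{\X_n}$, which by Lemma~\ref{lem:factorisation-conjunctive} has as focal elements exactly the products $C_1 \times \cdots \times C_n$ with $\emptyset \neq C_i \subseteq \X_i$, each carrying mass $\prod_{i=1}^n m_{\X_i}(C_i|\theta)$. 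By Definition~\ref{def:bel2}, $Bel_{\X_1 \times \cdots \times \X_n}(A_1 \times \cdots \times A_n|\theta)$ is then the sum of these masses over all focal elements $B$ with $B \subseteq A_1 \times \cdots \times A_n$.

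The key step is the observation that, for nonempty subsets $C_i$, one has $C_1 \times \cdots \times C_n \subseteq A_1 \times \cdots \times A_n$ if and only if $C_i \subseteq A_i$ for every $i$; nonemptiness holds automatically since focal elements are nonempty, and the degenerate case where some $A_i = \emptyset$ is trivial (both sides of (\ref{eq:corollary-cci}) vanish). Consequently the belief value is a sum over tuples $(C_1, \ldots, C_n)$ with $\emptyset \neq C_i \subseteq A_i$, and the distributive law for a finite product of finite sums gives
\[
Bel_{\X_1 \times \cdots \times \X_n}(A_1 \times \cdots \times A_n|\theta)
= \sum_{\substack{\emptyset \neq C_i \subseteq A_i\\ i=1,\ldots,n}} \; \prod_{i=1}^n m_{\X_i}(C_i|\theta)
= \prod_{i=1}^n \Bigg( \sum_{\emptyset \neq C_i \subseteq A_i} m_{\X_i}(C_i|\theta) \Bigg).
\]
Since $m_{\X_i}(\emptyset) = 0$, the inner sum equals $\sum_{C_i \subseteq A_i} m_{\X_i}(C_i|\theta) = Bel_{\X_i}(A_i|\theta)$, and (\ref{eq:corollary-cci}) follows.

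I do not expect a real obstacle: the corollary is essentially a repackaging of Lemma~\ref{lem:factorisation-conjunctive}, and indeed it reduces to Theorem~\ref{the:belief-likelihood-decomposition-ocap} when each $A_i = \{x_i\}$. The only point deserving a line of care is the ``if and only if'' for the inclusion of Cartesian products --- in particular that the forward implication really does need the $C_i$ to be nonempty --- together with the bookkeeping that every focal element of the combined belief function, and no other subset, is accounted for exactly once in the sum. Everything past that is the definition of $Bel$ and the distributive law.
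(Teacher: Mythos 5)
Your proof is correct, and it shares the paper's first two steps exactly: invoke Lemma~\ref{lem:factorisation-conjunctive} to identify the focal elements of the combination as Cartesian products carrying product masses, and observe that $B_1 \times \cdots \times B_n \subseteq A_1 \times \cdots \times A_n$ (for non-empty factors) is equivalent to $B_i \subseteq A_i$ for all $i$. Where you diverge is in the final summation. The paper splits the indices into those $i$ for which $A_i$ is a singleton (forcing $B_i = A_i$) and those for which it is not, and only at this point uses the binary hypothesis $\X_i = \{T,F\}$ to conclude that every non-singleton $A_i$ equals $\X_i$, so that the remaining sum of mass products telescopes to $1 = Bel_{\X_i}(\X_i)$. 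You instead apply the distributive law for a product of finite sums directly, turning the sum over tuples $(C_1,\ldots,C_n)$ with $\emptyset \neq C_i \subseteq A_i$ into $\prod_i \sum_{\emptyset \neq C_i \subseteq A_i} m_{\X_i}(C_i) = \prod_i Bel_{\X_i}(A_i)$. This is cleaner and strictly more general: your argument never uses that the frames are binary, so it proves the factorisation for arbitrary finite frames $\X_i$ and arbitrary subsets $A_i$ (consistent with, and extending, Theorem~\ref{the:factorisation-conjunctive} and Corollary~\ref{cor:belief-likelihood-decomposition-ocap}, which the paper states only for singletons). Your attention to the degenerate case $A_i = \emptyset$ and to the non-emptiness needed for the forward implication of the product-inclusion equivalence is also a point the paper glosses over.
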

\begin{proof}
As, by Lemma \ref{lem:factorisation-conjunctive}, all the focal elements of $Bel_{\X_1}$\ocap $\cdots$\ocap $Bel_{\X_n}$ are Cartesian products of the form $B = B_1 \times \cdots \times B_n$, $B_i \subseteq \X_i$,
\[
Bel_{\X_1 \times \cdots \times \X_n}(A_1 \times \cdots \times A_n | \theta)
=
\sum_{\stackrel{\mathlarger{B \subseteq A_1 \times \cdots \times A_n}}{B = B_1 \times \cdots \times B_n}}
m_{\X_1}(B_1) \cdot \ldots \cdot m_{\X_n}(B_n).
\] 
But 
\[
\{
B \subseteq A_1 \times \cdots \times A_n, B = B_1 \times \cdots \times B_n
\}
=
\{
B = B_1 \times \cdots \times B_n, B_i \subseteq A_i \forall i
\},
\]
since, if $B_j \not \subset A_j$ for some $j$, the resulting Cartesian product would not be contained within $A_1 \times \cdots \times A_n$. Therefore,
\[
Bel_{\X_1 \times \cdots \times \X_n}(A_1 \times \cdots \times A_n | \theta)
=
\sum_{\stackrel{\mathlarger{B = B_1 \times \cdots \times B_n}}{B_i \subseteq A_i \forall i}}
m_{\X_1}(B_1) \cdot \ldots \cdot m_{\X_n}(B_n). 
\]
For all those $A_i$'s, $i=i_1, \ldots, i_m$ that are singletons of $\X_i$, necessarily $B_i = A_i$ and we can write the above expression as
\[
m(A_{i_1}) \cdot \ldots \cdot m(A_{i_m}) 
\sum_{B_j \subseteq A_j, j \neq i_1,...,i_m} \prod_{j \neq i_1,...,i_m} m_{\X_j}(B_j).
\]
If the frames involved are binary, $\X_i = \{T,F\}$, those $A_i$'s that are not singletons coincide with $\X_i$, so that we have
\[
m(A_{i_1}) \cdot \ldots \cdot m(A_{i_m}) \sum_{B_j \subseteq \X_j, j \neq i_1,...,i_m} \prod_{j} m_{\X_j}(B_j).
\]
The quantity 
\[
\sum_{B_j \subseteq \X_j, j \neq i_1,...,i_m} \prod_{j} m_{\X_j}(B_j) 
\]
is, according to the definition of the conjunctive combination, the sum of the masses of all the possible intersections of (cylindrical extensions of) focal elements of $Bel_{\X_j}$, $j \neq i_1, \ldots, i_m$. Thus, it adds up to 1.
In conclusion (forgetting the conditioning on $\theta$ in the derivation for the sake of readability),
\[
\begin{array}{ll}
& Bel_{\X_1 \times \cdots \times \X_n}(A_1 \times \cdots \times A_n) 
\\
= & m(A_{i_1}) \cdot \ldots \cdot m(A_{i_m}) \cdot 1 \cdot \ldots \cdot 1 
\\
= & Bel_{\X_{i_1}}(A_{i_1}) \cdot \ldots \cdot Bel_{\X_{i_m}}(A_{i_m}) \cdot Bel_{\X_{j_1}}(\X_{j_1}) \cdot \ldots \cdot Bel_{\X_{j_k}}(\X_{j_k}) 
\\
= & Bel_{\X_{i_1}}(A_{i_1}) \cdot \ldots \cdot Bel_{\X_{i_m}}(A_{i_m}) \cdot Bel_{\X_{j_1}}(A_{j_1}) \cdot \ldots \cdot Bel_{\X_{j_k}}(A_{j_k}),
\end{array}
\] 
and we have (\ref{eq:corollary-cci}).
\end{proof}

Corollary \ref{cor:cci} states that conditional conjunctive independence always holds, for events that are Cartesian products, whenever the frames of discernment concerned are binary. 

\subsubsection{Binary trials: The disjunctive case} \label{sec:belief-likelihood-disjunctive}

Similar factorisation results hold when the (more cautious) disjunctive combination \ocup$\hspace{0.2mm}$ is used.
As in the conjunctive case, we first analyse the case $n=2$.

\paragraph{Structure of the focal elements}

We seek the disjunctive combination $Bel_{\X_1}$\ocup $Bel_{\X_2}$, where each $Bel_{\X_i}$ has as focal elements $\{T\}$, $\{F\}$ and $\X_i$.
Figure \ref{fig:casen2}(b) is a diagram of all the unions of focal elements of the two input belief functions on their common refinement $\X_1 \times \X_2$.
There are $5 = 2^2 +1$ distinct such unions, which correspond to the focal elements of $Bel_{\X_1}$\ocup $Bel_{\X_2}$, with mass values
\[
\begin{array}{lll}
m_{Bel_{\X_1} \oplus Bel_{\X_2}} (\{(x_i,x_j)\}^c) & = & m_{{\X_1}} (\{x_i\}^c) \cdot m_{{\X_2}} (\{x_j\}^c),
\\
m_{Bel_{\X_1} \oplus Bel_{\X_2}} (\X_1 \times \X_2) & = & \displaystyle 1 - \sum_{i,j} m_{{\X_1}} (\{x_i\}^c) \cdot m_{{\X_2}} (\{x_j\}^c).
\end{array}
\]

We can now prove the following lemma. 
\begin{lemma} \label{lem:factorisation-disjunctive}
The belief function $Bel_{\X_1}$\ocup $\cdots$\ocup $Bel_{\X_n}$, where $\X_i = \X = \{T,F\}$, has $2^n + 1$ focal elements, namely all the complements of 
the $n$-tuples $\vec{x} = (x_1, \ldots, x_n)$ of singleton elements $x_i \in \X_i$, with BPA
\begin{equation} \label{eq:mass-disjunctive}
\begin{array}{c}
m_{Bel_{\X_1} \text{\ocup} \cdots \text{\ocup} Bel_{\X_n}}(\{ (x_1, \ldots, x_n) \}^c) 
= m_{\X_1}(\{x_1\}^c) \cdot \ldots \cdot m_{\X_n}(\{x_n\}^c),
\end{array}
\end{equation}
plus the Cartesian product $\X_1 \times \cdots \times \X_n$ itself, with its mass value given by normalisation.
\end{lemma}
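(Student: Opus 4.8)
The plan is to argue by induction on $n$, in close parallel with the proof of Lemma~\ref{lem:factorisation-conjunctive}, simply replacing intersections of focal elements with unions. Recall that $Bel_{\X_1}$\ocup $\cdots$\ocup $Bel_{\X_n}$ denotes the disjunctive combination, on the common refinement $\X_1 \times \cdots \times \X_n$, of the vacuous extensions of the individual $Bel_{\X_i}$; since the disjunctive rule is associative and commutative this $n$-fold combination can be assembled one factor at a time, so for the inductive step I may combine $Bel_{\X_1}$\ocup $\cdots$\ocup $Bel_{\X_n}$, now regarded as a belief function on $\X_1 \times \cdots \times \X_n$, disjunctively with $Bel_{\X_{n+1}}$. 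The base case $n=2$ is exactly the computation recorded just above (and $n=1$ is immediate, as $Bel_{\X_1}$ itself has the three focal elements $\{F\} = \{T\}^c$, $\{T\} = \{F\}^c$ and $\X_1$, with the stated masses).

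For the inductive step I would first record the two relevant vacuous extensions onto $\X_1 \times \cdots \times \X_{n+1}$. By the inductive hypothesis, and because vacuous extension preserves masses, the extension of $Bel_{\X_1}$\ocup $\cdots$\ocup $Bel_{\X_n}$ has as focal elements the sets $\{(x_1, \ldots, x_n)\}^c \times \X_{n+1}$, with mass $\prod_{i=1}^n m_{\X_i}(\{x_i\}^c)$, one for each $n$-tuple $(x_1, \ldots, x_n)$ of singleton values, together with $\X_1 \times \cdots \times \X_{n+1}$; meanwhile the extension of $Bel_{\X_{n+1}}$ has the three focal elements $\X_1 \times \cdots \times \X_n \times \{T\}$, $\X_1 \times \cdots \times \X_n \times \{F\}$ and $\X_1 \times \cdots \times \X_{n+1}$, with masses $m_{\X_{n+1}}(\{T\})$, $m_{\X_{n+1}}(\{F\})$ and $m_{\X_{n+1}}(\X_{n+1})$.

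The crux of the argument is the set identity that controls which unions the disjunctive rule produces. Because every frame is binary, a point of $\X_1 \times \cdots \times \X_{n+1}$ fails to lie in $\big( \{(x_1, \ldots, x_n)\}^c \times \X_{n+1} \big) \cup \big( \X_1 \times \cdots \times \X_n \times \{y\} \big)$ exactly when its first $n$ coordinates equal $(x_1, \ldots, x_n)$ and its last coordinate is the other element $\bar{y}$ of $\{T,F\}$; hence
\[
\big( \{(x_1, \ldots, x_n)\}^c \times \X_{n+1} \big) \cup \big( \X_1 \times \cdots \times \X_n \times \{y\} \big) = \{(x_1, \ldots, x_n, \bar{y})\}^c ,
\]
while any union involving either of the two whole-space focal elements is again $\X_1 \times \cdots \times \X_{n+1}$. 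So the focal elements of the combination are precisely the $2^{n+1}$ distinct singleton complements $\{(x_1, \ldots, x_{n+1})\}^c$ plus $\X_1 \times \cdots \times \X_{n+1}$, giving the asserted count $2^{n+1}+1$. To obtain the mass of a fixed $\{(x_1, \ldots, x_{n+1})\}^c$, I would note that the pair $(B,C)$ realising it in $m_{\ocup}(A) = \sum_{B \cup C = A} m_1(B) m_2(C)$ is \emph{unique}: a whole-space focal element cannot occur (its union with anything is the whole space, not a proper subset), the first $n$ coordinates force $B = \{(x_1, \ldots, x_n)\}^c \times \X_{n+1}$, and the last coordinate forces $C = \X_1 \times \cdots \times \X_n \times \{x_{n+1}\}^c$. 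Invoking the inductive hypothesis then gives mass $\big( \prod_{i=1}^n m_{\X_i}(\{x_i\}^c) \big) \cdot m_{\X_{n+1}}(\{x_{n+1}\}^c) = \prod_{i=1}^{n+1} m_{\X_i}(\{x_i\}^c)$, and the mass of $\X_1 \times \cdots \times \X_{n+1}$ is whatever remains by normalisation, which completes the induction.

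I expect the only real obstacle to be bookkeeping rather than anything conceptual: making the uniqueness-of-decomposition claim fully watertight (in particular the exclusion of whole-space focal elements, which rests on $\X_1 \times \cdots \times \X_{n+1}$ having at least two elements so that a singleton complement is a proper subset), and tracking the involution $y \mapsto \bar{y}$ carefully so that the index set of tuples gets correctly relabelled rather than shifted. A non-inductive route is also available: the vacuous extension of $Bel_{\X_i}$ to $\X_1 \times \cdots \times \X_n$ has as focal elements the `slabs' $\X_1 \times \cdots \times \{x_i\} \times \cdots \times \X_n$ and the full product, and a union of one such set per coordinate equals the full product if any chosen set is an entire frame and otherwise equals the complement of the single tuple of `missing' coordinate values; one can read off focal elements and masses directly from this, but the induction has the advantage of mirroring the structure of the proof of Lemma~\ref{lem:factorisation-conjunctive}.
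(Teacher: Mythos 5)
Your proposal is correct and follows essentially the same route as the paper: induction on $n$, vacuous extension of the $n$-fold combination and of $Bel_{\X_{n+1}}$, the observation that the union of a tuple-complement cylinder with a singleton cylinder excludes exactly one point of $\X_1 \times \cdots \times \X_{n+1}$ (hence equals a singleton complement), and distinctness/uniqueness of these unions to read off the product mass, with the whole product absorbing the rest. Your explicit uniqueness-of-decomposition argument and the careful tracking of the involution $y \mapsto \bar{y}$ merely make watertight what the paper states more tersely as ``all such unions are distinct.''
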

\begin{proof}
The proof is by induction. The case $n=2$ was proven above. In the induction step, we assume that the thesis is true for $n$, namely that the focal elements of $Bel_{\X_1}$\ocup $\cdots$\ocup $Bel_{\X_n}$ have the form 
\begin{equation} \label{eq:fe}
A = \Big \{ (x_1, \ldots, x_n) \}^c = \{ (x'_1, \ldots, x'_n) \Big | \exists i : \{x'_i\} = \{x_i\}^c \Big \}, 
\end{equation}
where $x_i \in \X_i = \X$. We need to prove it is true for $n+1$.

The vacuous extension of (\ref{eq:fe}) has, trivially, the form
\[
\begin{array}{c}
A' = \Big \{ (x'_1, \ldots, x'_n, x_{n+1}) \Big |
\exists i\in \{1, \ldots, n\} : \{x'_i\} = \{x_i\}^c,  x_{n+1} \in \X \Big \}.
\end{array}
\]
Note that only $2=|\X|$ singletons of $\X_1 \times \cdots \times \X_{n+1}$ are \emph{not} in $A'$, for any given tuple $(x_1, \ldots, x_n)$.
The vacuous extension to $\X_1 \times \cdots \times \X_{n+1}$ of a focal element $B = \{x_{n+1}\}$ of $Bel_{\X_{n+1}}$ is instead 
\[
B' = \{ (y_1, \ldots, y_n ,x_{n+1}), y_i \in \X \; \forall i=1, \ldots, n \}.
\]

Now, all the elements of $B'$, except for $(x_1, \ldots, x_n, x_{n+1})$, are also elements of $A'$.
Hence, the union $A' \cup B'$ reduces to the union of $A'$ and $(x_1, \ldots, x_n, x_{n+1})$.
The only singleton element of $\X_1 \times \cdots \times \X_{n+1}$ not in $A' \cup B'$ is therefore $(x_1, \ldots, x_n, x'_{n+1})$, $\{ x'_{n+1} \} = \{ x_{n+1} \}^c$, for it is neither in $A'$ nor in $B'$.
All such unions are distinct, and therefore, by the definition of disjunctive combination, have mass $m(\{ (x_1, \ldots, x_n) \}^c) \cdot m(\{x_{n+1}\}^c)$, which by the inductive hypothesis is equal to (\ref{eq:mass-disjunctive}).
Unions involving either $\X_{n+1}$ or $\X_1 \times \cdots \times \X_n$ are equal to $\X_1 \times \cdots \times \X_{n+1}$ by the property of the union operator.
\end{proof}

\paragraph{Factorisation}

\begin{theorem} \label{the:belief-likelihood-decomposition-ocup}
In the hypotheses of Lemma \ref{lem:factorisation-disjunctive}, 
when the disjunctive combination \ocup \hspace{0.2mm} is used in the definition of the belief likelihood function, the following decomposition holds: 
\begin{equation} \label{eq:likelihood-decomposition-ocup-belief}
Bel_{\X_1 \times \cdots \times \X_n}(\{(x_1, \ldots, x_n)\}^c | \theta) = \prod_{i=1}^n Bel_{\X_i} (\{x_i\}^c | \theta).
\end{equation}
\end{theorem}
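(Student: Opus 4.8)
The plan is to follow exactly the template of the conjunctive case (Theorem~\ref{the:belief-likelihood-decomposition-ocap}), this time using the complete description of the focal elements of the disjunctive combination supplied by Lemma~\ref{lem:factorisation-disjunctive}. First I would write, by the definition of a belief function,
\[
Bel_{\X_1 \times \cdots \times \X_n}(\{(x_1, \ldots, x_n)\}^c | \theta) = \sum_{B \subseteq \{\vec{x}\}^c} m(B | \theta),
\]
the sum extending over those focal elements $B$ of $Bel_{\X_1 \times \cdots \times \X_n}$ that are contained in the singleton complement $\{\vec{x}\}^c = \{(x_1, \ldots, x_n)\}^c$. By Lemma~\ref{lem:factorisation-disjunctive} these focal elements are precisely the singleton complements $\{(y_1, \ldots, y_n)\}^c$, with mass $\prod_{i=1}^n m_{\X_i}(\{y_i\}^c|\theta)$, together with the whole product $\X_1 \times \cdots \times \X_n$.

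The key step is the observation that the only focal element contained in $\{\vec{x}\}^c$ is $\{\vec{x}\}^c$ itself. Indeed $\X_1 \times \cdots \times \X_n$ contains $\vec{x}$ and so is not a subset of $\{\vec{x}\}^c$; and since complementation reverses inclusion, $\{(y_1, \ldots, y_n)\}^c \subseteq \{(x_1, \ldots, x_n)\}^c$ is equivalent to $\{(x_1, \ldots, x_n)\} \subseteq \{(y_1, \ldots, y_n)\}$, i.e. to $(y_1, \ldots, y_n) = (x_1, \ldots, x_n)$ because both sides are singletons. Hence the sum collapses to a single term and
\[
Bel_{\X_1 \times \cdots \times \X_n}(\{(x_1, \ldots, x_n)\}^c | \theta) = m(\{\vec{x}\}^c|\theta) = \prod_{i=1}^n m_{\X_i}(\{x_i\}^c|\theta).
\]

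It then remains to pass from masses to belief values. Here the binary-frame hypothesis of Lemma~\ref{lem:factorisation-disjunctive} is used: when $\X_i = \{T,F\}$, the complement $\{x_i\}^c$ of a singleton is itself a singleton, and on singletons mass and belief coincide, so $m_{\X_i}(\{x_i\}^c|\theta) = Bel_{\X_i}(\{x_i\}^c|\theta)$; substituting yields (\ref{eq:likelihood-decomposition-ocup-belief}). I do not expect any serious obstacle: the containment characterisation is immediate from the inclusion-reversing property of complements, and the rest is bookkeeping. The one place where care is genuinely needed is precisely this last conversion, which is where binariness enters — for non-binary $\X_i$ the set $\{x_i\}^c$ is no longer a singleton and $m \neq Bel$ on it, so the statement (and its proof) would have to be recast, e.g. via the multiplicativity $Bel_1 \mathop{\text{\ocup}} Bel_2(A) = Bel_1(A)\,Bel_2(A)$ of the disjunctive rule applied to the vacuous extensions, evaluated at $A = \{\vec{x}\}^c$, noting that a cylinder over coordinate $i$ lies in $\{\vec{x}\}^c$ iff its $\X_i$-component avoids $x_i$.
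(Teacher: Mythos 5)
Your proposal is correct and follows essentially the same route as the paper's proof: observe that the only focal element of the disjunctive combination contained in $\{(x_1,\ldots,x_n)\}^c$ is that set itself, so belief equals mass there, then invoke Lemma \ref{lem:factorisation-disjunctive} for the product form and use the binary hypothesis to identify $m_{\X_i}(\{x_i\}^c)$ with $Bel_{\X_i}(\{x_i\}^c)$. You merely spell out the containment check (via inclusion reversal under complementation) that the paper states without justification.
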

\begin{proof}
Since $\{(x_1, \ldots, x_n)\}^c$ contains only itself as a focal element,
\[
Bel_{\X_1 \times \cdots \times \X_n}(\{(x_1, \ldots, x_n)\}^c | \theta) = m(\{(x_1, \ldots, x_n)\}^c | \theta).
\]
By Lemma \ref{lem:factorisation-disjunctive}, the latter becomes
\[
Bel_{\X_1 \times \cdots \times \X_n}(\{(x_1, \ldots, x_n)\}^c | \theta) 
=
\prod_{i=1}^n m_{\X_i} (\{x_i\}^c | \theta) = \prod_{i=1}^n Bel_{\X_i} (\{x_i\}^c | \theta),
\]
as $\{x_i\}^c$ is a singleton element of $\X_i$, and we have (\ref{eq:likelihood-decomposition-ocup-belief}).
\end{proof}

Note that 
\[
Pl_{\X_1 \times \cdots \times \X_n}(\{(x_1, \ldots, x_n)\}^c | \theta) = 1 
\]
for all tuples $(x_1, \ldots, x_n)$, as the set $\{(x_1, \ldots, x_n)\}^c$ has non-empty intersection with all the focal elements of the belief function $Bel_{\X_1} \text{\ocup} \cdots \text{\ocup} Bel_{\X_n}$.

\subsubsection{General factorisation results}

A look at the proof of Lemma \ref{lem:factorisation-conjunctive} shows that the argument is in fact valid for the conjunctive combination of belief functions defined on an arbitrary collection $\X_1, \ldots,\X_n$ of finite spaces. 

Namely, we have the following theorem.

\begin{theorem} \label{the:factorisation-conjunctive}
For any $n \in \mathbb{Z}$, the belief function $Bel_{\X_1} \oplus \cdots \oplus Bel_{\X_n}$, where $\X_1, \ldots, \X_n$ are finite spaces, has as focal elements all the Cartesian products $A_1\times \cdots \times A_n$ of $n$ focal elements $A_1 \subseteq \X_1, \ldots, A_n \subseteq \X_n$, with BPA
\[
m_{Bel_{\X_1} \oplus \cdots \oplus Bel_{\X_n}} (A_1\times \cdots \times A_n) = \prod_{i=1}^n m_{\X_i}(A_i).
\]
\end{theorem}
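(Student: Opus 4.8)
The plan is to re-run, verbatim, the induction from the proof of Lemma~\ref{lem:factorisation-conjunctive}, observing that nothing in that argument used $|\X_i|=2$: the only role played by binariness was to fix the focal elements of each $Bel_{\X_i}$ to the three sets $\{T\},\{F\},\X$, whereas here each $Bel_{\X_i}$ simply has \emph{some} finite collection $\mathcal{F}_i$ of (non-empty) focal elements. So I would state the claim as an induction on $n$ and carry the product formula through unchanged.

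For the base case $n=2$ I would repeat the derivation leading to~(\ref{eq:lemma-n2}): the vacuous extension of $Bel_{\X_1}$ to $\X_1\times\X_2$ has focal elements $A_1\times\X_2$ (one for each $A_1\in\mathcal{F}_1$), the vacuous extension of $Bel_{\X_2}$ has focal elements $\X_1\times A_2$ (one for each $A_2\in\mathcal{F}_2$), and $(A_1\times\X_2)\cap(\X_1\times A_2)=A_1\times A_2$. Since $A_1,A_2$ are non-empty, every such intersection is non-empty, so $m_\cap(\emptyset)=0$, the denominator in~(\ref{eq:dempster}) equals $1$, and Dempster's rule coincides with the conjunctive rule; moreover $(A_1,A_2)\mapsto A_1\times A_2$ is injective on non-empty sets, so distinct pairs of input focal elements produce distinct focal elements of the combination, with no mass collisions, and $m(A_1\times A_2)=m_{\X_1}(A_1)\,m_{\X_2}(A_2)$.

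For the inductive step I would combine $(Bel_{\X_1}\oplus\cdots\oplus Bel_{\X_n})\oplus Bel_{\X_{n+1}}$ on the common refinement $\X_1\times\cdots\times\X_{n+1}$, using commutativity and associativity of $\oplus$. By the inductive hypothesis, after vacuous extension the first factor has focal elements $A_1\times\cdots\times A_n\times\X_{n+1}$ with mass $\prod_{i=1}^n m_{\X_i}(A_i)$, while the vacuous extension of $Bel_{\X_{n+1}}$ has focal elements $\X_1\times\cdots\times\X_n\times A_{n+1}$; their intersection is $A_1\times\cdots\times A_{n+1}$, which is again always non-empty and injective in the tuple $(A_1,\dots,A_{n+1})$. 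Hence no normalisation intervenes, Dempster's rule again reduces to the conjunctive rule, and $m_{Bel_{\X_1}\oplus\cdots\oplus Bel_{\X_{n+1}}}(A_1\times\cdots\times A_{n+1}) = \big(\prod_{i=1}^n m_{\X_i}(A_i)\big)\, m_{\X_{n+1}}(A_{n+1})$, completing the induction. Completeness of the focal-element list — that every focal element of the combination is of this Cartesian-product form — follows because Dempster's rule makes the focal elements of the combination exactly the non-empty intersections of extended input focal elements, which we have just enumerated.

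The argument is essentially bookkeeping, so I do not expect a genuine obstacle; the single point that deserves explicit care is the injectivity/distinctness claim, namely that $A_1\times\cdots\times A_n = A'_1\times\cdots\times A'_n$ forces $A_i=A'_i$ for all $i$ whenever every factor is non-empty, since it is precisely this that rules out mass collisions and lets the product formula for $m$ be read off directly. It is also worth remarking, in contrast to the binary case, that the number of focal elements is $\prod_{i=1}^n|\mathcal{F}_i|$ rather than $\prod_{i=1}^n(2^{|\X_i|}-1)$, which is why the statement is phrased in terms of Cartesian products of focal elements rather than of arbitrary non-empty subsets.
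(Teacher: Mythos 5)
Your proposal is correct and is exactly the route the paper intends: the text states that the argument of Lemma \ref{lem:factorisation-conjunctive} carries over verbatim to arbitrary finite frames and leaves the details to the reader, and your write-up simply supplies those details (base case, inductive step, absence of normalisation, and the injectivity of $(A_1,\ldots,A_n)\mapsto A_1\times\cdots\times A_n$ on non-empty factors). Nothing further is needed.
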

The proof is left to the reader. The corollary below follows.

\begin{corollary} \label{cor:belief-likelihood-decomposition-ocap}
When either \ocap \hspace{0.2mm} or $\oplus$ is used as a combination rule in the definition of the belief likelihood function, the following decomposition holds for tuples $(x_1, \ldots, x_n)$, $x_i \in \X_i$, which are the singleton elements of $\X_1 \times \cdots \times \X_n$, with $\X_1, \ldots , \X_n$ being arbitrary discrete frames of discernment:
\begin{equation} \label{eq:belief-likelihood-decomposition-ocap-general}
\begin{array}{c}
\displaystyle
Bel_{\X_1 \times \cdots \times \X_n}(\{(x_1, \ldots, x_n)\}|\theta) 
= 
\prod_{i=1}^n Bel_{\X_i} (\{x_i\}|\theta).
\end{array}
\end{equation}
\end{corollary}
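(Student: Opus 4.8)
The plan is to mirror the proof of Theorem~\ref{the:belief-likelihood-decomposition-ocap}, simply replacing the appeal to Lemma~\ref{lem:factorisation-conjunctive} with its general version, Theorem~\ref{the:factorisation-conjunctive}. First I would observe that a singleton of the product space factorises as $\{(x_1, \ldots, x_n)\} = \{x_1\} \times \cdots \times \{x_n\}$, so that, by the definition of a belief function, $Bel_{\X_1 \times \cdots \times \X_n}(\{(x_1, \ldots, x_n)\}|\theta) = \sum_{B \subseteq \{x_1\} \times \cdots \times \{x_n\}} m_{Bel_{\X_1} \oplus \cdots \oplus Bel_{\X_n}}(B|\theta)$, where the sum ranges over all subsets of the singleton carrying non-zero mass.

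Next I would use Theorem~\ref{the:factorisation-conjunctive}: every focal element $B$ of $Bel_{\X_1} \oplus \cdots \oplus Bel_{\X_n}$ is a Cartesian product $B_1 \times \cdots \times B_n$ with $\emptyset \neq B_i \subseteq \X_i$, and such a product is contained in $\{x_1\} \times \cdots \times \{x_n\}$ only when $B_i \subseteq \{x_i\}$ for every $i$, i.e. when $B_i = \{x_i\}$ for every $i$ (a non-empty subset of a singleton is that singleton). Hence the sum collapses to the single term $m_{Bel_{\X_1} \oplus \cdots \oplus Bel_{\X_n}}(\{x_1\} \times \cdots \times \{x_n\}|\theta)$, which by Theorem~\ref{the:factorisation-conjunctive} equals $\prod_{i=1}^n m_{\X_i}(\{x_i\}|\theta)$. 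Finally, since a singleton has no non-empty proper subsets, mass and belief coincide on it, $m_{\X_i}(\{x_i\}|\theta) = Bel_{\X_i}(\{x_i\}|\theta)$, and the product rewrites as $\prod_{i=1}^n Bel_{\X_i}(\{x_i\}|\theta)$, which is exactly~(\ref{eq:belief-likelihood-decomposition-ocap-general}). The interchangeability of $\oplus$ and \ocap\ in this statement is immediate, just as in the binary case: all the intersections of the (cylindrically extended) focal elements involved are non-empty, so no normalisation is ever triggered.

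I do not expect a genuine obstacle here: the combinatorial content has already been absorbed into Theorem~\ref{the:factorisation-conjunctive}, whose proof is the routine inductive lift of Lemma~\ref{lem:factorisation-conjunctive} from $\X_i = \{T,F\}$ to arbitrary finite $\X_i$ (the induction step is verbatim the same — vacuous extensions of Cartesian-product focal elements intersect to give Cartesian-product focal elements, with multiplying masses and no empty intersections). The only point that warrants a line of care is the degenerate situation in which $\{(x_1, \ldots, x_n)\}$ is not a focal element of the combination at all; but then $m_{\X_i}(\{x_i\}|\theta) = 0$ for at least one $i$, so both sides of~(\ref{eq:belief-likelihood-decomposition-ocap-general}) vanish and the identity still holds. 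I would therefore present the corollary with the short three-step chain of equalities above, exactly paralleling the display in the proof of Theorem~\ref{the:belief-likelihood-decomposition-ocap}.
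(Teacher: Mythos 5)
Your proposal is correct and follows essentially the same route as the paper's own proof: factorise the singleton as $\{x_1\}\times\cdots\times\{x_n\}$, identify its belief value with its mass value, and invoke Theorem~\ref{the:factorisation-conjunctive} for the mass factorisation, noting that mass and belief coincide on singletons. Your extra remarks (why the sum collapses, the non-focal degenerate case, the absence of normalisation) only make explicit what the paper leaves implicit.
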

\begin{proof}
For the singleton elements of $\X_1 \times \cdots \times \X_n$, since $\{(x_1, \ldots, x_n)\} = \{x_1\} \times \cdots \times \{x_n\}$, (\ref{eq:belief-likelihood-decomposition-ocap-general}) becomes
\[
\begin{array}{lll}
& & \displaystyle
Bel_{\X_1 \times \cdots \times \X_n}(\{(x_1, \ldots, x_n)\})
\\
& = & \displaystyle
m_{Bel_{\X_1} \oplus \cdots \oplus Bel_{\X_n}}(\{ (x_1,...,x_n) \}) 
=
\prod_{i=1}^n m_{\X_i}(\{x_i \}) = \prod_{i=1}^n Bel_{\X_i} (\{x_i\}),
\end{array}
\]
where the mass factorisation follows from Theorem \ref{the:factorisation-conjunctive},
as mass and belief values coincide on singletons.
\end{proof}

This new approach to inference with belief functions, based on the notion of the belief likelihood, opens up a number of interesting avenues: from a more systematic comparison with other inference approaches, to the possible generalisation of the factorisation results obtained above, and to the computation of belief and plausibility likelihoods for other major combination rules.

\subsubsection{Lower and upper likelihoods of Bernoulli trials} \label{sec:bernoulli}

In the case of Bernoulli trials, where not only is there a single binary sample space $\X_i = \X = \{T,F\}$ and conditional independence holds, but also the random variables are assumed to be equally distributed, the conventional likelihood 
reads as $p^k (1-p)^{n-k}$, where $p = P(T)$, $k$ is the number of successes ($T$) and $n$ is as usual the total number of trials.

Let us then compute the lower likelihood function for a series of Bernoulli trials, under the similar assumption that all the belief functions $Bel_{\X_i} = Bel_{\X}$, $i = 1, \ldots, n$, coincide (the analogous of equidistribution), with $Bel_\X$  parameterised by $p=m(\{T\})$, $q = m(\{F\})$ (where, this time, $p +q \leq 1$). 
\begin{corollary}
Under the above assumptions, the lower and upper likelihoods of the sample $\vec{x} = (x_1, \ldots, x_n)$ are, respectively,
\begin{equation} \label{eq:lower-upper-likelihoods-bernoulli}
\begin{array}{lll}
\underline{L}(\vec{x}) & = & \displaystyle \prod_{i=1}^n Bel_\X(\{x_i\})
= p^k q^{n-k},
\\
\overline{L}(\vec{x}) & = & 
\displaystyle \prod_{i=1}^n Pl_\X(\{x_i\}) = (1-q)^k (1-p)^{n-k}.
\end{array}
\end{equation}
\end{corollary}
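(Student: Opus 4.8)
The statement is essentially a corollary of Theorem~\ref{the:belief-likelihood-decomposition-ocap} and Lemma~\ref{lem:factorisation-conjunctive}, so the plan is to treat the lower and upper likelihoods separately: reading the first off the singleton factorisation already in hand, and obtaining the second by a short complementary-counting argument over the focal elements of the combined belief function.

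For $\underline{L}(\vec x)$ I would invoke Theorem~\ref{the:belief-likelihood-decomposition-ocap} (equivalently Corollary~\ref{cor:belief-likelihood-decomposition-ocap}) with $\X_1 = \cdots = \X_n = \X = \{T,F\}$ and $\odot = \oplus$, which here coincides with \ocap\ since the intersections of the relevant focal elements are all non-empty and no normalisation is needed. This gives $\underline{L}(\vec x) = Bel_{\X_1 \times \cdots \times \X_n}(\{\vec x\}|\theta) = \prod_{i=1}^n Bel_\X(\{x_i\}|\theta)$. Since belief and mass coincide on singletons, $Bel_\X(\{x_i\}) = m(\{x_i\})$, which is $p$ if $x_i = T$ and $q$ if $x_i = F$; writing $k$ for the number of indices $i$ with $x_i = T$ collapses the product to $p^k q^{n-k}$. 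The equidistribution assumption enters only at this last collapsing step.

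For $\overline{L}(\vec x)$ I would start from $\overline{L}(\vec x) = Pl_{\X_1 \times \cdots \times \X_n}(\{\vec x\}|\theta) = 1 - Bel_{\X_1 \times \cdots \times \X_n}(\{\vec x\}^c|\theta) = 1 - \sum_{B \subseteq \{\vec x\}^c} m_\oplus(B)$, and then use Lemma~\ref{lem:factorisation-conjunctive}: the focal elements of $Bel_{\X_1}\oplus\cdots\oplus Bel_{\X_n}$ are exactly the products $A_1\times\cdots\times A_n$ with each $A_i \in \{\{T\},\{F\},\X\}$ and mass $\prod_{i=1}^n m_\X(A_i)$. Setting $r \doteq m(\X) = 1-p-q \geq 0$, such a product is contained in $\{\vec x\}^c$ iff it does not contain the tuple $\vec x$, i.e. iff $x_i \notin A_i$ for at least one $i$; and since $|\X| = 2$, the unique choice of $A_i$ not containing $x_i$ is the singleton $\{x_i\}^c$. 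Hence $\sum_{B \subseteq \{\vec x\}^c} m_\oplus(B)$ is the total mass of the products having $A_i = \{x_i\}^c$ for some $i$, which I would compute by complementation: the mass of all products is $\prod_{i=1}^n\big(m_\X(\{T\}) + m_\X(\{F\}) + m_\X(\X)\big) = \prod_{i=1}^n (p+q+r) = 1$, whereas the mass of those products avoiding $\{x_i\}^c$ in every coordinate is $\prod_{i=1}^n\big(m_\X(\{x_i\}) + m_\X(\X)\big) = \prod_{i=1}^n\big(1 - m_\X(\{x_i\}^c)\big) = \prod_{i=1}^n Pl_\X(\{x_i\})$. Subtracting, $\sum_{B \subseteq \{\vec x\}^c} m_\oplus(B) = 1 - \prod_{i=1}^n Pl_\X(\{x_i\})$, so $\overline{L}(\vec x) = \prod_{i=1}^n Pl_\X(\{x_i\})$; finally $Pl_\X(\{T\}) = 1 - m(\{F\}) = 1-q$ and $Pl_\X(\{F\}) = 1 - m(\{T\}) = 1-p$, giving $(1-q)^k(1-p)^{n-k}$.

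I expect no genuinely hard step: the binary structure forces every focal element of the combined belief function to be a product of singletons or the whole space, so the sum over $\{\vec x\}^c$ factorises coordinate by coordinate. The two points that need a little care are (i) checking that the leftover mass $r = m(\X)$ does not disturb the identities — it does not, since $p+q+r=1$ and $m(\{x_i\}) + r = 1 - m(\{x_i\}^c)$ — and (ii) noticing that this argument in fact proves the plausibility factorisation (\ref{eq:plausibility-likelihood-decomposition-ocap}) for \emph{arbitrary} belief functions on binary frames, and hence settles Conjecture~\ref{con:plausibility-likelihood-decomposition-ocap} in that case. The part that remains open is precisely the move to arbitrary frames $\X_i$, where Lemma~\ref{lem:factorisation-conjunctive} no longer guarantees that focal elements are products of singletons or whole spaces, and the coordinate-wise collapse breaks down.
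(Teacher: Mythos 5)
Your proof is correct, and for the upper likelihood it actually does more than the paper does. For $\underline{L}(\vec x)$ you and the paper take the same route: Theorem~\ref{the:belief-likelihood-decomposition-ocap} plus the coincidence of mass and belief on singletons, then equidistribution to collapse the product to $p^k q^{n-k}$. For $\overline{L}(\vec x)$, however, the paper does not prove the factorisation at all: it leaves it resting on Conjecture~\ref{con:plausibility-likelihood-decomposition-ocap}, which it supports only by a binomial-identity verification for the single tuple $(T,\ldots,T)$ under equidistribution, and it explicitly flags the corollary's upper half as conditional on that conjecture. Your complementary-counting argument over the product focal elements of Lemma~\ref{lem:factorisation-conjunctive} — total mass $ (p+q+r)^n = 1$ minus the mass $\prod_i\bigl(m_\X(\{x_i\})+r\bigr)=\prod_i Pl_\X(\{x_i\})$ of the products containing $\vec x$ — is a complete proof of the plausibility factorisation for every sharp sample on binary frames, so it settles the conjecture in the Bernoulli case and makes the corollary unconditional; that is a genuine improvement. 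One small remark: your closing caveat that the coordinate-wise collapse "breaks down" on arbitrary frames is too pessimistic. If you phrase the same computation directly as $Pl(\{\vec x\}) = \sum_{B \ni \vec x} m(B)$ and note that a Cartesian product $A_1\times\cdots\times A_n$ contains $\vec x$ iff $x_i\in A_i$ for every $i$, then Theorem~\ref{the:factorisation-conjunctive} gives $Pl(\{\vec x\}) = \prod_i \sum_{A_i\ni x_i} m_{\X_i}(A_i) = \prod_i Pl_{\X_i}(\{x_i\})$ on arbitrary finite frames as well — the restriction to the two-element alternative $\{x_i\}^c$ was never needed.
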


The above decomposition for $\overline{L}(\vec{x})$, in particular, is valid under the assumption that Conjecture \ref{con:plausibility-likelihood-decomposition-ocap} holds, at least for Bernoulli trials, as the evidence seems to suggest.
After normalisation, these lower and upper likelihoods can be seen as PDFs over the (belief) space $\mathcal{B}$ of all belief functions definable on $\X$ \cite{cuzzolin2008geometric,cuzzolin14lap}.

Having observed a series of trials $\vec{x} = ( x_1, \ldots, x_n )$, $x_i \in \{T,F\}$, one may then seek the belief function on $\X = \{T,F\}$ which best describes the observed sample, i.e., the optimal values of the two parameters $p$ and $q$. In the next subsection we will address a similar problem in a generalised logistic regression setting.

\begin{exam}\hspace{-1.5mm}\emph{\textbf{Lower and upper likelihoods.}} \label{exa:lu-likelihoods}
Figure \ref{fig:lower-upper-likelihoods} plots the lower and upper likelihoods (\ref{eq:lower-upper-likelihoods-bernoulli}) for the case of $k=6$ successes in $n=10$ trials.
Both subsume the traditional likelihood $p^k(1-p)^{n-k}$ as their section for $p+q =1$, although this is particularly visible for the lower likelihood (a). In particular, the maximum of the lower likelihood is the traditional maximum likelihood estimate $p = k/n$, $q = 1 - p$.
This makes sense, for the lower likelihood is highest for the most committed belief functions (i.e., for probability measures).
The upper likelihood (b) has a unique maximum in $p=q=0$: this is the vacuous belief function on $\{T,F\}$, with $m(\{T,F\}) = 1$.

The interval of belief functions joining $\max \overline{L}$ with $\max \underline{L}$ is the set of belief functions such that $\frac{p}{q} = \frac{k}{n-k}$, i.e., those which \emph{preserve the ratio between the observed empirical counts}.
\end{exam}

\begin{figure*}[t!] 
\begin{tabular}{cc}
\includegraphics[width = 0.46\textwidth]{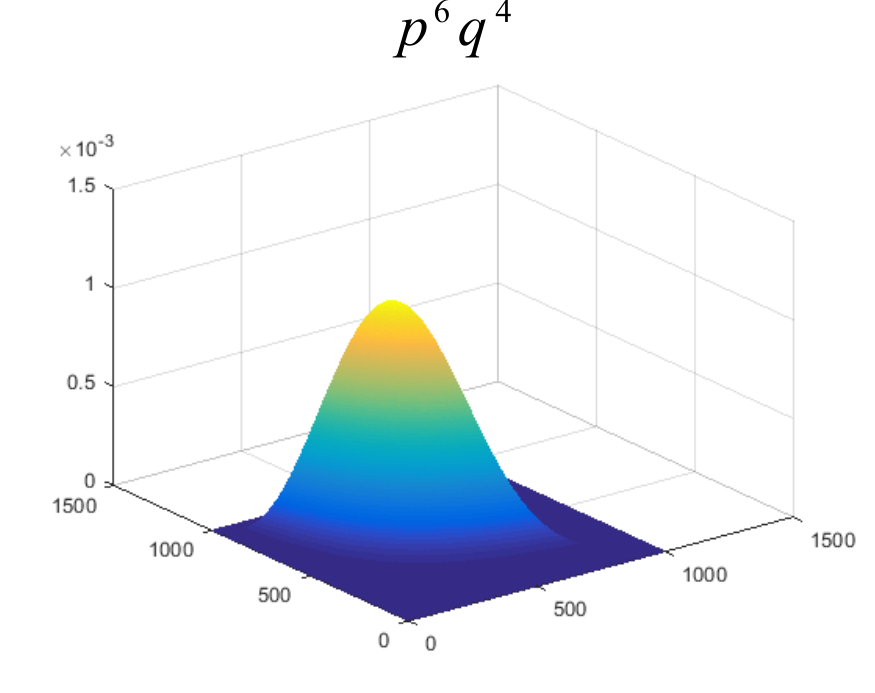}
&
\includegraphics[width = 0.46\textwidth]{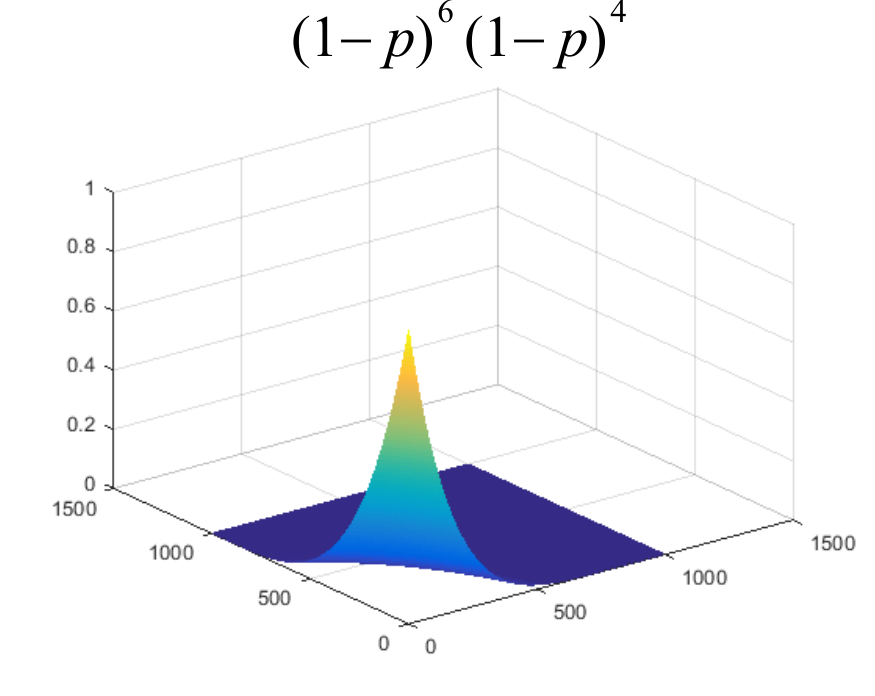}
\\
(a) & (b)
\end{tabular}
\caption{Lower (a) and upper (b) likelihood functions plotted in the space of belief functions defined on the frame $\X = \{ T,F \}$, parameterised by $p = m(T)$ ($X$ axis) and $q = m(F)$ ($Y$ axis), for the case of $k=6$ successes in $n=10$ trials.} \label{fig:lower-upper-likelihoods}
\end{figure*} 

\subsection{Generalised logistic regression} \label{sec:future-logistic} 

\subsubsection{Logistic regression} \label{sec:logistic-regression}

\emph{Logistic regression} allows us, given a sample $Y = \{Y_1,...,Y_n\}$, $X = \{x_1,...,x_n\}$ where $Y_i \in \{0,1\}$ is a binary outcome at time $i$ and $x_i$ is the corresponding observed measurement, to learn the parameters of a conditional probability relation between the two, of the form:
\begin{equation} \label{eq:logistic}
P(Y=1|x)={\frac {1}{1+e^{-(\beta _{0}+\beta _{1}x)}}},
\end{equation}
where $\beta_0$ and $\beta_1$ are two scalar parameters.
Given a new observation $x$, (\ref{eq:logistic}) delivers the probability of a positive outcome $Y=1$.
Logistic regression generalises deterministic linear regression, as it is a function of the linear combination $\beta _{0}+\beta _{1}x$.
The $n$ trials are assumed independent but not equally distributed, for $\pi_i = P(Y_i=1|x_i)$ varies with the time instant $i$ of collection.
The two scalar parameters $\beta_0,\beta_1$ in (\ref{eq:logistic}) are estimated by maximum likelihood of the sample. After denoting by
\begin{equation} \label{eq:logit}
\begin{array}{lll}
\pi_i & = & P(Y_i=1|x_i) = \displaystyle \frac{1}{1+e^{ - (\beta _{0}+\beta _{1}x_i) } }, 
\\ 
1 - \pi_i & = & P(Y_i=0|x_i) = \displaystyle \frac{e^{ - (\beta _{0}+\beta _{1} x_i) } }{1+e^{-(\beta _{0}+\beta _{1}x_i)} }
\end{array}
\end{equation}
the conditional probabilities of the two outcomes, the likelihood of the sample can be expressed as:
$
L(\beta|Y) = \prod_{i=1}^n \pi_i^{Y_i} (1-\pi_i)^{Y_i},
$
where $Y_i \in \{0,1\}$ and $\pi_i$ is a function of $\beta = [\beta_0,\beta_1]$. Maximising $L(\beta|Y)$ yields a conditional PDF $P(Y=1|x)$.

Unfortunately, logistic regression shows clear limitations when the number of samples is insufficient or when there are too few positive outcomes (1s) \cite{King2001}. 
Moreover, inference by logistic regression tends to underestimate the probability of a positive outcome 
\cite{King2001}.

\subsubsection{Generalised formulation}

Based on Theorem \ref{the:belief-likelihood-decomposition-ocap} and Conjecture \ref{con:plausibility-likelihood-decomposition-ocap}, 
we can also generalise logistic regression 
to a belief function setting by replacing the conditional probability $(\pi_i, 1-\pi_i)$ on $\X = \{T,F\}$ with a belief function ($p_i = m(\{T\})$, $q_i = m(\{F\})$, $p_i + q_i \leq 1$) on $2^\X$.
Note that, just as in a traditional logistic regression setting (Section \ref{sec:logistic-regression}), the belief functions $Bel_i$ associated with different input values $x_i$ are not equally distributed.

Writing $T=1, F=0$, the lower and upper likelihoods can be computed as
\[
\begin{array}{c}
\underline{L}(\beta|Y) = \displaystyle \prod_{i=1}^n p_i^{Y_i} q_i^{1-Y_i}, 
\quad
\overline{L}(\beta|Y) = \displaystyle \prod_{i=1}^n (1 - q_i)^{Y_i} (1 - p_i)^{1-Y_i}.
\end{array}
\]
The question becomes how to generalise the logit link between observations $x$ and outputs $y$, in order to seek an analytical mapping between observations and belief functions over a binary frame. Just assuming
\begin{equation}\label{eq:logit3}
p_i = m(Y_i = 1|x_i) = \displaystyle \frac{1}{1+e^{ - (\beta _{0}+\beta _{1}x_i) } }, 
\end{equation}
as in the classical constraint (\ref{eq:logistic}), does not yield any analytical dependency for $q_i$. To address this issue, we can just (for instance) add a parameter $\beta_2$ such that the following relationship holds:
\begin{equation}\label{eq:logit2}
q_i = m(Y_i = 0|x_i) = \beta_2 \frac{e^{-(\beta _{0}+\beta _{1}x_i)}}{1+e^{-(\beta _{0}+\beta _{1}x_i)}}.
\end{equation}
We can then seek lower and upper optimal estimates for the parameter vector $\beta = [\beta_0,\beta_1,\beta_2]'$:
\begin{equation} \label{eq:generalised-logistic}
\arg\max_{\beta} \underline{L} \mapsto \underline{\beta}_0, \underline{\beta}_1, \underline{\beta}_2, 
\quad 
\arg\max_{\beta} \overline{L} \mapsto \overline{\beta}_0, \overline{\beta}_1, \overline{\beta}_2.
\end{equation}
Plugging these optimal parameters into (\ref{eq:logit3}), (\ref{eq:logit2}) will then yield an upper and a lower family of conditional belief functions given $x$ (i.e., an interval of belief functions),
\[
Bel_\X (.|\underline{\beta},x), 
\quad 
Bel_\X (.|\overline{\beta},x).
\]
Given any new test observation $x'$, our generalised logistic regression method will then output a pair of lower and upper belief functions on $\X = \{T,F\} = \{1,0\}$, as opposed to a sharp probability value as in the classical framework. As each belief function itself provides a lower and an upper probability value for each event, both the lower and the upper regressed BFs will provide an interval for the probability $P(T)$ of success, whose width reflects the uncertainty encoded by the training set of sample series.

\subsubsection{Optimisation problem}

Both of the problems (\ref{eq:generalised-logistic}) are constrained optimisation problems (unlike the classical case, where $q_i=1-p_i$ and the optimisation problem is unconstrained). In fact, the parameter vector $\beta$ must be such that
\[
0 \leq p_i + q_i \leq 1 \quad \forall i=1, \ldots, n.
\]
Fortunately, the number of constraints can be reduced by noticing that, under the analytical relations (\ref{eq:logit3}) and (\ref{eq:logit2}), $p_i + q_i \leq 1$ for all $i$ whenever $\beta_2 \leq 1$.

The objective function can be simplified by taking the logarithm. 
In the lower-likelihood case, we get\footnote{Derivations are omitted.}
\[
\log \underline{L}(\beta|Y) = 
\sum_{i=1}^n \Big \{ -\log (1+e^{-(\beta_0+\beta_1 x_i)})
+ (1 - Y_i) \big [ \log \beta_2 - (\beta_0 + \beta_1 x_i) \big ] \Big \}.
\]

We then need to analyse the \emph{Karush--Kuhn--Tucker} (KKT) necessary conditions for the optimality of the solution of a nonlinear optimisation problem
subject to differentiable constraints.
\begin{definition} \label{def:kkt}
Suppose that the objective function ${\displaystyle f:\mathbb {R} ^{n}\rightarrow \mathbb {R} }$  and the constraint functions ${\displaystyle g_{i}:\,\!\mathbb {R} ^{n}\rightarrow \mathbb {R} }$  and ${\displaystyle h_{j}:\,\!\mathbb {R} ^{n}\rightarrow \mathbb {R} }$ of a nonlinear optimisation problem 
\[
\arg\max_x f(x),
\]
subject to
\[
g_i(x)\leq 0 \;\; i=1, \ldots, m, \quad h_j(x)=0 \;\; j = 1, \ldots, l,
\]
are continuously differentiable at a point $x^{*}$.
If $x^{*}$ is a local optimum, under some regularity conditions there then exist constants $\mu _{i}$ $(i=1,\ldots ,m)$ and $\lambda _{j}\ (j=1,\ldots ,l)$, called \emph{KKT multipliers}, such that the following conditions hold:
\begin{enumerate}
\item
\emph{Stationarity}: 
$
\nabla f(x^{*})=\sum _{i=1}^{m}\mu _{i}\nabla g_{i}(x^{*})+\sum _{j=1}^{l}\lambda _{j}\nabla h_{j}(x^{*})$.
\item
\emph{Primal feasibility}: 
$g_{i}(x^{*})\leq 0$ for all $i=1,\ldots ,m$, and $h_{j}(x^{*})=0$, for all $j=1,\ldots ,l$.
\item
\emph{Dual feasibility}:
$\mu _{i}\geq 0$ for all $i=1,\ldots ,m$.
\item
\emph{Complementary slackness}: 
$\mu _{i}g_{i}(x^{*})=0$ for all $i=1,\ldots ,m$.
\end{enumerate}
\end{definition}

For the optimisation problem considered here, we have only inequality constraints, namely
\[
\begin{array}{lll}
\beta_2 \leq 1  & \equiv & g_0 = \beta_2 - 1 \leq 0,
\\
p_i + q_i \geq 0 & \equiv & g_i = - \beta_2 - e^{\beta_0 + \beta_1 x_i} \leq 0,
\quad
i = 1, \ldots, n.
\end{array}
\]
The Lagrangian becomes
\[
\Lambda(\beta) = \log \underline{L}(\beta) + \mu_0 (\beta_2-1) - \sum_{i=1}^n \mu_i (\beta_2 + e^{\beta_0 + \beta_1 x_i}).
\]
The stationarity conditions thus read as $\nabla\Lambda(\beta) = 0$, namely
\begin{equation} \label{eq:kkt-stationarity-logistic}
\left \{
\begin{array}{l}
\displaystyle
\sum_{i=1}^n \Big [ (1-p_i) - (1-Y_i) - \mu_i e^{\beta_0+\beta_1 x_i} \Big ] = 0,
\\
\displaystyle
\sum_{i=1}^n \Big [ (1-p_i) - (1-Y_i) - \mu_i e^{\beta_0+\beta_1 x_i} \Big ] x_i = 0,
\\
\displaystyle
\sum_{i=1}^n \left (\frac{1 - Y_i}{\beta_2} - \mu_i \right ) + \mu_0 = 0,
\end{array}
\right .
\end{equation}
where, as usual, $p_i$ is as in (\ref{eq:logit3}).

The complementary slackness condition reads as follows:
\begin{equation} \label{eq:kkt-slackness-logistic}
\left \{
\begin{array}{l}
\mu_0 (\beta_2 - 1) = 0,
\\
\mu_i (\beta_2 + e^{\beta_0+\beta_1 x_i}) = 0, \quad i = 1, \ldots, n.
\end{array}
\right .
\end{equation}
Gradient descent methods can then be applied to the above systems of equations to find the optimal parameters of our generalised logistic regressor.

\begin{question}
Is there an analytical solution to the generalised logistic regression inference problem, based on the above KKT necessity conditions?
\end{question}

Similar calculations hold for the upper-likelihood problem. A multi-objective optimisation setting in which the lower likelihood is minimised as the upper likelihood is maximised can also be envisaged, in order to generate the most cautious interval of estimates.

\subsubsection{Research questions}

As far as generalised logistic regression is concerned, an analysis of the validity of such a direct extension of logit mapping and the exploration of alternative ways of generalising it are research questions that are potentially very interesting to pursue. 
\begin{question}
What other parameterisations can be envisaged for the generalised logistic regression problem, in terms of the logistic links between the data and the belief functions $Bel_i$ on $\{T,F\}$?
\end{question}

Comprehensive testing of the robustness of the generalised framework on standard estimation problems, including rare-event analysis \cite{Falk04}, needs to be conducted to further validate this new procedure. 

Note that the method, unlike traditional ones, can naturally cope with missing data (represented by vacuous observations $x_i = \X$), therefore providing a robust framework for logistic regression which can deal with incomplete series of observations.

\subsection{The total probability theorem for random sets} \label{sec:future-total}

Spies 
and others have posed the problem of generalising the law of total probability,
\[
P(A) = \sum_{i=1}^N P(A|B_i) P(B_i),
\]
where $\{B_1, \ldots, B_N\}$ is a disjoint partition of the sample space,
to the case of belief functions. They mostly did so with the aim of generalising Jeffrey's combination rule \cite{shafer81jeffrey} -- nevertheless, the question goes rather beyond their original intentions, as it involves understanding the space of solutions to the generalised total probability problem.

\subsubsection{The law of total belief}

The problem of generalising the total probability theorem to belief functions can be posed as follows (Fig. \ref{fig:total-belief}) \cite{zhou2017total}.

\begin{theorem} \emph{(\textbf{Total belief theorem})}. \label{the:total-belief}
Suppose $\Theta$ and $\Omega$ are two frames of discernment, and $\rho:2^{\Omega}\rightarrow2^{\Theta}$ the unique refining between them. Let $Bel_0$ be a belief function defined over $\Omega = \{ \omega_1, \ldots, \omega_{|\Omega|} \}$. Suppose there exists a collection of belief functions ${Bel_i} : 2^{\Pi_i} \rightarrow [0,1]$, where $\Pi = \{ \Pi_1, \ldots, \Pi_{|\Omega|} \}$, $\Pi_i = \rho(\{\omega_i\})$, is the partition of $\Theta$ induced by its coarsening $\Omega$. 

Then, there exists a belief function $Bel : 2^{\Theta} \rightarrow [0,1]$ such that:
\begin{itemize}
\item 
(P1)
$Bel_0$ is the marginal of $Bel$ to $\Omega$, $Bel_0 = Bel \upharpoonright_{\Omega}$; 
\item 
(P2)
$Bel \oplus Bel_{\Pi_i} = Bel_i$ for all $i = 1, \ldots, |\Omega|$, where $Bel_{\Pi_i}$ is the categorical belief function with BPA $m_{\Pi_i} (\Pi_i) =1$, $m_{\Pi_i}(B) = 0$ for all $B\neq \Pi_i$.
\end{itemize}
\end{theorem}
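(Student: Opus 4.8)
The plan is to prove existence by exhibiting a concrete basic probability assignment $m$ on $\Theta$ and then checking (P1) and (P2) directly; conditions (P1)--(P2) are only consistency requirements, not constraints that could fail, so no delicate fixed-point or compactness argument is needed. Write $m_0$ for the BPA of $Bel_0$ and $m_i$ for that of $Bel_i$, and for a focal element $B$ of $Bel_0$ put $I_B = \{\, i : \omega_i \in B \,\}$, so that $\rho(B) = \bigcup_{i\in I_B} \Pi_i$. The construction is: for every focal element $B$ of $Bel_0$ and every choice of a focal element $A_i \subseteq \Pi_i$ of $Bel_i$, one for each $i\in I_B$, add the contribution $m_0(B)\prod_{i\in I_B} m_i(A_i)$ to the mass of the set $\bigcup_{i\in I_B} A_i$ (summing contributions whenever the same union arises from several choices). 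Conceptually, $Bel$ is the Dempster combination (\ref{eq:dempster}) of the ``refined extension'' of $Bel_0$ (the BF on $\Theta$ with focal elements $\rho(B)$ and masses $m_0(B)$) with the embeddings into $\Theta$ of $Bel_1,\dots,Bel_{|\Omega|}$ obtained by carrying each $Bel_i$ across via the categorical BF $Bel_{\Pi_i}$; since $\rho(B)\cap\Pi_i=\Pi_i$ for $i\in I_B$ and $=\emptyset$ otherwise, while the components living on $\Pi_j$ with $j\notin I_B$ impose no constraint, all intersections occurring in that combination are exactly the sets $\bigcup_{i\in I_B} A_i$ and none is empty, so no normalisation is triggered.

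The first routine check is that $m$ is a genuine BPA: $m(\emptyset)=0$ because every $A_i$ is non-empty, and
\[
\sum_A m(A) \;=\; \sum_B m_0(B)\prod_{i\in I_B}\Big(\sum_{A_i\subseteq\Pi_i} m_i(A_i)\Big) \;=\; \sum_B m_0(B) \;=\; 1 .
\]
For (P1) I would group the focal elements of $Bel$ by their outer reduction (\ref{eq:outer-reduction}). A set $A=\bigcup_{i\in I_B}A_i$ with each $A_i\subseteq\Pi_i$ non-empty has $\{\,\omega_i : A\cap\Pi_i\neq\emptyset\,\}=B$, so sets produced from distinct focal elements of $Bel_0$ are never equal; hence the marginal BPA on $\Omega$ in the sense of (\ref{eq:marginal}) satisfies $m\!\upharpoonright_\Omega(B)=\sum_{\{A:\,\bar\rho(A)=B\}} m(A)=m_0(B)\prod_{i\in I_B}\big(\sum_{A_i}m_i(A_i)\big)=m_0(B)$, i.e.\ $Bel_0 = Bel\!\upharpoonright_\Omega$.

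The main work, and the step I expect to be the real obstacle, is (P2), because Dempster conditioning on $\Pi_i$ carries the normalising factor $1-m_\cap(\emptyset)=Pl_{Bel}(\Pi_i)$, which has to be computed from the construction. A focal element $A=\bigcup_{j\in I_B}A_j$ of $Bel$ meets $\Pi_i$ in $A_i$ when $i\in I_B$ and in $\emptyset$ when $i\notin I_B$; therefore $Pl_{Bel}(\Pi_i)=\sum_{\{B:\,i\in I_B\}} m_0(B)=Pl_0(\{\omega_i\})$, while for every non-empty $C\subseteq\Pi_i$,
\[
\sum_{\{A:\,A\cap\Pi_i=C\}} m(A) \;=\; \sum_{\{B:\,i\in I_B\}} m_0(B)\, m_i(C)\prod_{\substack{j\in I_B\\ j\neq i}}\Big(\sum_{A_j}m_j(A_j)\Big) \;=\; Pl_0(\{\omega_i\})\, m_i(C) .
\]
Dividing by $Pl_{Bel}(\Pi_i)=Pl_0(\{\omega_i\})$ shows that $Bel\oplus Bel_{\Pi_i}$ has BPA exactly $m_i$, which is (P2). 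One caveat I would state up front is that $Bel\oplus Bel_{\Pi_i}$ — and hence (P2) itself — is only meaningful when $Pl_0(\{\omega_i\})>0$, so the theorem should be read under the natural assumption that every $\omega_i$ has positive plausibility under $Bel_0$; the construction then supplies a witness $Bel$. I would close by noting, as a sanity check, that when $Bel_0$ and all $Bel_i$ are Bayesian the construction collapses to $P(\{\theta\})=P(\{\theta\}\mid\Pi_i)\,P(\Pi_i)$, recovering the classical law of total probability, and that in general the constructed $Bel$ is just one point of the solution space alluded to in the discussion preceding the theorem.
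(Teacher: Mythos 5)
Your proposal is correct and follows essentially the same route as the paper: the belief function you construct is exactly the Dempster combination $Bel_0^{\uparrow\Theta}\oplus\overrightarrow{Bel_1}\oplus\cdots\oplus\overrightarrow{Bel_{|\Omega|}}$ of the vacuous extension of $Bel_0$ with the conditional embeddings of the $Bel_i$ used in the paper's Lemmas \ref{the:ConditionalBeliefs} and \ref{lem:uai-2}, and your verifications of (P1) and (P2) (grouping focal elements by outer reduction, and computing $Pl(\Pi_i)=Pl_0(\{\omega_i\})$ to cancel the Dempster normalisation) mirror the paper's computations (\ref{eq:denominator})--(\ref{eq:numerator}). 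Your explicit caveat that (P2) presupposes $Pl_0(\{\omega_i\})>0$ is a reasonable remark that the paper leaves implicit.
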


Note that a disjoint partition $\Pi_1, \ldots, \Pi_{|\Omega|}$ of $\Theta$ defines a subalgebra $\mathbb{A}^{\rho}$ of $2^{\Theta}$ as a Boolean algebra with set operations, which is isomorphic to the set algebra $\mathbb{A} = 2^{\Omega}$.
We use the notation $Bel \upharpoonright_{\Omega}$ to denote the marginal of $Bel$ to $\Omega$.

\begin{figure}[ht!]
\begin{center}
\includegraphics[width = \textwidth]{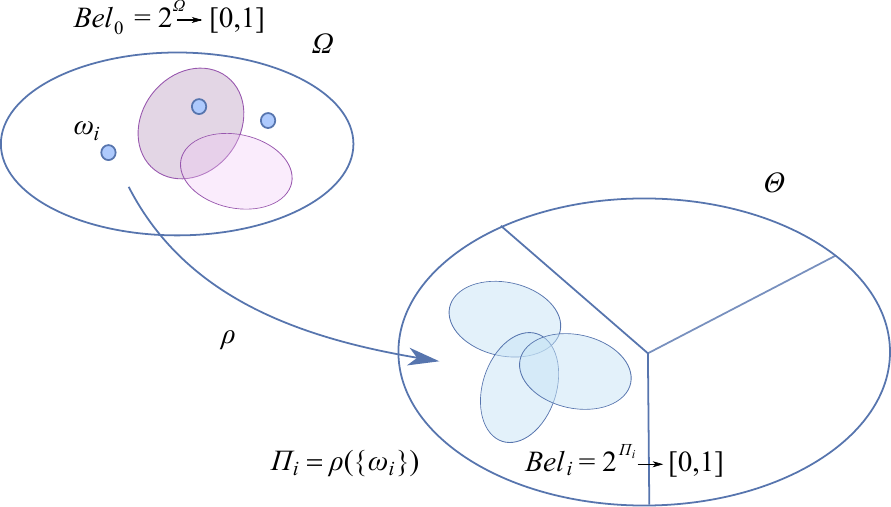} 
\caption{Pictorial representation of the hypotheses of the total belief theorem (Theorem \ref{the:total-belief}).} \label{fig:total-belief}
\end{center}
\end{figure}

\subsubsection{Proof of existence} \label{sec:proof-of-existence}

Theorem \ref{the:total-belief}'s proof makes use of the two lemmas that follow. 
Here, basically, we adapt Smets's original proof of the generalised Bayesian theorem \cite{smets93belief} to the refinement framework presented here, which is more general than his multivariate setting, in which only Cartesian products of frames are considered. As usual, we denote by $m_0$ and $m_i$ the mass functions of $Bel_0$ and $Bel_i$, respectively. Their sets of focal elements are denoted by $\mathcal{E}_{\Omega}$ and $\mathcal{E}_i$, respectively.

Suppose that $\Theta' \supseteq\Theta$ and $m$ is a mass function over $\Theta$.  The mass function $m$ can be identified with a mass function $\overrightarrow{m}_{\Theta'}$ over the larger frame $\Theta'$: for any $E'\subseteq \Theta'$:
\[
\overrightarrow{m}_{\Theta'}(E') = \left \{ 
\begin{array}{ll}
   m(E)  & E' = E \cup (\Theta' \setminus \Theta ) \\
   0 & \text{otherwise}.
\end{array}
\right .
\]
Such an $\overrightarrow{m}_{\Theta'}$ is called the \emph{conditional embedding} of $m$ into $\Theta'$.  When the context is clear, we can drop the subscript $\Theta'$. It is easy to see that conditional embedding is the inverse of Dempster conditioning.

For the collection of belief functions  $Bel_i: 2^{\Pi_i}\rightarrow [0,1]$, let  $\overrightarrow{Bel_i}$ be the conditional embedding of $Bel_i$ into $\Theta$ and let $\overrightarrow{Bel}$ denote the Dempster combination of all $\overrightarrow{Bel_i}$, i.e., $\overrightarrow{Bel} = \overrightarrow{Bel_1}\oplus \cdots \oplus \overrightarrow{Bel_{|\Omega|}}$, with mass function $\overrightarrow{m}$. 

\begin{lemma}\label{the:ConditionalBeliefs}
The belief function $\overrightarrow{Bel}$ over $\Theta$ satisfies the following two properties: (1) each focal element $\overrightarrow{e}$ of  $\overrightarrow{Bel}$ is the union of exactly one focal element $e_i$ of each conditional belief function $Bel_i$; (2) the marginal $\overrightarrow{Bel}\upharpoonright_{\Omega}$ on $\Omega$ is the vacuous belief function over $\Omega$.
\end{lemma}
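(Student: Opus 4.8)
The plan is to unpack the two claims directly from the definition of conditional embedding and the combinatorics of Dempster's rule on the common frame $\Theta$. First I would record the shape of the focal elements of $\overrightarrow{Bel_i}$: if $e_i \subseteq \Pi_i$ is a focal element of $Bel_i$ with mass $m_i(e_i)$, then by the definition of conditional embedding its image in $\Theta$ is $\overrightarrow{e_i} = e_i \cup (\Theta \setminus \Pi_i)$, carrying the same mass; no other subsets of $\Theta$ receive mass. Because $\Theta \setminus \Pi_i = \bigcup_{j \neq i} \Pi_j$ (the $\Pi_j$ form a disjoint partition of $\Theta$ by hypothesis), each such focal element is ``full'' on every block except the $i$-th, where it is the proper piece $e_i$.

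Next I would form the Dempster combination $\overrightarrow{Bel} = \overrightarrow{Bel_1} \oplus \cdots \oplus \overrightarrow{Bel_{|\Omega|}}$ and intersect one focal element from each factor. Picking $e_i$ a focal element of $Bel_i$ for every $i$, the intersection is
\[
\bigcap_{i=1}^{|\Omega|} \Big( e_i \cup \bigcup_{j \neq i} \Pi_j \Big).
\]
Restricting to a fixed block $\Pi_k$: every factor with $i \neq k$ contributes all of $\Pi_k$, while the factor $i = k$ contributes $e_k \subseteq \Pi_k$; hence the intersection meets $\Pi_k$ in exactly $e_k$, and therefore equals $\bigcup_{k} e_k$. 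This is non-empty (each $e_k \neq \emptyset$) and its decomposition into the blocks $\Pi_k$ recovers the $e_k$ uniquely, so distinct tuples $(e_1,\ldots,e_{|\Omega|})$ give distinct, always non-empty intersections. Consequently no mass is lost to $\emptyset$, no normalisation occurs, the mass of $\bigcup_k e_k$ is $\prod_k m_k(e_k)$, and claim (1) follows: every focal element $\overrightarrow{e}$ of $\overrightarrow{Bel}$ is the union of exactly one focal element $e_i$ from each $Bel_i$.

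For claim (2), I would compute the marginal $\overrightarrow{Bel}\upharpoonright_{\Omega}$ via the projection $\rho$, i.e.\ transport each focal element $\overrightarrow{e} = \bigcup_k e_k$ to $\{\omega_k \in \Omega : e_k \cap \Pi_k \neq \emptyset\}$; since every $e_k$ is a non-empty subset of $\Pi_k$, this projection is all of $\Omega$. Thus every focal element of $\overrightarrow{Bel}$ projects onto $\Omega$ itself, so the marginal mass function puts all its mass on $\Omega$ — that is, $\overrightarrow{Bel}\upharpoonright_{\Omega}$ is vacuous. The only point needing care is the bookkeeping in claim (1) — verifying that the block-wise intersection argument really does force every intersection to be non-empty and that the product-of-masses formula survives because no normalisation is triggered; once that is in hand, (2) is immediate. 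I would also remark that this is exactly the refinement-framework analogue of the corresponding step in Smets's proof of the generalised Bayesian theorem, with Cartesian-product frames replaced by the partition $\{\Pi_i\}$.
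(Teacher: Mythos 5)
Your proposal is correct and follows essentially the same route as the paper's proof: identify the focal elements of each conditional embedding as $e_i \cup (\Theta\setminus\Pi_i)$, observe that block-wise intersection yields exactly $\bigcup_i e_i$ with no empty intersections (hence no normalisation), and conclude that every focal element outer-projects onto all of $\Omega$, making the marginal vacuous. Your block-by-block verification of the intersection and the remark that distinct tuples yield distinct focal elements merely spell out what the paper leaves as ``it is easy to see.''
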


\begin{proof}  Each focal element $\overrightarrow{e_i}$ of $\overrightarrow{Bel_i}$ is of the form $(\bigcup_{j\neq i} \Pi_j) \cup e_i$, where $e_i$
is some focal element of $Bel_i$.  
In other words, $\overrightarrow{e_i}= (\Theta \setminus \Pi_i) \cup e_i $.  Since $\overrightarrow{Bel}$ is the Dempster combination of the $\overrightarrow{Bel_i}$s, it is easy to see that each focal element $\overrightarrow{e}$ of
$\overrightarrow{Bel}$ is the union of exactly one focal element $e_i$ from each conditional belief function $Bel_i$.  In other words, $\overrightarrow{e} = \bigcup_{i=1}^{|\Omega|}e_i $, where $e_i \in \mathcal{E}_i$, and condition (1) is proven.

Let $\overrightarrow{\mathcal{E}}$ denote the set of all the focal elements of $\overrightarrow{Bel}$, namely
	\begin{center} $\overrightarrow{\mathcal{E}} = \Big \{\overrightarrow{e}\subseteq \Theta: \overrightarrow{e} = \bigcup_{i=1}^{|\Omega|} e_i $ where $e_i$ is a focal element of $Bel_i \Big \}$.
	\end{center}

\noindent Note that $e_i$'s coming from different conditional belief functions $Bel_i$'s are disjoint. For each $\overrightarrow{e}\in \overrightarrow{\mathcal{E}}$, $\bar{\rho}(\overrightarrow{e}) =\Omega$ (where $\bar{\rho}$ denotes the outer reduction (\ref{eq:outer-reduction})). 
It follows from (\ref{eq:marginal}) that $\overrightarrow{m}\upharpoonright_{\Omega}(\Omega) =1$ -- hence the marginal of $\overrightarrow{Bel}$ on $\Omega$ is the vacuous belief function there.
\end{proof}

Let $Bel_0^{\uparrow\Theta}$ be the vacuous extension of $Bel_0$ from $\Omega$ to $\Theta$. We define the desired total function $Bel$ to be the Dempster combination of $Bel_0^{\uparrow\Theta}$ and $\overrightarrow{Bel}$, namely
\begin{equation} \label{eq:total-belief-function}
Bel \doteq Bel_0^{\uparrow\Theta} \oplus \overrightarrow{Bel}.
\end{equation}

\begin{lemma} \label{lem:uai-2}
The belief function $Bel$ defined in (\ref{eq:total-belief-function}) on the frame $\Theta$ satisfies the following two properties:
\begin{enumerate}
	\item $Bel \oplus Bel_{\Pi_i} = Bel_i$ for all $i=1, \ldots, |\Omega|$, where $Bel_{\Pi_i}$ is again the categorical belief function focused on $\Pi_i$.
	\item $Bel_0$ is the marginal of $Bel$ on $\Omega$, i.e., $Bel_0 = Bel \upharpoonright_{\Omega}$.
\end{enumerate}
That is, $Bel$ is a valid total belief function.
\end{lemma}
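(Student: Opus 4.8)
The plan is to make the focal decomposition of $Bel = Bel_0^{\uparrow\Theta}\oplus\overrightarrow{Bel}$ fully explicit and then read off both properties directly. First I would record the focal structure. A focal element of $Bel_0^{\uparrow\Theta}$ is $\rho(B)$ with $B\in\mathcal{E}_\Omega$ and mass $m_0(B)$; by Lemma~\ref{the:ConditionalBeliefs}(1) a focal element of $\overrightarrow{Bel}$ is $\overrightarrow{e}=\bigcup_{j=1}^{|\Omega|}e_j$ with $e_j\in\mathcal{E}_j$, and since the $\Pi_j$ are disjoint we recover $e_j = \overrightarrow{e}\cap\Pi_j$, so distinct tuples give distinct $\overrightarrow{e}$ and the mass factorises as $\overrightarrow{m}(\overrightarrow{e})=\prod_{j}m_j(e_j)$. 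Intersecting, $\rho(B)\cap\overrightarrow{e}=\bigcup_{\,\omega_j\in B}e_j$, which is never empty; hence the Dempster combination in (\ref{eq:total-belief-function}) requires no normalisation, and the pair $(B,\overrightarrow{e})$ contributes mass $m_0(B)\prod_j m_j(e_j)$ to the focal element $\bigcup_{\omega_j\in B}e_j$ of $Bel$.

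For property~1, combining $Bel$ with the categorical $Bel_{\Pi_i}$ is exactly Dempster conditioning on $\Pi_i$, so I would intersect the generic focal element with $\Pi_i$: $\big(\bigcup_{\omega_j\in B}e_j\big)\cap\Pi_i$ equals $e_i$ when $\omega_i\in B$ and $\emptyset$ otherwise. Summing the contributions, the conflict mass is $\big(\sum_{B:\,\omega_i\notin B}m_0(B)\big)\big(\sum_{\overrightarrow{e}}\overrightarrow{m}(\overrightarrow{e})\big)=1-Pl_0(\{\omega_i\})$, while the un-normalised mass on a fixed $e_i\in\mathcal{E}_i$ is $\big(\sum_{B:\,\omega_i\in B}m_0(B)\big)\,m_i(e_i)\prod_{j\neq i}\big(\sum_{e_j\in\mathcal{E}_j}m_j(e_j)\big)=Pl_0(\{\omega_i\})\,m_i(e_i)$, the product over $j\neq i$ telescoping to $1$. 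Dividing by $1-(1-Pl_0(\{\omega_i\}))=Pl_0(\{\omega_i\})$ leaves mass $m_i(e_i)$ on each $e_i$ and $0$ elsewhere, i.e. $Bel\oplus Bel_{\Pi_i}=Bel_i$. This step tacitly requires $Pl_0(\{\omega_i\})>0$ for every $i$; I would state this as a standing assumption of the theorem, since otherwise conditioning on $\Pi_i$ is undefined.

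For property~2, I would marginalise on $\Omega$ through the outer reduction, $m\upharpoonright_\Omega(C)=\sum_{F:\,\bar\rho(F)=C}m(F)$. Because $\bar\rho(\rho(B)\cap\overrightarrow{e})=\{\omega_k:\Pi_k\cap\bigcup_{\omega_j\in B}e_j\neq\emptyset\}=B$ for every $\overrightarrow{e}$ (again using that the $\Pi_j$ are disjoint and each $e_j\neq\emptyset$), the total mass sent to a given $C\subseteq\Omega$ is $\sum_{\overrightarrow{e}}m_0(C)\,\overrightarrow{m}(\overrightarrow{e})=m_0(C)$, whence $Bel\upharpoonright_\Omega=Bel_0$. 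As a sanity check this agrees with Lemma~\ref{the:ConditionalBeliefs}(2): the $\overrightarrow{Bel}$ factor is already vacuous on $\Omega$ and the $Bel_0^{\uparrow\Theta}$ factor is carried by the subalgebra $\mathbb{A}^{\rho}$, so marginalising the combination returns $Bel_0$.

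The point I would be most careful about is the non-emptiness of every relevant intersection: it is what collapses Dempster's rule to plain products, what makes the two factorising sums above telescope to $1$, and what pins the renormalising constant in property~1 to be exactly $Pl_0(\{\omega_i\})$ — together with the accompanying hypothesis $Pl_0(\{\omega_i\})>0$. The remaining bookkeeping (several pairs $(B,\overrightarrow{e})$ may yield the same focal element, so masses must be summed, not identified) is harmless precisely because all of these sums factorise.
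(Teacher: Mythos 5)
Your proposal is correct and follows essentially the same route as the paper's proof: establish the product form of the focal elements and masses of $Bel_0^{\uparrow\Theta}\oplus\overrightarrow{Bel}$, observe that no normalisation occurs because every $\rho(B)\cap\overrightarrow{e}$ is non-empty, then verify property 1 by showing the conditioning normaliser is $Pl_0(\{\omega_i\})$ and the unnormalised mass on $e_i$ is $m_i(e_i)\,Pl_0(\{\omega_i\})$, and property 2 by marginalising through $\bar\rho$. Your explicit flagging of the standing assumption $Pl_0(\{\omega_i\})>0$ is a reasonable point that the paper leaves tacit, but it does not change the substance of the argument.
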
 
	
\begin{proof}  Let $\overrightarrow{m}$ and $m_i$ be the mass functions corresponding to $\overrightarrow{Bel}$ and $Bel_i$, respectively. For each $\overrightarrow{e} = \bigcup_{i=1}^{|\Omega|} e_i \in \overrightarrow{\mathcal{E}}$, where $e_i \in \mathcal{E}_i$, we have that 
\[
\overrightarrow{m}(\overrightarrow{e}) = \prod_{i=1}^{|\Omega|} m_i(e_i).
\]
Let $\mathcal{E}^{\uparrow{\Theta}}_{\Omega}$ denote the set of focal elements of $Bel_0^{\uparrow\Theta}$. 
Since $Bel_0^{\uparrow\Theta}$ is the vacuous extension of $Bel_0$, $\mathcal{E}_{\Omega}^{\uparrow\Theta} = \{\rho(e_{\Omega}): e_{\Omega}\in \mathcal{E}_{\Omega} \}$.   
Each element of $\mathcal{E}^{\uparrow{\Theta}}_{\Omega}$ is actually the union of some equivalence classes $\Pi_i$ of the partition $\Pi$. 
Since each focal element of $Bel_0^{\uparrow\Theta}$ has non-empty intersection with all the focal elements $\overrightarrow{e} \in \overrightarrow{\mathcal{E}}$, it follows that
\begin{align} \label{normalization}
\sum_{e_{\Omega}\in \mathcal{E}_{\Omega}, \overrightarrow{e} \in \overrightarrow{\mathcal{E}},\rho(e_{\Omega}) \cap \overrightarrow{e} \neq \emptyset}  m_0^{\uparrow\Theta}(\rho(e_{\Omega}))\overrightarrow{m}(\overrightarrow{e}) =1. 
\end{align}
Thus, the normalisation factor in the combination $Bel_0^{\uparrow\Theta}\oplus \overrightarrow{Bel}$ is equal to 1.  

Now, let $\mathcal{E}$ denote the set of focal elements of the belief function $Bel = Bel_0^{\uparrow\Theta}\oplus \overrightarrow{Bel}$.  
By the Dempster sum (\ref{eq:dempster}), each element $e$ of $\mathcal{E}$ is the union of focal elements of some conditional belief functions $Bel_i$, i.e., $e = e_{j_1} \cup e_{j_2}\cup \cdots \cup e_{j_K}$ for some $K$ such that $\{j_1, \ldots, j_K\} \subseteq \{1, \ldots, |\Omega|\}$ and $e_{j_l}$ is a focal element of $Bel_{j_l}$, $1\leq l \leq K$.  Let $m$ denote the mass function for $Bel$. 
For each such $e \in \mathcal{E}$, $e = \rho(e_{\Omega}) \cap \overrightarrow{e}$ for some $e_{\Omega}\in \mathcal{E}_{\Omega}$ and $\overrightarrow{e} \in \overrightarrow{\mathcal{E}}$, so that $e_{\Omega} = \bar{\rho}(e)$.
Thus we have
\begin{equation} \label{mass_total}
\begin{array}{lll}
& & (m_0^{\uparrow\Theta} \oplus  \overrightarrow{m} )(e) 
\\ \\
& = &  \displaystyle
\sum_{e_{\Omega}\in \mathcal{E}_{\Omega}, \overrightarrow{e} \in \overrightarrow{\mathcal{E}},\rho(e_{\Omega}) \cap \overrightarrow{e} = e} m_0^{\uparrow\Theta}(\rho(e_{\Omega})) \overrightarrow{m}(\overrightarrow{e})
=
\sum_{
\stackrel{\mathlarger{
e_{\Omega}\in \mathcal{E}_{\Omega}, \overrightarrow{e} \in \overrightarrow{\mathcal{E}}}
}{ 
\rho(e_{\Omega}) \cap \overrightarrow{e} = e
}
} m_0(e_{\Omega}) \overrightarrow{m}(\overrightarrow{e})  
\\
& = & \displaystyle
m_0(\bar{\rho}(e))  \sum_{ \overrightarrow{e} \in \overrightarrow{\mathcal{E}},\rho(\bar\rho(e)) \cap \overrightarrow{e} = e}\overrightarrow{m}(\overrightarrow{e})  
\\
& = & \displaystyle
m_0(\bar{\rho}(e)) \cdot m_{j_1} (e_{j_1})\cdots m_{j_K}(e_{j_K}) \prod_{j \not \in \{j_1, \ldots, j_K\}}\sum_{e\in \mathcal{E}_j}m_j(e) 
\\
& = & \displaystyle
m_0(\bar{\rho}(e)) \prod_{k=1}^K m_{j_k} (e_{j_k}),
\end{array}
\end{equation}
as $\overrightarrow{m}(\overrightarrow{e}) = \prod_{i=1}^n m_i(e_i)$ whenever $\overrightarrow{e} = \cup_{i=1}^n e_i$. 

Without loss of generality, we will consider the conditional mass function $m(e_1 | \Pi_1)$, where $e_1$ is a focal element of $Bel_1$ and $\Pi_1$ is the first partition class associated with the partition $\Pi$, and show that $m(e_1 |\Pi_1) = m_1(e_1)$.  In order to obtain $m(e_1 | \Pi_1)$, which is equal to 
\[
\sum_{e\in \mathcal{E}, e\cap \Pi_1=e_1} m(e) / Pl(\Pi_1), 
\]
in the following we separately compute $\sum_{e\in \mathcal{E}, e\cap \Pi_1=e_1}m(e)$ and $Pl(\Pi_1)$.

For any $e \in \mathcal{E}$, if $ e \cap \Pi_1 \neq \emptyset$, $\bar{\rho}(e)$ is a subset of $\Omega$ including $\omega_1$. Therefore,
\begin{equation} \label{eq:denominator}
\begin{array}{lll}
& & Pl(\Pi_1) 
\\
& = & \displaystyle
\sum_{e\in \mathcal{E}, e \cap \Pi_1 \neq \emptyset} m(e) 
= 
\sum_{ \mathcal{C} \subseteq \{\Pi_2, \ldots, \Pi_{|\Omega|}\}} \left ( \sum_{\rho(\bar{\rho}(e)) = 
\Pi_1 \cup (\bigcup_{E\in  \mathcal{C}} E)} m(e) \right ) 
\\ \\
& = & \displaystyle
\sum_{ \mathcal{C} \subseteq \{\Pi_2, \ldots, \Pi_{|\Omega|}\}} m_0^{\uparrow\Theta} \left ( \Pi_1\cup \bigcup_{E\in \mathcal{C}} E \right ) 
\left ( \sum_{e_1\in \mathcal{E}_1}m_1(e_1) \prod_{\Pi_l\in \mathcal{C}} 
\sum_{e_l\in\mathcal{E}_l}m_l(e_l) \right ) 
\\ \\
& = & \displaystyle
\sum_{ \mathcal{C} \subseteq \{\Pi_2, \ldots, \Pi_{|\Omega|}\}} m_0^{\uparrow\Theta} \left (\Pi_1\cup \bigcup_{E\in \mathcal{C}} E \right ) 
= 
\sum_{e_{\Omega}\in \mathcal{E}_{\Omega}, \omega_1\in e_{\Omega}} m_0(e_{\Omega}) 
= 
Pl_0 (\{\omega_1\}).  	
\end{array}
\end{equation} 
Similarly,
\begin{align}
\sum_{\stackrel{\mathlarger{e \in \mathcal{E}}}{e\cap \Pi_1 = e_1}} m(e)
& 
=  
\sum_{ \mathcal{C} \subseteq \{\Pi_2, \ldots, \Pi_{|\Omega|}\}} \sum_{\rho(\bar{\rho}(e)) 
= 
\bigcup_{E\in  \mathcal{C}} E} m(e_1 \cup e)
\nonumber 
\\
& 
=  
m_1(e_1) \sum_{ \mathcal{C} \subseteq \{\Pi_2, \ldots, \Pi_{|\Omega|}\}} m_0^{\uparrow\Theta} \left ( \Pi_1\cup \bigcup_{E\in \mathcal{C}} E \right ) \prod_{\Pi_l\in \mathcal{C}}  \sum_{e_l\in\mathcal{E}_l}m_l(e_l)
\nonumber 
\\
& 
=  
m_1(e_1) \sum_{ \mathcal{C} \subseteq \{\Pi_2, \ldots, \Pi_{|\Omega|}\}} m_0^{\uparrow\Theta} \left ( \Pi_1\cup \bigcup_{E\in \mathcal{C}} E \right ) 
\nonumber 
\\
& = m_1(e_1)\sum_{e_{\Omega}\in \mathcal{E}_{\Omega}, \omega_1\in e_{\Omega}} m_0(e_{\Omega}) = m_1(e_1) Pl_0 (\{\omega_1\}). \label{eq:numerator} 
\end{align} 

From (\ref{eq:denominator}) and (\ref{eq:numerator}), it follows that 
\[
m(e_1|\Pi_1) = \frac{\sum_{e\in \mathcal{E}, e\cap \Pi_1=e_1}m(e)}{Pl(\Pi_1)} = m_1(e_1).  
\]
This proves property 1. Proving 2 is much easier.

For any $e_{\Omega} \doteq \{\omega_{j_1}, \ldots, \omega_{j_K}\}\in \mathcal{E}_{\Omega}$,
\begin{equation}
\begin{array}{lll}
m\upharpoonright_{\Omega} (e_{\Omega}) & = & \displaystyle
\sum_{\overline{\rho}(e) = e_{\Omega}} m(e) 
=  m_0^{\uparrow \Theta}(\rho(e_{\Omega}))  \prod_{l=1}^K\sum_{e\in \mathcal{E}_{j_l}} m_{j_l}(e) =  m_0^{\uparrow \Theta}(\rho(e_{\Omega}))
\\
& = & m_0(e_{\Omega}).
\end{array}
\end{equation}
It follows that $Bel \upharpoonright_{\Omega} = Bel_0$, and hence the thesis.
\end{proof} 

The proof of Theorem \ref{the:total-belief} immediately follows from Lemmas \ref{the:ConditionalBeliefs} and \ref{lem:uai-2}.

\begin{exam}\label{exam:total_belief}

Suppose that the coarsening $\Omega:=\{\omega_1, \omega_2, \omega_3\}$ considered induces a partition $\Pi$ of $\Theta$: $\{\Pi_1, \Pi_2, \Pi_3\}$. Suppose also that the conditional belief function $Bel_1$ considered, defined on $\Pi_1$, has two focal elements $e_1^1$ and $e_1^2$, that the conditional belief function $Bel_2$ defined on $\Pi_2$ has a single focal element $e_2^1$ and that $Bel_3$, defined on $\Pi_3$, has two focal elements $e_3^1$ and $e_3^2$ (see Fig. \ref{fig:case-study}). 
\begin{figure}[ht!] 
\begin{center}
\includegraphics[width = 0.7 \textwidth]{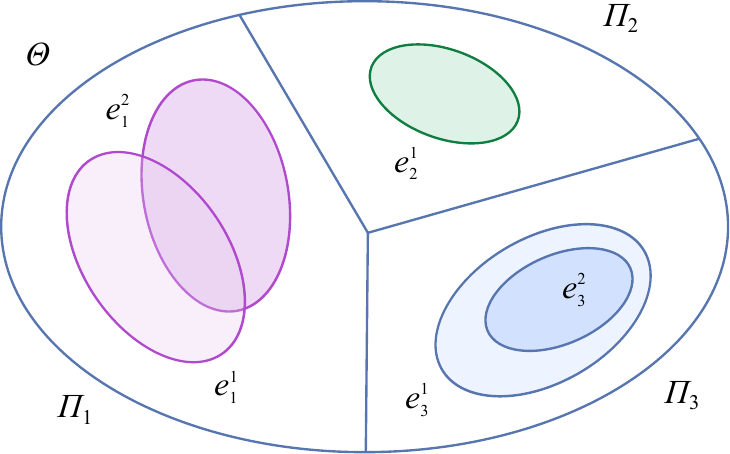}
\caption{The conditional belief functions considered in our case study. The set-theoretical relations between their focal elements are immaterial to the solution. \label{fig:case-study}}
\end{center}
\end{figure} 
According to Lemma \ref{the:ConditionalBeliefs}, the Dempster combination $\overrightarrow{Bel}$ of the conditional embeddings of the $Bel_i$'s has four focal elements, listed below:
\begin{equation} \label{eq:elastic-bands}
\begin{array}{lll}
e_1 & = & e_1^1 \cup e_2^1 \cup e_3^1, \quad  e_2 = e_1^1 \cup e_2^1 \cup e_3^2, \\
e_3 & = & e_1^2 \cup e_2^1 \cup e_3^1, \quad 	e_4 = e_1^2 \cup e_2^1 \cup e_3^2.
\end{array}
\end{equation}
\noindent The four focal total elements can be represented as `elastic bands' as in Fig. \ref{fig:focal-elements}. 

\begin{figure}[ht!]
\begin{center}
\includegraphics[width = 0.8\textwidth]{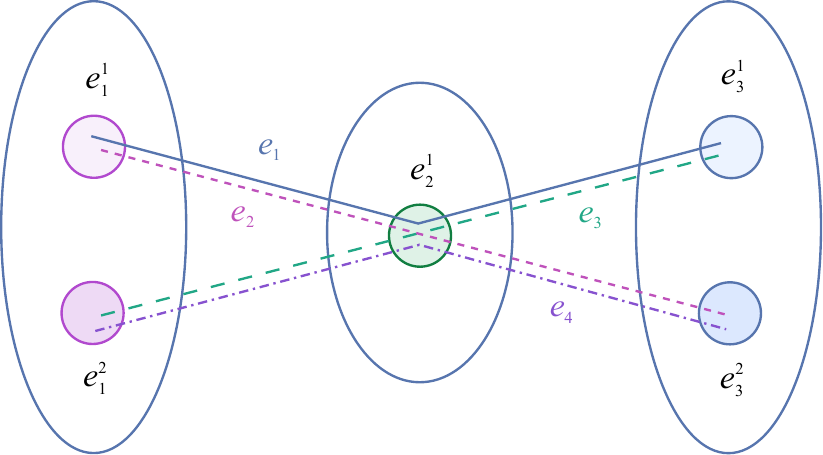}
\end{center}
\caption{Graphical representation of the four possible focal elements (\ref{eq:elastic-bands}) of the total belief function (\ref{eq:total-belief-function}) in our case study. \label{fig:focal-elements}} 
\end{figure} 

Without loss of generality, we assume that a prior $Bel_0$ on $\Omega$ has each subset of $\Omega$  as a focal element, i.e., $\mathcal{E}_{\Omega} = 2^{\Omega}$.    
It follows that each focal element $e$ of the total belief function $Bel \doteq \overrightarrow{Bel} \oplus Bel_0^{\uparrow \Theta} $ is the union of some focal elements from \emph{different} conditional belief functions $Bel_i$. 
So, the set $\mathcal{E}$ of the focal elements of $Bel$ is  
\[
\mathcal{E} = \{ e = \bigcup_{1\leq i \leq I} e_i:  1\leq I \leq 3, e_i\in \mathcal{E}_i \}
\]
and is the union of the following three sets:
	\begin{align}
		\mathcal{E}_{I=1}: &=\mathcal{E}_1 \cup \mathcal{E}_2\cup \mathcal{E}_3, 
		\nonumber
		\\
		\mathcal{E}_{I=2}: &=\{e\cup e': (e,e')\in \mathcal{E}_i\times \mathcal{E}_j, 1\leq i,j\leq 3,i\neq j \}, 
		\nonumber
		\\
		\mathcal{E}_{I=3}: &=\{e_1\cup e_2\cup e_3: e_i\in \mathcal{E}_i, 1\leq i\leq 3\}.\nonumber
	\end{align}
Hence $|\mathcal{E}| = 5+ 8+ 4= 17$.
  According to (\ref{mass_total}), it is very easy to compute the corresponding total mass function $m$.  For example, for the two focal elements $e_1^1 \cup e_3^2$ and $e_1^1 \cup e_2^1 \cup e_3^2$, we have
	\begin{align}
		m(e_1^1 \cup e_3^2) &= m_0(\{w_1, w_3\}) m_1(e_1^1) m_3(e_3^2), 
		\nonumber 
		\\
		m(e_1^1 \cup e_2^1 \cup e_3^2) &= m_0(\Omega) m_1(e_1^1) m_2(e_2^1) m_3(e_3^2). \nonumber
	\end{align}

\end{exam}

\subsubsection{Number of solutions} 

The total belief function $Bel$ obtained in Theorem \ref{the:total-belief} is not the only solution to the total belief problem (note that the Dempster combination (\ref{eq:total-belief-function}) is itself unique, as the Dempster sum is unique).

Assume that $Bel^*$ is a total belief function posssessing the two properties in Theorem \ref{the:total-belief}. Let $m^*$ and $\mathcal{E}^*$ denote its mass function and the set of its focal elements, respectively.  Without loss of generality, we may still assume that the prior $Bel_0$ has every subset of $\Omega$ as its focal element, i.e., $\mathcal{E}_{\Omega}= 2^{\Omega}$.  From property (P2), 
\[
Bel^* \oplus Bel_{\Pi_i} = Bel_i, \quad 1 \leq i \leq |\Omega|, 
\]
we derive that each focal element of $Bel^*$ must be a union of focal elements of a number of conditional belief functions $Bel_i$.  
For,  if $e^*$ is a focal element of $Bel^*$ and $e^* = e_l \cup e'$, where $\emptyset \neq e_l \subseteq \Pi_l$ and $e' \subseteq \Theta \setminus \Pi_l$ for some $1\leq l \leq |\Omega|$, then $m_l(e_l) =( m^* \oplus m_{\Pi_l})(e_l) >0$ and hence $e_l \in \mathcal{E}_l$.  So we must have that $\mathcal{E}^* \subseteq \mathcal{E}$, where $\mathcal{E}$ is the set of focal elements of the total belief function $Bel$ (\ref{eq:total-belief-function}) obtained in Theorem \ref{the:total-belief}, 
\[
\mathcal{E}= \left \{\bigcup_{j\in J}e_j: J \subseteq \{1, \ldots, |\Omega|\}, e_j \in \mathcal{E}_j\right \}.
\] 

In order to find $Bel^*$ (or $m^*$), we need to solve a group of linear equations which correspond to the constraints dictated by the two properties, in which the mass $m^*(e)$ of each focal element $e\in \mathcal{E}$ of the total solution (\ref{eq:total-belief-function}) is treated as an unknown variable. There are $|\mathcal{E}|$ variables in the group. 

From properties (P1) and (P2), we know that $Pl_0(\omega_i) = Pl^*(\Pi_i)$, $1 \leq i \leq |\Omega|$, where $Pl_0$ and $Pl^*$ are the plausibility functions associated with $Bel_0$ and $Bel^*$, respectively.
In addition, property (P1) implies the system of linear constraints
\begin{equation} \label{eq:constraints-p1}
\left \{
\sum_{e \cap \Pi_i = e_i, e \in \mathcal{E}} m^*(e) = m_i(e_i) Pl_0(\omega_i), 
\quad
\forall i=1, \ldots, n, \; \forall e_i \in \mathcal{E}_i.
\right.	
\end{equation}
The total number of such equations is $\sum_{j=1}^{|\Omega|}|\mathcal{E}_j|$.  Since, for each $1\leq i \leq |\Omega|$, $\sum_{e \in \mathcal{E}_i} m_i(e) =1$, the
system (\ref{eq:constraints-p1}) includes a group of $\sum_{j=1}^{|\Omega|}|\mathcal{E}_j| - |\Omega|$ \emph{independent} linear equations, which we denote as $G_1$. 

From property (P2) (the marginal of $Bel^*$ on $\Omega$ is $Bel_0$), we have the following constraints:
\begin{equation} \label{eq:constraints-p2}
\left \{
\sum_{e \in \mathcal{E}, \overline{\rho}(e) =C } m^*(e) = m_0(C), 
\quad 
\forall \emptyset \neq C \subseteq \Omega.
\right.	
\end{equation}
The total number of linear equations in (\ref{eq:constraints-p2}) is the number of non-empty subsets of $\Omega$. Since $\sum_{C \subseteq \Omega} m_0(C)=1$, there is a subset of $|2^{\Omega}|-2$ {independent} linear equations in (\ref{eq:constraints-p2}), denoted by $G_2$. 

As the groups of constraints $G_1$ and $G_2$ are independent, 
their union $G:=G_1 \cup G_2 $ completely specifies properties (P1) and (P2) in Theorem \ref{the:total-belief}.  Since $|G_1| =\sum_{j=1}^{|\Omega|}|\mathcal{E}_j| - |\Omega| $ and $|G_2| = |2^{\Omega}|-2$, the cardinality of the union group $G$ is $|G|=\sum_{j=1}^{|\Omega|}|\mathcal{E}_j| - |\Omega| +  |2^{\Omega}|-2 $.  

From Theorem \ref{the:total-belief}, we know that the system of equations $G$ is solvable and has at least one positive solution (in which each variable has a positive value). This implies that $|\mathcal{E}| \geq |G|$, i.e., the number of variables must be no less than that of the independent linear equations in $G$.  If $|\mathcal{E}| > |G|$, in particular, we can apply the Fourier--Motzkin elimination method \cite{Khachiyan2009} to show that $G$ 
has another distinct positive solution $Bel^*$ (i.e., such that $m^*(e)\geq 0$ $\forall e\in \mathcal{E}$). 

\begin{exam}\label{exam:non-unique}  We employ Example \ref{exam:total_belief} to illustrate the whole process of finding such an alternative total belief function $Bel^*$. 
We assume further that $m_0$ and $m_i$, $1 \leq i \leq 3$, take values
\[
\begin{array}{c}
\displaystyle
m_1(e_1^1)  = \frac{1}{2} = m_1(e_1^2), \quad m_2(e_2^1) = 1, \quad m_3(e_3^1) = \frac{1}{3}, \quad m_3(e_3^2) = \frac{2}{3},
\\ \\
\displaystyle
m_0(\{\omega_1\}) = m_0(\{\omega_2\}) = m_0(\{\omega_3\}) = \frac{1}{16}, 
\quad 
m_0(\{\omega_1, \omega_2\}) = \frac{2}{16},
\\ \\
\displaystyle
m_0(\{\omega_2, \omega_3\}) = \frac{4}{16}, 
\quad 
m_0(\{\omega_1,\omega_3\}) = \frac{3}{16}, 
\quad 
m_0(\Omega) =\frac{1}{4}. 
\end{array}
\] 
If we follow the process prescribed above to translate the two properties into a group $G$ of linear equations, we obtain 17 unknown variables $m^*(e)$, $e \in \mathcal{E}$ ($|\mathcal{E}| =17$) and eight independent linear equations ($|G| =8$).  From Theorem \ref{the:total-belief}, we can construct a positive solution $m$ defined according to (\ref{mass_total}). For this example,
	\begin{align}
		m (\{e_1^1,e_2^1, e_3^1\}) & = m_0(\Omega) \; m_1(e_1^1) \; m_2(e_2^1) \; m_3(e_3^1) = \frac{1}{24},\nonumber
		\\
		m (\{e_1^2,e_2^1, e_3^2\}) & = m_0(\Omega) \; m_1(e_1^2) \; m_2(e_2^1) \; m_3(e_3^2) = \frac{1}{12}.
		\nonumber
	\end{align}
When solving the equation group $G$ via the Fourier--Motzkin elimination method, we choose $m^*(\{e_1^1,e_2^1, e_3^1\})$ and $m^* (\{e_1^2,e_2^1, e_3^2\})$ to be the last two variables to be eliminated. 
Moreover, there is a sufficiently small positive number $\epsilon$ such that 
\[
m^*(\{e_1^1,e_2^1, e_3^1\})=\frac{1}{24}-\epsilon >0, \quad m^* (\{e_1^2,e_2^1, e_3^2\})= \frac{1}{12} +\epsilon, 
\]
and all other variables also take positive values.  
It is easy to see that the $m^*$ so obtained is different from the $m$ obtained via Theorem \ref{the:total-belief}. 
\end{exam}

Clearly, this new solution has the same focal elements as the original one: other solutions may exist with a different set of focal elements (as can indeed be empirically shown). We will discuss this issue further in a couple of pages. 

\subsubsection{Case of Bayesian prior} 

Note that when the prior belief function $Bel_0$ is Bayesian, however, the total belief function obtained according to (\ref{mass_total}) is the unique one satisfying the two properties in Theorem \ref{the:total-belief}. 
\begin{corollary} \label{cor:total-belief-bayesian} For the belief function $Bel_0$ over $\Omega$ and the conditional belief functions $Bel_i$ over $\Pi_i$ in Theorem \ref{the:total-belief}, if $Bel_0$ is Bayesian (i.e., a probability measure), then there exists a unique total belief function $Bel: 2^{\Theta}\rightarrow [0,1]$ such that (P1) and (P2) are satisfied.
Moreover, the total mass function $m$ of $Bel$ is
\begin{equation*}
m(e)= \left\{
\begin{array}{ll}
 m_i(e) \; m_0(\omega_i)  & \quad \text{if } e \in \mathcal{E}_i \text{ for some } i,\\
0 & \hspace{5mm} \text{otherwise}.
\end{array} \right.
\end{equation*}
\end{corollary}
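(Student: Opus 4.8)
The plan is to treat existence as an immediate consequence of Theorem \ref{the:total-belief} and to focus the argument on two points: that the canonical construction (\ref{eq:total-belief-function})--(\ref{mass_total}) degenerates to the stated closed form when $Bel_0$ is Bayesian, and, more importantly, that \emph{no other} total belief function exists. For the first point, $Bel_0$ being a probability measure means $\mathcal{E}_{\Omega}$ consists only of the singletons $\{\omega_i\}$, so the vacuous extension $Bel_0^{\uparrow\Theta}$ has exactly the partition classes $\Pi_i = \rho(\{\omega_i\})$ as focal elements, with $m_0^{\uparrow\Theta}(\Pi_i) = m_0(\omega_i)$. Every focal element $\overrightarrow{e} = \bigcup_j e_j$ of $\overrightarrow{Bel}$, with $e_j \in \mathcal{E}_j$ pairwise disjoint by Lemma \ref{the:ConditionalBeliefs}, then meets $\Pi_i$ in exactly $e_i$; substituting $e = e_i$ into (\ref{mass_total}) and summing out the free components --- each sum $\sum_{e_l \in \mathcal{E}_l} m_l(e_l)$ being $1$ --- gives $m(e_i) = m_0(\omega_i)\, m_i(e_i)$ and assigns zero mass to every other subset, which is precisely the asserted formula.

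For uniqueness, let $Bel^*$ be any belief function on $\Theta$ satisfying (P1) and (P2), with mass function $m^*$ and focal-element set $\mathcal{E}^*$. The crux is to pin down $\mathcal{E}^*$. First, (P1) together with the fact that $Bel_0$ is Bayesian forces every focal element of $Bel^*$ to lie inside a single partition class: since $m^*\upharpoonright_{\Omega}(C) = \sum_{\bar{\rho}(e) = C} m^*(e)$ is a sum of non-negative terms that must vanish for every non-singleton $C \subseteq \Omega$ (because $m_0(C) = 0$ there), $\bar{\rho}(e^*)$ is a singleton for each $e^* \in \mathcal{E}^*$, i.e., $e^* \subseteq \Pi_i$ for some $i$. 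Second, fix $i$ and condition $Bel^*$ on $\Pi_i$: because each focal element of $Bel^*$ sits either inside $\Pi_i$ or inside a class $\Pi_j$ disjoint from it, Dempster conditioning on $\Pi_i$ simply discards the latter and renormalises the former, so the focal elements of $Bel^* \oplus Bel_{\Pi_i}$ are exactly $\{ e^* \in \mathcal{E}^* : e^* \subseteq \Pi_i \}$; comparing with (P2), $Bel^* \oplus Bel_{\Pi_i} = Bel_i$, this set must equal $\mathcal{E}_i$. Letting $i$ vary, $\mathcal{E}^* = \bigsqcup_{i=1}^{|\Omega|} \mathcal{E}_i$.

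Finally, the masses are forced. From (P1), $m_0(\omega_i) = \sum_{\bar{\rho}(e^*) = \{\omega_i\}} m^*(e^*) = \sum_{e^* \subseteq \Pi_i} m^*(e^*) = Pl^*(\Pi_i)$, the last step because the focal elements meeting $\Pi_i$ are precisely those contained in it. On the other hand, for $e_i \in \mathcal{E}_i$ the unique focal element of $Bel^*$ whose intersection with $\Pi_i$ equals $e_i$ is $e_i$ itself, so the conditioning identity behind (P2) reads $m^*(e_i)/Pl^*(\Pi_i) = m_i(e_i)$, whence $m^*(e_i) = m_i(e_i)\, m_0(\omega_i)$. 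Thus $m^*$ is uniquely determined and coincides with the closed form obtained above, so $Bel^* = Bel$. The step I expect to require the most care is the bookkeeping around Dempster conditioning on $\Pi_i$: one must check that, once every focal element is confined to a single partition class, conditioning acts transparently at the level of focal elements and that $Pl^*(\Pi_i)$ equals the total mass lying inside $\Pi_i$ --- after which the linear system for the $m^*(e_i)$ is diagonal, in sharp contrast to the underdetermined system handled by Fourier--Motzkin elimination in the general (non-Bayesian) case, and uniqueness follows at once.
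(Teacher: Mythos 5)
Your proposal is correct, and on the uniqueness half it is actually more self-contained than what the paper does. The existence part is identical: both you and the paper observe that when $Bel_0$ is Bayesian the focal elements of $Bel_0^{\uparrow\Theta}$ are exactly the classes $\Pi_i$, so each intersection $\Pi_i \cap \overrightarrow{e}$ collapses to a single $e_i \in \mathcal{E}_i$ and (\ref{mass_total}) reduces to $m(e_i) = m_0(\omega_i)\,m_i(e_i)$. For uniqueness, however, the paper's printed proof essentially just computes the canonical solution and asserts that nothing else can work; the justification is implicitly outsourced to the earlier structural discussion (any solution $Bel^*$ has $\mathcal{E}^* \subseteq \mathcal{E}$, and the linear systems (\ref{eq:constraints-p1})--(\ref{eq:constraints-p2}) then pin down the masses). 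You instead derive everything directly from (P1) and (P2): non-negativity of $m^*$ plus $m_0(C)=0$ for non-singleton $C$ confines every focal element of $Bel^*$ to a single class $\Pi_i$; Dempster conditioning on $\Pi_i$ then acts by pure restriction-and-renormalisation, forcing $\{e^* \in \mathcal{E}^*: e^* \subseteq \Pi_i\} = \mathcal{E}_i$ and $m^*(e_i) = m_i(e_i)\,Pl^*(\Pi_i)$, with $Pl^*(\Pi_i) = m_0(\omega_i)$ coming back from (P1). This makes the linear system diagonal, exactly as you note, and renders the corollary independent of the Fourier--Motzkin machinery. The only caveat worth flagging (which affects the paper equally) is that the conditioning in (P2) presupposes $Pl^*(\Pi_i) = m_0(\omega_i) > 0$; for classes of zero prior mass the combination $Bel^* \oplus Bel_{\Pi_i}$ is undefined and the statement must be read as vacuous there.
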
 

\begin{proof}  It is easy to check that the total mass function $m$   defined above satisfies the two properties.   Now we need to show that it is unique.  
Since $Bel_0$ is Bayesian, 
$\mathcal{E}_{\Omega} \subseteq \{ \{ \omega_1\}, \ldots, \{ \omega_{|\Omega|} \} \}$. In other words, all focal elements of $Bel_0$ are singletons.  
It follows that $\mathcal{E}^{\uparrow\Theta}_{\Omega} \subseteq \{\Pi_1, \ldots, \Pi_{|\Omega|}\}$. 
If $e$ is a focal element of $Bel$, we obtain from (\ref{mass_total}) that $e \in \mathcal{E}_i$ for some $i \in \{1, \ldots, |\Omega|\}$ and $m(e)= m_0(\omega_i) m_i(e)$. 
This implies that, if $e\not \in \bigcup_{i=1}^{|\Omega|} \mathcal{E}_i$, i.e., $e$ is not a focal element of any conditional belief function $Bel_i$, $ m(e) =0$.  So we have shown that the total mass function $m$ is the unique one satisfying (P1) and (P2).
\end{proof}

\subsubsection{Generalisation}

If the first requirement (P1) is modified to include conditional constraints with respect to \emph{unions} of equivalence classes, the approach used to prove Theorem \ref{the:total-belief} is no longer valid.  

For each non-empty subset $\{i_1, \ldots, i_J\} \subseteq \{1, \ldots, |\Omega|\}$, let 
\[
Bel_{\bigcup_{j=1}^J \Pi_{i_j}}, \quad Bel_{i_1\cdots i_J} 
\]
denote the categorical belief function with $\bigcup_{j=1}^J \Pi_{i_j}$ as its only focal element, and a conditional belief function on $\bigcup_{j=1}^J \Pi_{i_j}$, respectively. 
\\
We can then introduce a new requirement by generalising property (P1) as follows: 
\begin{itemize}
\item  $(P1'): Bel \oplus Bel_{\bigcup_{j=1}^J \Pi_{i_j}} = Bel_{i_1 \cdots i_J}$ $\forall \emptyset \neq \{i_1, \ldots, i_J\} \subseteq \{1, \ldots, |\Omega|\}$.
\end{itemize} 
Let $\overrightarrow{Bel}_{i_1\cdots i_J}$ denote the conditional embeddings of $Bel_{i_1\cdots i_J}$, and $\overrightarrow{Bel}_\text{new}$ the Dempster combination of all these conditional embeddings. Let $Bel_\text{new} = \overrightarrow{Bel}_\text{new} \oplus Bel_0^{\uparrow \Theta}$.
It is easy to show that $Bel_\text{new}$ satisfies neither $(P1')$ nor $(P2)$. 

\subsubsection{Relation to generalised Jeffrey's rules} \label{sec:total-jeffrey} 

In spirit, our approach in this chapter is similar to Spies's Jeffrey's rule for belief functions in \cite{spies94conditional}. His total belief function is also the Dempster combination of the prior on the subalgebra generated by the partition $\Pi$ of $\Theta$ with the conditional belief functions on all the equivalence classes $\Pi_i$. 
Moreover, he showed that this total belief function satisfies the two properties in Theorem \ref{the:total-belief}. However, his definition of the conditional belief function is different from the one used here, which is derived from Dempster's rule of combination. His definition falls within the framework of random sets, in which a conditional belief function is a \emph{second-order} belief function whose focal elements are conditional events, defined as sets of subsets of the underlying frame of discernment.
The biggest difference between Spies's approach and ours is thus that his framework depends on probabilities, whereas ours does not. It would indeed be interesting to explore the connection of our total belief theorem with his Jeffrey's rule for belief functions.  

Smets  \cite{smets93jeffrey} also generalised Jeffrey's rule within the framework of models based on belief functions, without relying on probabilities.  Recall that $\rho$ is a refining mapping from $2^{\Omega}$ to $2^{\Theta}$, and $\mathbb{A}^{\rho}$ is the Boolean algebra generated by the set of equivalence classes $\Pi_i$ associated with $\rho$. Contrary to our total belief theorem, which assumes conditional constraints only with respect to the equivalence classes $\Pi_i$ (the atoms of $\mathbb{A}^{\rho}$), Smets's generalised Jeffrey's rule considers constraints with respect to unions of equivalence classes, i.e., arbitrary elements of $\mathbb{A}^{\rho}$ (see the paragraph `Generalisation' above). 

Given two belief functions $Bel_1$ and $Bel_2$ over $\Theta$, his general idea is to find a BF $Bel_3$ there such that:
\begin{itemize}
	\item (Q1) its marginal on $\Omega$ is the same as that of $Bel_1$, i.e., $Bel_3 \upharpoonright_{\Omega} = Bel_1 \upharpoonright_{\Omega}$;
	\item (Q2) its conditional constraints with respect to elements of $\mathbb{A}^{\rho}$ are the same as those of $Bel_2$. 
\end{itemize} 
In more detail, let $m$ be a mass function over $\Theta$. Smets defines two kinds of conditioning for conditional constraints: for any $E \in \mathbb{A}^{\rho}$ and $e\subseteq E$ such that $\rho(\bar{\rho}(e)) = E$,
\[ 
m^\text{in} ( e | E ) \doteq \frac{m(e)}{\sum_{\rho(\bar{\rho}(e')) = E} m(e')},
\quad
m^\text{out} ( e | E ) \doteq \frac{m(e |E)}{\sum_{\rho(\bar{\rho}(e')) = E} m(e'|E )}.
\]
The first one is the well-known geometric conditioning \cite{smets93deductibility}, whereas the second is called \emph{outer conditioning}. Both are distinct from Dempster's rule of conditioning. From these two conditioning rules, he obtains two different forms of generalised Jeffrey's rule, namely, for any $e\subseteq \Theta$,
\begin{itemize}
	\item $ m_3^\text{in} (e) = m_1^\text{in} (e |E)  m_2(E) $, where $E = \rho(\bar{\rho}(e))$;
	\item $ m_3^\text{out} (e) = m_1^\text{out} (e |E)  m_2(E)$. 
\end{itemize}
Both $m_3^\text{in}$ and $m_3^\text{out}$ satisfy (Q1).  As for (Q2), $m_3^\text{in}$ applies, whereas $m_3^\text{out}$ does so only partially, since $(m_3^\text{out})^\text{in} (e |E) = m_1^\text{out}(e|E)$ \cite{ZhouWQ14}. 

In \cite{76dbd9a53fe5495695d6fc9e7d8fcd3f}, Ma et al. defined a new Jeffrey's rule where the conditional constraints are indeed defined according to Dempster's rule of combination, and with respect to the whole power set of the frame instead of a subalgebra as in Smets's framework. In their rule, however, the conditional constraints are not preserved by their total belief functions. 

\subsubsection{Exploring the space of solutions} 

Lemma \ref{the:ConditionalBeliefs} ensures that the solution (\ref{eq:total-belief-function}) of the total belief theorem has focal elements which are unions of 
one focal element $e_i$ of a subset of conditional belief functions $Bel_i$.

The following result prescribes that \emph{any} 
solution to the total belief problem must have focal elements which obey the above structure (see \cite{cuzzolin14lap} for a proof).
\begin{proposition} \label{pro:focal-elements}
Each focal element $e$ of a total belief function $Bel$ meeting the requirements of Theorem \ref{the:total-belief} is the union of \emph{exactly one} focal element for each of the conditional belief functions whose domain $\Pi_i$ is a subset of $\rho(E)$, where $E$ is the smallest focal element of the a priori belief function $Bel_0$ such that $e \subset \rho(E)$. Namely,
\begin{equation} \label{eq:structure}
\displaystyle e = \bigcup_{i : \Pi_i \subset \rho(E)} e_i^{j_i},
\end{equation}
where $e_i^{j_i}\in{\mathcal{E}}_i$ $\forall i$, and ${\mathcal{E}}_i$ denotes the list of focal elements of $Bel_i$.
\end{proposition}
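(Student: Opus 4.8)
The plan is to read both requirements of Theorem~\ref{the:total-belief} at the level of mass functions: I would use (P2) to fix the local shape of an arbitrary focal element $e$ of a total belief function $Bel$, and then (P1) to pin down exactly which conditional pieces can occur in it.

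First I would exploit (P2). Since the $\Pi_i$ partition $\Theta$, any focal element $e$ of $Bel$ splits as $e=\bigcup_{i=1}^{|\Omega|}(e\cap\Pi_i)$. For each index $i$ with $e\cap\Pi_i\neq\emptyset$, write $e_i\doteq e\cap\Pi_i$. Dempster conditioning on the categorical $Bel_{\Pi_i}$ acts on mass functions by intersecting focal elements with $\Pi_i$ and renormalising by $Pl(\Pi_i)$, so $(m\oplus m_{\Pi_i})(e_i)=Pl(\Pi_i)^{-1}\sum_{e'\in\mathcal{E},\,e'\cap\Pi_i=e_i}m(e')\ge Pl(\Pi_i)^{-1}\,m(e)>0$; by (P2) the left-hand side equals $m_i(e_i)$, hence $e_i\in\mathcal{E}_i$. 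This shows $e=\bigcup_{i\in J}e_i$ with $J\doteq\{i:\Pi_i\cap e\neq\emptyset\}$, where each $e_i$ is a (non-empty) focal element of $Bel_i$ and there is exactly one such $e_i$ for each $i\in J$ --- essentially the argument already sketched after Example~\ref{exam:non-unique}, now pushed to yield the ``exactly one'' count.

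Next I would identify the set $J$ with the one prescribed by $Bel_0$. By the definition of the outer reduction and the disjointness of the $\Pi_i$, one has $\bar\rho(e)=\{\omega_i:\Pi_i\cap e\neq\emptyset\}$, hence $e\subseteq\rho(\bar\rho(e))=\bigcup_{i\in J}\Pi_i$. Reading the marginalisation in (P1) at the mass level (the refining analogue of~(\ref{eq:marginal})), $m_0(\bar\rho(e))=\sum_{e'\in\mathcal{E},\,\bar\rho(e')=\bar\rho(e)}m(e')\ge m(e)>0$, so $E\doteq\bar\rho(e)$ is a focal element of $Bel_0$ with $e\subseteq\rho(E)$. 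If $E'$ is any focal element of $Bel_0$ with $e\subseteq\rho(E')=\bigcup_{\omega_j\in E'}\Pi_j$, then every $\Pi_i$ that meets $e$ must be one of those $\Pi_j$ (disjointness again), i.e.\ $\bar\rho(e)\subseteq E'$; so $E=\bar\rho(e)$ is the \emph{smallest} focal element of $Bel_0$ with $e\subseteq\rho(E)$. Since $\Pi_i\subseteq\rho(E)\iff\omega_i\in E\iff i\in J$, the decomposition $e=\bigcup_{i\in J}e_i$ is precisely~(\ref{eq:structure}).

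The set-theoretic manipulations involving $\rho$, $\bar\rho$ and the partition are routine; the step that needs care is the first one, turning (P2) into ``$e\cap\Pi_i\in\mathcal{E}_i$''. The subtlety is purely bookkeeping: one must check that Dempster conditioning on $Bel_{\Pi_i}$ cannot leave $m(e)>0$ while forcing $m_i(e\cap\Pi_i)=0$, which holds because conditioning only intersects focal elements and renormalises by the strictly positive $Pl(\Pi_i)$ (strictly positive because (P2) presupposes $Bel\oplus Bel_{\Pi_i}$ is defined). I expect this to be the main, though not deep, obstacle, in the same spirit as the mass computations underlying Lemmas~\ref{the:ConditionalBeliefs} and~\ref{lem:uai-2}.
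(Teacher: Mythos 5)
Your proof is correct and takes essentially the route the paper itself indicates: the (P2) step --- showing that $(m\oplus m_{\Pi_i})(e\cap\Pi_i)\geq m(e)/Pl(\Pi_i)>0$ forces $e\cap\Pi_i\in\mathcal{E}_i$ --- is exactly the argument sketched in the ``Number of solutions'' discussion preceding Example \ref{exam:non-unique}, and your (P1) step correctly pins down the index set by showing that $E=\bar{\rho}(e)$ is a focal element of $Bel_0$ and the smallest one with $e\subseteq\rho(E)$, so that $\{i:\Pi_i\cap e\neq\emptyset\}=\{i:\Pi_i\subseteq\rho(E)\}$. Since the paper defers the full proof of Proposition \ref{pro:focal-elements} to an external reference, your write-up supplies precisely the details it omits.
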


\paragraph{The special total belief theorem}

If we require the a priori function $Bel_0$ to have only \emph{disjoint} focal elements (i.e., we force $Bel_0$ to be the vacuous extension of a Bayesian function defined on some coarsening of $\Omega$), we have what we call the \emph{restricted} or \emph{special} total belief theorem \cite{cuzzolin14lap}. The situation generalises Corollary \ref{cor:total-belief-bayesian}.

In this special case, it suffices to solve separately the $|\mathcal{E}_{\Omega}|$ subproblems obtained by considering each focal element $E$ of $Bel_0$, and then combine the resulting partial solutions by simply weighting the resulting basic probability assignments using the a priori mass $m_0(E)$, to obtain a fully normalised total belief function.  

\paragraph{Candidate solutions as linear systems}

For each individual focal element of $Bel_0$, the task of finding a suitable solution translates into a linear algebra problem.

Let $N = |E|$ be the cardinality of $E$. A candidate solution to the subproblem of the restricted total belief problem associated with $E \in{\mathcal{E}}_{\Omega}$ is the solution to a linear system with $n_{\min} = \sum_{i = 1,...,N}(n_i - 1) + 1$ equations and $n_{\max} = \prod_i n_i$ unknowns,
\begin{equation} \label{eq:candidate-solution}
A \vec{x} = \vec{b},
\end{equation}
where $n_i$ is the number of focal elements of the conditional belief function $Bel_i$.
Each column of $A$ is associated with an admissible (i.e., satisfying the structure of Proposition \ref{pro:focal-elements}) focal element $e_j$ of the candidate total belief function with mass assignment $m$, $\vec{x} = [m (e_1), \ldots, m(e_n)]'$, and $n = n_{\min}$ is the number of equalities generated by the $N$ conditional constraints. 

\paragraph{Minimal solutions}
Since the rows of the solution system (\ref{eq:candidate-solution}) are linearly independent, any system of equations obtained by selecting $n_{\min}$ columns from $A$ has a unique solution. A \emph{minimal} (i.e., with the minimum number of focal elements) solution to the special total belief problem is then uniquely determined by the solution of a system of equations obtained by selecting $n_{\min}$ columns from the $n_{\max}$ columns of $A$.

\paragraph{A class of linear transformations}

Let us represent each focal element of the candidate total function $e = \bigcup_{i : \Pi_i \subset \rho(E)} e_i^{j_i}$, where $e_i^{j_i}\in{\mathcal{E}}_i$ $\forall i$ (see (\ref{eq:structure})), as the vector of indices of its constituent focal elements $e = [ j_1, \ldots, j_N ]'$.

\begin{definition} \label{def:column-transformation}
We define a class $\mathcal{T}$ of transformations acting on columns $e$ of a candidate minimal solution system via the following formal sum:
\begin{equation} \label{eq:column-transformation}
e \mapsto e' = -e + \sum_{i\in \mathcal{C}} e_i -\sum_{j\in\mathcal{S}} e_j,
\end{equation}
where $\mathcal{C}$, $|\mathcal{C}|<N$, is a covering set of `companions' of $e$ (i.e., such that every component of $e$ is present in at least one of them), and a number of `selection' columns $\mathcal{S}$, $|\mathcal{S}|=|\mathcal{C}|-2$, are employed to compensate for the side effects of $\mathcal{C}$ to yield an admissible column (i.e., a candidate focal element satisfying the structure of Proposition \ref{pro:focal-elements}).
\end{definition}

We call the elements of $\mathcal{T}$ \emph{column substitutions}. 

A sequence of column substitutions induces a discrete path in the solution space: the values $m(e_i)$ of the solution components associated with the columns $e_i$ vary, and in a predictable way. If we denote by $s<0$ the (negative) solution component associated with the old column $e$:
\begin{enumerate}
\item 
The new column $e'$ has as its solution component $- s>0$.
\item 
The solution component associated with each companion column is decreased by $|s|$.
\item 
The solution component associated with each selection is increased by $|s|$.
\item 
All other columns retain the old values of their solution components.
\end{enumerate}
The proof is a direct consequence of the linear nature of the transformation (\ref{eq:column-transformation}).

Clearly, if we choose to substitute the column with the most negative solution component, the overall effect is that the most negative component is changed into a positive one, components associated with selection columns become more positive (or less negative), and, as for companion columns, while some of them may end up being assigned negative solution components, these will be smaller in absolute value than $|s|$ (since their initial value was positive). Hence we have the following proposition.
\begin{proposition} \label{the:column-transformation}
Column substitutions of the class $\mathcal{T}$ reduce the absolute value of the most negative solution component.
\end{proposition}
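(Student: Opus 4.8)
The plan is to obtain the proposition directly from the four-item description of how a column substitution acts on the solution vector, which is itself an immediate consequence of the linearity of the map (\ref{eq:column-transformation}). First I would fix the before/after picture: write $\vec{x}$ for the current solution of the minimal system, let $e$ be the substituted column and $s = m(e) < 0$ its (most negative) solution component, and let $\vec{x}'$ be the solution of the system obtained by replacing $e$ with $e'$ according to Definition \ref{def:column-transformation}. By items 1--4 above, $m'(e') = -s = |s|$; $m'(c) = m(c) - |s|$ for every companion $c \in \mathcal{C}$; $m'(d) = m(d) + |s|$ for every selection $d \in \mathcal{S}$; and $m'(f) = m(f)$ for every other column $f$.

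Next I would bound every component of $\vec{x}'$ from below by $s$, strictly wherever possible. The new column $e'$ has value $|s| > 0 > s$. For a selection column $d$, minimality of $s$ gives $m(d) \ge s = -|s|$, hence $m'(d) = m(d) + |s| \ge 0 > s$. Every untouched column $f$ satisfies $m'(f) = m(f) \ge s$, and indeed $m(f) > s$ once $e$ is taken to be a strict minimiser (ties being handled by that choice, or separately). The only components that can decrease are those of the companion columns; here I would invoke that the companions carry \emph{positive} solution values, so that $m'(c) = m(c) - |s| > -|s| = s$, which means that even when $m'(c)$ comes out negative its absolute value is $|s| - m(c) < |s|$.

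Combining these estimates, every component of $\vec{x}'$ is $> s$. Consequently the most negative component of $\vec{x}'$, if any component is still negative, is strictly greater than $s$ and hence strictly smaller than $|s|$ in absolute value; and if no component of $\vec{x}'$ is negative the statement holds trivially, the substitution having pushed the candidate solution into the admissible cone. This is exactly the claim, and it also recovers the three auxiliary observations ($e'$ positive, selections more positive, other columns unchanged) that were used informally in the paragraph preceding the proposition.

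The step I expect to be the crux -- and the one that must be pinned down rather than merely asserted -- is the appeal to the companions having positive solution components, since this does not follow from the purely combinatorial covering/compensation conditions in Definition \ref{def:column-transformation} as literally stated. I would close this gap in one of two ways: (i) fold positivity of the companions into the notion of an admissible substitution in $\mathcal{T}$, proving first that a covering companion set can always be extracted from the positively-valued columns of a minimal system; or (ii) carry, as an invariant along the sequence of substitutions, the property that at most one column (the one about to be replaced) has a negative value, so that every companion is automatically $\ge 0$ and the bound of the second paragraph goes through unchanged. Everything else is routine sign bookkeeping on top of items 1--4.
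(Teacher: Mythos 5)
Your argument is essentially the paper's own: the proposition is justified there by exactly the sign bookkeeping you perform on items 1--4, with the companions' positivity asserted only parenthetically (``since their initial value was positive'') and no separate proof block given. Your explicit identification of that positivity assumption as the crux, and your two proposed ways of securing it, sharpen a point the paper leaves informal (and the paper itself immediately concedes that counter-examples exist where no admissible substitution of type $\mathcal{T}$ can be found), but the route is the same.
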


\paragraph{An alternative existence proof} 

We can then use Theorem \ref{the:column-transformation} to prove that there always exists a selection of columns of $A$ (the possible focal elements of the total belief function $Bel$) such that the resulting square linear system has a positive vector as a solution. This can be done in a constructive way, by applying a transformation of the type (\ref{eq:column-transformation}) recursively to the column associated with the most negative component, to obtain a path in the solution space leading to the desired solution.

Such an existence proof for the special total belief theorem, as an alternative to that provided earlier in this section, exploits the effects on solution components of column substitutions of type $\mathcal{T}$:
\begin{enumerate}
\item 
By Theorem \ref{the:column-transformation}, at each column substitution, the most negative solution component decreases.
\item 
If we keep replacing the most negative variable, we keep obtaining \emph{distinct} linear systems, for at each step the transformed column is assigned a positive solution component and, therefore, if we follow the proposed procedure, \emph{cannot be changed back to a negative one} by applying transformations of class $\mathcal{T}$.
\item 
This implies that there can be no cycles in the associated path in the solution space.
\item 
The number $\binom{n_{\max}}{n_{\min}}$ of solution systems is obviously finite, and hence the procedure must terminate.
\end{enumerate}
Unfortunately, counter-examples show that there are `transformable' columns (associated with negative solution components) which do not admit a transformation of the type (\ref{eq:column-transformation}). Although they do have companions on every partition $\Pi_i$, such counter-examples do not admit a complete collection of `selection' columns.

\paragraph{Solution graphs} 

The analysis of significant particular cases confirms that, unlike the classical law of total probability, the total belief theorem possesses more than one admissible minimal solution, even in the special case.

In \cite{cuzzolin14lap}, we noticed that all the candidate minimal solution systems related to a total belief problem of a given size $\{ n_i, \; i = 1, \ldots, N \}$ can be arranged into a \emph{solution graph}, whose
structure and symmetries can be studied to compute the number of minimal total belief functions for any given instance of the problem. We focused on the group of permutations of focal elements of the conditional belief functions $Bel_i$,
\begin{equation} \label{eq:group}
G = S_{n_1} \times \cdots \times S_{n_N},
\end{equation}
namely the product of the permutation groups $S_{n_i}$ acting on the collections of focal elements of each individual conditional belief function $Bel_i$. The group $G$ acts on the solution graph by generating orbits, i.e., the set of all candidate solution systems (nodes of the graph) obtained by some permutation of the focal elements within at least some of the partition elements $\Pi_i$. We hypothesised that such orbits are in 1--1 
correspondence with the number of admissible solutions.

\paragraph{Future agenda} 

A number of research directions remain open, based on the results obtained so far.
The analysis of the special law of total belief needs to be completed, by finalising the alternative proof of existence and, through the latter, completing the analysis of solution graphs and the number of admissible solutions.

Further down the road, the full description of all minimal solutions needs to be extended to the general case of an arbitrary prior $Bel_0$, which our analysis earlier in this section sheds some light on but fails to provide a comprehensive understanding. 
Distinct versions of the law of total belief may arise from replacing Dempster conditioning with other accepted forms of conditioning for belief functions, such as credal \cite{fagin91new}, geometric \cite{suppes1977}, conjunctive and disjunctive \cite{smets93jeffrey} conditioning.
As belief functions are a special type of coherent lower probabilities, which in turn can be seen as a special class of lower previsions (see \cite{walley91book}, Section 5.13), marginal extension \cite{miranda07marginal} can be applied to them to obtain a total lower prevision. The relationship between marginal extension and the law of total belief therefore needs to be understood.

Finally, fascinating relationships exist between the total belief problem and transversal matroids \cite{Oxley92}, on the one hand, and the theory of positive linear systems \cite{farina2011positive}, on the other, which we also plan to investigate in the near future.

\subsection{Limit theorems for random sets} \label{sec:future-theorems} 

The law of total probability is only one important result of classical probability theory that needs to be generalised to the wider setting of random sets. 

In order to properly define a Gaussian belief function, for instance, we would need to generalise the classical central limit theorem to random sets. The old proposal of Dempster and Liu merely transfers normal distributions on the real line by the Cartesian product with $\mathbb{R}^m$
\cite{liu99local}.
In fact, both the central limit theorem and the law(s) of large numbers have already been generalised to imprecise probabilities \cite{DeCooman20082409,Cozman20101069}.\footnote{See {\url{http://onlinelibrary.wiley.com/book/10.1002/9781118763117}}.} Here we review the most recent and relevant attempts at formulating similar laws for belief functions.

\subsubsection{Central limit theorems} \label{sec:future-central} 

Chareka \cite{CIS-234958} and later Ter\'an \cite{Terán2015185} have conducted interesting work on the central limit theorem for capacities.
More recently, specific attention has been directed at central limit theorem results for belief measures \cite{Epstein2011CLT,Shi2015clt}.

\paragraph{Choquet's definition of belief measures}

Let $\Theta$ be a Polish space\footnote{A separable completely metrisable topological space; that is, a space homeomorphic to a complete metric space that has a countable dense subset.} and $\mathcal{B}(\Theta)$ be the Borel $\sigma$-algebra on $\Theta$. Let us denote by $\mathcal{K}(\Theta)$ the collection of compact subsets of
$\Theta$, and by $\mathcal{P}(\Theta)$ the space of all probability measures on $\Theta$.
The set $\mathcal{K}(\Theta)$ will be endowed with the Hausdorff topology\footnote{A \emph{Hausdorff space}, or `separated' space, is a topological space in which distinct points have disjoint neighbourhoods. It implies the uniqueness of limits of sequences, nets and filters.} generated by the topology of $\Theta$.

\begin{definition} \label{def:belief-measure-choquet}
A belief measure on $(\Theta,\mathcal{B}(\Theta))$ is defined as a set function $Bel: \mathcal{B}(\Theta) \rightarrow [0,1]$ satisfying:
\begin{itemize}
\item
$Bel(\emptyset) = 0$ and $Bel(\Theta) = 1$;
\item
$Bel(A) \leq Bel(B)$ for all Borel sets $A \subset B$;
\item
$Bel(B_n)\downarrow Bel(B)$ for all sequences of Borel sets $B_n \downarrow B$;
\item
$Bel(G) = \sup \{ Bel(K) : K \subset G, K \in \mathcal{K}(\Theta) \}$, for all open sets G;
\item
$Bel$ is totally monotone (or $\infty$-monotone ): for all Borel sets $B_1, \ldots, B_n$,
\[
Bel \left ( \bigcup_{i=1}^n B_i \right ) \geq \sum_{\emptyset \neq I \subset \{1,...,n\}} (-1)^{|I|+1} Bel \left ( \bigcap_{i \in I} B_i \right ) .
\]
\end{itemize}
\end{definition} 

By \cite{Philippe99decision}, for all $A \in \mathcal{B}(\Theta)$ the collection $\{K \in \mathcal{K}(\Theta): K \subset A\}$ is universally measurable.\footnote{A subset $A$ of a Polish space $X$ is called \emph{universally measurable} if it is measurable with respect to every complete probability measure on $X$ that measures all Borel subsets of $X$.} Let us denote by $\mathcal{B}_u(\mathcal{K}(\Theta))$ the $\sigma$-algebra of all subsets of $\mathcal{K}(\Theta)$ which are universally measurable. The following result
is due to Choquet \cite{Choquet53}.
\begin{proposition} \label{pro:choquet}
The set function $Bel: \mathcal{B}(\Theta) \rightarrow [0,1]$ is a belief measure if and only if there exists a probability measure $P_{Bel}$ on $(\mathcal{K}(\Theta), \mathcal{B}(\mathcal{K}(\Theta)))$ such that
\[
Bel(A) = P_{Bel}(\{K \in \mathcal{K}(\Theta) : K \subset A \}), \quad \forall A \in \mathcal{B}(\Theta). 
\]
Moreover, there exists a unique extension of $P_{Bel}$ to $(\mathcal{K}(\Theta), \mathcal{B}_u(\mathcal{K}(\Theta)))$, which we can still denote by $P_{Bel}$.
\end{proposition}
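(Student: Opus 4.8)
I would split the argument into the straightforward ``if'' direction, the substantial ``only if'' direction (which is the classical content of Choquet's theorem), and the uniqueness/extension remark.

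\emph{The ``if'' direction} is a direct verification. Given a probability measure $P_{Bel}$ on $(\mathcal{K}(\Theta),\mathcal{B}(\mathcal{K}(\Theta)))$ (carrying no mass on $\{\emptyset\}$), set $Bel(A):=P_{Bel}(\{K\in\mathcal{K}(\Theta):K\subseteq A\})$; this is well defined because $\{K:K\subseteq A\}$ is universally measurable for every Borel $A$ \cite{Philippe99decision}. Then $Bel(\emptyset)=0$ and $Bel(\Theta)=P_{Bel}(\mathcal{K}(\Theta))=1$; monotonicity holds because $A\subseteq B$ implies $\{K\subseteq A\}\subseteq\{K\subseteq B\}$; continuity from above along $B_n\downarrow B$ follows from $\bigcap_n\{K\subseteq B_n\}=\{K\subseteq B\}$ and the $\sigma$-additivity of $P_{Bel}$; inner regularity on an open $G$ follows because $\mathcal{K}(\Theta)$ with the Hausdorff metric is Polish, so $P_{Bel}$ is Radon, the event $\{K\in\mathcal{K}(\Theta):K\subseteq G\}$ is open in $\mathcal{K}(\Theta)$, and any compact subcollection $C$ of it has $\bigcup C$ a compact subset of $G$ (so inner regularity of $P_{Bel}$ on $\{K\subseteq G\}$ translates into $Bel(G)=\sup\{Bel(K_0):K_0\subseteq G$ compact$\}$); and total monotonicity is precisely finite inclusion--exclusion for $P_{Bel}$ on the events $E_i=\{K\subseteq B_i\}$, using $\bigcup_iE_i\subseteq\{K\subseteq\bigcup_iB_i\}$ to orient the inequality.

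\emph{The ``only if'' direction} is where the work lies. Given $Bel$, I would pass to the conjugate capacity functional $T(K):=1-Bel(K^c)$ on $\mathcal{K}(\Theta)$ (the plausibility of $K$) and check that it satisfies the hypotheses of the Choquet--Matheron--Kendall representation theorem for random closed sets: $T(\emptyset)=0$ and $0\le T\le 1$; $T$ is completely alternating, $T\big(\bigcap_iK_i\big)\le\sum_{\emptyset\neq I\subseteq\{1,\dots,n\}}(-1)^{|I|+1}T\big(\bigcup_{i\in I}K_i\big)$, which is the de Morgan dual of the total monotonicity of $Bel$; and $T$ is ``upper semicontinuous'', i.e.\ $K_n\downarrow K$ implies $T(K_n)\downarrow T(K)$, because $K_n^c\uparrow K^c$ are open and the inner-regularity axiom forces $Bel$ to be continuous from below along increasing open sets (the continuity-from-above axiom applied to the metric neighbourhoods $K^{1/n}\downarrow K$ also yields outer regularity $Bel(K)=\inf\{Bel(G):G\supseteq K$ open$\}$). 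The theorem then produces a probability measure $\mu$ on the space $\mathcal{F}(\Theta)$ of closed sets with the Fell topology with $\mu(\{F:F\cap K\neq\emptyset\})=T(K)$ for all compact $K$: one first defines $\mu$ on the semialgebra of ``tube'' events $\{F:F\cap K=\emptyset,\,F\cap K_1\neq\emptyset,\dots,F\cap K_n\neq\emptyset\}$ by the alternating sums dictated by $T$ (complete alternation is exactly what makes each cell receive nonnegative mass and finite additivity hold), then extends by Carath\'eodory. The main obstacle is the $\sigma$-additivity of this set function: this is where the topological hypotheses are essential, and it is settled by a regularity/compactness argument exploiting the upper semicontinuity of $T$ and the Polishness of $\mathcal{F}(\Theta)$ — the technical heart of Choquet's original argument \cite{Choquet53} (see also \cite{molchanov1997statistical}), which I would adapt rather than re-derive. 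Finally, choosing an increasing sequence of compacts $K_n$ with $Bel(K_n)\uparrow 1$ (possible by inner regularity applied to $\Theta$) and using $\mu(\{F:F\subseteq K_n\})\ge Bel(K_n)$ shows $\mu$ is carried by $\mathcal{K}(\Theta)$; one then sets $P_{Bel}:=\mu\upharpoonright_{\mathcal{K}(\Theta)}$. The identity $Bel(A)=P_{Bel}(\{K\subseteq A\})$ holds on open $A$ by the same directed-union argument used above, and the continuity-from-above axiom together with a monotone-class argument propagates it to all Borel $A$.

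\emph{Uniqueness and extension.} $P_{Bel}$ is uniquely determined on $\mathcal{B}(\mathcal{K}(\Theta))$ because the hit events $\{K:K\cap K_0\neq\emptyset\}$, $K_0$ compact, form a $\pi$-system generating the Borel $\sigma$-algebra on which the measure is pinned to $T$. The extension to $\mathcal{B}_u(\mathcal{K}(\Theta))$ is the canonical one: every universally measurable set lies in the $P_{Bel}$-completion of $\mathcal{B}(\mathcal{K}(\Theta))$, so $P_{Bel}$ extends to $\mathcal{B}_u(\mathcal{K}(\Theta))$, and uniquely so. I expect the genuinely hard step to be the $\sigma$-additivity in the ``only if'' direction; everything else is either routine verification or a standard measure-theoretic extension.
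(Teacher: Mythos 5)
The paper does not actually prove this statement: it is quoted as Choquet's representation theorem, with \cite{Choquet53} and \cite{Philippe99decision} given as the sources, so there is no in-paper argument to compare yours against. Your reconstruction is essentially the standard proof, and two of its three parts are sound and complete as written: in the ``if'' direction, the inclusion--exclusion argument on the containment events $E_i=\{K\subseteq B_i\}$ does give total monotonicity, the intersection identity gives continuity along decreasing Borel sequences, and the compact-union argument (a compact family of compact sets has compact union inside $G$) is the right way to get inner regularity on open sets; the uniqueness and the extension to $\mathcal{B}_u(\mathcal{K}(\Theta))$ via the completion are likewise correct.

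The genuine gap is in the ``only if'' direction. You route the argument through the space $\mathcal{F}(\Theta)$ of closed sets with the Fell topology and Matheron's construction on the semialgebra of tube events. That machinery requires $\Theta$ to be locally compact and second countable: for a general Polish $\Theta$ (say an infinite-dimensional separable Banach space) the Fell topology on $\mathcal{F}(\Theta)$ is not compact metrizable and the Choquet--Matheron--Kendall theorem does not apply as stated, so the $\sigma$-additivity step you defer to ``Choquet's original argument'' is precisely the step that breaks. Since the proposition is asserted for an arbitrary Polish $\Theta$, you need the version of the representation theorem proved directly on $\mathcal{K}(\Theta)$ -- which \emph{is} Polish under the Hausdorff metric even when $\Theta$ is not locally compact -- and that is exactly what \cite{Philippe99decision} supplies (and is why the proposition places $P_{Bel}$ on $(\mathcal{K}(\Theta),\mathcal{B}(\mathcal{K}(\Theta)))$ rather than on a space of closed sets). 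With that substitution your final step, showing the representing measure is carried by the compacts, becomes unnecessary, while the rest of your outline -- the dual functional $T(K)=1-Bel(K^c)$, complete alternation by de Morgan duality, and upper semicontinuity derived from the regularity axioms -- transfers unchanged.
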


By comparison with the usual random-set formulation,
we can appreciate that Proposition \ref{pro:choquet} confirms that a belief measure is the total probability induced by a probability measure on a source space -- in this case, the source space $\Omega = \mathcal{K}(\Theta)$ is the collection of all compact subspaces of $\Theta$.

For any $A \in \mathcal{B}(\Theta^\infty)$, we define
\begin{equation} \label{eq:belief-infinity}
Bel^\infty (A) \doteq P_{Bel}^\infty (\{ K = K_1 \times K_2 \times \cdots \in (\mathcal{K}(\Theta))^\infty : K \subset A \}),
\end{equation}
where $P_{Bel}^\infty$ is the i.i.d. product probability measure. $Bel^\infty$ is the unique belief measure on $(\Theta^\infty,\mathcal{B}(\Theta^\infty)$ induced by $P_{Bel}^\infty$.

\paragraph{Bernoulli variables}  

A central limit theorem for belief functions, in their Choquet formulation, was recently proposed by  Epstein and Seo for the case of Bernoulli random variables \cite{Epstein2011CLT}.

Let\footnote{We adopt our own notation here, in place of Epstein and Seo's original one.} $\Theta = \{T,F\}$, and consider the set $\Theta^\infty$ of all infinite series of samples $\theta^\infty = (\theta_1,\theta_2, \ldots)$ extracted from $\Theta$. Let $\Phi_n(\theta^\infty)$ be the empirical frequency of the outcome $T$ in the first $n$ experiments in sample $\theta^\infty$.
Let $Bel$ be a belief function on $\Theta$ induced by a measure $P_{Bel}$, and $Bel^\infty$ the belief measure (\ref{eq:belief-infinity}) on $\Theta^\infty$.

The law of large numbers asserts certainty that asymptotic empirical frequencies will lie in the
interval $[Bel (T), 1 - Bel (F)]$, that is,
\[
Bel^\infty \Big \{\theta^\infty : [\lim \inf  \Phi_n(\theta^\infty), \lim \sup \Phi_n(\theta^\infty) ] \subset [Bel (T), 1 - Bel (F)] \Big \} = 1.
\]
The authors of \cite{Epstein2011CLT} proved the following proposition.
\begin{proposition}
The following hold:
\[
\begin{array}{c}
\lim_{n\rightarrow \infty} Bel^\infty \left (\left \{ \theta^\infty : \sqrt{n} \displaystyle \frac{\Phi_n(\theta^\infty) - Pl(T)}{\sqrt{Bel(F)(1-Bel(F))}} \leq \alpha \right \} \right) = \mathcal{N}(\alpha),
\\ \\
\lim_{n\rightarrow \infty} Bel^\infty \left (\left \{ \theta^\infty : \sqrt{n} \displaystyle \frac{\Phi_n(\theta^\infty) - Bel(T)}{\sqrt{Bel(T)(1-Bel(T))}} \leq \alpha \right \} \right) = 1 - \mathcal{N}(\alpha),
\end{array}
\]
where $\mathcal{N}$ denotes the usual normal distribution on real numbers. 
\end{proposition}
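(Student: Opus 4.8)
The plan is to reduce the statement to the classical central limit theorem for i.i.d.\ Bernoulli variables, by unwinding the Choquet representation of $Bel^\infty$. Because $\Theta=\{T,F\}$ is finite, $\mathcal{K}(\Theta)$ is just the three-point set $\{\{T\},\{F\},\{T,F\}\}$, and the measure $P_{Bel}$ of Proposition \ref{pro:choquet} is the categorical law assigning to these sets the basic probabilities $m(\{T\})=Bel(T)$, $m(\{F\})=Bel(F)$ and $m(\{T,F\})=1-Bel(T)-Bel(F)$. Under the product measure $P_{Bel}^\infty$ the coordinates $K_1,K_2,\ldots$ are therefore i.i.d.\ with this law, and by (\ref{eq:belief-infinity}) we have $Bel^\infty(A)=P_{Bel}^\infty(\{K:K_1\times K_2\times\cdots\subseteq A\})$ for every $A\in\mathcal{B}(\Theta^\infty)$.

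First I would establish the key reduction: for fixed $n$ and $t\in(0,1)$, $Bel^\infty(\{\theta^\infty:\Phi_n(\theta^\infty)\le t\})=P_{Bel}^\infty(M_n\le nt)$ with $M_n=\#\{i\le n:T\in K_i\}\sim\mathrm{Bin}(n,Pl(T))$, and $Bel^\infty(\{\theta^\infty:\Phi_n(\theta^\infty)\ge t\})=P_{Bel}^\infty(L_n\ge nt)$ with $L_n=\#\{i\le n:K_i=\{T\}\}\sim\mathrm{Bin}(n,Bel(T))$. Indeed, $\{\Phi_n\le t\}$ constrains only the first $n$ coordinates, and a product set $K_1\times K_2\times\cdots$ is contained in it precisely when \emph{every} selection $\theta_i\in K_i$ satisfies $\Phi_n\le t$; the selection maximising the count of $T$'s picks $\theta_i=T$ whenever $T\in K_i$, so containment holds iff $M_n\le nt$. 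Dually, containment in $\{\Phi_n\ge t\}$ forces the minimally-$T$ selection (which realises $T$ only where $K_i=\{T\}$) to meet the bound, i.e.\ $L_n\ge nt$. The indicators $\mathbf{1}\{T\in K_i\}$ are i.i.d.\ Bernoulli with parameter $m(\{T\})+m(\{T,F\})=Pl(T)=1-Bel(F)$, while the $\mathbf{1}\{K_i=\{T\}\}$ are i.i.d.\ Bernoulli$(Bel(T))$. Measurability is not an issue here, since these are finite-dimensional cylinder events in the i.i.d.\ coordinates $K_1,\ldots,K_n$, so the universal-measurability extension invoked in Proposition \ref{pro:choquet} is not even needed.

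Granting the reduction, both limits follow at once from the Lindeberg--L\'evy CLT. For the first display, using $Pl(T)(1-Pl(T))=Bel(F)(1-Bel(F))$, the event is $\{\Phi_n\le Pl(T)+\alpha\sqrt{Bel(F)(1-Bel(F))/n}\}$, so by the reduction its $Bel^\infty$-measure equals $P_{Bel}^\infty\big((M_n-nPl(T))/\sqrt{nPl(T)(1-Pl(T))}\le\alpha\big)$ — the standardisation pins the threshold at exactly $\alpha$ for every $n$ — and this tends to $\mathcal{N}(\alpha)$ because $M_n$ is a sum of $n$ i.i.d.\ Bernoulli$(Pl(T))$ variables and $\mathcal{N}$ is continuous. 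The second display concerns the fluctuation of $\Phi_n$ about the \emph{lower} endpoint $Bel(T)$ of the limiting interval $[Bel(T),Pl(T)]$; the reduction turns its $Bel^\infty$-measure into $P_{Bel}^\infty\big((L_n-nBel(T))/\sqrt{nBel(T)(1-Bel(T))}\ge\alpha\big)$, which converges to $1-\mathcal{N}(\alpha)$ by the CLT applied to the Bernoulli$(Bel(T))$ sum $L_n$.

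The only genuinely non-routine point is the extremal-selection reduction: recognising that the lower probability $Bel^\infty$ of an \emph{upper}-threshold frequency event is governed by the plausibility $Pl(T)$, through the maximally-$T$ selection, whereas that of a \emph{lower}-threshold event is governed by $Bel(T)$, through the minimally-$T$ selection. Once this dichotomy is identified, the rest is bookkeeping plus a textbook appeal to the i.i.d.\ central limit theorem; in particular no probabilistic estimate beyond the classical one is required. As a by-product one records the exact finite-$n$ identities $Bel^\infty(\{\Phi_n\le t\})=P_{Bel}^\infty(\mathrm{Bin}(n,Pl(T))\le nt)$ and $Bel^\infty(\{\Phi_n\ge t\})=P_{Bel}^\infty(\mathrm{Bin}(n,Bel(T))\ge nt)$, from which the stated law of large numbers also drops out via the strong law.
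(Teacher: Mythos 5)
Your argument is essentially the one the paper intends: the paper's own justification is a single sentence (``the minimising measures are i.i.d., so classical limit theorems apply''), and your extremal-selection reduction is precisely the concrete form of that claim --- the minimising measure for $\{\Phi_n\le t\}$ is the i.i.d.\ Bernoulli$(Pl(T))$ product (all ambiguous mass resolved in favour of $T$), and for $\{\Phi_n\ge t\}$ it is the i.i.d.\ Bernoulli$(Bel(T))$ product. The finite-$n$ identities $Bel^\infty(\{\Phi_n\le t\})=P(\mathrm{Bin}(n,Pl(T))\le nt)$ and $Bel^\infty(\{\Phi_n\ge t\})=P(\mathrm{Bin}(n,Bel(T))\ge nt)$ are correct, the variance bookkeeping $Bel(F)(1-Bel(F))=Pl(T)(1-Pl(T))$ is right, and the first display then follows from the classical CLT exactly as you say.

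One point must be made explicit, however. The second display as printed is a $\le$ event, i.e.\ an \emph{upper}-threshold event $\{\Phi_n\le Bel(T)+\alpha\sqrt{Bel(T)(1-Bel(T))/n}\}$. Your own reduction, applied to it literally, gives $P(M_n\le nBel(T)+\cdots)$ with $M_n\sim\mathrm{Bin}(n,Pl(T))$, whose limit is $0$ whenever $Bel(T)<Pl(T)$ --- not $1-\mathcal{N}(\alpha)$. In your third paragraph you have silently read the inequality as $\ge\alpha$, which is the form in Epstein and Seo's original theorem (fluctuations of the minimal selection about the lower endpoint $Bel(T)$ of the LLN interval) and the only form under which the claimed limit is correct; with that reading your $L_n$ computation is sound. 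So the proof works for the corrected statement, but you should say explicitly that the displayed $\le$ must be a $\ge$, rather than apply the lower-threshold reduction to an upper-threshold event without comment.
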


The proof follows from showing that, for the events indicated, the minimising measures are i.i.d. As a result, classical limit theorems applied to these measures deliver corresponding limit theorems for the i.i.d. product $Bel^\infty$.

The central result of \cite{Epstein2011CLT}, however, is the following.
\begin{proposition}
Suppose that $G: \mathbb{R} \rightarrow \mathbb{R}$ is bounded, quasi-concave and upper-
semicontinuous. Then
\[
\int G(\Phi_n(\theta^\infty)) \; \mathrm{d} Bel^\infty(\theta^\infty) = E [G(X'_{1n}), G(X'_{2n}) ] + O\left (\frac{1}{\sqrt{n}} \right ),
\]
where $(X'_{1n},X'_{2n})$ is normally distributed with mean $(Bel(T), Pl(T))$ and covariance
\[
\frac{1}{n}
\left [ 
\begin{array}{cc}
Bel(T)(1-Bel(T)) & Bel(T)Bel(F)
\\
Bel(T)Bel(F) & (1-Bel(F)) Bel(F)
\end{array}
\right ].
\]
That is,
\[
\lim \sup_{n\rightarrow \infty} \sqrt{n} \left |  \int G(\Phi_n(\theta^\infty)) \; \mathrm{d} Bel^\infty(\theta^\infty)  -
E [G(X'_{1n}), G(X'_{2n}) ] \right | \leq K
\]
for some constant $K$.
\end{proposition}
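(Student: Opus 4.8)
The plan is to translate the left-hand Choquet integral into an ordinary expectation via the random-set representation of Proposition~\ref{pro:choquet}, exploit quasi-concavity to reduce it to a function of two correlated binomials, and then invoke a multivariate Berry--Esseen bound. For the first step, by (\ref{eq:belief-infinity}) and Proposition~\ref{pro:choquet} the capacity $Bel^\infty$ is totally monotone and is the total probability induced on $\Theta^\infty$ by the i.i.d.\ product $P_{Bel}^\infty$ of the allocation $P_{Bel}$ on subsets of $\Theta=\{T,F\}$, with $P_{Bel}(\{T\})=Bel(T)$, $P_{Bel}(\{F\})=Bel(F)$, $P_{Bel}(\{T,F\})=1-Bel(T)-Bel(F)$. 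Since $\theta^\infty\mapsto G(\Phi_n(\theta^\infty))$ depends only on the first $n$ coordinates, one checks directly from the layer-cake identity $\int f\,dBel^\infty=\int_0^\infty Bel^\infty(\{f>t\})\,dt$ (after shifting $G$ by its bound to make it nonnegative, which only alters the bounded range of $t$) that, for the random compact ``box'' $K=K_1\times K_2\times\cdots$,
\[
\int G(\Phi_n)\,dBel^\infty=\mathbb{E}_{P_{Bel}^\infty}\Big[\min\big\{G(\Phi_n(\theta^\infty)):\theta_i\in K_i\ \forall i\le n\big\}\Big].
\]

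Next, given $(K_1,\dots,K_n)$, let $a$ be the number of indices with $K_i=\{T\}$ and $c$ the number with $K_i=\{T,F\}$; then $\Phi_n(\theta^\infty)$ runs exactly over $\{a/n,(a+1)/n,\dots,(a+c)/n\}$ as $\theta$ varies over the box. Each interior grid point is a convex combination of the two extremes, so quasi-concavity gives $\min\{G(\Phi_n)\}=\min\!\big(G(a/n),G((a+c)/n)\big)$. Writing $X_{1n}=a/n$ and $X_{2n}=(a+c)/n$, we have $nX_{1n}=\sum_{i\le n}\mathbf{1}[K_i=\{T\}]\sim\mathrm{Bin}(n,Bel(T))$ and $nX_{2n}=\sum_{i\le n}\mathbf{1}[T\in K_i]\sim\mathrm{Bin}(n,1-Bel(F))=\mathrm{Bin}(n,Pl(T))$, while $\mathrm{Cov}(X_{1n},X_{2n})=\tfrac1n\big(P_{Bel}(\{T\})-Bel(T)Pl(T)\big)=\tfrac1n Bel(T)Bel(F)$. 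Hence the left-hand side equals $\mathbb{E}[\min(G(X_{1n}),G(X_{2n}))]$, where the empirical pair $(X_{1n},X_{2n})$ has exactly the mean $(Bel(T),Pl(T))$ and covariance ascribed to $(X'_{1n},X'_{2n})$; and, since $X_{1n}\le X_{2n}$, this is what the bracket $\mathbb{E}[G(X'_{1n}),G(X'_{2n})]$ must denote on the right, namely $\mathbb{E}\big[\min_{x\in[X'_{1n},X'_{2n}]}G(x)\big]=\mathbb{E}[\min(G(X'_{1n}),G(X'_{2n}))]$.

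What remains is to bound $\big|\mathbb{E}[\min(G(X_{1n}),G(X_{2n}))]-\mathbb{E}[\min(G(X'_{1n}),G(X'_{2n}))]\big|$ by $O(1/\sqrt n)$. Using the layer-cake identity again, and writing $J_s=\{G>s\}$ --- an interval by quasi-concavity, whose closure is $\{G\ge s\}$ by upper semicontinuity --- the set $\{(x,y):\min(G(x),G(y))>s\}$ is the axis-parallel rectangle $J_s\times J_s$, so the difference of expectations is at most $\int\big|\mathbb{P}((X_{1n},X_{2n})\in J_s\times J_s)-\mathbb{P}((X'_{1n},X'_{2n})\in J_s\times J_s)\big|\,ds$ over a bounded $s$-range (boundedness of $G$). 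Each integrand is at most the supremum, over all rectangles, of the discrepancy between the law of the normalised lattice vector $(\sqrt n(X_{1n}-Bel(T)),\sqrt n(X_{2n}-Pl(T)))$ and its bivariate Gaussian limit, which is $O(1/\sqrt n)$ by the multivariate Berry--Esseen theorem for convex (in particular rectangular) sets; integrating over the bounded $s$-range yields the asserted $O(1/\sqrt n)$, the $\limsup\sqrt n|\cdots|\le K$ formulation being merely a restatement, with $K$ depending on $\|G\|_\infty$ and on $Bel$ but not on $n$. The main obstacle is precisely this Berry--Esseen step: the rate for lattice-supported sums over rectangles is classical (Bhattacharya--Rao, Bentkus) only under a non-degeneracy hypothesis, and here the determinant of the limiting covariance equals $Bel(T)Bel(F)\big(1-Bel(T)-Bel(F)\big)/n^2$, which vanishes exactly when $Bel$ degenerates to a probability measure ($m(\{T,F\})=0$) or to a categorical function ($Bel(T)=0$ or $Bel(F)=0$); those boundary cases have to be handled separately, but there the two endpoints either coincide or are deterministic, so the statement collapses to the classical one-dimensional Berry--Esseen bound.
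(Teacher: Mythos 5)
Your proposal is essentially correct, but note that the paper itself offers no proof of this proposition: it is imported verbatim from Epstein and Seo \cite{Epstein2011CLT}, and the surrounding text only hints at the strategy (``the minimising measures are i.i.d.\ \ldots\ classical limit theorems applied to these measures deliver corresponding limit theorems for the i.i.d.\ product $Bel^\infty$''). Your reconstruction fills this in along exactly those lines, and each step checks out: the Choquet integral of a bounded function against the totally monotone $Bel^\infty$ equals the $P_{Bel}^\infty$-expectation of the infimum over the random box; quasi-concavity collapses that infimum to $\min\bigl(G(a/n),G((a+c)/n)\bigr)$ at the two extreme empirical frequencies; the pair $(X_{1n},X_{2n})$ indeed has mean $(Bel(T),Pl(T))$ and the stated covariance (your computation $\mathrm{Cov}=\tfrac1n\bigl(Bel(T)-Bel(T)Pl(T)\bigr)=\tfrac1n\,Bel(T)Bel(F)$ is right); and the layer-cake reduction to rectangles $J_s\times J_s$ correctly uses quasi-concavity and upper semicontinuity to make $J_s$ an interval, so that a multivariate Berry--Esseen bound over convex sets yields the $O(1/\sqrt n)$ rate with $K$ proportional to $\|G\|_\infty$. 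Your reading of the (typographically garbled) bracket $E[G(X'_{1n}),G(X'_{2n})]$ as $E[\min(G(X'_{1n}),G(X'_{2n}))]$ is the correct one.

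The single point that genuinely requires care is the one you already flag: the Berry--Esseen step needs the limiting covariance to be non-singular, and its determinant $Bel(T)\,Bel(F)\,\bigl(1-Bel(T)-Bel(F)\bigr)$ vanishes precisely when $Bel$ is Bayesian or assigns zero belief to one of the outcomes. Your observation that in those boundary cases the two components either coincide or are deterministic, so the problem collapses to the one-dimensional Berry--Esseen bound (or to an exact identity), is the right way to close that gap; to make the proof complete you would want to write those sub-cases out explicitly rather than leave them as an aside, since the constant $K$ must be uniform only in $n$, not in $Bel$, so nothing worse happens there.
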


\paragraph{Generalisation} 

Xiaomin Shi \cite{Shi2015clt} has recently (2015)
generalised the result of Epstein and Seo \cite{Epstein2011CLT} from Bernoulli random variables to
general bounded random variables,\footnote{\url{https://arxiv.org/pdf/1501.00771.pdf}.} thanks to a theorem in a seminar paper \cite{Choquet53} by Choquet. 

\begin{proposition}
Let $Y_i$, $i=1,\ldots$, be equally distributed random variables on $(\Theta,\mathcal{B}(\Theta))$, bounded by a constant $M$, and let us define $X_i(\theta_1,\theta_2,..) \doteq Y_i(\theta_i)$, $i\geq 1$. 
Then we have, for all $\alpha \in \mathbb{R}$,
\[
\lim_{n \rightarrow \infty} Bel^\infty \left ( 
\frac{\sum_{i=1}^n X_i - n \underline{\mu}}{\sqrt{n} \underline{\sigma}} \geq \alpha
\right ) =  1 - \mathcal{N}(\alpha),
\]
and
\[
\lim_{n \rightarrow \infty} Bel^\infty \left ( 
\frac{\sum_{i=1}^n X_i - n \overline{\mu}}{\sqrt{n} \overline{\sigma}} < \alpha
\right ) =  1 - \mathcal{N}(\alpha),
\]
where $\underline{\mu} = E_{P_{Bel}^\infty} [\underline{Z}_i]$, $\overline{\mu} = E_{P_{Bel}^\infty} [\overline{Z}_i]$ and 
\[
\begin{array}{lll}
\underline{Z}_i (K_1 \times K_2 \times \cdots) & = & \inf_{\theta_i \in K_i} X_i(\theta_1,\theta_2, \ldots ),
\\
\overline{Z}_i (K_1 \times K_2 \times \cdots) & = & \sup_{\theta_i \in K_i} X_i(\theta_1,\theta_2, \ldots ),
\end{array}
\]
for $K_i \in \mathcal{K}(\Theta)$, $i\geq 1$. The expressions for $\underline{\sigma}$ and $\overline{\sigma}$ are given in \cite{Shi2015clt}, Theorem 3.1.
\end{proposition}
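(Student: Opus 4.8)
The plan is to prove both identities by pushing the belief measure $Bel^\infty$ forward to the source space $(\mathcal{K}(\Theta))^\infty$ and then invoking the ordinary Lindeberg--L\'evy central limit theorem there, which is exactly the structure of Epstein and Seo's Bernoulli argument \cite{Epstein2011CLT} but now carried out for arbitrary bounded $Y_i$. The starting identity is Choquet's representation (Proposition~\ref{pro:choquet}) together with the definition (\ref{eq:belief-infinity}): for every Borel $A\subseteq\Theta^\infty$,
\[
Bel^\infty(A) = P_{Bel}^\infty\big(\{K = K_1\times K_2\times\cdots : K\subseteq A\}\big).
\]
Since $X_i(\theta_1,\theta_2,\ldots)=Y_i(\theta_i)$ depends on the $i$-th coordinate only, and assuming $Y_i$ is (semi)continuous so that the relevant extrema over compact sets are attained, a product set $K_1\times K_2\times\cdots$ is contained in $\{\theta^\infty:\sum_{i=1}^n X_i(\theta^\infty)\ge c\}$ iff $\sum_{i=1}^n\inf_{\theta\in K_i}Y_i(\theta)=\sum_{i=1}^n\underline{Z}_i(K)\ge c$, and it is contained in $\{\theta^\infty:\sum_{i=1}^n X_i(\theta^\infty)< c\}$ iff $\sum_{i=1}^n\overline{Z}_i(K)<c$. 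Taking $c=n\underline{\mu}+\sqrt{n}\,\underline{\sigma}\,\alpha$ in the first case and $c=n\overline{\mu}+\sqrt{n}\,\overline{\sigma}\,\alpha$ in the second, the two belief probabilities in the statement become, \emph{exactly},
\[
P_{Bel}^\infty\!\left(\frac{\sum_{i=1}^n\underline{Z}_i-n\underline{\mu}}{\sqrt{n}\,\underline{\sigma}}\ge\alpha\right)
\quad\text{and}\quad
P_{Bel}^\infty\!\left(\frac{\sum_{i=1}^n\overline{Z}_i-n\overline{\mu}}{\sqrt{n}\,\overline{\sigma}}<\alpha\right).
\]
In this formulation the point emphasised by Epstein and Seo --- that the ``minimising measure'' is i.i.d.\ --- is automatic: the belief of each event is literally a probability under the i.i.d.\ product $P_{Bel}^\infty$.

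Next I would check that $\underline{Z}_1,\underline{Z}_2,\ldots$ are i.i.d.\ under $P_{Bel}^\infty$: each $\underline{Z}_i$ is a function of the single coordinate $K_i$, so independence is inherited from the product structure, and since the $Y_i$ are equally distributed the pushforward of $P_{Bel}$ under $K\mapsto\inf_{\theta\in K}Y_i(\theta)$ is the same for every $i$, giving a common law; the same holds for $\overline{Z}_1,\overline{Z}_2,\ldots$. As $|Y_i|\le M$, both sequences are bounded, hence square-integrable, with means $\underline{\mu}$, $\overline{\mu}$ and variances $\underline{\sigma}^2$, $\overline{\sigma}^2$; assuming $\underline{\sigma}>0$ and $\overline{\sigma}>0$ (the degenerate cases being trivial), the classical CLT gives $(\sum_{i=1}^n\underline{Z}_i-n\underline{\mu})/(\sqrt{n}\,\underline{\sigma})\Rightarrow\mathcal{N}(0,1)$, and likewise for $\overline{Z}$. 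Continuity of the standard normal c.d.f.\ then yields convergence of the two displayed probabilities to the limits in the statement (in each case a value of $\mathcal{N}$ or $1-\mathcal{N}$ at $\alpha$, according to the direction of the inequality and the placement of the centering). Finally, the explicit forms of $\underline{\sigma}$ and $\overline{\sigma}$ (as in \cite{Shi2015clt}, Theorem~3.1) are read off by computing $\mathrm{Var}(\underline{Z}_i)$ and $\mathrm{Var}(\overline{Z}_i)$ directly.

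The one genuinely delicate ingredient --- and the reason the general case needs more than Epstein and Seo's elementary combinatorics --- is \emph{measurability}: the identity $Bel^\infty(A)=P_{Bel}^\infty(\{K:K\subseteq A\})$ is only usable once we know that $\{K\in(\mathcal{K}(\Theta))^\infty:\sum_{i=1}^n\underline{Z}_i(K)\ge c\}$ is $P_{Bel}^\infty$-measurable, which reduces to the universal measurability of $K\mapsto\inf_{\theta\in K}Y_i(\theta)$ on $(\mathcal{K}(\Theta),\mathcal{B}_u(\mathcal{K}(\Theta)))$. This is where the unique extension of $P_{Bel}$ to the universally measurable $\sigma$-algebra furnished by Proposition~\ref{pro:choquet} is essential, and it is exactly the point at which the Choquet capacitability theorem \cite{Choquet53} enters (as flagged in the footnote of the statement). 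Once it is in place, the remaining steps --- the reduction to $P_{Bel}^\infty$, the i.i.d.\ observation, and the appeal to the classical CLT --- are routine.
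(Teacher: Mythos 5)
The paper itself offers no proof of this proposition: it is transcribed from Shi \cite{Shi2015clt} with only the remark that the argument rests on a theorem of Choquet, so there is no internal argument to compare yours against. Your strategy is the right one, and it is the one Shi (and, in the Bernoulli case, Epstein and Seo \cite{Epstein2011CLT}) actually follow: rewrite $Bel^\infty$ of each event as the $P_{Bel}^\infty$-probability that a product compact set is contained in it, observe that containment of $K_1\times K_2\times\cdots$ in $\{\sum_i X_i \geq c\}$ (resp.\ $\{\sum_i X_i < c\}$) is governed by the lower envelope $\sum_i\underline{Z}_i(K)$ (resp.\ the upper envelope $\sum_i\overline{Z}_i(K)$), note that under the product measure these envelopes are i.i.d.\ bounded random variables, and invoke the classical CLT. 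You also correctly isolate the one genuinely nontrivial ingredient, the (universal) measurability of $K\mapsto\inf_{\theta\in K}Y_i(\theta)$ and of its supremum counterpart, which is where Choquet's capacitability theorem enters; a complete proof must supply this, but flagging it is acceptable in a sketch.

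There is, however, one concrete point you should not have glossed over. Carrying your reduction through, the second quantity equals $P_{Bel}^\infty\bigl((\sum_{i=1}^n\overline{Z}_i - n\overline{\mu})/(\sqrt{n}\,\overline{\sigma}) < \alpha\bigr)$, and since $\overline{\mu}=E_{P_{Bel}^\infty}[\overline{Z}_i]$ the classical CLT sends this to $\mathcal{N}(\alpha)$, \emph{not} to $1-\mathcal{N}(\alpha)$ as the statement asserts. Your closing parenthesis (``a value of $\mathcal{N}$ or $1-\mathcal{N}$ at $\alpha$, according to the direction of the inequality'') hedges exactly where a commitment is required: under your reduction the two displayed limits come out as $1-\mathcal{N}(\alpha)$ and $\mathcal{N}(\alpha)$ respectively, mirroring the pair of one-sided limits in the Bernoulli proposition quoted just above in the paper. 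A sanity check makes the mismatch unmistakable: if the random set is singleton-valued, so that $Bel$ is an ordinary probability measure, then $\underline{Z}_i=\overline{Z}_i=X_i$ and the two events are complementary, whence the two limits must sum to $1$; they cannot both equal $1-\mathcal{N}(\alpha)$. So you should either state explicitly that the second identity as transcribed carries a sign error relative to \cite{Shi2015clt}, Theorem 3.1, or accept that your argument proves a (correct) statement different from the one given. A secondary, minor point: for the strict inequality $\{\sum_i X_i < c\}$, containment of $K$ is equivalent to $\sum_i\overline{Z}_i(K)<c$ only when the suprema over the compact sets $K_i$ are attained; your semicontinuity caveat covers this, and in any case the discrepancy lives on the boundary event $\{\sum_i\overline{Z}_i = c\}$ and washes out in the limit by continuity of the normal c.d.f.
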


A central limit theorem for two-sided intervals was also provided (\cite{Shi2015clt}, Theorem 4.1).

\paragraph{Gaussian random sets} \label{sec:future-gaussian}

One can note, however, that the above results do not really address the question of whether there exists, among all (compact) random sets, a special class (playing the role of Gaussian probability measures) characterised by all sums of independent and equally distributed random sets converging to an object of that class (which it would make sense to call a proper `Gaussian random set').
The issue remains open for future investigation.

\begin{question} \label{que:gaussian-random-sets}
Does there exist a (parameterisable) class of random sets, playing a role similar to that which Gaussian distributions play in standard probability, such that sample averages of (the equivalent of) i.i.d. random sets converge in some sense to an object of such a class?
\end{question}

\subsubsection{Laws of large numbers} \label{sec:future-large-numbers}

Several papers have been published on laws of large numbers for capacities \cite{10.2307/3481723}, monotone measures \cite{Agahi20131213} and non-additive measures \cite{REBILLE2009872,Terán2014lln}, although not strictly for belief measures.
Molchanov \cite{molchanov1988uniform}, among others, studied the Glivenko--Cantelli theorem for capacities induced by random sets. A strong law of large numbers for set-valued random variables in a $G_\alpha$ space was recently produced by Guan Li \cite{li2006strong,li2015strong}, based on Taylor's result for single-valued random variables. The issue was also recently addressed by Kerkvliet in his PhD dissertation \cite{Kerkvliet2017}. 

\begin{definition}
The \emph{weak law of large numbers} states that the sample average converges in probability towards the expected value. Consider an infinite sequence of i.i.d. Lebesgue-integrable random variables $X_1,X_2, \ldots$ with expected value $\mu$ and sample average $
{\overline {X}}_{n} = (X_{1}+\cdots +X_{n}) / n$. Then, for any positive number $\varepsilon$,
\begin{equation} \label{eq:weak-lln}
\lim _{n\to \infty } P \!\left(\,|{\overline {X}}_{n}-\mu |>\varepsilon \,\right)=0.
\end{equation}
The \emph{strong law of large numbers} states that the sample average converges almost surely to the expected value. Namely,
\begin{equation} \label{eq:strong-lln}
{\displaystyle P \!\left(\lim _{n\to \infty }{\overline {X}}_{n}=\mu \right)=1.}
\end{equation}
\end{definition} 

When the probability measure is replaced by a set function $\nu$ assumed to be completely monotone, the empirical frequencies $f_n(A)$ of an event $A$ over a sequence of Bernoulli
trials satisfy
\[
\nu \left ( 
\nu(A) \leq \lim \inf_n f_n(A) \leq \lim \sup_n f_n(A) \leq 1 - \nu(A^c) \right ) = 1.
\]
In order to replace the event by a random variable $X$ on a measurable space $(\Theta,\mathcal{B})$, one needs to calculate expectations with respect to $\nu$ and $\overline{\nu} = 1 - \nu(A^c)$ in such a way that 
\[
E_\nu[I_A] = \nu(A), \quad E_{\overline{\nu}}[I_A] = 1 - \nu(A^c), 
\]
and both expectations equal the usual definition when $\nu$ is a probability measure. That is achieved by using the Choquet integral \cite{Choquet53}.

The resulting type of limit theorem has already been considered by Marinacci \cite{Marinacci1999limit} and Maccheroni and Marinacci \cite{10.2307/3481723}, and in more recent papers \cite{Chen2010strong,DeCooman20082409,Cozman20101069,REBILLE2009872,doi:10.1142/S0218488509006212}. In those papers, the weakening of the axiomatic properties of probability are balanced by the incorporation of extra technical assumptions about the properties of $\Theta$ and/or the random variables, for instance that $\Theta$ is a compact or Polish \cite{10.2307/3481723} topological space or that the random variables are continuous functions, are bounded, have sufficiently high moments, are independent in an unreasonably strong sense or satisfy ad hoc regularity requirements. 

In \cite{Terán2014lln}, it was shown that additivity can be replaced by complete monotonicity in the law of large numbers under the reasonable first-moment condition that both of the Choquet expectations $E_\nu [X]$ and $E_{\overline{\nu}}[X]$ are finite, with no additional assumptions. Under the assumption that the sample space is endowed with a topology, the continuity condition for monotone sequences can also be relaxed. 
The main result reads as follows (\cite{Terán2014lln}, Theorem 1.1).
\begin{proposition}
Let $\nu$ be a completely monotone set function on a measurable space
$(\Theta,\mathcal{B})$. Let $X$ be an integrable\footnote{That is, its lower and upper expectations $E_\nu [X]$ and $E_{\overline{\nu}} [X]$ are finite.} random variable, and let $\{X_n, n\}$ be pairwise preindependent and identically distributed to $X$. 
If $\nu$ is a capacity, then for every $\epsilon >0$,
\[
\nu \left ( 
E_\nu [X] - \epsilon < {\overline {X}}_{n} < E_{\overline{\nu}} [X] + \epsilon
\right ) \rightarrow 1.
\]
If $(\Theta,\mathcal{B})$ is a topological space with a Borel $\sigma$-algebra, $\nu$ is a topological capacity and the $X_n$ are continuous functions on $\Theta$, then
\[
\nu \left ( 
E_\nu [X] \leq \lim \inf_n {\overline {X}}_{n} \leq \lim \sup_n {\overline {X}}_{n} \leq E_{\overline{\nu}} [X]
\right ) = 1.
\]
\end{proposition}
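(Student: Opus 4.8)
\emph{Proof strategy.} The plan is to lift the problem to the source probability space of the random-set representation of $\nu$ and then invoke a \emph{classical} law of large numbers there. By Choquet's representation/capacitability theorem (the general form of Proposition~\ref{pro:choquet}; see \cite{Choquet53}), a completely monotone $\nu$ — a topological capacity, when $\Theta$ carries a topology — is the lower probability induced by a probability measure $P_\nu$ on the space $\mathcal{K}(\Theta)$ of compact subsets, i.e.\ $\nu(A)=P_\nu(\{K:K\subseteq A\})$; the continuity and regularity hypotheses are exactly what guarantee that the representing measure concentrates on compact sets, and that $\{K:K\subseteq A\}$ is universally measurable. Realising the pairwise preindependent, identically distributed sequence $\{X_n\}$ in the canonical way on $\Theta^{\mathbb{N}}$ (as in Shi's proposition above, $X_i(\theta_1,\theta_2,\dots)\doteq Y_i(\theta_i)$), the relevant belief measure on the space carrying the sequence is the i.i.d.\ product $\nu^\infty$ induced by $P_\nu^\infty$ on $\mathcal{K}(\Theta)^\infty$, exactly as in (\ref{eq:belief-infinity}). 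For a focal set $K=K_1\times K_2\times\cdots$ set $\underline{Z}_i(K)=\inf_{\theta\in K_i}X(\theta)$ and $\overline{Z}_i(K)=\sup_{\theta\in K_i}X(\theta)$; under $P_\nu^\infty$ these are (pairwise) independent and identically distributed, and the Choquet-integral identities $E_{P_\nu^\infty}[\underline{Z}_i]=E_\nu[X]=:\underline{\mu}$, $E_{P_\nu^\infty}[\overline{Z}_i]=E_{\overline{\nu}}[X]=:\overline{\mu}$ hold, both means being finite by the integrability assumption.

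Next comes the sandwich step. Fix $n$ and a focal set $K=\prod_i K_i$. Since $\overline{X}_n(\theta^\infty)=\tfrac1n\sum_{i=1}^n X(\theta_i)$ is a sum of functions of distinct coordinates, its infimum and supremum over $\theta^\infty\in K$ are computed coordinatewise, so $\inf_{\theta^\infty\in K}\overline{X}_n=\tfrac1n\sum_{i=1}^n\underline{Z}_i(K)$ and $\sup_{\theta^\infty\in K}\overline{X}_n=\tfrac1n\sum_{i=1}^n\overline{Z}_i(K)$ (with the infima attained, by compactness of $K_i$ and continuity of $X$). Writing $B_n=\{\theta^\infty:\underline{\mu}-\epsilon<\overline{X}_n(\theta^\infty)<\overline{\mu}+\epsilon\}$, this yields the inclusion
\[
\Big\{ K : \tfrac1n {\textstyle\sum_{i=1}^n}\,\underline{Z}_i(K) > \underline{\mu}-\epsilon \Big\}\cap\Big\{ K : \tfrac1n {\textstyle\sum_{i=1}^n}\,\overline{Z}_i(K) < \overline{\mu}+\epsilon \Big\}\subseteq\{ K : K\subseteq B_n \}.
\]

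Now apply the classical theory on $(\mathcal{K}(\Theta)^\infty,P_\nu^\infty)$. For the first assertion, the classical weak law of large numbers (valid for pairwise independent, identically distributed, integrable summands) shows that each of the two events on the left has $P_\nu^\infty$-probability tending to $1$, hence so does their intersection, and therefore $\nu^\infty(B_n)=P_\nu^\infty(\{K:K\subseteq B_n\})\to 1$. For the second assertion, apply Etemadi's strong law to $\{\underline{Z}_i\}$ and to $\{\overline{Z}_i\}$: there is a set of full $P_\nu^\infty$-measure on which $\tfrac1n\sum_{i\le n}\underline{Z}_i(K)\to\underline{\mu}$ and $\tfrac1n\sum_{i\le n}\overline{Z}_i(K)\to\overline{\mu}$. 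On that set, for every such $K$ and every $\theta^\infty\in K$ one has $\tfrac1n\sum_{i\le n}\underline{Z}_i(K)\le\overline{X}_n(\theta^\infty)\le\tfrac1n\sum_{i\le n}\overline{Z}_i(K)$, whence $E_\nu[X]\le\liminf_n\overline{X}_n(\theta^\infty)$ and $\limsup_n\overline{X}_n(\theta^\infty)\le E_{\overline{\nu}}[X]$; so that $K$ is contained in the target event, and $\nu^\infty$ of that event equals $1$.

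The routine LLN invocations are not the difficulty. The real work is in the two preparatory facts. The first is obtaining the compact-set representation $P_\nu$ in the stated generality — this is where ``$\nu$ is a (topological) capacity'' is used, via Choquet's theorem and the inner regularity it supplies, together with the universal-measurability considerations quoted after Definition~\ref{def:belief-measure-choquet} needed to make $\underline{Z}_i,\overline{Z}_i$ measurable, and (for the pairwise-preindependent case) a representation of the preindependent family as an i.i.d.\ product of selections. The second, genuinely novel relative to the additive case, is the identity $E_\nu[X]=E_{P_\nu^\infty}[\underline{Z}_i]$ and its upper counterpart: this is precisely the statement that the Choquet integral of $X$ against a completely monotone capacity equals the ordinary expectation of its infimum selection, and it is here that \emph{complete} monotonicity — not mere monotonicity — and the finiteness of both Choquet expectations are essential, permitting the continuity-from-below condition on monotone sequences to be traded for topological regularity, as in \cite{Terán2014lln}.
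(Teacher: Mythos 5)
First, a point of reference: the paper does not prove this proposition at all — it is quoted from Ter\'an (\cite{Terán2014lln}, Theorem 1.1) — so your argument can only be judged on its own terms. On those terms it is essentially the Epstein--Seo/Shi product-space strategy (cf.\ (\ref{eq:belief-infinity}) and the surrounding discussion): the sandwich inclusion and the classical LLN invocations on the source space are correct as far as they go, but the two items you defer as ``preparatory facts'' are not preparatory — they are where the proof fails under the stated hypotheses. (i) The first assertion is stated for a completely monotone set function on a bare measurable space $(\Theta,\mathcal{B})$, with no topology; the Choquet representation on $\mathcal{K}(\Theta)$ (Proposition~\ref{pro:choquet}) requires $\Theta$ Polish together with the regularity conditions of Definition~\ref{def:belief-measure-choquet}, so the compact-random-set lift is simply unavailable for that part. (ii) More seriously, pairwise preindependence is a factorisation property of $\nu$ on joint preimages of the $X_i$, all of which live on the \emph{same} space $\Theta$; it does not supply a representation in which $\nu$ is an i.i.d.\ product over coordinates with $X_i$ depending only on the $i$-th coordinate, nor does it make the selections $\underline{Z}_i,\overline{Z}_i$ pairwise independent under the representing measure. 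By silently replacing $(\Theta,\nu)$ with $(\Theta^\infty,\nu^\infty)$ you also change the event whose capacity is computed (in the proposition it is a subset of $\Theta$, not of $\Theta^\infty$). What you have sketched is a proof of the canonical i.i.d.\ product model only.

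This matters because the content of the theorem — as the paper stresses in the paragraph immediately preceding it — is precisely that the law of large numbers survives with \emph{no} additional assumptions beyond finiteness of $E_\nu[X]$ and $E_{\overline{\nu}}[X]$: no Polishness or compactness, no continuity of the $X_n$ in the weak law, and no independence ``in an unreasonably strong sense''. Your route reintroduces all of these, i.e., it recovers the earlier Maccheroni--Marinacci/Epstein--Seo-type results rather than the stated one. A proof of the actual statement has to work directly with the capacity: for instance, complete monotonicity gives $2$-monotonicity and hence $\nu(A\cap B)\geq \nu(A)+\nu(B)-\nu(A\cup B)\geq \nu(A)+\nu(B)-1$, which, combined with product-type identities for Choquet integrals of preindependent variables and a truncation argument, replaces the role your source-space LLN plays. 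If you wish to keep your approach, you must either explicitly strengthen the hypotheses to match it, or prove that pairwise preindependence plus complete monotonicity forces the product representation you use — and the latter is false in general.
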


\subsection{Frequentist inference with random sets} \label{sec:future-frequentist} 

Random sets are mathematical objects detached from any specific interpretation. Just as probability measures are used by both Bayesians and frequentists for their analyses, random sets can also be employed in different ways according to the interpretation they are provided with. 

In particular, it is natural to imagine a generalised frequentist framework in which random experiments are designed by assuming a specific random-set distribution, rather than a conventional one, in order to better cope with the ever-occurring set-valued observations (see the Introduction).

\subsubsection{Parameterised families of random sets} \label{sec:future-families}

The first necessary step is to introduce parametric models based on random sets.

\begin{figure}[ht!]
\centering
\includegraphics[width = \textwidth]{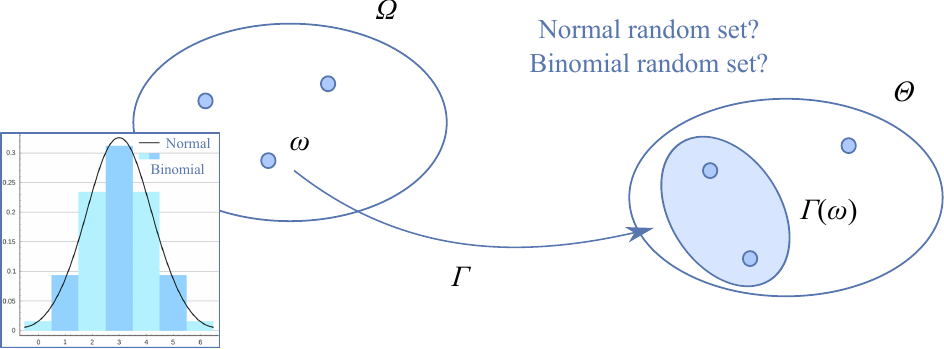}
\caption{Describing the family of random sets (right) induced by families of probability distributions in the source probability space (left) is a likely first step towards a generalisation of frequentist inference to random sets.}\label{fig:parameterised}
\end{figure}

Recall Dempster's random-set interpretation (Fig. \ref{fig:parameterised}).
Should the multivalued mapping $\Gamma$ which defines a random set be `designed', or derived from the problem?
For instance, in the cloaked die example discussed in the Introduction to \cite{cuzzolin2021springer}, 
it is the occlusion which generates the multivalued mapping and we have no control over it.
In other situations,
however, it may make sense to impose a parameterised family of mappings
\[
\Gamma(.|\pi) : \Omega \rightarrow 2^{\Theta}, \quad \pi \in \Pi,
\]
which, given a (fixed) probability on the source space $\Omega$, would yield as a result a parameterised family of random sets.

The alternative is to fix the multivalued mapping (e.g., when it is given by the problem), and model the source probability by a classical parametric model. A Gaussian or binomial family of source probabilities would then induce a family of `Gaussian'\footnote{Note that the Gaussian random sets so defined would be entirely distinct from Gaussian random sets as advocated in the context of the generalised central limit theorem setting.} or `binomial' random sets (see Fig. \ref{fig:parameterised} again). 

\begin{question} \label{que:binomial-random-sets}
What is the analytical form of random sets induced by say, Gaussian, binomial or exponential families of (source) probability distributions?
\end{question}

\subsubsection{Hypothesis testing with random sets} \label{sec:future-hypothesis}

As we know, in hypothesis testing \cite{berger1997unified}
designing an experiment amounts to choosing a family of probability distributions which is assumed to generate the observed data. If parameterised families of random sets can be constructed, they can then be plugged into the frequentist inference machinery, after an obvious generalisation of some of the steps involved. 

Hypothesis testing with random sets would then read as follows:
\begin{itemize}
\item
State the relevant null hypothesis $H_0$ and the alternative hypothesis.
\item
State the assumptions about the form of the 
\emph{random set} (or \emph{mass assignment}, as opposed to a classical distribution) describing the observations.
\item
State the relevant test statistic $T$ (a quantity derived from the sample) -- only {this time the sample may contain set-valued observations!}
\item
Derive the 
\emph{mass assignment} (rather than probability distribution) of the test statistic under the null hypothesis (from the assumptions).
\item
Set a significance level ($\alpha$).
\item
Compute from the observations the observed value $t_\text{obs}$ of the test statistic $T$ -- this will also now be set-valued.
\item
Calculate the 
\emph{conditional belief value} (playing the role of the classical p-value) under $H_0$ of sampling a test statistic at least as extreme as the observed value.
\item
Reject the null hypothesis, in favour of the alternative hypothesis, if and only if 
the above {conditional belief value} is less than the significance level.
\end{itemize}

\begin{question} \label{que:hypothesis-testing}
How would results from random-set hypothesis testing relate to classical hypothesis testing applied to parameterised distributions within the assumed random set?
\end{question}

\subsection{Random-set random variables} \label{sec:future-random-variables}

We know that random sets are set-valued random variables: nevertheless, the question remains as to whether one can build random variables on top of random-set (belief) spaces, rather than the usual probability space. It would be natural to term such objects \emph{random-set random variables}.

Just as in the classical case, we would need a mapping from $\Theta$ to a measurable space (e.g. the positive real half-line)
\[
f : \Theta \rightarrow \mathbb{R}_+ = [0,+\infty],
\]
where this time $\Theta$ is itself the codomain of a multivalued mapping $\Gamma: \Omega \rightarrow 2^\Theta$, with source probability space $\Omega$.


\subsubsection{Generalising the Radon--Nikodym derivative} \label{sec:future-radon}

For a classical continuous random variable $X$ on a measurable space $(\Theta,\mathcal{F}(\Theta))$, we can compute its PDF as its \emph{Radon--Nikodym derivative} (RND), namely the measurable function $p : \Theta \rightarrow [0,\infty)$ such that for any measurable subset $A \subset \Theta$,
\[
P[X \in A] = \int_A p \; \mathrm{d} \mu,
\]
where $\mu$ is a $\sigma$-finite measure on $(\Theta,\mathcal{F}(\Theta))$.
We can pose ourselves the following research question. 
\begin{question} \label{que:rsrv}
Can a (generalised) PDF be defined for a random-set random variable as defined above? 
\end{question}

Answering Question \ref{que:rsrv} requires extending the notion of a Radon--Nikodym derivative to random sets.
Graf \cite{Graf1980} pioneered the study of Radon--Nikodym derivatives for the more general case of \emph{capacities}. The extension of the RND to the even more general case of \emph{set functions} was studied by Harding et al. in 1997 \cite{Harding1997}. The problem was investigated more recently by Rebille in 2009 \cite{RePEc:hal:wpaper:hal-00441923}.
The following summary of the problem is abstracted from Molchanov's Theory of Random Sets \cite{Molchanov05}.

\subsubsection{Absolute continuity} 

Let us assume that the two capacities $\mu$, $\nu$ are monotone, subadditive and continuous from below.
\begin{definition}
A capacity $\nu$ is \emph{absolutely continuous} with respect to another capacity $\mu$ if, for every $A \in \mathcal{F}$, $\nu(A) = 0$ whenever $\mu(A)=0$.
\end{definition}

This definition is the same as for standard measures. However, whereas for
standard measures absolute continuity is equivalent to the integral relation $\mu = \int \nu$ (the existence of the corresponding RND), this is no longer true for general capacities.
To understand this, consider the case of a finite $\Theta$, $|\Theta|=n$. Then any measurable function $f:\Theta \rightarrow \mathbb{R}_+$ is determined by just $n$ numbers, which do not suffice to uniquely define a capacity on $2^\Theta$ (which has $2^n$ degrees of freedom) \cite{Molchanov05}.

\subsubsection{Strong decomposition} \label{sec:future-strong-decomposition}

Nevertheless, a Radon--Nikodym theorem for capacities can be established after introducing the notion of \emph{strong decomposition}.

Consider a pair of capacities $(\mu,\nu)$ that are monotone, subadditive and continuous from below.
\begin{definition} \label{def:strong-decomposition}
The pair $(\mu,\nu)$ is said to have the \emph{strong decomposition} property if, $\forall \alpha \geq 0$, there exists a measurable set $A_\alpha \in \mathcal{F}$ such that
\[
\begin{array}{ll}
\alpha (\nu(A)-\nu(B)) \leq \mu(A) - \mu(B) & \text{if} \; B \subset A \subset A_\alpha, 
\\
\alpha (\nu(A) - \nu(A \cap A_\alpha)) \geq \mu(A) - \mu(A \cap A_\alpha) & \forall A.
\end{array}
\]
\end{definition}

Roughly speaking, the strong decomposition condition states that, for each bound $\alpha$, the `incremental ratio' of the two capacities is bounded by $\alpha$ in the sub-power set capped by some event $A_\alpha$.
Note that all standard measures possess the strong decomposition property.

\subsubsection{A Radon--Nikodym theorem for capacities}

The following result was proved by Graf \cite{Graf1980}.
\begin{theorem} \label{the:rnd-capacities}
For every two capacities $\mu$ and $\nu$, $\nu$ is an indefinite integral of $\mu$ if and only if the pair $(\mu,\nu)$ has the strong decomposition property \emph{and} $\nu$ is absolutely continuous with respect to $\mu$.
\end{theorem}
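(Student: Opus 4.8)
The plan is to prove the two implications of the biconditional separately; the forward (necessity) direction is short, and essentially all of the content sits in the converse, which is where Graf's construction operates. For \emph{necessity}, suppose $\nu$ is the indefinite integral of $\mu$, i.e.\ there is a measurable density $f:\Theta\to[0,\infty]$ with $\nu(A)=\int_A f\,\mathrm d\mu$ for every $A\in\mathcal F$ (understood, as is appropriate for non-additive $\mu$, as the Lebesgue--Stieltjes integral $\int_0^\infty t\,\mathrm d\big[\mu(A\cap\{f\le t\})\big]$, the layer function $t\mapsto\mu(A\cap\{f\le t\})$ being nondecreasing). Absolute continuity is then immediate: $\mu(A)=0$ forces that layer function to vanish identically, so $\nu(A)=0$. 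For the strong decomposition property I would take, for each $\alpha>0$, the set $A_\alpha$ to be (up to a $\mu$-null modification) the sublevel set $\{f\le 1/\alpha\}$, with $A_0=\Theta$; the two inequalities of Definition~\ref{def:strong-decomposition} then just record that the ``density'' of $\nu$ with respect to $\mu$ is at most $1/\alpha$ on $A_\alpha$ and at least $1/\alpha$ on its complement, and one checks them directly from the integral representation. The single point that is not a verbatim copy of the classical argument is that $A\mapsto\int_A f\,\mathrm d\mu$ is not additive, so the comparison of a $\nu$-increment with the corresponding $\mu$-increment over the same slice has to be run through the monotonicity and subadditivity of $\mu$ rather than by splitting $A\setminus B$.

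For \emph{sufficiency}, assume the strong decomposition property and absolute continuity. For each rational $\alpha\ge 0$ fix a witnessing set $A_\alpha\in\mathcal F$. The first step, and the delicate one, is \emph{regularisation}: by a countable intersection/union argument over the rationals, leaning on continuity from below of $\mu$ and $\nu$, one arranges that $(A_\alpha)_\alpha$ may be taken decreasing in $\alpha$ and jointly measurable while still satisfying both inequalities. Writing $B_t=A_{1/t}$ for $t>0$, which is then an increasing family (informally $B_t=\{f\le t\}$), the function $f(\omega)\doteq\inf\{t>0:\omega\in B_t\}$, with $f(\omega)=\infty$ if that set is empty, is measurable and is the only candidate for the density. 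It remains to verify $\nu(A)=\int_A f\,\mathrm d\mu$ for every $A$. Fixing $A$ and a partition $0=t_0<t_1<\cdots<t_N$, I would telescope $\nu(A\cap B_{t_N})$ over the layers $A\cap(B_{t_{k+1}}\setminus B_{t_k})$ and apply the first strong-decomposition inequality to the pair $(A\cap B_{t_k},\,A\cap B_{t_{k+1}})$, both sets lying inside $A_{1/t_{k+1}}=B_{t_{k+1}}$, to bound $\nu(A\cap B_{t_N})$ above by $\sum_k t_{k+1}\big(\mu(A\cap B_{t_{k+1}})-\mu(A\cap B_{t_k})\big)$, and the second inequality to bound it below by the same sum with $t_k$ in place of $t_{k+1}$. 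These are the upper and lower Riemann--Stieltjes sums of $f$ against the nondecreasing function $t\mapsto\mu(A\cap B_t)$, whose common value as the mesh shrinks is exactly $\int_A f\,\mathrm d\mu$; letting $t_N\to\infty$ with continuity from below of $\nu$, and using absolute continuity (together with the second inequality in the limit $\alpha\to 0$) to dispose of the degenerate layers $\{f=0\}$ and $\{f=\infty\}$, one obtains $\nu(A)=\int_A f\,\mathrm d\mu$.

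The main obstacle is everything the classical Radon--Nikodym argument lets one skip. There is no Hahn decomposition and no Hilbert-space projection here, so the sets $A_\alpha$ are produced one parameter at a time with no built-in compatibility, and fusing them into a single nested, jointly measurable family that still validates both inequalities is the genuinely hard step — it is precisely where subadditivity and continuity from below of the capacities must be used at full strength, and where the proof departs most sharply from the additive case. Once that monotone family, hence $f$, is in hand, the Riemann--Stieltjes sandwich is bookkeeping, but one must still treat the boundary layers at $0$ and $\infty$ with care, absolute continuity being the only available tool there. The complete argument is Graf's \cite{Graf1980}, which should be cited for the details.
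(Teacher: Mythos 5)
The paper does not actually prove this statement: Theorem~\ref{the:rnd-capacities} is quoted as Graf's result, with \cite{Graf1980} cited in lieu of an argument (the surrounding material is abstracted from Molchanov's book). So there is no in-paper proof to compare yours against, and your sketch has to be judged on its own merits. On those terms it is a credible reconstruction of the shape of Graf's argument: the identification $A_\alpha\approx\{f\le 1/\alpha\}$ is the right reading of Definition~\ref{def:strong-decomposition} (the first inequality caps the incremental ratio of $\nu$ against $\mu$ inside $A_\alpha$, the second bounds it from below outside), the candidate density $f(\omega)=\inf\{t>0:\omega\in B_t\}$ is the correct one, and the telescoping of $\nu(A\cap B_{t_N})$ through the layers, with the two strong-decomposition inequalities supplying the upper and lower Riemann--Stieltjes sums, is exactly the right sandwich. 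You are also right, and commendably explicit, about where the real work is.

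That said, as a proof the proposal is incomplete at precisely the step you flag. The regularisation of the one-parameter-at-a-time sets $A_\alpha$ into a single nested measurable family that still satisfies \emph{both} inequalities is asserted via ``a countable intersection/union argument over the rationals'' but not carried out, and it is not routine: the capacities here are only monotone, subadditive and continuous from below (no continuity from above, no additivity), so replacing $A_\alpha$ by, say, $\bigcap_{\beta<\alpha}A_\beta$ or $\bigcup_{\beta>\alpha}A_\beta$ does not obviously preserve the two-sided increment bounds. This is the content of Graf's paper, and deferring it to the citation means your write-up proves no more than the paper's own one-line attribution. A second, quieter gap: the theorem's phrase ``indefinite integral'' is never defined in the paper, and the object your Riemann--Stieltjes sums converge to, namely $\int_0^\infty t\,\mathrm{d}\bigl[\mu(A\cap\{f\le t\})\bigr]$, does not automatically coincide with the Choquet integral $\int_0^\infty \mu(A\cap\{f>t\})\,\mathrm{d}t$ that Graf (and Molchanov) use, because for a non-additive $\mu$ one has in general $\mu(A\cap\{f\le t\})+\mu(A\cap\{f>t\})\neq\mu(A)$. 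Your necessity direction is internally consistent with your own definition, but you should either adopt the Choquet integral throughout and redo the sandwich against the decreasing layer function $t\mapsto\mu(A\cap\{f>t\})$, or prove that the two notions agree for the density you construct; as written, the two halves of your biconditional could be talking about two different integrals.
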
 

Nevertheless, a number of issues remain open. Most relevantly to our ambition to develop a statistical theory based on random sets, the conditions of Theorem \ref{the:rnd-capacities} (which holds for general capacities) need to be elaborated for the case of completely alternating capacities (distributions of random closed sets).
As a first step, Molchanov notes (\cite{Molchanov05}, page 75) that the strong decomposition property for two capacity functionals $\nu = T_X$ and $\mu=T_Y$ associated with the random sets $X$, $Y$ implies that
\[
\alpha {T}_X(\mathcal{C}_A^B) \leq {T}_Y(\mathcal{C}_A^B)
\]
if $B \subset A \subset A_\alpha$, and
\[
\alpha {T}_X(\mathcal{C}_A^{A \cap A_\alpha}) \geq {T}_Y(\mathcal{C}_A^{A \cap A_\alpha}) \; \forall A,
\]
where $\mathcal{C}_A^B = \{C \in \mathcal{C}, B \subset C \subset A \}$.

A specific result on the Radon--Nikodym derivative for random (closed) sets is still missing, and with it the possibility of generalising the notion of a PDF to these more complex objects. 

\section{Extending the geometric approach} \label{sec:future-geometric}

\subsection{The geometry of uncertainty}

As we argued in \cite{cuzzolin2021springer},
\emph{geometry} may be a unifying language, not only for random set and evidential theory, but for uncertainty measures more generally 
\cite{cuzzolin18belief-maxent,Cuzzolin99,cuzzolin05isipta,cuzzolin00mtns,cuzzolin13fusion,gennari02-integrating,gong2017belief,black97geometric,rota97book,ha98geometric,wang91geometrical}, possibly in conjunction with an algebraic perspective \cite{cuzzolin00rss,cuzzolin01bcc,cuzzolin08isaim-matroid,cuzzolin01lattice,cuzzolin05amai,cuzzolin07bcc,cuzzolin14algebraic}.

{In our 
geometric approach to uncertainty \cite{cuzzolin2021springer}, in particular, uncertainty measures 
are assimilated to points of a geometric space, where that can be processed via combination, conditioning and so on
\cite{cuzzolin01thesis,cuzzolin2008geometric}.
Most 
{efforts have focused}
on the geometry of belief functions, 
{as elements of a}
\emph{belief space} 
{representable as} either
a simplex 
({the generalisation of a triangle in higher dimensions})
or a recursive bundle structure \cite{cuzzolin01space,cuzzolin03isipta,cuzzolin14annals,cuzzolin14lap}.}
The approach can be applied to combination rules, such as Dempster's sum, resulting in rather elegant geometric procedures for them \cite{cuzzolin02fsdk,cuzzolin04smcb}.
The combinatorial 
{features}
of plausibility and commonality functions, 
{set functions which carry the same information as the associated BF,}
have also been 
{touched upon}
\cite{cuzzolin08pricai-moebius,cuzzolin10ida}.
\\
The geometric approach 
{has been subsequently extended}
to other {types} uncertainty measures, 
{such as}
possibility measures 
\cite{cuzzolin10fss}, and {special classes of belief functions such as} consistent {BFs} \cite{cuzzolin11-consistent,cuzzolin2008lp,cuzzolin09isipta-consistent,cuzzolin08isaim-simplicial}, 
{via the notion of}
simplicial {complex} \cite{cuzzolin04ipmu}. 
{The geometry of BFs as} credal sets {has also been investigated}
\cite{cuzzolin08-credal,antonucci10-credal,burger10brest}.

{The geometry of the mappings between different forms of uncertainty measures has been considered}
\cite{cuzzolin05hawaii,cuzzolin09-intersection,cuzzolin07ecsqaru,cuzzolin2010credal},
{specifically the task of mapping a}
belief function 
{to a}
probability measure \cite{Cobb03isf,voorbraak89efficient,Smets:1990:CPP:647232.719592}. 
Belief functions can be approximated using possibility measures too
\cite{aregui08constructing}, a problem which
can be approached in geometric terms as well \cite{cuzzolin09ecsqaru,cuzzolin11isipta-consonant,Cuzzolin2014tfs}. Namely,
one can analytically compute and analyse approximations obtained by minimising standard $L_p$ norms and compare with outer consonant approximations \cite{Dubois90}.
Consistent approximations of belief functions induced by classical Minkowski norms may also be derived \cite{cuzzolin11-consistent}
and compared with classical outer consonant approximations \cite{Dubois90}. 

{Geometric reasoning}
can be applied to most if not all aspects of making inferences from data in the framework of epistemic uncertainty,
{including}
conditioning 
\cite{lehrer05updating},  
{by associating a}
conditioning event 
{$A$ with a}
`conditioning simplex' 
\cite{cuzzolin10brest,cuzzolin11isipta-conditional}. 
{Transformations mapping belief functions to probabilities can be interpreted}
in terms of credal sets 
\cite{cuzzolin2010credal}.

\subsection{Open questions}

Nevertheless, the geometric approach to uncertainty 
still
has much room for further development. 
For instance:
\begin{question} \label{que:geometric-inference}
Can the inference problem can be posed in a geometric setting too, by representing both data and belief measures in a common geometric space?
\end{question}

An answer to such an intriguing question would require a geometric representation general enough to encode both the data driving the inference \emph{and} the (belief) measures possibly resulting from the inference, in such a way that the inferred measure minimises some sort of distance from the empirical data.
The question of what norm is the most appropriate to minimise for inference purposes would also arise.

On the other hand, while \cite{cuzzolin2021springer} has mainly concerned itself with the geometric representation of finite belief measures, as we move on from belief fuctions on finite frames to random sets on arbitrary domains, the nature of the geometry of these continuous formulations poses new questions.
The formalism needs to tackle (besides probability, possibility and belief measures) other important mathematical descriptions of uncertainty, first of all general monotone capacities and gambles (or variations thereof).

Finally, new, more sophisticated geometric representations of belief measures can be sought, in terms of either exterior algebras or areas of projections of convex bodies.

Some of these aspects are briefly considered in this section.

\subsection{Geometry of general combination} \label{sec:future-combination} 

The study of the geometry of the notion of evidence combination and belief updating, which started with
the analysis of Dempster's rule provided in \cite{cuzzolin04smcb}, 
will need to be extended to the geometric behaviour of the other main combination operators.
A comparative geometric analysis of combination rules would allow us to describe the `cone' of possible future belief states under stronger or weaker assumptions about the reliability and independence of sources.

We can start by giving a general definition of a conditional subspace (see \cite{cuzzolin2021springer}, Chapter 8).
\begin{definition} \label{def:conditional-subspace}
Given a belief function $Bel \in \mathcal{B}$, we define the \emph{conditional subspace} $\langle Bel \rangle_\odot$ as the set of all $\odot$ combinations of $Bel$ with {any} other belief function on the same frame, where $\odot$ is an arbitrary combination rule, assuming such a combination exists. Namely,
\begin{equation}
\langle Bel \rangle_\odot \doteq \Big \{ Bel \odot Bel', \; Bel' \in \mathcal{B} \; s.t. \; \exists \; Bel \odot Bel' \Big \}.
\end{equation}
\end{definition} 

\subsubsection{Geometry of Yager's and Dubois's rules} \label{sec:future-combination-yager}

Alternatives to the Dempster rule of combination were proposed by both Yager and Dubois. 
The latter's proposal is based on the view that conflict is generated by non-reliable information sources. In response, the conflicting mass (denoted here by $m_\cap(\emptyset)$) is reassigned to the whole frame of discernment $\Theta$:
\begin{equation} \label{eq:combination-yager}
m_Y(A) = \left \{ \begin{array}{ll} m_\cap (A) & \emptyset \neq A \subsetneq \Theta, 
\\ 
m_\cap (\Theta) + m_\cap(\emptyset) & A = \Theta, \end{array} \right .
\end{equation}
where $m_\cap (A) \doteq \sum_{B \cap C = A} m_1(B) m_2(C)$ and $m_1,m_2$ are the masses to be combined.

The combination operator proposed by Dubois and Prade\footnote{We follow here the same notation as in \cite{Lefevre2002149}.} \cite{dubois88representation} comes from applying the minimum specificity principle \cite{dubois87principle} to cases in which the focal elements $B,C$ of two input belief functions do not intersect. 

This results in assigning their product mass to $B \cup C$, namely
\begin{equation} \label{eq:combination-dubois}
m_D(A) = m_\cap (A) + \sum_{B \cup C = A, B \cap C = \emptyset} m_1(B) m_2(C).
\end{equation}

\paragraph{Analysis on binary frames}

On binary frames $\Theta = \{x,y\}$, Yager's rule (\ref{eq:combination-yager}) and Dubois's rule (\ref{eq:combination-dubois}) coincide, 
as the only conflicting focal elements there are $\{x\}$ and $\{y\}$, whose union is $\Theta$ itself. Namely,
\begin{equation}\label{eq:yager-binary}
\begin{array}{lll}
m_{\text{\oyager}}(x) & = & m_1(x) (1-m_2(y)) + m_1(\Theta) m_2(x),
\\
m_{\text{\oyager}}(y) & = & m_1(y) (1-m_2(x)) + m_1(\Theta) m_2(y),
\\
m_{\text{\oyager}}(\Theta) & = & m_1(x)m_2(y) + m_1(y) m_2(x) + m_1(\Theta) m_2(\Theta).
\end{array}
\end{equation}
Using (\ref{eq:yager-binary}), we can easily show that
\begin{equation}\label{eq:vertices-yager}
\begin{array}{lll}
Bel \text{\oyager} Bel_x & = & [m(x) + m(\Theta), 0, m(y)]',
\\
Bel \text{\oyager} Bel_y & = & [0, m(y) + m(\Theta), m(x)]',
\\
Bel \text{\oyager} Bel_\Theta & = & Bel = [m(x), m(y), m(\Theta)],
\end{array}
\end{equation}
adopting the usual vector notation $Bel = [Bel(x), Bel(y), Bel(\Theta)]'$.

The global behaviour of Yager's (and Dubois's) rule in the binary case is then pretty clear: the conditional subspace $\langle Bel \rangle_{\text{\text{\oyager}}}$ is the convex closure
\[
\langle Bel \rangle_{\text{\oyager}} = Cl(Bel, Bel \text{\oyager} Bel_x, Bel \text{\oyager} Bel_y)
\]
of the points (\ref{eq:vertices-yager}) in the belief space: see Fig. \ref{fig:yager}. 

\begin{figure}[ht!]
\centering
\includegraphics[width=0.75\textwidth]{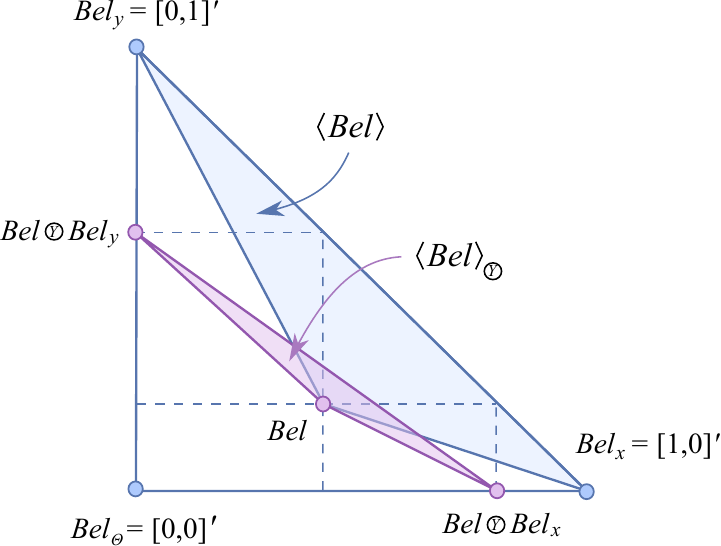}
\caption{Conditional subspace $\langle Bel \rangle_{\text{\oyager}}$ for Yager's (and Dubois's) combination rule on a binary frame $\Theta = \{x,y\}$. Dempster's conditional subspace $\langle Bel \rangle$ is also shown for the sake of comparison.} \label{fig:yager}
\end{figure} 

Comparing (\ref{eq:yager-binary}) with (\ref{eq:vertices-yager}), it is easy to see that
\[
Bel_1 \text{\oyager} Bel_2 = m_2(x) Bel_1 \text{\oyager} Bel_x + m_2(y) Bel_1 \text{\oyager} Bel_y
+ m_2(\Theta) Bel_1 \text{\oyager} Bel_\Theta,
\]
i.e., the simplicial coordinates of $Bel_2$ in the binary belief space $\mathcal{B}_2$ and of the Yager combination $Bel_1 \text{\oyager} Bel_2$ in the conditional subspace $\langle Bel_1 \rangle_{\text{\oyager}}$ coincide.

We can then conjecture the following.
\begin{conjecture} \label{con:yager-commute}
Yager combination and affine combination commute. Namely,
\[
Bel {\text{\oyager}} \left ( \sum_{i} \alpha_i Bel_i \right ) =  \sum_{i} \alpha_i Bel {\text{\oyager}} Bel_i.
\]
\end{conjecture}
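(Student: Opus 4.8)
\emph{Proof idea.} The plan is to reduce the statement to the observation, recorded immediately before the conjecture, that on a binary frame Yager's rule with a fixed first argument is an \emph{affine} map of its second argument; commutation with affine (in particular convex) combinations is then automatic. Throughout I assume, as ``affine combination'' requires, that $\sum_i \alpha_i = 1$.

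First I would set up coordinates: identify any belief function $Bel'$ on $\Theta=\{x,y\}$ with its mass vector $(m'(x),m'(y),m'(\Theta))$, recalling that $Bel'$ and its mass function are related by the invertible linear M\"obius transform, so that the affine combination $\sum_i \alpha_i Bel_i$ corresponds to the mass vector $(\sum_i \alpha_i m_i(x),\, \sum_i \alpha_i m_i(y),\, \sum_i \alpha_i m_i(\Theta))$, whose entries still sum to $1$. When the $\alpha_i$ are moreover chosen so that these entries stay non-negative, this is a genuine belief function and $Bel\,\text{\oyager}\,(\sum_i \alpha_i Bel_i)$ is a well-defined element of $\langle Bel\rangle_{\text{\oyager}}$.

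The key step is the decomposition obtained by comparing (\ref{eq:yager-binary}) with (\ref{eq:vertices-yager}): for every $Bel$ and every $Bel_2$ on $\{x,y\}$,
\[
Bel\,\text{\oyager}\,Bel_2 \;=\; m_2(x)\,\big(Bel\,\text{\oyager}\,Bel_x\big) + m_2(y)\,\big(Bel\,\text{\oyager}\,Bel_y\big) + m_2(\Theta)\,\big(Bel\,\text{\oyager}\,Bel_\Theta\big).
\]
I would first verify this component by component (it says exactly that $Bel_2\mapsto Bel\,\text{\oyager}\,Bel_2$ is linear in the mass/barycentric coordinates of $Bel_2$; the three scalar formulas in (\ref{eq:yager-binary}) are each homogeneous linear in $(m_2(x),m_2(y),m_2(\Theta))$ after using $m_2(x)+m_2(y)+m_2(\Theta)=1$). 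Granting it, I apply the decomposition with $Bel_2=\sum_i \alpha_i Bel_i$, substitute the mass coordinates computed above, interchange the two finite sums, and recollect the three terms attached to each index $i$:
\begin{align*}
Bel\,\text{\oyager}\,\Big(\sum_i \alpha_i Bel_i\Big)
&= \sum_i \alpha_i\Big(m_i(x)\,Bel\,\text{\oyager}\,Bel_x + m_i(y)\,Bel\,\text{\oyager}\,Bel_y + m_i(\Theta)\,Bel\,\text{\oyager}\,Bel_\Theta\Big) \\
&= \sum_i \alpha_i\,\big(Bel\,\text{\oyager}\,Bel_i\big),
\end{align*}
the last equality being the displayed decomposition read backwards, once for each $Bel_i$. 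This proves the conjecture.

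The computation is routine; the only genuine content, and the reason the argument is confined to the binary case, is the linearity of $\text{\oyager}$ in the mass coordinates of its second argument. On frames of cardinality $\geq 3$ this fails, because the reassignment of the conflicting mass $m_\cap(\emptyset)$ to $\Theta$ depends nonlinearly on which pairs of focal elements happen to be disjoint; so the main obstacle to any generalisation would be to identify the faces of $\mathcal{B}$ on which $\text{\oyager}$ (or a suitably corrected variant) is still affine, and to argue commutation only there. A secondary technical point is the domain: one should either restrict to those affine combinations $\sum_i \alpha_i Bel_i$ that remain inside $\mathcal{B}$, or else read both sides of the claimed identity as formal affine combinations of mass vectors, in which case the equality holds at the level of vectors regardless of non-negativity, since $Bel_2\mapsto Bel\,\text{\oyager}\,Bel_2$ then extends to an affine map of the whole affine hull of $\mathcal{B}$.
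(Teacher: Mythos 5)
Your computation for the binary frame is correct and follows exactly the route the paper suggests: the displayed decomposition of $Bel\,\text{\oyager}\,Bel_2$ over the three vertex images is precisely the identity the paper derives from (\ref{eq:yager-binary}) and (\ref{eq:vertices-yager}) immediately before stating the conjecture, and your interchange of finite sums is the routine step the paper leaves implicit. Bear in mind, though, that the paper offers no proof at all --- the statement is left as a conjecture --- so what you have really supplied is the missing argument for the binary case.

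The genuine problem is your closing claim that the argument is confined to $|\Theta|=2$ because ``the reassignment of the conflicting mass $m_\cap(\emptyset)$ to $\Theta$ depends nonlinearly on which pairs of focal elements happen to be disjoint.'' This is not so. Which pairs $(B,C)$ satisfy $B\cap C=\emptyset$ is a purely set-theoretic fact about subsets of $\Theta$, independent of the mass values, so $m_\cap(\emptyset)=\sum_{B\cap C=\emptyset}m_1(B)\,m_2(C)$ is a fixed bilinear form, exactly like every other $m_\cap(A)$; and Yager's reassignment, $m_Y(\Theta)=m_\cap(\Theta)+m_\cap(\emptyset)$ and $m_Y(A)=m_\cap(A)$ otherwise, is a linear function of $m_\cap$. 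Hence for fixed $Bel$ the map $m_2\mapsto m_Y$ is linear on \emph{any} finite frame; since the M\"obius transform is linear and invertible and affine combinations with $\sum_i\alpha_i=1$ preserve the normalisation $\sum_A m(A)=1$, your own argument proves the statement in full generality, not only for $|\Theta|=2$. The operation that genuinely destroys affinity is Dempster's normalisation by $1/(1-m_\cap(\emptyset))$, which Yager's rule is designed to avoid. So rather than flagging a spurious obstruction, you should observe that your proof upgrades the conjecture to a theorem on arbitrary finite frames (the decomposition then runs over all categorical belief functions $Bel_A$ with coefficients $m_2(A)$), and that the same bilinearity argument covers Dubois's rule (\ref{eq:combination-dubois}) as well.
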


As commutativity is the basis of our geometric analysis of Dempster's rule (see \cite{cuzzolin2021springer} Chapter 8), this opens the way for a similar geometric construction for Yager's rule.

\begin{figure}[ht!]
\centering
\includegraphics[width=1\textwidth]{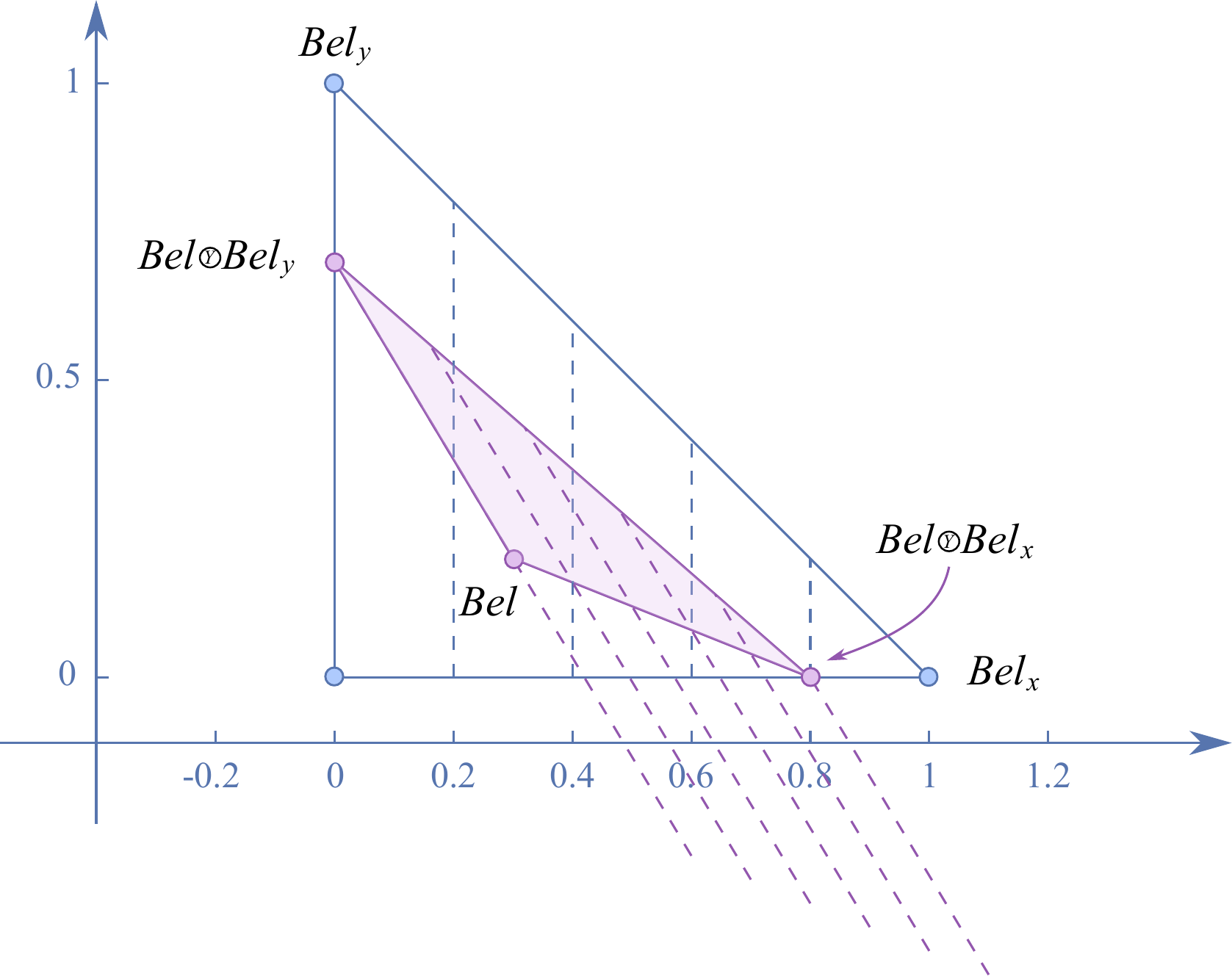}
\caption{In the case of the Yager combination, the images of constant-mass loci (dashed blue segments) do not converge to a focus, but are parallel lines (dashed purple lines; compare \cite{cuzzolin2021springer}, Fig 8.5).} \label{fig:yager-constant-mass}
\end{figure}

\paragraph{Geometric construction}

However, as shown in Fig. \ref{fig:yager-constant-mass}, the images of constant-mass loci under Yager's rule are parallel, so that the related conditional subspaces have no foci.
Indeed, from (\ref{eq:yager-binary}),
\[
\lim_{m_2(y)\rightarrow - \infty} \frac{m_{\text{\oyager}}(y)}{m_{\text{\oyager}}(x)} = \frac{m_1(y) (1-m_2(x)) + m_1(\Theta) m_2(y)}{m_1(x) (1-m_2(y)) + m_1(\Theta) m_2(x)} = - \frac{m_1(\Theta)}{m_1(x)},
\]
and similarly for the loci with $m_2(y) =$ const.

Nevertheless, the general principle of geometric construction of intersecting the linear spaces which are images of constant-mass loci still holds. 

\subsubsection{Geometry of disjunctive combination} \label{sec:future-combination-disjunctive}

Disjunctive combination is the natural, cautious dual of conjunctive/Dempster combination. The operator follows from the assumption that the consensus between two sources of evidence is best represented by the union of the supported hypotheses, rather than by their intersection.

Again, here we will briefly analyse its geometry in the case of a binary frame, and make conjectures about its general behaviour. We will first consider normalised belief functions, to later extend our analysis to the unnormalised case.

\paragraph{Global behaviour} Let us first understand the shape of the disjunctive conditional subspace. By definition,
\begin{equation}\label{eq:disjunctive-binary}
\begin{array}{c}
m_{\ocup}(x) = m_1(x) m_2(x), \quad m_{\ocup}(y) = m_1(y) m_2(y),
\\
m_{\ocup}(\Theta) = 1 -  m_1(x)m_2(x) - m_1(y) m_2(y),
\end{array}
\end{equation}
so that, adopting the usual vector notation $[Bel(x), Bel(y), Bel(\Theta)]'$,
\begin{equation}\label{eq:vertices-disjunctive}
\begin{array}{c}
Bel \ocup Bel_x = [m(x), 0, 1 - m(x)]',
\quad
Bel \ocup Bel_y = [0, m(y), 1- m(y)]',
\\
Bel \ocup Bel_\Theta = Bel_\Theta.
\end{array}
\end{equation}
The conditional subspace $\langle Bel \rangle_{\ocup}$ is thus the convex closure
\[
\langle Bel \rangle_{\ocup} = Cl(Bel, Bel \ocup Bel_x, Bel \ocup Bel_y)
\]
of the points (\ref{eq:vertices-disjunctive}): see Fig. \ref{fig:ocup}.
As in the Yager case,
\[
\begin{array}{lll}
& & 
Bel \ocup [\alpha Bel' + (1-\alpha) Bel''] 
\\
& = & \Big [ m(x) (\alpha m'(x) + (1-\alpha) m''(x)),
m(y) (\alpha m'(y) + (1-\alpha) m''(y)) \Big ]'
\\
& = & \alpha Bel \ocup Bel' + (1-\alpha) Bel \ocup Bel'',
\end{array}
\]
i.e., \emph{$\ocup$ commutes with affine combination}, at least in the binary case.

\begin{question} \label{que:ocup-commute}
Does disjunctive combination commute with affine combination in general belief spaces?
\end{question}

\begin{figure}[ht!]
\centering
\includegraphics[width=0.8\textwidth]{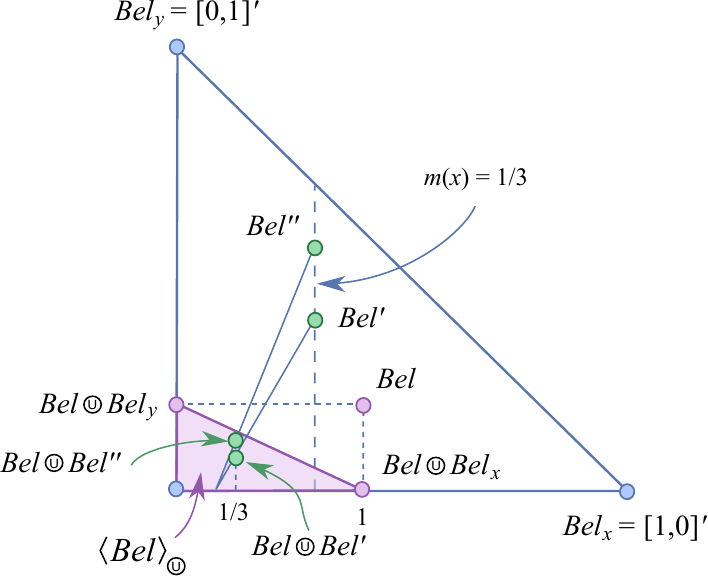}
\caption{Conditional subspace $\langle Bel \rangle_{\ocup}$ for disjunctive combination on a binary frame. The pointwise behaviour of $\ocup$ is also illustrated (see text).} \label{fig:ocup}
\end{figure}

\paragraph{Pointwise behaviour} As in the Yager case, for disjunctive combination the images of constant-mass loci are parallel to each other. In fact, they are parallel to the corresponding constant-mass loci and the coordinate axes (observe, in Fig. \ref{fig:ocup}, the locus $m(x) = \frac{1}{3}$ and its image in the conditional subspace $\langle Bel \rangle_{\ocup}$). 

We can prove the following.
\begin{theorem}
In the binary case $\Theta = \{x,y\}$, the lines joining $Bel'$ and $Bel \ocup Bel'$ for any $Bel' \in \mathcal{B}$ intersect in the point
\begin{equation} \label{eq:intersection-ocup}
\overline{m(x)} = m'(x) \frac{m(x) - m(y)}{1 - m(y)}, \quad \overline{m(y)} = 0.
\end{equation}
\end{theorem}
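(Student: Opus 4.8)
The plan is to work throughout in \emph{mass coordinates}, identifying a belief function on $\Theta=\{x,y\}$ with the pair $(m(x),m(y))$, the third mass $m(\Theta)=1-m(x)-m(y)$ being redundant, so that $\mathcal{B}_2$ is the triangle $\{(m(x),m(y)):m(x),m(y)\geq 0,\ m(x)+m(y)\leq 1\}$. Fixing $Bel=(m(x),m(y))$ and writing $Bel'=(m'(x),m'(y))$ for the variable argument, equation (\ref{eq:disjunctive-binary}) gives at once $Bel\ocup Bel'=\big(m(x)\,m'(x),\,m(y)\,m'(y)\big)$, since disjunctive combination multiplies the singleton masses componentwise. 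This is the only structural input needed; everything else is a two-dimensional incidence calculation.

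First I would write the line through $Bel'$ and $Bel\ocup Bel'$ in parametric form,
\[
\ell(t)=(1-t)\,Bel'+t\,(Bel\ocup Bel')=\Big(m'(x)\big(1+t(m(x)-1)\big),\ m'(y)\big(1+t(m(y)-1)\big)\Big),
\]
and then intersect it with the facet $\{m(y)=0\}$ of the belief space. Assuming for the moment $m'(y)\neq 0$, the second coordinate of $\ell(t)$ vanishes precisely when $1+t(m(y)-1)=0$, i.e. at $t^\star=\tfrac{1}{1-m(y)}$, which is well defined as long as $m(y)\neq 1$ (i.e. $Bel\neq Bel_y$). Substituting $t^\star$ into the first coordinate and simplifying $1+\tfrac{m(x)-1}{1-m(y)}=\tfrac{m(x)-m(y)}{1-m(y)}$, one obtains
\[
\overline{m(x)}=m'(x)\,\frac{m(x)-m(y)}{1-m(y)},\qquad \overline{m(y)}=0,
\]
which is exactly (\ref{eq:intersection-ocup}); the companion value $\overline{m(\Theta)}=1-\overline{m(x)}$ follows by normalisation. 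The key observation to extract is that this crossing point depends on $Bel'$ \emph{only through} $m'(x)$: hence all the joining lines associated with belief functions $Bel'$ lying on one and the same constant-mass locus $\{m'(x)=\mathrm{const}\}$ pass through the single point (\ref{eq:intersection-ocup}), which is the disjunctive analogue of the Dempster focus (compare the behaviour of Yager's rule in Fig.~\ref{fig:yager-constant-mass}, where these foci degenerate to the point at infinity). Consistency with the explicit vertices (\ref{eq:vertices-disjunctive}) can be checked directly: $Bel'=Bel_x$ gives $\overline{m(x)}=\tfrac{m(x)-m(y)}{1-m(y)}$, a point on the line $m(y)=0$ that already contains both $Bel_x$ and $Bel\ocup Bel_x$, while $Bel'=Bel_y$ gives $\overline{m(x)}=0$, the correct intersection of the vertical line through $Bel_y$ and $Bel\ocup Bel_y$ with the $m(x)$-axis.

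Since the statement is a pure incidence computation in the plane, there is no deep obstacle; the one thing to handle with care is the set of boundary cases the parametrisation glosses over. When $m'(y)=0$ the point $Bel'$ already lies on $\{m(y)=0\}$ and so does its combination $Bel\ocup Bel'$, so the joining line is (a segment of) that facet itself and the claimed point lies on it trivially. When $m(y)=1$ — equivalently $Bel=Bel_y$, the case excluded by $1-m(y)\neq 0$ in (\ref{eq:intersection-ocup}) — the images $Bel\ocup Bel'=(0,m'(y))$ all lie on the opposite facet $\{m(x)=0\}$, the joining lines are horizontal and meet $\{m(y)=0\}$ only at infinity, so the finite intersection point genuinely fails to exist and the singularity of the formula is the correct answer. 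Modulo these degeneracies the argument above is complete.
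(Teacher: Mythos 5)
Your proof is correct and follows essentially the same route as the paper's: both write down the line in the plane $(m(x),m(y))$ joining $Bel'$ to $Bel\ocup Bel' = (m(x)m'(x),\,m(y)m'(y))$ and intersect it with the axis $m(y)=0$, the only cosmetic difference being your parametric form versus the paper's two-point slope form. Your explicit treatment of the degenerate cases $m'(y)=0$ and $m(y)=1$, and the remark that the resulting point depends on $Bel'$ only through $m'(x)$ (so that all lines issuing from a constant-mass locus share a focus), is a welcome addition that the paper leaves implicit.
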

\begin{proof}
Recalling the equation of the line joining two points $(\chi_1,\upsilon_1)$ and $(\chi_2,\upsilon_2)$ of $\mathbb{R}^2$, with coordinates $(\chi,\upsilon)$,
\[
\upsilon - \upsilon_1 = \frac{\upsilon_2 - \upsilon_1}{\chi_2 - \chi_1} (\chi - \chi_1),
\]
we can identify the line joining $Bel'$ and $Bel \ocup Bel'$ as
\[
\upsilon - m'(y) = \frac{m(y)m'(y) - m'(y)}{m(x)m'(x) - m'(x)} (\chi - m'(x)).
\]
Its intersection with $\upsilon = 0$ is the point (\ref{eq:intersection-ocup}), which does not depend on $m'(y)$ (i.e., on the vertical location of $Bel'$ on the constant-mass loci).
\end{proof} 

A valid geometric construction for the disjunctive combination $Bel \ocup Bel'$ of two belief functions in $\mathcal{B}_2$ is provided by simple trigonometric arguments. Namely (see Fig. \ref{fig:ocup-construction}): 
\begin{enumerate}
\item
Starting from $Bel'$, find its orthogonal projection onto the horizontal axis, with coordinate $m'(x)$ (point 1).
\item
Draw the line with slope $45^\circ$ passing through this projection, and intersect it with the vertical axis, at coordinate $\upsilon = m'(x)$ (point 2).
\item
Finally, take the line $l$ passing through $Bel_y$ and the orthogonal projection of $Bel$ onto the horizontal axis, and draw a parallel line $l'$ through point 2 -- its intersection with the horizontal axis (point 3) is the $x$ coordinate $m(x)m'(x)$ of the desired combination.
\end{enumerate}
A similar construction (shown in magenta) allows us to locate the $y$ coordinate of the combination (as also shown in Fig. \ref{fig:ocup-construction}). 

\begin{figure}[ht!]
\centering
\includegraphics[width=0.9\textwidth]{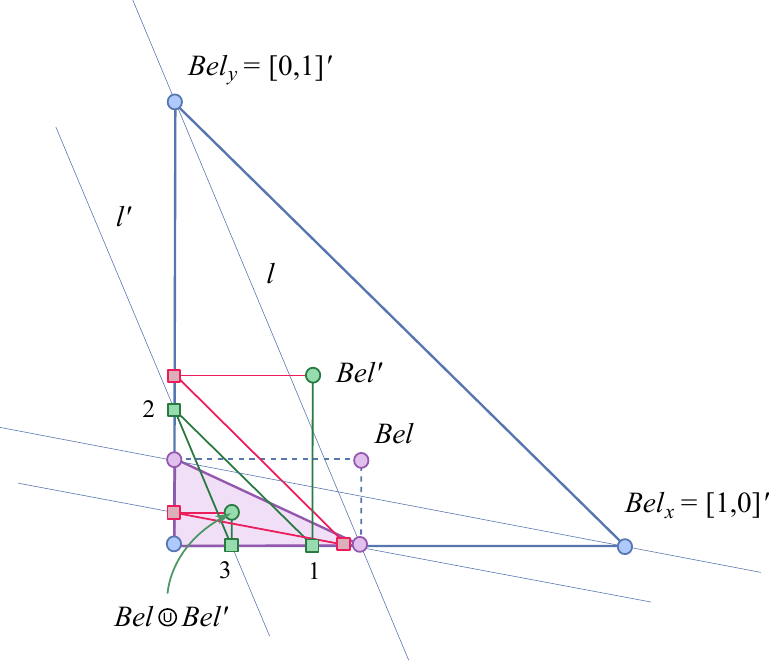}
\caption{Geometric construction for the disjunctive combination of two belief functions $Bel$, $Bel'$ on a binary frame.} \label{fig:ocup-construction}
\end{figure}

\subsubsection{Geometry of combination of unnormalised belief functions} \label{sec:future-combination-ubfs} 

In the case of unnormalised belief functions, Dempster's rule is replaced by the conjunctive combination. The disjunctive combination itself needs to be reassessed for UBFs as well.

In the unnormalised case, a distinction exists between the \emph{belief} measure
\[
Bel(A) = \sum_{\emptyset \neq B \subseteq A} m(B)
\]
and the \emph{believability} (in Smets's terminology) measure of an event $A$, denoted by $b(A)$, 
\begin{equation} \label{eq:believability-measure}
b(A) = \sum_{\emptyset \subseteq B \subseteq A} m(B).  
\end{equation}
Here we analyse the geometric behaviour of the latter, in which case $\emptyset$ is not treated as an exception: the case of belief measures is left for future work.
As $Bel(\Theta) = b(\Theta)=1$, as usual, we neglect the related coordinate and represent believability functions as points of a Cartesian space of dimension $|2^\Theta|-1$ (as $\emptyset$ cannot be ignored any more).

\paragraph{Conjunctive combination on a binary frame} 

In the case of a binary frame, the conjunctive combination of two belief functions $Bel_1$ and $Bel_2$, with BPAs $m_1$, $m_2$, yields
\begin{equation}\label{eq:conjunctive-binary-ubf}
\begin{array}{lll}
m_{\ocap}(\emptyset) & = & m_1(\emptyset) + m_2(\emptyset) - m_1(\emptyset) m_2(\emptyset)   + m_1(x) m_2(y) + m_1(y) m_2(x),
\\
m_{\ocap}(x) & = & m_1(x) (m_2(x) + m_2(\Theta)) + m_1(\Theta) m_2(x), 
\\
m_{\ocap}(y) & = & m_1(y) (m_2(y) + m_2(\Theta)) + m_1(\Theta) m_2(y), 
\\
m_{\ocap}(\Theta) & = & m_1(\Theta) m_2(\Theta).
\end{array}
\end{equation}

\paragraph{Conditional subspace for conjunctive combination}
The global behaviour of $\ocap$ in the binary (unnormalised) case can then be understood in terms of its conditional subspace. We have
\begin{equation} \label{eq:conditional-subspace-ocap}
\begin{array}{lll}
b \ocap b_\emptyset & = & b_\emptyset = [1,1,1]',
\\
b \ocap b_x & = & (m_1(\emptyset) + m_1(y)) b_\emptyset + (m_1(x) + m_1(\Theta)) b_x 
\\
&= & 
[m_1(\emptyset) + m_1(y), 1, m_1(\emptyset) + m_1(y)]'
\\
& = & b_1(y) b_\emptyset + (1 - b_1(y)) b_x,
\\
b \ocap b_y & = & (m_1(\emptyset) + m_1(x)) b_\emptyset + (m_1(y) + m_1(\Theta)) b_y 
\\
& = & 
[m_1(\emptyset) + m_1(x), m_1(\emptyset) + m_1(x), 1]'
\\
& = & b_1(x) b_\emptyset + (1 - b_1(x)) b_y,
\\
b \ocap b_\Theta & = & b,
\end{array}
\end{equation}
as $b_x = [0,1,0]'$, $b_y = [0,0,1]'$, $b_\emptyset = [1,1,1]'$ and $b_\Theta = [0,0,0]'$. 

From (\ref{eq:conditional-subspace-ocap}), we can note that the vertex $b \ocap b_x$ belongs to the line joining $b_\emptyset$ and $b_x$, with its coordinate given by the believability assigned by $b$ to the other outcome $y$. Similarly, the vertex $b \ocap b_y$ belongs to the line joining $b_\emptyset$ and $b_y$, with its coordinate given by the believability assigned by $b$ to the complementary outcome $x$ (see Fig. \ref{fig:conditional-subspace-ubf}).

\begin{figure}[ht!]
\centering
\includegraphics[width=0.8\textwidth]{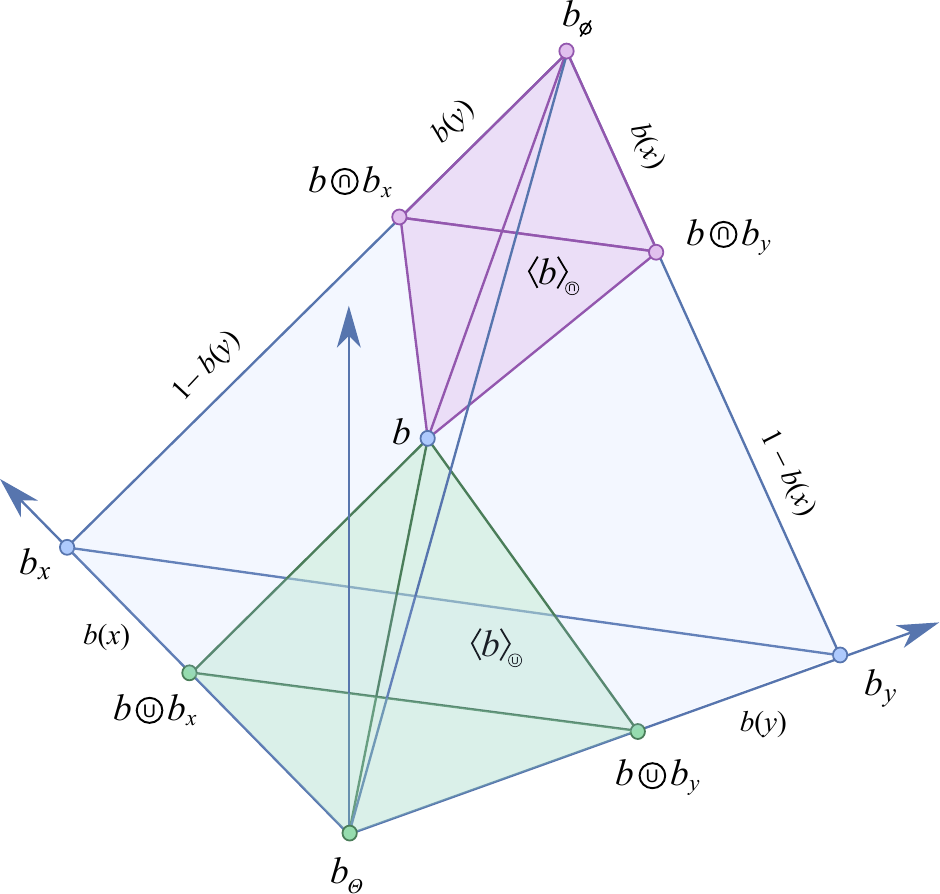}
\caption{Conditional subspaces induced by $\ocap$ and $\ocup$ in a binary frame, for the case of unnormalised belief functions.} \label{fig:conditional-subspace-ubf}
\end{figure}

\paragraph{Conditional subspace for disjunctive combination}

As for the disjunctive combination, it is easy to see that in the unnormalised case we get
\[
\begin{array}{llll}
b \ocup b_\Theta &= & b_\Theta, & \quad b \ocup b_x = b(x) b_x + (1 - b(x)) b_\Theta, 
\\
b \ocup b_\emptyset & = & b, & \quad
b \ocup b_y = b(y) b_y + (1 - b(y)) b_\Theta,
\end{array}
\]
so that the conditional subspace is as in Fig. \ref{fig:conditional-subspace-ubf}. Note that, in the unnormalised case, there is a unit element with respect to $\ocup$, namely $b_\emptyset$. 

In the unnormalised case, we can observe a clear symmetry between the conditional subspaces induced by disjunctive and conjunctive combination. 

\subsubsection{Research questions} \label{sec:future-combination-issues} 

A number of questions remain open after this preliminary geometric analysis of other combination rules on binary spaces, and its extension to the case of unnormalised belief functions.

\begin{question} \label{que:geometry-combination-disjunctive-pointwise}
What is the general pointwise geometric behaviour of disjunctive combination, in both the normalised and the unnormalised case?
\end{question}

A dual question concerns the conjunctive rule, as the alter ego of Dempster's rule in the unnormalised case.
\begin{question} \label{que:geometry-combination-conjunctive-pointwise}
What is the general pointwise geometric behaviour of conjunctive combination in the unnormalised case?
\end{question}

Bold and cautious rules \cite{denoeux07ai}, which are also inherently defined for unnormalised belief functions, are still to be geometrically understood.
\begin{question} \label{que:geometry-combination-bold-cautious}
What is the geometric behaviour of bold and cautious rules?
\end{question} 

\subsection{Geometry of general conditioning} \label{sec:future-conditioning} 

The author's analysis of geometric conditioning (see \cite{cuzzolin2021springer}, Chapter 15) is also still in its infancy. There, we analysed the case of classical Minkowski norms. It is natural to wonder what happens when we plug other norms into the optimisation problem that defines geometric conditioning:
\begin{equation} \label{eq:geometric-conditioning}
Bel_d(.|A) \doteq \arg \min_{Bel' \in \mathcal{B}_A} d(Bel,Bel'),
\end{equation}
where $\mathcal{B}_A$ is the \emph{conditioning event} $A$, i.e., the set of belief functions whose BPA assigns mass to subsets of $A$ only, and $d$ is a suitable distance function in the belief space.

Even more relevantly, the question of whether
geometric conditioning is a framework flexible enough to encompass general conditioning in belief calculus arises.  

\begin{question} \label{que:geometric-conditioning}
Can any major conditioning operator be interpreted as geometric (in the sense defined above) conditioning, i.e., as producing the belief function within the conditioning simplex at a minimum distance from the original belief function, with respect to an appropriate norm in the belief space?
\end{question}


\subsection{A true geometry of uncertainty} \label{sec:future-uncertainty}

A true geometry of uncertainty will require the ability to manipulate in our geometric language any (or most) forms of uncertainty measures \cite{cuzzolin2021uncertainty}. 

Probability and possibility measures are, as we know, special cases of belief functions: therefore, their geometric interpretation does not require any extension of the notion of a belief space. 
Most other uncertainty measures, however, are not special cases of belief functions -- in fact, a number of them are more
general than belief functions, such as probability intervals (2-monotone capacities), general monotone capacities and lower previsions.

Tackling these more general measures requires, therefore, extending the concept of a geometric belief space in order to encapsulate the most general such representation. In the medium term, the aim is to develop a general geometric theory of imprecise probabilities, encompassing capacities, random sets induced by capacity functionals, and sets of desirable gambles. 

\subsubsection{Geometry of capacities and random sets} \label{sec:future-uncertainty-capacities}

Developing a geometric theory of general monotone capacities appears to be rather challenging. We can gain some insight into this issue, however, from an analysis of 2-monotone capacities (systems of probability intervals) and, more specifically, when they are defined on binary frames.

\paragraph{Geometry of 2-monotone capacities} \label{sec:future-capacities} 

Recall that a capacity $\mu$ is called \emph{2-monotone} iff, for any $A,B$ in the relevant $\sigma$-algebra 
\[
\mu(A\cup B)+ \mu(A \cap B) \geq \mu(A) + \mu(B),
\]
whereas it is termed \emph{2-alternating} whenever
\[
\mu(A\cup B)+ \mu(A \cap B) \leq \mu(A) + \mu(B). 
\]
Belief measures are 2-monotone capacities, while their dual plausibility measures are 2-alternating ones.
More precisely, belief functions are infinitely monotone capacities, as they satisfy 
\begin{equation} \label{eq:infinite-order}
\displaystyle \mu \left (\bigcup_{j=1}^k A_j \right) \geq \sum_{\emptyset \neq K \subseteq [1,...,k]} (-1)^{|K|+1} \mu \left (\bigcap_{j \in K} A_j \right )
\end{equation}
\noindent for every value of $k \in \mathbb{N}$.

A well-known lemma by Chateauneuf and Jaffray states the following.
\begin{proposition} \label{pro:2-monotone}
A capacity $\mu$ on $\Theta$ is 2-monotone if and only if
\begin{equation} \label{eq:2-monotone}
\sum_{\{x,y\} \subseteq E \subseteq A } m(E) \geq 0 
\quad 
\forall x, y \in \Theta, \; \forall A \ni x,y,
\end{equation}
where $m$ is the M\"obius transform (\ref{moebius-capacity}) of $\mu$:
\begin{equation} \label{moebius-capacity}
m(A) = \sum_{B \subset A} (-1)^{|A - B|} \mu(B).
\end{equation}
\end{proposition}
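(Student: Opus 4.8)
The plan is to reduce the global supermodularity condition defining 2-monotonicity to the family of ``local'' inequalities (\ref{eq:2-monotone}), one for each pair $\{x,y\}$ and each superset $A$, by means of a single combinatorial identity linking a discrete mixed second difference of $\mu$ to a partial sum of its M\"obius transform $m$.

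First I would record the Möbius inversion dual to (\ref{moebius-capacity}), namely $\mu(B)=\sum_{E\subseteq B}m(E)$, and use it to prove that for every $A\subseteq\Theta$ and every pair of distinct elements $x,y\in A$,
\[
\mu(A)-\mu(A\setminus\{x\})-\mu(A\setminus\{y\})+\mu(A\setminus\{x,y\})=\sum_{\{x,y\}\subseteq E\subseteq A}m(E).
\]
This is immediate: substitute the inversion formula into each of the four terms on the left and collect the coefficient of a fixed $m(E)$; it is $1$ if $\{x,y\}\subseteq E$ and $0$ otherwise, by inclusion--exclusion on the index sets $\{E\subseteq A\}$, $\{E\subseteq A:x\notin E\}$, $\{E\subseteq A:y\notin E\}$, $\{E\subseteq A:x,y\notin E\}$. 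With this identity in hand the forward implication is trivial: if $\mu$ is 2-monotone, apply the supermodularity inequality to the pair $(A\setminus\{x\},\,A\setminus\{y\})$, whose union is $A$ (as $x\neq y$) and whose intersection is $A\setminus\{x,y\}$; this says precisely that the left-hand side of the identity, hence $\sum_{\{x,y\}\subseteq E\subseteq A}m(E)$, is nonnegative.

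The substance is in the converse. Assuming (\ref{eq:2-monotone}) for all pairs, I would first show that single-element increments of $\mu$ are monotone in the base set: for $x\notin B$ and $A\subseteq B$, $\mu(A\cup\{x\})-\mu(A)\le\mu(B\cup\{x\})-\mu(B)$. Writing $B\setminus A=\{c_1,\dots,c_k\}$ and $A_j=A\cup\{c_1,\dots,c_j\}$, telescope the difference of increments along $A=A_0\subseteq A_1\subseteq\cdots\subseteq A_k=B$; the $j$-th telescoped term is exactly the mixed second difference of $\mu$ at $A_{j-1}\cup\{c_j,x\}$ in the directions $c_j$ and $x$, which by the identity equals a partial M\"obius sum of the form (\ref{eq:2-monotone}) and is therefore $\ge 0$. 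Finally, to recover full 2-monotonicity for arbitrary $S,T$, set $I=S\cap T$ and $S\setminus T=\{a_1,\dots,a_p\}$, put $B_j=T\cup\{a_1,\dots,a_j\}$ and $C_j=I\cup\{a_1,\dots,a_j\}$, and write $\mu(S\cup T)-\mu(T)=\sum_{j=1}^p[\mu(B_j)-\mu(B_{j-1})]$ and $\mu(S)-\mu(I)=\sum_{j=1}^p[\mu(C_j)-\mu(C_{j-1})]$. Since $C_{j-1}\subseteq B_{j-1}$ and $a_j\notin B_{j-1}$, the increment-monotonicity just proved gives the inequality term by term, hence $\mu(S\cup T)+\mu(S\cap T)\ge\mu(S)+\mu(T)$.

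I expect the main obstacle to be organisational rather than conceptual: the converse requires a chain of reductions (local second differences $\Rightarrow$ monotonicity of single-element increments $\Rightarrow$ supermodularity), and one must keep the telescoping bookkeeping clean and be careful that every step invokes (\ref{eq:2-monotone}) only for \emph{singleton} pairs, never for larger blocks. It is worth remarking, once the argument is complete, that neither the identity nor either implication uses monotonicity, normalisation, or finiteness beyond that of the lattice on which $\mu$ is defined, so the result is really a characterisation of supermodular set functions in terms of their M\"obius transforms.
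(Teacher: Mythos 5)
Your proposal is correct. Note that the paper itself gives no proof of this proposition --- it is stated as ``a well-known lemma by Chateauneuf and Jaffray'' and simply cited --- so there is nothing to compare against; your argument is the standard one and it is complete. The key identity
\[
\mu(A)-\mu(A\setminus\{x\})-\mu(A\setminus\{y\})+\mu(A\setminus\{x,y\})=\sum_{\{x,y\}\subseteq E\subseteq A}m(E)
\]
is verified correctly by collecting the coefficient of each $m(E)$, the forward direction via the pair $(A\setminus\{x\},A\setminus\{y\})$ is right, and the two-stage telescoping in the converse (local second differences $\Rightarrow$ monotone single-element increments $\Rightarrow$ supermodularity for arbitrary $S,T$) is sound, since each telescoped term is exactly a mixed second difference covered by the hypothesis. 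One small point worth making explicit: the statement's quantifier ``$\forall x,y\in\Theta$'' must be read as ranging over \emph{distinct} $x,y$ (as in Chateauneuf--Jaffray), which you correctly assume throughout; and your closing observation that the equivalence characterises supermodularity without using monotonicity or normalisation is accurate.
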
 

We can use Proposition \ref{pro:2-monotone} to show that the following theorem is true.
\begin{theorem} \label{the:2-monotone-convex}
The space of all 2-monotone capacities defined on a frame $\Theta$, which we denote by $\mathcal{M}^2_\Theta$, is convex.
\end{theorem}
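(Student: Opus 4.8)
The plan is to exploit the fact that 2-monotonicity is carved out by a system of \emph{linear} inequalities, so that stability under convex combinations is almost immediate. First I would fix $\mu_1,\mu_2 \in \mathcal{M}^2_\Theta$ and $\alpha \in [0,1]$, and consider $\mu \doteq \alpha \mu_1 + (1-\alpha)\mu_2$. The preliminary step is to check that $\mu$ is a legitimate capacity at all: $\mu(\emptyset)=0$ and $\mu(\Theta)=1$ hold by linearity, and monotonicity $A\subseteq B \Rightarrow \mu(A)\le \mu(B)$ is preserved because each summand is monotone and the weights are nonnegative. Thus $\mu$ lies in the domain on which Proposition~\ref{pro:2-monotone} is stated.

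Next I would invoke the Chateauneuf--Jaffray characterisation (Proposition~\ref{pro:2-monotone}): $\mu$ is 2-monotone if and only if its M\"obius transform $m$, defined by (\ref{moebius-capacity}), satisfies $\sum_{\{x,y\}\subseteq E \subseteq A} m(E) \ge 0$ for all $x,y\in\Theta$ and all $A\ni x,y$. The crucial observation is that the M\"obius transform is a \emph{linear} map on set functions: from (\ref{moebius-capacity}), the M\"obius transform of $\mu = \alpha\mu_1 + (1-\alpha)\mu_2$ is $m = \alpha m_1 + (1-\alpha)m_2$, where $m_i$ is the M\"obius transform of $\mu_i$. Hence, for any admissible $x,y$ and any $A\ni x,y$,
\[
\sum_{\{x,y\}\subseteq E \subseteq A} m(E)
= \alpha \sum_{\{x,y\}\subseteq E \subseteq A} m_1(E)
+ (1-\alpha)\sum_{\{x,y\}\subseteq E \subseteq A} m_2(E) \;\ge\; 0,
\]
since each of the two sums is nonnegative by Proposition~\ref{pro:2-monotone} applied to $\mu_1$ and $\mu_2$, and $\alpha,1-\alpha\ge 0$. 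By the converse direction of the same proposition, $\mu$ is 2-monotone, i.e. $\mu\in\mathcal{M}^2_\Theta$; since $\mu_1,\mu_2,\alpha$ were arbitrary, $\mathcal{M}^2_\Theta$ is convex.

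There is essentially no genuine obstacle here: the argument is a one-line consequence of the linearity of the M\"obius transform together with the linear-inequality form of the 2-monotonicity test. The only point deserving explicit mention is that the set of capacities (normalised monotone set functions) is itself convex, so that the combination $\mu$ we form is a bona fide capacity to which Proposition~\ref{pro:2-monotone} may be applied. I would also note in passing that the proof can be given without the M\"obius transform at all, by observing that the supermodularity inequality $\mu(A\cup B)+\mu(A\cap B)-\mu(A)-\mu(B)\ge 0$ is itself linear in $\mu$ and hence preserved by convex combinations; the route via (\ref{eq:2-monotone}) is chosen here only because it meshes with the surrounding development and will be reused for the finer structural results (extreme points, dimension) that follow.
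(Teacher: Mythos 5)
Your proof is correct and follows essentially the same route as the paper's: both invoke the Chateauneuf--Jaffray characterisation (Proposition~\ref{pro:2-monotone}) and use the linearity of the M\"obius transform to show that the defining nonnegativity constraints are preserved under convex combination. The extra remarks you add (that convex combinations of capacities are capacities, and that one could argue directly from the linearity of the supermodularity inequality) are sound but not needed beyond what the paper already does.
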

\begin{proof}
Suppose that $\mu_1$ and $\mu_2$, with M\"obius inverses $m_1$ and $m_2$, respectively, are 2-monotone capacities (therefore satisfying the constraint (\ref{eq:2-monotone})). 
Suppose that a capacity $\mu$ is such that $\mu = \alpha \mu_1 + (1-\alpha) \mu_2$, $0 \leq \alpha \leq 1$.

Since M\"obius inversion is a linear operator, we have that
\[
\begin{array}{lll}
\displaystyle \sum_{\{x,y\} \subseteq E \subseteq A } m(E) & = & \displaystyle \sum_{\{x,y\} \subseteq E \subseteq A }
\alpha m_1 + (1-\alpha) m_2 
\\ \\
& = & \displaystyle
\alpha  \sum_{\{x,y\} \subseteq E \subseteq A } m_1
+ (1-\alpha) \sum_{\{x,y\} \subseteq E \subseteq A } m_2 \geq 0,
\end{array}
\]
since 
\[
\sum_{\{x,y\} \subseteq E \subseteq A } m_1\geq 0,
\quad
\sum_{\{x,y\} \subseteq E \subseteq A } m_2 \geq 0
\]
for all $x, y \in \Theta$, $\forall A \ni x,y$.
\end{proof} 

Note that Theorem \ref{the:2-monotone-convex} states the convexity of the \emph{space} of 2-monotone capacities:
it is, instead, well known that every 2-monotone capacity corresponds to a convex set of probability measures in their simplex.

It is useful to get an intuition about the problem by considering the case of 2-monotone capacities defined on the ternary frame $\Theta = \{x,y,z\}$.
There, the constraints (\ref{eq:2-monotone}) read as
\begin{equation} \label{eq:2-monotone-ternary1}
\left \{
\begin{array}{l}
m(\{x,y\}) \geq 0, \\
m(\{x,y\}) + m(\Theta) \geq 0, \\
m(\{x,z\}) \geq 0, \\
m(\{x,z\}) + m(\Theta) \geq 0, \\
m(\{y,z\}) \geq 0, \\
m(\{y,z\}) + m(\Theta) \geq 0,
\end{array}
\right.
\end{equation}
whereas for $\Theta = \{x,y,z,w\}$ they become
\[
\begin{array}{ll}
\left \{
\begin{array}{l}
m(\{x_i,x_j\}) \geq 0,
\\
m(\{x_i,x_j\}) + m(\{x_i,x_j,x_k\}) \geq 0,
\\
m(\{x_i,x_j\}) + m(\{x_i,x_j,x_l\}) \geq 0,
\\
m(\{x_i,x_j\}) + m(\{x_i,x_j,x_k\}) + m(\{x_i,x_j,x_l\}) + m(\Theta) \geq 0,
\end{array}
\right.
&
\end{array}
\]
for each possible choice of $x_i,x_j,x_k,x_l \in \Theta$. 

The constraints (\ref{eq:2-monotone-ternary1}) are clearly equivalent to
\begin{equation} \label{eq:2-monotone-ternary2}
\left \{
\begin{array}{l}
m(\{x,y\}) \geq 0, \\
m(\{x,z\}) \geq 0, \\
m(\{y,z\}) \geq 0, \\
m(\Theta) \geq  - \min \Big \{ m(\{x,y\}), m(\{x,z\}), m(\{y,z\}) \Big \},
\end{array}
\right.
\end{equation}
so that the space $\mathcal{M}^2$ of 2-monotone capacities on $\{x,y,z\}$ is the set of capacities
\[
\begin{array}{ll} 
\mathcal{M}^2_{\{x,y,z\}} = \bigg \{ 
&
\displaystyle
\mu : 2^\Theta \rightarrow [0,1] \; \bigg | \; \sum_{A \subseteq \Theta} m(A) =1, m(A)\geq 0 \; \forall
A : |A| = 2,
\\
&
\displaystyle
m(\Theta) \geq  - \min \Big \{ m(\{x,y\}), m(\{x,z\}), m(\{y,z\}) \Big \}
\bigg \}.
\end{array}
\] 

We can begin to understand the structure of this convex set and its vertices by studying the shape of the convex subset of $\mathbb{R}^3$
\begin{equation} \label{eq:toy-geometry-capacities}
z \geq - \min \{x,y\}, \quad x,y \geq 0, \quad x+y+z =1.
\end{equation}
The latter is the intersection of the following loci:
\[
\begin{array}{c}
V = \{x+y+z = 1\}, \quad H_1 = \{x \geq 0\}, \quad H_2 = \{y \geq 0\}, 
\\ \\
H'_3 = \{ z \geq - x, x \leq y \}, \quad H''_3 = \{ z \geq - y, y \leq x \},
\end{array}
\]
the first of which is a linear space, whereas the others are half-spaces determined by a plane in $\mathbb{R}^3$.
The resulting convex body is the convex closure 
\[
Cl([1,0,0],[0,1,0],[0,0,1],[1,1,-1]) 
\]
depicted in Fig. \ref{fig:2-monotone-toy}. The four vertices are associated with all possible binary configurations of the variables $x$ and $y$, with the $z$ component determined by the other constraints. 
One can notice that vertices are allowed to have negative mass values on some component (focal elements of $\Theta$). As a result, we can conjecture that the space of 2-monotone capacities on a frame $\Theta$, while being embedded in a Cartesian space of the same dimension as the belief space, will contain the belief space by virtue of its extra vertices (e.g. $[1,1,-1]$ in Fig. \ref{fig:2-monotone-toy}).

\begin{figure}[ht!]
\centering
\includegraphics[width=0.6\textwidth]{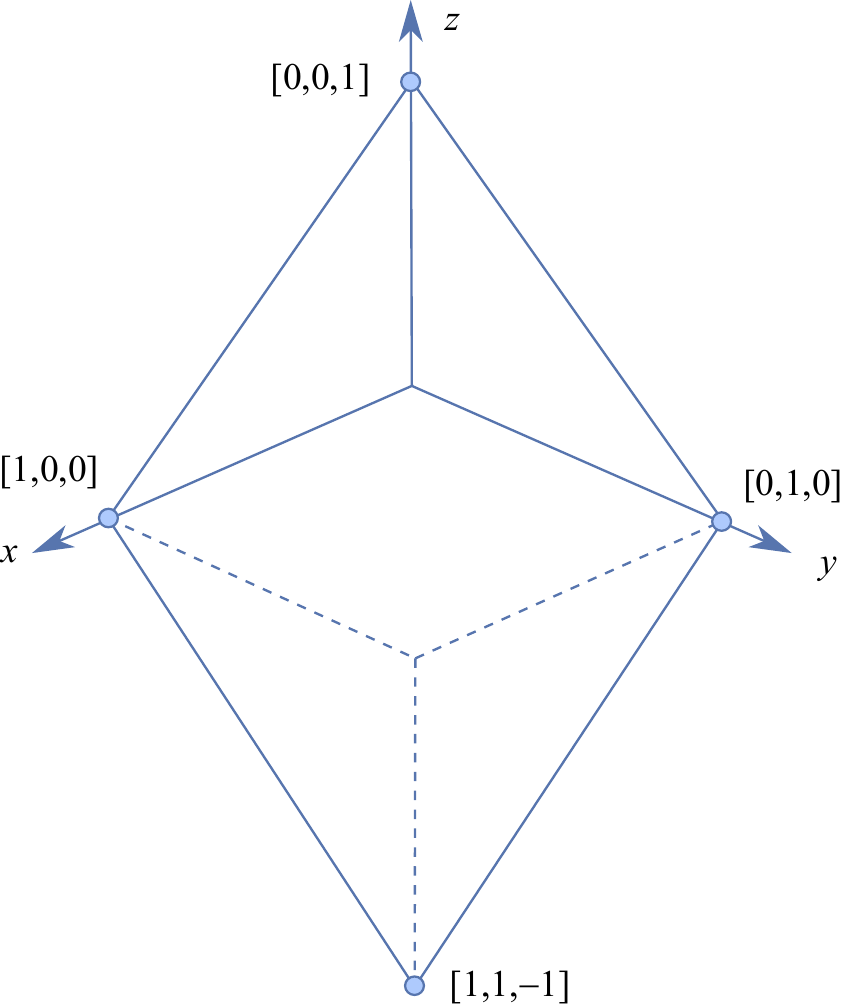}
\caption{The geometry of the convex body (\ref{eq:toy-geometry-capacities}) in $\mathbb{R}^3$.} \label{fig:2-monotone-toy}
\end{figure}

As the geometry of 2-monotone capacities involves a hierarchical set of constraints involving events of increasing cardinality, however, the analysis of the toy problem (\ref{eq:toy-geometry-capacities}) cannot be directly generalised to it.  

\paragraph{Geometry of capacities and random sets}

The geometry of capacities is related to that of random sets, via the capacity functional $T_X$. 
Recall that
\begin{definition} \label{def:capacity-functional}
A functional $T_X : \mathcal{K} \rightarrow [0, 1]$ given by
\begin{equation} \label{eq:capacity-functional}
T_X (K) = P (\{X \cap K \neq \emptyset \}), 
\quad 
K \in \mathcal{K},
\end{equation}
is termed the \emph{capacity functional} of a random closed set $X$. 
\end{definition}
We will analyse this point further in the near future.

\subsubsection{Geometry of functionals} \label{sec:future-uncertainty-gambles}
Any study of the geometry of gambles (widely employed in imprecise probability \cite{walley91book}) requires an analysis of the geometry of \emph{functionals}, i.e., mappings from a space of functions to the real line. The same holds for other uncertainty representations which make use of functions on the real line, such as MV algebras \cite{dubuc2010representation}
and belief measures on fuzzy events \cite{smets81degree}.
For a very interesting recent publication (2016) on the geometry of MV algebras in terms of rational polyhedra, see \cite{Mundici2016}.

\paragraph{Geometric functional analysis} 
\cite{Holmes1975,holmes2012geometric}\footnote{
\url{https://link.springer.com/book/10.1007/978-1-4684-9369-6}.} is a promising tool from this perspective.
A central question of geometric functional analysis is: what do typical $n$-dimensional structures look
like when $n$ grows to infinity? This is exactly what happens when we try to generalise the notion of a belief space to continuous domains. One of the main tools of geometric functional analysis is the theory of \emph{concentration of measure}, which offers a geometric view of the limit theorems of probability theory. 
Geometric functional analysis thus bridges three areas: { functional analysis, convex geometry and
probability theory \cite{Vershynin2009}.

In more detail, a \emph{norm} on a vector space $X$ is a function $\|\| : X \rightarrow \mathbb{R}$ that satisfies (i) non-negativity, (ii) homogeneity and (iii) the triangle inequality. A \emph{Banach space} \cite{fabian2013functional} is a complete normed space.
A convex set $K$ in $\mathbb{R}^n$ is called \emph{symmetric} if $K = -K$ (i.e., $x \in K$ implies $-x \in K$).
\begin{proposition} \label{pro:banach} (Banach spaces and symmetric convex bodies) Let $X = (\mathbb{R}^n, \|\cdot \|)$ be a
Banach space. Then its unit ball $B_X$ is a symmetric convex body in $\mathbb{R}^n$. 
Further, let $K$ be a symmetric convex body in $\mathbb{R}^n$. Then $K$ is the unit ball of some normed space with norm (its \emph{Minkowski functional}, Fig. \ref{fig:minkowski}(a))
\[
\|x\|_K = \inf \left \{t>0, \frac{x}{t} \in K \right \}.
\]
\end{proposition}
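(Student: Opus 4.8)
The plan is to prove the two implications separately, in each case reducing everything to the defining axioms of a norm and of a convex body. For the \emph{forward direction}, starting from a norm $\|\cdot\|$ on $\mathbb{R}^n$ I would check the three requirements on $B_X=\{x:\|x\|\le 1\}$. Symmetry is immediate from homogeneity with the scalar $-1$, giving $\|-x\|=\|x\|$ and hence $x\in B_X\iff -x\in B_X$. Convexity follows by combining homogeneity with the triangle inequality: for $x,y\in B_X$ and $\lambda\in[0,1]$, $\|\lambda x+(1-\lambda)y\|\le\lambda\|x\|+(1-\lambda)\|y\|\le 1$. That $B_X$ is a genuine body (compact with non-empty interior) comes from the equivalence of all norms on $\mathbb{R}^n$: expanding a vector in the standard basis and applying the triangle inequality shows that $\|\cdot\|$ is Lipschitz, hence continuous, with respect to the Euclidean norm, so $B_X$ is closed; and $c\|x\|_2\le\|x\|\le C\|x\|_2$ for suitable $0<c\le C$ shows that $B_X$ is bounded and that it contains the Euclidean ball of radius $1/C$ about the origin. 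Completeness is automatic in finite dimensions, so the word `Banach' in the hypothesis is there only for the sake of the infinite-dimensional analogue.

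For the \emph{converse direction}, given a symmetric convex body $K$ (note that symmetry together with non-empty interior already forces $0\in\operatorname{int}K$), I would verify that $\|x\|_K=\inf\{t>0:x/t\in K\}$ is a norm whose unit ball is $K$. Finiteness of $\|x\|_K$ for every $x$ uses $0\in\operatorname{int}K$; strict positivity for $x\ne 0$ uses boundedness of $K$ (for $t$ small, $x/t\notin K$). Absolute homogeneity splits into the case $\lambda>0$, handled by the substitution $t\mapsto t/\lambda$, and the treatment of the sign, handled by $K=-K$. The triangle inequality uses convexity: if $x/s,y/t\in K$ then $\frac{x+y}{s+t}=\frac{s}{s+t}\cdot\frac{x}{s}+\frac{t}{s+t}\cdot\frac{y}{t}\in K$, so $\|x+y\|_K\le s+t$, and taking infima over admissible $s,t$ gives $\|x+y\|_K\le\|x\|_K+\|y\|_K$. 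To identify $K$ with $\{x:\|x\|_K\le 1\}$: the inclusion `$\subseteq$' is immediate; conversely, if $\|x\|_K<1$ then $x/t\in K$ for some $t<1$, whence $x=t(x/t)+(1-t)\cdot 0\in K$ by convexity, while if $\|x\|_K=1$ I would take $t_n\downarrow 1$ with $x/t_n\in K$ and pass to the limit using the closedness of $K$.

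The argument is elementary throughout; the only points needing care — the `main obstacle', such as it is — are the boundary case $\|x\|_K=1$, which genuinely requires $K$ to be \emph{closed} rather than merely convex and absorbing, and the auxiliary observation that $\{t>0:x/t\in K\}$ is an interval of the form $[\|x\|_K,\infty)$ (again by convexity together with $0\in K$), which is what makes the Minkowski functional well behaved. One should also note that the symmetry of $K$ enters exactly once, to obtain $\|\lambda x\|_K=|\lambda|\,\|x\|_K$ for $\lambda<0$; without it, $\|\cdot\|_K$ is only an asymmetric gauge.
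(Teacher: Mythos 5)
Your proof is correct and complete. Note, however, that the paper does not actually prove this proposition: it is quoted as a standard fact of geometric functional analysis (with a citation to Vershynin's lecture notes) and used only to motivate the correspondence between convex bodies and normed spaces, so there is nothing in the text to compare your argument against. What you have written is the classical self-contained argument, and all the delicate points are handled properly: the use of norm equivalence on $\mathbb{R}^n$ to get compactness and non-empty interior of $B_X$ (correctly flagging that completeness is automatic in finite dimensions), the observation that symmetry plus non-empty interior of $K$ forces $0\in\operatorname{int}K$ (which is what makes the Minkowski functional finite and the gauge set $\{t>0: x/t\in K\}$ a half-line), the convexity trick $\frac{x+y}{s+t}=\frac{s}{s+t}\frac{x}{s}+\frac{t}{s+t}\frac{y}{t}$ for the triangle inequality, and the closedness of $K$ to recover the boundary case $\|x\|_K=1$. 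Your closing remark that symmetry is used exactly once, to pass from a positively homogeneous gauge to a genuine norm, is also accurate and worth keeping.
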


The correspondence between unit balls in Banach spaces and convex bodies established in Proposition \ref{pro:banach} allows arguments from convex geometry to be used in
functional analysis and vice versa. 
\begin{figure}[ht!]
\centering
\begin{tabular}{cc}
\includegraphics[width=0.35\textwidth]{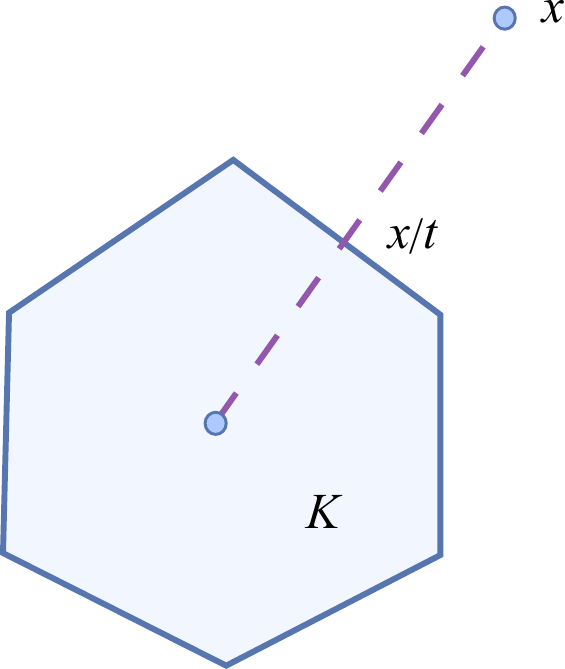}
&
\includegraphics[width=0.56\textwidth]{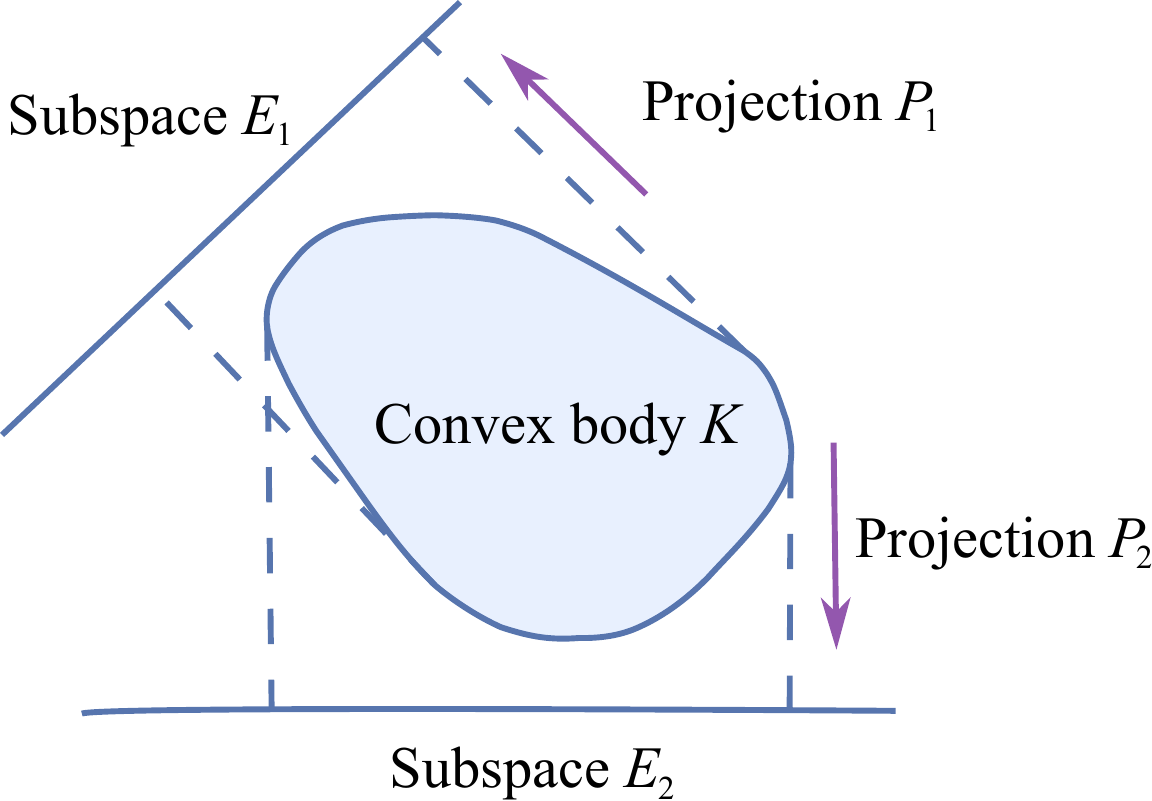}
\\
(a) & (b)
\end{tabular}
\caption{(a) The Minkowski functional of a symmetric convex body in $\mathbb{R}^2$ \cite{Vershynin2009}. (b) Projections of a convex body.} \label{fig:minkowski}
\end{figure}
Using Proposition \ref{pro:banach}, one can show that subspaces of Banach spaces correspond
to \emph{sections} of convex bodies, and quotient spaces correspond to \emph{projections} of convex bodies.

The phenomenon of concentration of measure was the driving force for the early development of geometric functional analysis. It tells us that anti-intuitive phenomena occur in high
dimensions. For example, the `mass' of a high-dimensional ball is concentrated only in a thin band around any equator. This reflects the philosophy that metric and measure should be treated very differently: a set can have a large diameter but carry little mass \cite{Vershynin2009}.

The shape of random projections of symmetric convex bodies onto $k$-dimensional subspaces  (Fig. \ref{fig:minkowski}(b)) can also be studied within geometric functional analysis. It is easy to see that a projection captures only the extremal points of $K$. Hence, one quantity affected by projections is the diameter of the body. 
\begin{proposition} (Diameters under random projections, \cite{Giannopoulos2004}) \label{pro:diameters}
Let $K$ be a symmetric convex body in $\mathbb{R}^n$, and let $P$ be a projection onto a random subspace of dimension $k$. Then, with probability at least $1 - e^{-k}$,
\[
\text{diam}(PK) \leq C \left ( M^*(K) + \sqrt{\frac{k}{n}} \text{diam}(K) \right ),
\]
where $M^*(K)$ is the mean width of $K$.
\end{proposition}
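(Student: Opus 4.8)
The plan is to deduce the statement from two standard pillars of asymptotic geometric analysis: a Gaussian comparison inequality (Chevet, a consequence of Gordon's minimax comparison) bounding the operator norm of a random Gaussian matrix in terms of the \emph{mean width} of its domain, and Gaussian concentration of Lipschitz functionals. Since $K$ is symmetric, $\mathrm{diam}(PK)=2\sup_{x\in K}\|Px\|_2$, so it suffices to control $\sup_{x\in K}\|P_F x\|_2$ where $F=\mathrm{range}(P)$ is a uniformly random $k$-dimensional subspace of $\mathbb R^n$. First I would realise $F$ concretely: let $G$ be a $k\times n$ matrix with i.i.d.\ $\mathcal N(0,1)$ entries; by rotational invariance its row space $F$ is uniformly distributed on the Grassmannian, $G$ annihilates $F^\perp=\ker G$, and the restriction $G|_F\colon F\to\mathbb R^k$ is almost surely an isomorphism whose singular values coincide with those of $G$. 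By the Davidson--Szarek bounds for Gaussian matrices, for $k$ bounded away from $n$ (the only interesting range, since otherwise $\sqrt{k/n}\,\mathrm{diam}(K)\gtrsim\mathrm{diam}(PK)$ and the estimate is trivial) one has $\tfrac12\sqrt n\le\sigma_{\min}(G)$ with probability at least $1-e^{-k}$ after adjusting constants, and on that event $\|P_F x\|_2\le\tfrac{2}{\sqrt n}\|Gx\|_2$ for every $x$, hence
\[
\sup_{x\in K}\|P_F x\|_2\;\le\;\frac{2}{\sqrt n}\,\sup_{x\in K}\|Gx\|_2 .
\]

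Next I would estimate $\sup_{x\in K}\|Gx\|_2$. Chevet's inequality applied with the sphere $S^{k-1}$ in $\mathbb R^k$ gives
\[
\mathbb E\,\sup_{x\in K}\|Gx\|_2\;\le\;w(K)+\mathbb E\|h\|_2\,\mathrm{rad}(K)\;\le\;w(K)+\sqrt{k}\,\mathrm{rad}(K),
\]
where $h\sim\mathcal N(0,I_k)$, $\mathrm{rad}(K)=\sup_{x\in K}\|x\|_2=\tfrac12\mathrm{diam}(K)$, and $w(K)=\mathbb E\sup_{x\in K}\langle g,x\rangle$ is the Gaussian mean width ($g\sim\mathcal N(0,I_n)$). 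Writing $g=\|g\|_2\,\theta$ with $\theta$ uniform on $S^{n-1}$ and independent of $\|g\|_2$ yields $w(K)=\mathbb E\|g\|_2\cdot M^*(K)\le\sqrt n\,M^*(K)$, where $M^*(K)=\int_{S^{n-1}}\sup_{x\in K}\langle\theta,x\rangle\,d\sigma(\theta)$ is, up to the universal factor relating it to the mean width appearing in the statement, the relevant quantity. Dividing by $\sqrt n$ gives $\tfrac{1}{\sqrt n}\,\mathbb E\sup_{x\in K}\|Gx\|_2\le M^*(K)+\sqrt{k/n}\,\mathrm{rad}(K)$. To pass from expectation to high probability, observe that $G\mapsto\sup_{x\in K}\|Gx\|_2$ is $\mathrm{rad}(K)$-Lipschitz for the Hilbert--Schmidt norm, since $|\sup_x\|G_1x\|_2-\sup_x\|G_2x\|_2|\le\|G_1-G_2\|_{\mathrm{op}}\,\mathrm{rad}(K)\le\|G_1-G_2\|_{\mathrm{HS}}\,\mathrm{rad}(K)$; hence Gaussian concentration gives $\sup_{x\in K}\|Gx\|_2\le\mathbb E\sup_{x\in K}\|Gx\|_2+\sqrt{2k}\,\mathrm{rad}(K)$ with probability at least $1-e^{-k}$.

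Intersecting the two good events (whose probability is still $1-e^{-k}$ after a harmless adjustment of the numerical constants, exactly the convention used in the statement) and combining the displays,
\[
\mathrm{diam}(P_F K)=2\sup_{x\in K}\|P_F x\|_2\;\le\;\frac{4}{\sqrt n}\Big(\sqrt n\,M^*(K)+\big(\sqrt k+\sqrt{2k}\big)\mathrm{rad}(K)\Big)\;\le\;C\Big(M^*(K)+\sqrt{k/n}\,\mathrm{diam}(K)\Big),
\]
which is the claim. I expect the Gaussian comparison step -- bounding $\mathbb E\sup_{x\in K}\|Gx\|_2$ by the mean width of $K$ plus its radius -- to be the real content, because it is the only place where the geometry of the body genuinely enters and where a crude argument fails: a general $K\subseteq\tfrac12\mathrm{diam}(K)\,B_2^n$ admits only an $\varepsilon$-net of cardinality $e^{\Theta(n)}$, so a union bound over such a net would defeat the $e^{-k}$ probability gain unless $k\gtrsim n$, and one really needs the chaining / Slepian--Gordon machinery to replace the net cardinality by the much smaller mean-width term. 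Everything else -- realising a random subspace as the row space of a Gaussian matrix, the singular-value bounds for Gaussian matrices, and concentration of Lipschitz functionals -- is routine; the only care required is tracking the admissible range of $k/n$ and absorbing absolute constants into $C$.
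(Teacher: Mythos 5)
The paper does not prove this proposition: it is quoted verbatim as background from Giannopoulos and Milman's survey on geometric functional analysis, so there is no in-paper argument to compare yours against. Judged on its own merits, your reconstruction is correct and is essentially the standard modern proof: realising the Haar-random subspace as the row space of a $k\times n$ Gaussian matrix $G$, reducing $\sup_{x\in K}\|P_Fx\|$ to $\sup_{x\in K}\|Gx\|$ via the smallest singular value of $G$, bounding the expectation by Chevet/Gordon in terms of the Gaussian width $w(K)\le\sqrt{n}\,M^*(K)$ plus $\sqrt{k}\,\mathrm{rad}(K)$, and upgrading to high probability by Lipschitz concentration. The reductions you flag are handled properly: the case $k\gtrsim n$ is indeed trivial because an orthogonal projection is a contraction, and for $k$ a small fraction of $n$ the Davidson--Szarek bound with deviation $t=\sqrt{2k}$ gives $\sigma_{\min}(G)\ge\tfrac12\sqrt n$ on an event of probability $1-e^{-k}$. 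Two cosmetic points only: intersecting your two good events yields probability $1-2e^{-k}$, not $1-e^{-k}$, which is fine under the stated convention of adjusting absolute constants (or by taking slightly larger deviations); and the proposition's $M^*(K)$ is the spherical average $\int_{S^{n-1}}h_K\,d\sigma$ rather than the mean width proper, a factor-of-two discrepancy in nomenclature that the paper itself glosses over and that is absorbed into $C$. You are also right that the genuinely non-trivial ingredient is the comparison step replacing a net of cardinality $e^{\Theta(n)}$ by the mean-width term; the rest is routine.
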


\subsubsection{Research questions} \label{sec:future-uncertainty-issues}

As this is one of the most challenging topics for future research, most issues are still wide open, in particular the following.
\begin{question}
What is the most appropriate framework for describing the geometry of monotone capacities?
\end{question}
\begin{question} 
Can we provide a representation in the theory of convex bodies of the set of 2-monotone capacities similar to that of the belief space, i.e., by providing an analytical expression for the vertices of this set?
\end{question}
\begin{question} 
How can the geometry of monotone capacities be exploited to provide a geometric theory of general random sets?
\end{question}

Finally, we have the following question.
\begin{question} 
How can we frame the geometry of (sets of desirable) gambles and MV algebras in terms of functional spaces?
\end{question}

\subsection{Fancier geometries} \label{sec:future-fancier}

Representing belief functions as mere vectors of mass or belief values is not entirely satisfactory.
Basically, when this is done all vector components are indistinguishable, whereas they in fact correspond to values assigned to subsets of $\Theta$ of different cardinality. 

\subsubsection{Exterior algebras} \label{sec:future-exterior}

Other geometrical representations of belief functions on finite spaces can nevertheless be imagined, which take into account the qualitative difference between events of different cardinalities.

For instance, the \emph{exterior algebra} \cite{Grassmann1844} is a promising tool, as it allows us to encode focal elements of cardinality $k$ with exterior powers (\emph{k-vectors}) of the form $x_1 \wedge x_2 \wedge \cdots \wedge x_k$ and belief functions as linear combinations of decomposable $k$-vectors, namely
\[
\vec{bel} = \sum_{A = \{x_1,...,x_k\} \subset \Theta} m(A) \; x_1 \wedge x_2 \wedge \cdots \wedge x_k.
\] 
Under an exterior algebra representation, therefore, focal elements of different cardinalities are associated with qualitatively different objects (the $k$-vectors) which encode the geometry of volume elements of different dimensions (e.g. the area of parallelograms in $\mathbb{R}^2$).

The question remains of how Dempster's rule may fit into such a representation, and what its relationship with the exterior product operator $\wedge$ is.

\subsubsection{Capacities as isoperimeters of convex bodies} \label{sec:future-isoperimeter}

\emph{Convex bodies}, as we have seen above, 
are also the subject of a fascinating field of study \cite{brazitikos2014geometry}. 

In particular, any convex body in $\mathbb{R}^n$ possesses $2^n$ distinct orthogonal projections onto the $2^n$ subspaces generated by all possible subsets of coordinate axes.
It is easy to see that, given a convex body ${K}$ in the Cartesian space $\mathbb{R}^n$, endowed with coordinates $x_1, \ldots, x_n$, the function $\nu$ that assigns to each subset of coordinates $S = \{ x_{i_1}, \ldots, x_{i_k} \}$ the (hyper)volume $\nu(S)$ of the orthogonal projection ${K}|S$ of $\mathcal{K}$ onto the linear subspace generated by $S = \{ x_{i_1}, \ldots, x_{i_k} \}$ is potentially a capacity. In order for that to happen, volumes in linear spaces of different dimensions need to be compared, and conditions need to be enforced in order for volumes of projections onto higher-dimensional subspaces to be greater than those of projections on any of their subspaces.

As we have just seen, symmetric convex bodies are in a 1--1 relationship with (balls in) Banach spaces, which can then be used as a tool to represent belief measures as projections of symmetric convex bodies.

The following research questions arise.

\begin{question} \label{que:projections-monotone}
Under what conditions is the above capacity monotone?
\end{question}

\begin{question} \label{que:projections-belief}
Under what conditions is this capacity a belief function (i.e. an infinitely monotone capacity)?
\end{question}

\subsection{Relation to integral and stochastic geometry}

\subsubsection{Integral geometry, or geometric probability} 
This is the theory of invariant measures \cite{Matheron75} (with respect to continuous groups of transformations of a space onto itself) on sets consisting of submanifolds of the space (for example lines, planes, geodesics and convex surfaces; in other words, manifolds preserving their type under the transformations in question). Integral geometry has been constructed for various spaces, primarily Euclidean, projective and homogeneous spaces \cite{santalo1950integral}, and 
arose in connection with refinements of statements of problems in geometric probabilities.

In order to introduce an invariant measure, one needs to identify a function depending on the coordinates of the space under consideration whose integral over some region of the space is not changed under any continuous coordinate transformation belonging to a specified Lie group. This requires finding an integral invariant of the Lie group. 

A classic problem of integral geometry is Buffon's \emph{needle problem} (dating from 1733). 
\begin{proposition}
Some parallel lines on a wooden floor are a distance $d$ apart from each other. A needle of length $l <d$ is randomly dropped onto the floor (Fig. \ref{fig:buffon}, left). Then the probability that the needle will touch one of the lines is
\[
P= \frac{2l}{\pi d}.
\]
\end{proposition}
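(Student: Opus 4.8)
The plan is to reduce this geometric-probability question to a calculation against an explicitly described uniform measure, and then — in keeping with the integral-geometry framing of this section — to recover the same answer via the additivity argument that is its real content.

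First I would fix coordinates. By translation invariance along the direction of the lines, and periodicity with period $d$ transverse to them, the configuration of the dropped needle is fully captured by two numbers: the distance $y \in [0, d/2]$ from the midpoint of the needle to the nearest line, and the acute angle $\theta \in [0, \pi/2]$ that the needle makes with the family of lines. ``Dropped at random'' is taken to mean that $y$ and $\theta$ are independent and each uniformly distributed, i.e. the joint density on $[0,d/2]\times[0,\pi/2]$ is the constant $\frac{4}{\pi d}$; this is exactly the normalised invariant (Haar) measure for the group of rigid motions restricted to a fundamental domain, which is the integral-geometry way of saying ``uniformly at random''. Next I would identify the crossing event: the projection of the needle onto the direction transverse to the lines has length $\ell\sin\theta$, so a needle whose midpoint lies at height $y$ from the nearest line produces a crossing iff $y \le \tfrac{\ell}{2}\sin\theta$, and because $\ell < d$ there is at most one crossing. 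Integrating the indicator,
\[
P = \frac{4}{\pi d}\int_0^{\pi/2}\!\!\int_0^{\frac{\ell}{2}\sin\theta} dy\,d\theta = \frac{4}{\pi d}\int_0^{\pi/2}\frac{\ell}{2}\sin\theta\,d\theta = \frac{4}{\pi d}\cdot\frac{\ell}{2} = \frac{2\ell}{\pi d}.
\]

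As a cross-check in the integral-geometric spirit, I would then sketch the additivity argument. For a rigid wire in the shape of any rectifiable plane curve, the expected number of intersections $E[N]$ with the line family depends only on the length of the wire (invariance of the measure under rigid motions) and is additive under concatenation of curves, so $E[N] = c\,(\text{length})$ for a universal constant $c = c(d)$; the constant is pinned down by dropping a circle of diameter $d$, which meets the lines in exactly two points almost surely, whence $c\cdot\pi d = 2$ and $c = \frac{2}{\pi d}$. Specialising to a needle of length $\ell < d$, where $N\in\{0,1\}$ and therefore $P = E[N]$, gives again $P = \frac{2\ell}{\pi d}$.

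The only genuinely delicate point is the modelling step: making precise what ``randomly dropped'' means and verifying that the two descriptions above — the explicit uniform law on $(y,\theta)$ and the invariant measure on motions — coincide and are well defined. Once that is fixed, the remainder is an elementary integral, or, in the second route, a one-line additivity remark together with the routine approximation of rectifiable curves by broken lines needed to extend additivity from polygonal wires to the circle. I expect that modelling/measure-theoretic bookkeeping, rather than any computation, to be the main obstacle to a fully rigorous write-up.
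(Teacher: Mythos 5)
The paper states Buffon's needle result without proof, so there is nothing to compare your argument against; it is simply quoted as a classical example of integral geometry. Your write-up is correct: the $(y,\theta)$ parameterisation with joint density $\tfrac{4}{\pi d}$ on $[0,d/2]\times[0,\pi/2]$, the crossing condition $y\le\tfrac{\ell}{2}\sin\theta$ (with $\ell<d$ ensuring at most one crossing), and the resulting integral all check out, and the Barbier additivity argument with the circle of diameter $d$ is a valid and elegant cross-check that is very much in the spirit of the kinematic-measure discussion that follows in the paper. Both routes are standard and sound; no gaps.
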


\begin{figure}[ht!]
\centering
\includegraphics[width=0.85\textwidth]{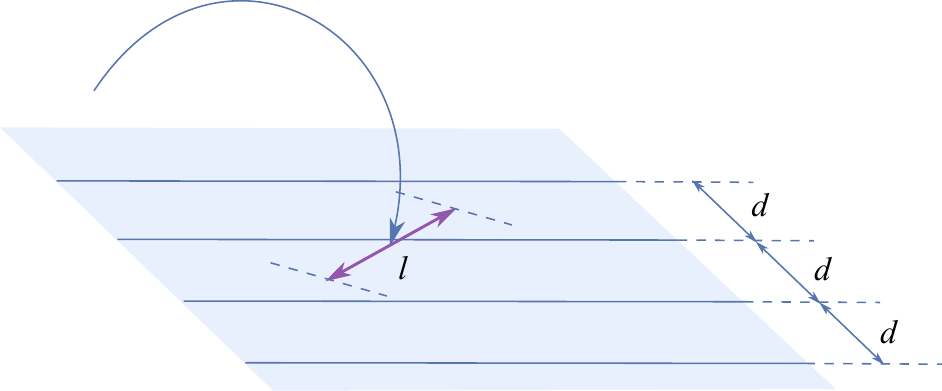}
\caption{Buffon's needle problem.} \label{fig:buffon}
\end{figure}

Another classical problem is Firey's \emph{colliding dice} problem (from 1974). 
\begin{proposition}
Suppose $\Omega_1$ and $\Omega_2$ are disjoint unit cubes in $\mathbb{R}^3$. In a random collision,
the probability that the cubes collide edge-to-edge slightly exceeds the
probability that the cubes collide corner-to-face. In fact,
\[
0.54 \approxeq P(\text{edge-to-edge collision}) > P(\text{corner-to-face collision}) \approxeq 0.46.
\]
\end{proposition}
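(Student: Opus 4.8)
The plan is to translate ``random collision'' into the language of integral geometry. Fix $\Omega_1$, rotate $\Omega_2$ by a Haar-random $\rho\in SO(3)$, and slide it in from infinity until it first meets $\Omega_1$. Since $\Omega_1\cap(\rho\Omega_2+t)\neq\emptyset$ exactly when $t$ lies in the \emph{collision body} $C_\rho\doteq\Omega_1\oplus(-\rho\Omega_2)$ (the Minkowski sum of $\Omega_1$ with the reflected, rotated second cube), the first-contact translations are precisely the points of $\partial C_\rho$. Equipping the contact manifold $\{(\rho,t):t\in\partial C_\rho\}$ with the motion-invariant (kinematic) density, which factors as Haar measure on $SO(3)$ times surface-area measure on $\partial C_\rho$, the probability of a contact type becomes
\[
P(\mathrm{type})=\frac{\int_{SO(3)}A_{\mathrm{type}}(\partial C_\rho)\,\mathrm{d}\rho}{\int_{SO(3)}A(\partial C_\rho)\,\mathrm{d}\rho},
\]
where $A_{\mathrm{type}}(\partial C_\rho)$ is the total area of the facets of the polytope $C_\rho$ realising that contact type. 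So everything reduces to dissecting the facets of $C_\rho$.

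I would classify facets through the identity $F(C_\rho,u)=F(\Omega_1,u)\oplus F(-\rho\Omega_2,u)$ for outer normals $u$. For all but a null set of $\rho$, each facet normal $u$ of $C_\rho$ is of exactly one of two kinds: either $u$ is a facet normal of $\Omega_1$ (or of $-\rho\Omega_2$), in which case the opposite support set is a single vertex and the facet is a translate of a unit square of that cube --- a \emph{corner-to-face} contact, a vertex of one cube lying on a face of the other; or $u$ lies in the relative interior of an edge normal cone of each cube, and the facet is a parallelogram $e\oplus f$ --- an \emph{edge-to-edge} contact. The first kind always yields the $6+6=12$ unit-square facets, so $A_{\mathrm{c2f}}(\partial C_\rho)\equiv 12$ for generic $\rho$, and its rotation average is $12$.

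The heart of the argument is the edge-to-edge term. Here I would use that the normal cones of the four mutually parallel edges of a unit cube are four quarter-arcs tiling the great circle orthogonal to their common direction. For an edge-direction class $\hat e_i$ of $\Omega_1$ and $\hat f_j$ of $\rho\Omega_2$ (generically non-parallel), the only possible common normals are $\pm(\hat e_i\times\rho\hat f_j)/|\hat e_i\times\rho\hat f_j|$; each lies on both relevant great circles, hence --- by the tiling --- in exactly one edge normal cone on each side, producing exactly one parallelogram facet of area $|\hat e_i\times\rho\hat f_j|=|\sin\theta(\hat e_i,\rho\hat f_j)|$. Summing over the $3\times3=9$ class pairs gives $A_{\mathrm{e2e}}(\partial C_\rho)=2\sum_{i,j=1}^3|\sin\theta(\hat e_i,\rho\hat f_j)|$. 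Averaging a single term over $SO(3)$ is the same as averaging $|\sin\theta|$ for the angle between a fixed unit vector and a uniform point of $S^2$, which equals $\tfrac1{4\pi}\int_{S^2}\sin\theta\,\mathrm{d}\sigma=\tfrac12\int_0^\pi\sin^2\theta\,\mathrm{d}\theta=\pi/4$; hence $\int_{SO(3)}A_{\mathrm{e2e}}(\partial C_\rho)\,\mathrm{d}\rho=9\cdot2\cdot\tfrac\pi4=\tfrac{9\pi}2$. Substituting,
\[
P(\text{edge-to-edge})=\frac{9\pi/2}{12+9\pi/2}=\frac{9\pi}{24+9\pi}\approx0.541,\qquad P(\text{corner-to-face})=\frac{24}{24+9\pi}\approx0.459,
\]
and since $9\pi>24$ the edge-to-edge probability strictly exceeds the corner-to-face one.

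The step I expect to be the main obstacle is twofold. First, one must pin down the ``correct'' model of a random collision so that the answer really is this ratio of rotation-averaged areas: the kinematic density is the canonical invariant choice, and one should verify that physically motivated alternatives --- for instance, a uniformly random incoming relative velocity, with first contact at the point of $\partial C_\rho$ on the hemisphere facing the velocity --- collapse to the same expression after the velocity average, which they do precisely because $\int_{S^2}\langle n,v\rangle_+\,\mathrm{d}v$ is independent of the unit normal $n$. Second, the combinatorial bookkeeping in the edge-to-edge case is where errors hide: one has to check that the quarter-arc tiling argument yields exactly two facets per edge-class pair with no double counting, that the parallelograms are genuine $2$-faces (non-degenerate since $\hat e_i\not\parallel\rho\hat f_j$), and that the exceptional set of $\rho$ --- those for which some edge of $\rho\Omega_2$ is parallel to an edge of $\Omega_1$, or a facet normal of one cube lands on an edge normal cone or facet normal of the other --- is indeed Haar-null and so does not affect the integrals.
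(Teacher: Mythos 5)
The paper does not prove this proposition: it is quoted as a classical result of Firey (1974) in the survey of integral geometry, with no argument supplied. So there is nothing to compare your proof against; judged on its own merits, your argument is correct and complete. The reduction of ``random collision'' to the surface-area (contact) measure on the boundary of the Minkowski difference body $C_\rho=\Omega_1\oplus(-\rho\Omega_2)$, averaged over Haar measure on $SO(3)$, is exactly Firey's kinematic set-up; the facet classification via $F(C_\rho,u)=F(\Omega_1,u)\oplus F(-\rho\Omega_2,u)$ correctly yields, for generic $\rho$, twelve unit-square facets (vertex-on-face contacts, total area $12$) plus, for each of the nine edge-direction pairs, exactly two parallelogram facets of area $\lvert\sin\theta(\hat e_i,\rho\hat f_j)\rvert$ (the quarter-arc tiling of the great circle by the edge normal cones is the right justification for ``exactly one facet per sign of the common normal''). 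A useful sanity check you could add: at $\rho=\mathrm{id}$ the formula gives $12+2\cdot 6=24$, the surface area of the side-$2$ cube, confirming the bookkeeping. The rotation average $\mathbb{E}\lvert\sin\theta\rvert=\pi/4$ and linearity of expectation then give $\tfrac{9\pi}{24+9\pi}=\tfrac{3\pi}{8+3\pi}\approx 0.541$, matching the stated values. Two points deserve one explicit sentence each if this were written up: (i) you are computing the ratio of $SO(3)$-averaged areas (the kinematic measure on the joint contact manifold $\{(\rho,t)\}$), not the $SO(3)$-average of the per-rotation area ratios --- these differ, and only the former is the invariant measure Firey uses and the one your velocity-averaging remark justifies; (ii) the exceptional rotations (parallel edges, normals landing on cone boundaries) form a lower-dimensional, hence Haar-null, subset of $SO(3)$, which you assert correctly but could dispatch in a line by noting each degeneracy is an analytic condition not identically satisfied.
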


Roughly speaking, integral geometry is the problem of computing probabilities of events which involve geometric loci, as opposed to the geometric approach to uncertainty supported in \cite{cuzzolin2021springer}, which is about understanding the geometric properties of (uncertainty) measures.
The main technical tool is Poincar\'e's integral-geometric formula.

After representing lines in the plane using coordinates $(p,\theta)$, $\cos(\theta) x + \sin(\theta) y = p$,
the \emph{kinematic measure} for such lines is defined as $\mathrm{d}K = \mathrm{d}p \wedge \mathrm{d} \theta$, as the measure on a set of lines which is invariant under rigid motion, i.e., the Jacobian of the coordinate transformation is equal to 1.
Now, let $C$ be a piecewise $\mathcal{C}^1$ curve in the plane. Given a line $L$ in the plane, let $n(L \cap C)$ be the number of intersection points. If $C$ contains a linear segment and $L$ agrees with that segment,
$n(C \cap L) = \infty$.
\begin{proposition} (Poincar\'e formula for lines, 1896)
Let $C$ be a piecewise $\mathcal{C}^1$ curve in the plane. Then the measure of unoriented lines
meeting $C$, counted with multiplicity, is given by
\[
2 L(C) = \int_{ L: L \cap C \neq \emptyset } n(C \cap L) \; \mathrm{d} K(L).
\]
\end{proposition}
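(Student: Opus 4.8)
\emph{Proof proposal.} The plan is the classical three-step reduction of integral geometry: establish the identity first for a line segment, extend it to polygonal curves by the additivity of the intersection count, and then recover the general piecewise $\mathcal{C}^1$ case by approximating $C$ with inscribed polygons. For the first step, note that since the kinematic measure $\mathrm{d}K = \mathrm{d}p\wedge\mathrm{d}\theta$ is invariant under rigid motions, there is no loss of generality in placing a segment $C$ of length $\ell$ along the $x$-axis, $C = [0,\ell]\times\{0\}$. Writing an unoriented line as $\cos\theta\,x + \sin\theta\,y = p$ with $\theta\in[0,\pi)$ and $p\in\mathbb{R}$, for almost every $(p,\theta)$ with $L\cap C\neq\emptyset$ the intersection is a single point, and this happens precisely when $p$ lies between $0$ and $\ell\cos\theta$, i.e.\ on an interval of length $\ell|\cos\theta|$; hence
\[
\int_{L:\,L\cap C\neq\emptyset} n(C\cap L)\,\mathrm{d}K(L) \;=\; \int_0^\pi \ell\,|\cos\theta|\,\mathrm{d}\theta \;=\; 2\ell \;=\; 2L(C),
\]
using $\int_0^\pi|\cos\theta|\,\mathrm{d}\theta = 2$.

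Next, for a polygonal curve $P = \bigcup_{i=1}^N S_i$ built from segments $S_i$ of length $\ell_i$, I would observe that for every line $L$ which neither passes through a vertex of $P$ nor is collinear with any $S_i$ --- a condition failing only on a $\mathrm{d}K$-null set --- the multiplicity-counted intersection number is additive, $n(P\cap L) = \sum_{i=1}^N n(S_i\cap L)$; integrating and applying the segment case term by term gives $\int n(P\cap L)\,\mathrm{d}K = \sum_{i=1}^N 2\ell_i = 2L(P)$. I would then split $C$ into its finitely many $\mathcal{C}^1$ pieces, approximate each piece $\gamma$ by a sequence of inscribed polygons $P_k$ with vertices on $\gamma$ and mesh tending to zero, use the rectifiability of $\mathcal{C}^1$ arcs to get $L(P_k)\to L(\gamma)$, note that $P_k\to\gamma$ in the Hausdorff metric so that $n(P_k\cap L)\to n(\gamma\cap L)$ for $\mathrm{d}K$-almost every $L$ (the exceptional families being the lines tangent to $\gamma$, the lines through an endpoint, and the $\mathrm{d}K$-null set of lines meeting $\gamma$ infinitely often), and finally pass to the limit inside the integral to obtain $\int n(\gamma\cap L)\,\mathrm{d}K = \lim_k 2L(P_k) = 2L(\gamma)$; summing over the pieces completes the proof.

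The step I expect to be the genuine obstacle is this last interchange of limit and integral: because the number of edges of $P_k$ grows without bound, $n(P_k\cap L)$ is not uniformly bounded, so one must exhibit a fixed $\mathrm{d}K$-integrable majorant valid for all $k$ on the finite-measure set of lines meeting a bounded neighbourhood of $C$, and one must check that the various degenerate families of lines really are $\mathrm{d}K$-null. The cleanest route to the majorant is to subdivide $\gamma$ further into subarcs on which both coordinates are monotone, so that every inscribed polygon is a monotone graph and a generic line crosses it a controlled number of times. Alternatively, one can bypass the limiting argument entirely: parametrising $\gamma$ by arc length $s$ and setting $p(s,\theta) = \gamma(s)\cdot(\cos\theta,\sin\theta)$, the area (coarea) formula applied to the $\mathcal{C}^1$ map $(s,\theta)\mapsto(\theta,p(s,\theta))$ --- whose Jacobian is $|\partial p/\partial s| = |\cos(\phi(s)-\theta)|$, with $\phi(s)$ the tangent angle --- yields directly
\[
\int n(\gamma\cap L)\,\mathrm{d}K \;=\; \int_0^{L(\gamma)}\!\!\int_0^\pi |\cos(\phi(s)-\theta)|\,\mathrm{d}\theta\,\mathrm{d}s \;=\; 2\,L(\gamma),
\]
the obstacle there being relegated to the measure-theoretic justification of the area formula and the identification of $n$ with the fibre cardinality of the projection.
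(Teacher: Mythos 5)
The paper states this proposition without proof --- it is recalled as a classical fact of integral geometry in a survey passage --- so there is no in-paper argument to compare yours against. Judged on its own, your proposal is correct and is the standard Cauchy--Crofton/Poincar\'e argument: the segment computation $\int_0^\pi \ell\,|\cos\theta|\,\mathrm{d}\theta = 2\ell$ is right, additivity over edges off a $\mathrm{d}K$-null set of degenerate lines is right, and you have correctly isolated the one genuinely delicate point, namely that Hausdorff convergence of the inscribed polygons does not by itself give $n(P_k\cap L)\to n(\gamma\cap L)$ or a dominating function, since the edge count of $P_k$ is unbounded.

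Of your two ways around that obstacle, the second (coarea/area formula applied to $(s,\theta)\mapsto(\theta,\gamma(s)\cdot(\cos\theta,\sin\theta))$ with Jacobian $|\cos(\phi(s)-\theta)|$) is the cleaner and essentially self-contained one: Sard's theorem for $\mathcal{C}^1$ maps of $\mathbb{R}^2$ to itself disposes of the tangent lines, and the only residual bookkeeping is that the fibre cardinality counts parameter values rather than intersection points, which agree off the (at most countable, hence null) family of lines containing a straight subarc of $\gamma$ --- consistent with the paper's convention that $n(C\cap L)=\infty$ on exactly those lines. The first route (monotone subarcs giving a uniform crossing bound) also works but requires a further subdivision argument you only sketch. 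Either way the proof is sound; I would simply present the coarea version as the main argument and relegate the polygonal approximation to motivation.
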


Integral-geometric formulae hold for convex sets $\Omega$ as well. 
Since $n(L \cap \partial \Omega)$, where $\partial \Omega$ is the boundary of $\Omega$, is either zero or two for $\mathrm{d}K$-almost all lines $L$, the measure of unoriented lines that meet the convex set $\Omega$ is given by
\[
L(\partial \Omega) = \int_{L: L \cap \Omega \neq \emptyset} \; \mathrm{d}K.
\]
One can then prove the following. 
\begin{proposition} (Sylvester's problem, 1889)
Let $\Theta \subset \Omega$ be two bounded convex sets in the plane. Then the probability that a random line meets $\Theta$ given that it meets $\Omega$ is $P = {L(\partial \Theta)} / {L(\partial \Omega)}$.
\end{proposition}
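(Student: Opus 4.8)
The plan is to deduce the statement directly from the integral-geometric formula for convex sets recalled just above, together with the invariance of the kinematic measure $\mathrm{d}K$ under rigid motions. Writing $\mathcal{L}(\Omega) \doteq \{ L : L \cap \Omega \neq \emptyset \}$ for the set of unoriented lines meeting a bounded convex body with nonempty interior, the Cauchy--Poincar\'e formula gives $\int_{\mathcal{L}(\Omega)} \mathrm{d}K = L(\partial\Omega)$ and likewise $\int_{\mathcal{L}(\Theta)} \mathrm{d}K = L(\partial\Theta)$, both finite and strictly positive. (If $\Theta$ or $\Omega$ is degenerate — a point or a segment — the statement is vacuous or trivial, so I would assume nonempty interiors.) The invariance of $\mathrm{d}K$ is what makes the phrase ``a line chosen at random subject to meeting $\Omega$'' unambiguous: the only law on $\mathcal{L}(\Omega)$ compatible with Euclidean symmetry is the normalised restriction
\[
\mathbb{P}_\Omega(\,\cdot\,) \doteq \frac{1}{L(\partial\Omega)} \int_{(\,\cdot\,) \cap \mathcal{L}(\Omega)} \mathrm{d}K .
\]

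Next I would observe the containment $\mathcal{L}(\Theta) \subseteq \mathcal{L}(\Omega)$, which is immediate from $\Theta \subseteq \Omega$: any line meeting $\Theta$ meets $\Omega$. Hence, under $\mathbb{P}_\Omega$, the event ``the random line meets $\Theta$'' is exactly the measurable set $\mathcal{L}(\Theta)$, and
\[
\mathbb{P}_\Omega\big( L \cap \Theta \neq \emptyset \big) = \frac{\int_{\mathcal{L}(\Theta)} \mathrm{d}K}{\int_{\mathcal{L}(\Omega)} \mathrm{d}K} = \frac{L(\partial\Theta)}{L(\partial\Omega)},
\]
which is the asserted conditional probability $P = L(\partial\Theta)/L(\partial\Omega)$.

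The one point requiring care — and the closest thing to an obstacle — is the passage from the generic fact ``$n(L \cap \partial\Omega) \in \{0,2\}$ for $\mathrm{d}K$-almost every $L$'' to the identification of $\mathcal{L}(\Omega)$, viewed as a set rather than with multiplicity, as having $\mathrm{d}K$-measure $L(\partial\Omega)$. I would argue that the exceptional lines (those supporting a linear facet of $\partial\Theta$ or $\partial\Omega$, and the tangent lines) form a $\mathrm{d}K$-null set, since in the $(p,\theta)$ parametrisation they lie on finitely many curves; thus the indicator of $\mathcal{L}(\cdot)$ and the intersection-number integrand agree almost everywhere, and the Poincar\'e formula yields the perimeter. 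Everything else is the one-line ratio above, so the proof is essentially definitional once the Cauchy--Poincar\'e formula is in hand.
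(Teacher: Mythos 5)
Your proof is correct and follows exactly the route the paper sets up: it applies the displayed formula $L(\partial \Omega) = \int_{L:\, L \cap \Omega \neq \emptyset} \mathrm{d}K$ (a consequence of the Poincar\'e formula together with the fact that $n(L \cap \partial\Omega) \in \{0,2\}$ for $\mathrm{d}K$-almost all lines) to both $\Theta$ and $\Omega$, uses $\mathcal{L}(\Theta) \subseteq \mathcal{L}(\Omega)$, and takes the ratio of the two kinematic measures. Your added care about degenerate bodies and about the null set of exceptional (tangent or facet-supporting) lines is consistent with the paper's ``$\mathrm{d}K$-almost all'' qualification and does not change the argument.
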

\begin{corollary}
Let $C$ be a piecewise $\mathcal{C}^1$ curve contained in a compact convex set $\Omega$. 
The expected number of intersections with $C$ of all random lines that meet $\Omega$ is
\[
\mathbb{E}(n) = \frac{2 L(C)}{L(\partial \Omega)}. 
\]
\end{corollary}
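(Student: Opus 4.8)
The plan is to derive the formula directly from the Poincar\'e integral-geometric formula for lines, together with the expression $L(\partial\Omega) = \int_{L:\,L\cap\Omega\neq\emptyset}\mathrm{d}K$ for the kinematic measure of the set of lines meeting a compact convex body (both stated immediately above). First I would pin down the probability space: a \emph{random line meeting $\Omega$} is, by definition, a line distributed according to the normalised kinematic measure $\mathrm{d}P = \mathrm{d}K / L(\partial\Omega)$ supported on $\{L : L\cap\Omega\neq\emptyset\}$. This is a genuine probability measure because $\Omega$ is a compact convex set with rectifiable boundary, so $0 < L(\partial\Omega) < \infty$; the quantity whose expectation we seek is the integrand $L\mapsto n(C\cap L)$.

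Next I would use the inclusion $C\subseteq\Omega$: every line that meets $C$ also meets $\Omega$, so $n(C\cap L)=0$ for every $L$ outside $\{L:L\cap\Omega\neq\emptyset\}$, and the domain of integration in Poincar\'e's formula may be enlarged (or shrunk) between ``lines meeting $C$'' and ``lines meeting $\Omega$'' without changing the value of the integral. Hence
\[
\mathbb{E}(n) \;=\; \int_{L:\,L\cap\Omega\neq\emptyset} n(C\cap L)\,\mathrm{d}P(L)
\;=\; \frac{1}{L(\partial\Omega)}\int_{L:\,L\cap C\neq\emptyset} n(C\cap L)\,\mathrm{d}K(L).
\]
Finally I would invoke the Poincar\'e formula for lines, which states precisely that $\int_{L:\,L\cap C\neq\emptyset} n(C\cap L)\,\mathrm{d}K(L) = 2L(C)$, and substitute, obtaining $\mathbb{E}(n) = 2L(C)/L(\partial\Omega)$.

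The step requiring the most care is the measure-theoretic bookkeeping rather than any computation. One must observe that $n(C\cap L)<\infty$ for $\mathrm{d}K$-almost every $L$: the only lines for which $n(C\cap L)=\infty$ are those collinear with one of the finitely many linear pieces of the piecewise $\mathcal{C}^1$ curve $C$, and for each such segment these lines form a one-parameter (hence $\mathrm{d}K$-null) family. This guarantees the integrand is finite a.e., that $L(C)<\infty$ (from piecewise $\mathcal{C}^1$-ness), and that applying Poincar\'e's formula and passing between the ``measure of lines counted with multiplicity'' and the expectation is legitimate. Everything else is just the normalisation already built into the notion of a random line through $\Omega$ via Sylvester's problem.
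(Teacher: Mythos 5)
Your derivation is correct and is exactly the intended one: the paper states this corollary without proof, but it follows immediately from normalising the kinematic measure by $L(\partial\Omega) = \int_{L:\,L\cap\Omega\neq\emptyset}\mathrm{d}K$ and applying the Poincar\'e formula, using $C\subseteq\Omega$ to extend the domain of integration at no cost since the integrand vanishes on lines missing $C$. (Minor quibble: the exceptional lines with $n=\infty$ form a finite set, one line per maximal linear segment of $C$, rather than a one-parameter family, but either way they are $\mathrm{d}K$-null and your conclusion stands.)
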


Integral geometry potentially provides a powerful tool for designing new geometric representations of belief measures, on the one hand. 
On the other hand, we can foresee generalisations of integral geometry in terms of non-additive measures and capacities.

\begin{question} \label{que:geometric-probability-extended}
How does geometric probability generalise when we replace standard probabilities with other uncertainty measures?
\end{question}

\subsubsection{Stochastic geometry}

Stochastic geometry is a branch of probability theory which deals with set-valued random elements. It describes the behaviour of random configurations such as random graphs, random networks, random cluster processes, random unions of convex sets, random mosaics and many other random geometric structures \cite{Hug2016}.
The topic is very much intertwined with the theory of random sets, and Matheron's work in particular.
The name appears to have been coined by David Kendall and Klaus Krickeberg \cite{chiu2013stochastic}.

Whereas geometric probability
considers a fixed number of random objects of a fixed shape and studies their interaction
when some of the objects move randomly, since the 1950s the focus has switched to models involving a random number of randomly chosen geometric objects. 
As a consequence, the notion of a \emph{point process} started to play a
prominent role in this field.

\begin{definition} \label{def:point-process} 
A \emph{point process} $\eta$ is a measurable map from some probability space $(\Omega,\mathcal{F},P)$ to the locally finite subsets of a Polish space $\X$ (endowed with a suitable $\sigma$-algebra), called the state space. The \emph{intensity measure} of $\eta$, evaluated at a measurable set $A \subset \X$, is defined by $\mu(A) = E[\eta(A)]$, and equals the mean number of elements of $\eta$ lying in $A$.
\end{definition}

Typically, $\X$ is either $\mathbb{R}^n$, the space of compact subsets of $\mathbb{R}^n$ or the set of all affine subspaces there (the Grassmannian).
A point process can be written as $\eta = \sum_{i=1}^\tau \delta_{\zeta_i}$, where $\tau$ is a random variable taking values in $\mathbb{N}_0 \cup \{\infty\}$, and $\zeta_1,\zeta_2, \ldots$ is a sequence of random points in $\X$ (Fig. \ref{fig:point-process}). `Random point field' is sometimes considered a more appropriate terminology, as it avoids confusion with stochastic processes. 

A point process is, in fact, a special case of a \emph{random element}, a concept introduced by Maurice Fr\'echet \cite{frechet1948elements} as a generalisation of the idea of a random variable to situations in which the outcome is a complex structure (e.g., a vector, a function, a group \cite{celler1995generating}, a series or a subset). 

\begin{figure}[ht!]
\centering
\includegraphics[width=1\textwidth]{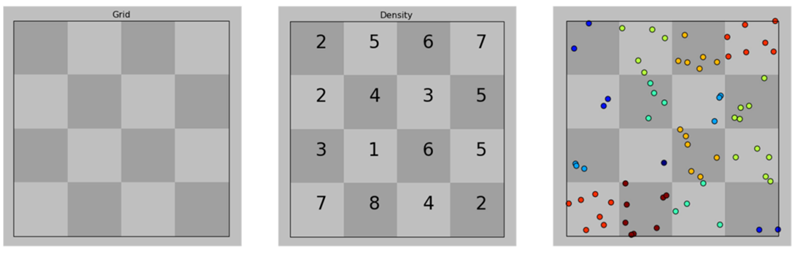}
\\
\begin{tabular}{ccc}
(a) & \hspace{34mm} (b) & \hspace{34mm} (c)
\end{tabular}
\caption{Examples of point processes (from 
{\url{http://stats.stackexchange.com/questions/16282/how-to-generate-nd-point-process}} \small : (a) grid; (b) Poisson; (c) uniform.} \label{fig:point-process}
\end{figure}

\begin{definition} \label{def:random-element}
Let ${\displaystyle (\Omega ,{\mathcal {F}},P)}$ be a probability space, and $(E,{\mathcal {E}})$ a measurable space. A \emph{random element} with values in $E$ is a function $X: \Omega \rightarrow E$ which is $({\mathcal {F}},{\mathcal {E}})$-measurable, that is, a function $X$ such that for any $B\in {\mathcal {E}}$, the pre-image of $B$ lies in ${\mathcal {F}}$.
\end{definition}
\noindent Random elements with values in $E$ are sometimes called $E$-valued random variables (hence the connection with random sets).

Geometric processes \cite{lam2007geometric} are defined as point processes on manifolds that represent spaces of events. Thus, processes of straight lines in the plane are defined as point processes on a M\"obius strip (as the latter represents the space of straight lines in $\mathbb{R}^2$).
Processes on manifolds form a more general concept; here, once again, stochastic geometry is linked with the theory of random sets \cite{Matheron75}. The `intrinsic volume' of convex bodies and Minkowski combinations of them (\emph{polyconvex} bodies) is a topic of stochastic geometry which relates to the questions we posed in Section \ref{sec:future-isoperimeter}.

A  peculiarity which distinguishes stochastic geometry from the theory of random sets is the interest stochastic geometry has for geometric processes with distributions that are invariant relative to groups acting on the fundamental space $\X$.
For instance, one may examine
the class of those processes of straight lines on $\mathbb{R}^2$ which are invariant relative to the group of Euclidean motions of the plane.  

\section{High-impact developments} \label{sec:future-applications} 

The theory of random sets is very well suited to tackling a number of high-profile applications, which range from the formulation of cautious climate change prediction models (Section \ref{sec:future-climate}) to the design of novel machine learning tool generalising classical mathematical frameworks (e.g. max-entropy classifiers and random forests, Section \ref{sec:future-machine-learning}) and to the analysis of the very foundations of statistical machine learning (Section \ref{sec:future-slt}).

Exciting work has recently been conducted on Bayesian formulations of the foundations of quantum mechanics \cite{PhysRevA.65.022305,Benavoli2017QPL}. Further analyses of the problem in the extended framework of imprecise probabilities are likely to follow.

\subsection{Climate change} \label{sec:future-climate} 

Climate change\footnote{\url{https://link.springer.com/journal/10584}.}
is a paramount example of a problem which requires predictions to be made under heavy uncertainty, due to the imprecision associated with any climate model, the long-term nature of the predictions involved and the second-order uncertainty affecting the statistical parameters involved.

A typical question a policymaker may ask a climate scientist is, for example \cite{Rougier2007}, \\

\noindent ``\emph{What is the probability that a doubling of atmospheric CO$_2$ from pre-industrial levels
will raise the global mean temperature by at least $2^\circ C$?}'' \\ 

Rougier \cite{Rougier2007} nicely outlined a Bayesian approach to climate modelling and prediction, in which a predictive distribution for a future climate is found by conditioning the future
climate on observed values for historical and current climates.
More recent papers on the Bayesian modelling of climate change include, for instance, \cite{min2007probabilistic,smith2009bayesian,katzfuss2017bayesian}.

A number of challenges arise when modelling climate change in a Bayesian framework:
\begin{itemize}
\item
in climate prediction, the collection of uncertain quantities for which the
climate scientist must specify prior probabilities can be large;
\item
specifying a prior distribution over climate vectors is very challenging.
\end{itemize}
According to Rougier, considering that people are spending thousands of hours on collecting climate data and constructing climate models, it is surprising that little attention is being devoted to quantifying our judgements about how the two are related. 

In this section, climate is represented as a vector of measurements $y$, collected at a given time. Its components include, for instance, the level of CO$_2$ concentration on the various points of a grid.
More precisely, the \emph{climate vector} $y = (y_h,y_f)$ collects together both historical and present ($y_h$) and future ($y_f$) climate values.
A \emph{measurement error} $e$ is introduced to take into account errors due to, for instance, a seasick technician or atmospheric turbulence. The actual measurement vector is therefore
\[
z \doteq y_h + e.
\]

The Bayesian treatment of the problem makes use of a number of assumptions. For starters, we have the following axioms.
\begin{axiom}
Climate and measurement error are independent: $e \bot y$.
\end{axiom}
\begin{axiom} \label{ax:error-gaussian}
The measurement error is Gaussian distributed, with mean $\vec{0}$ and covariance $\Sigma^e$: $e \sim \mathcal{N}(\vec{0},\Sigma^e)$.
\end{axiom}

Thanks to these assumptions, the predictive distribution for the climate given the measured values $z = \tilde{z}$ is
\begin{equation} \label{eq:climate-prediction-bayesian}
p(y| z = \tilde{z}) \sim \mathcal{N}(\tilde{z} - y_h|\vec{0},\Sigma^e) p(y),
\end{equation}
which requires us to {specify a prior distribution} for the climate vector $y$ itself.

\subsubsection{Climate models}

The
choice of such a prior $p(y)$ is extremely challenging, because $y$ is such a large collection of quantities, and these component quantities are linked by complex interdependencies, such as those arising from the laws of nature.
The role of the climate model is then to induce a distribution for the climate itself, and plays the role of a {parametric model in statistical inference}. 

Namely, a \emph{climate model} is a deterministic mapping from a collection of parameters $x$ (equation coefficients, initial conditions, forcing functions) to a vector of measurements (the `climate'), namely
\begin{equation} \label{eq:climate-mapping}
x \to y = g(x),
\end{equation}
where $g$ belongs to a predefined `model space' $\mathcal{G}$. 
Climate scientists call an actual value of $g(x)$, computed for some specific set of parameter values $x$, a \emph{model evaluation}. The reason is that the analytical mapping (\ref{eq:climate-mapping}) is generally not known, and only images of specific input parameters can be computed or sampled (at a cost).
A climate scientist considers, on a priori grounds based on their past experience, that some choices of $x$ are better than others, i.e., that there exists a set of parameter values $x^*$ such that
\[
y = g(x^*) + \epsilon^*,
\]
where $\epsilon^*$ is termed the `model discrepancy'.

\subsubsection{Prediction via a parametric model}

The difference between the climate vector and any model evaluation can be decomposed into two parts:
\[
y - g(x) = g(x^*) - g(x) + \epsilon^*.
\]
The first part is a contribution that may be reduced by a better choice of the model $g$;
the second part is, in contrast, an irreducible contribution that arises from the model's own imperfections.
Note that $x^*$ is not just a statistical parameter, though, for it relates to physical quantities, so that climate scientists have a clear intuition about its effects.
Consequently, scientists may be able to exploit their expertise to provide a prior $p(x^*)$ on the input parameters. 

In this Bayesian framework for climate prediction, two more assumptions are needed.
\begin{axiom} \label{ax:discrepancy-independence}
The `best' input, discrepancy and measurement error are mutually (statistically) independent: 
\[ x^* \bot \epsilon^* \bot e. \]
\end{axiom}
\begin{axiom} \label{ax:discrepancy-gaussian}
The model discrepancy $\epsilon^*$ is Gaussian distributed, with mean $\vec{0}$ and covariance $\Sigma^\epsilon$.
\end{axiom}

Axioms \ref{ax:discrepancy-independence} and \ref{ax:discrepancy-gaussian} then allow us to compute the desired climate prior as
\begin{equation} \label{eq:climate-prior-bayesian}
p(y) = \int \mathcal{N}(y - g(x^*)|\vec{0},\Sigma^\epsilon) p(x^*) \; \mathrm{d} x^*,
\end{equation}
which can be plugged into (\ref{eq:climate-prediction-bayesian}) to yield a Bayesian prediction of future climate values.

In practice, as we have said, the climate model function $g(.)$ is not known -- we only possess a sample of model evaluations $\{ g(x_1), \ldots, g(x_n) \}$.
We call the process of tuning the covariances $\Sigma^\epsilon, \Sigma^e$ and checking the validity of the Gaussianity assumptions (Axioms \ref{ax:error-gaussian} and \ref{ax:discrepancy-gaussian}) \emph{model validation}. 
This can be done by using (\ref{eq:climate-prediction-bayesian}) to predict past/present climates $p(z)$, and applying some hypothesis testing to the result. 
If the observed value $\tilde{z}$ is in the tail of the distribution, the model parameters (if not the entire set of model assumptions) need to be corrected.

As Rougier admits \cite{Rougier2007}, responding to bad validation results is not straightforward.

\subsubsection{Model calibration} 

Assuming that the model has been validated, it needs to be `{calibrated}', i.e., we need to
find the desired `best' value $x^*$ of the model's parameters.

Under Axioms 3--6, we can compute
\[
p(x^*|z=\tilde{z}) = p(z=\tilde{z}|x^*) = \mathcal{N}(\tilde{z} = g(x^*)| \vec{0}, \Sigma^{\epsilon} + \Sigma^e) p(x^*).
\]
As we know, maximum a posteriori estimation 
could be applied
to the above posterior distribution: however, the presence of multiple modes might make it ineffective.

As an alternative, we can apply full Bayesian inference to compute
\begin{equation} \label{eq:climate-posterior-prediction}
p(y_f | z = \tilde{z}) = \int p(y_f| x^*, z= \tilde{z}) p(x^*|z=\tilde{z}) \; \mathrm{d} x^*,
\end{equation}
where $p(y_f| x^*, z= \tilde{z})$ is Gaussian with a mean which depends on $\tilde{z} - g(x)$.
The posterior prediction (\ref{eq:climate-posterior-prediction}) highlights two routes for climate data to impact on future climate predictions:
\begin{itemize}
\item
by concentrating the distribution $p(x^*|z=\tilde{z})$ relative to the prior $p(x^*)$, depending on both the quantity and the quality of the climate data;
\item 
by shifting the mean of $p(y_f| x^*, z= \tilde{z})$ away from $g(x)$, depending on the size of the difference $\tilde{z} - g(x)$.
\end{itemize}

\subsubsection{Role of model evaluations}

Let us go back to the initial question: what is the probability that a doubling of atmospheric CO$_2$ will raise the global mean temperature by at least $2^\circ C$ by 2100?

Let $Q \subset \mathcal{Y}$ be the set of climates $y$ for which the global mean temperature is at least $2^\circ C$ higher in 2100. The probability of the event of interest can then be computed by integration, as follows:
\[
Pr(y_f \in Q | z = \tilde{z}) = \int f(x^*) p(x^*| z = \tilde{z}) \; \mathrm{d} x^*.
\]
The quantity $f(x) \doteq \int_Q \mathcal{N}(y_f|\mu(x),\Sigma) \; \mathrm{d} y_f$
can be computed directly, while the overall integral in $dx^*$ requires numerical integration, for example via a naive Monte-Carlo approach,
\[
\int \cong \frac{\sum_{i=1}^n f(x_i)}{n}, \quad x_i \sim p(x^*| z=\tilde{z}),
\]
or by weighted sampling
\[
\int \cong \frac{\sum_{i=1}^n w_i f(x_i)}{n}, \quad x_i \sim p(x^*| z=\tilde{z}),
\]
weighted by the likelihood $w_i \propto p(z=\tilde{z}|x^* = x_i)$.
A trade-off appears, in the sense that sophisticated models which take a long time to evaluate may not provide enough samples for the prediction to be statistically significant -- however,
they may make the prior $p(x^*)$ and covariance $\Sigma^\epsilon$ easier to specify.

\subsubsection{Modelling climate with belief functions} 

As we have seen, a number of issues arise in making climate inferences in the Bayesian framework.

Numerous assumptions are necessary to make calculations practical, which are only weakly related to the actual problem to be solved.
Although the prior on \emph{climates} is reduced to a prior $p(x^*)$ on the parameters of a climate \emph{model}, there is no obvious way of picking the latter (confirming our criticism of Bayesian inference in the Introduction).
In general,
it is far easier to say what choices are definitively wrong (e.g. uniform priors) than to specify `good' ones. Finally,
significant parameter tuning is required (e.g. to set the values of $\Sigma^\epsilon$, $\Sigma^e$ and so on).

Formulating a climate prediction method based on the theory of belief functions will require quite a lot of work, but a few landmark features can already be identified: 
\begin{itemize}
\item
such a framework will have to
avoid committing to priors $p(x^*)$ on the `correct' climate model parameters;
\item
it will likely
use a climate model as a parametric model to infer either a belief function on the space of climates $\mathcal{Y}$
\item
or a belief function on the space of parameters (e.g. covariances, etc.) of the distribution on $\mathcal{Y}$.
\end{itemize}

\begin{question} \label{que:climate-change}
Can a climate model allowing predictions about future climate be formulated in a belief function framework, in order to better describe the huge Knightian uncertainties involved?
\end{question}

In recent times, efforts to model climate change using belief functions have been conducted \cite{abdallah2013using,kriegler2005imprecise}, for instance by Kriegler and co-authors \cite{kriegler2005utilizing}, but more could and should be done.

\subsection{Machine learning} \label{sec:future-machine-learning} 

\emph{Machine learning} provides a clear case for the mathematical modelling of epistemic uncertainty, in particular via random set and belief function theory, as the issue of learning models capable of making predictions which are robust to variations in the process generating the data is foundational in machine learning.

\subsubsection{Modelling uncertainty in machine learning}

A significant amount of work has been done in this sense over the years.
Denoeux and co-authors, 
{together with others, \cite{Liu2012291}, 
have much contributed to}
unsupervised learning and clustering, in particular  
\cite{Masson20081384}. 
The combination of the outcomes of multiple classifiers has been investigated within the theory of evidence
\cite{155943} 
\cite{Rogova1994777}, 
and evidential approaches to
decision trees \cite{elouedi00decision} and K-nearest neighbour classifiers \cite{Denoeux2008classic} {have been proposed.} 
However, most of the sparse attempts made so far to incorporate uncertainty in neural predictions \cite{hullermeier2021aleatoric,geifman2017selective} have been made from a Bayesian stance, which only captures aleatory uncertainty, and do not fully model epistemic uncertainty.

A recent wave of papers have brought forward various distinct approaches to evidential deep classification \cite{tong2021evidential}, in particular via a second-order Dirichlet {parameter} representation \cite{sensoy2018evidential}, imprecise neural networks \cite{dutta2023distributionally}, 
{interval neural networks \cite{shariatmadar2022introduction} or convolutional neural networks outputting random sets \cite{manchingal2023random,manchingal2022epistemic} rather than conventional probability vectors.}

Here, we focus on a specific type of classifier as a representative of the issues involved.

\subsubsection{Maximum entropy classification}

\emph{Maximum entropy classifiers}\footnote{\url{http://www.kamalnigam.com/papers/maxent-ijcaiws99.pdf}.} are a widespread tool in machine learning. There, the Shannon entropy of a sought joint (or conditional) probability distribution $p(C_k|x)$ of observed data $x$ and an associated class $C_k \in \mathcal{C} = \{C_1, \ldots, C_K\}$ is maximised, following the maximum entropy principle that the least informative distribution matching the available evidence should be chosen. Once one has picked a set of \emph{feature functions}, chosen to efficiently encode the training information, the joint distribution is subject to the constraint that the empirical expectation of the feature functions equals the expectation associated with it.

Clearly, the two assumptions that (i) the training and test data come from the same probability distribution, and that (ii) the empirical expectation of the training data is correct, and the model expectation should match it, are rather strong, and work against generalisation power.
A more robust learning approach can instead be sought by generalising max-entropy classification in a belief function setting. We take the view here that a training set does not provide, in general, sufficient information to precisely estimate the joint probability distribution of the class and the data. We assume instead that a belief measure can be estimated, providing lower and upper bounds on the joint probability of the data and class.

As in the classical case, an appropriate measure of entropy for belief measures is maximised. In opposition to the classical case, however, the empirical expectation of the chosen feature functions is assumed to be merely `compatible' with the lower and upper bounds associated with the sought belief measure. This leads to a constrained optimisation problem with inequality constraints, rather than equality ones, which needs to be solved by looking at the Karush-Kuhn-Tucker conditions.

A shorter version of what follows was published in \cite{cuzzolin18belief-maxent}.

\subsubsection{Generalised max-entropy classifiers} \label{sec:future-maxentropy}

Given a training set in which each observation is attached to a class, namely
\begin{equation} \label{eq:maxent-training-set}
\mathcal{D} = \Big \{ (x_i,y_i), i = 1, \ldots, N \Big | x_i \in X, y_i \in \mathcal{C} \Big \},
\end{equation}
a set $M$ of \emph{feature maps} is designed,
\begin{equation} \label{eq:feature-maps}
\phi(x,C_k) = [\phi_1(x,C_k), \ldots , \phi_M(x,C_k)]',
\end{equation}
whose values depend on both the object observed and its class. Each feature map $\phi_m :  X \times \mathcal{C} \rightarrow \mathbb{R}$ is then a random variable, whose expectation is obviously
\begin{equation} \label{eq:feature-expectation-theoretical} 
E[\phi_m] = \sum_{x,k} p(x,C_k) \phi_m(x,C_k).
\end{equation}
In opposition, the \emph{empirical} expectation of $\phi_m$ is
\begin{equation} \label{eq:feature-expectation-empirical}
\hat{E}[\phi_m] = \sum_{x,k} \hat{p}(x,C_k) \phi_m(x,C_k),
\end{equation}
where $\hat{p}$ is a histogram constructed by counting occurrences of the pair $(x,C_k)$ in the training set:
\[
\hat{p}(x,C_k) = \frac{1}{N} \sum_{(x_i,y_i) \in \mathcal{D}} \delta(x_i = x \wedge y_i = C_k).
\]

The theoretical expectation (\ref{eq:feature-expectation-theoretical}) can be approximated by decomposing via Bayes' rule $p(x,C_k) = p(x) p(C_k|x)$, and approximating the (unknown) prior of the observations $p(x)$ by the empirical prior $\hat{p}$, i.e., the histogram of the observed values in the training set:
\[
\tilde{E}[\phi_m] = \sum_{x,k} \hat{p}(x) p(C_k|x) \phi_m(x,C_k). 
\]
For values of $x$ that are not observed in the training set, $\hat{p}(x)$ is obviously equal to zero.

\begin{definition} \label{def:maxent-classifier}
Given a training set (\ref{eq:maxent-training-set}) related to the problem of classifying $x \in X$ as belonging to one of the classes $\mathcal{C} = \{ C_1, \ldots, C_K \}$, the \emph{max-entropy classifier} is the conditional probability $p^*(C_k|x)$ which solves the following constrained optimisation problem:
\begin{equation} \label{eq:maxent-classifier-1}
p^*(C_k|x) \doteq \arg\max_{p(C_k|x)} H_s(P),
\end{equation}
where $H_s$ is the traditional Shannon entropy, subject to
\begin{equation} \label{eq:maxent-classifier-2}
\tilde{E}_{p}[\phi_m] = \hat{E}[\phi_m] \quad \forall m = 1, \ldots, M.
\end{equation}
\end{definition}

The constraint (\ref{eq:maxent-classifier-2}) requires the classifier to be consistent with the empirical frequencies of the features in the training set, while seeking the least informative probability distribution that does so (\ref{eq:maxent-classifier-1}).

The solution of the maximum entropy classification problem (Definition \ref{def:maxent-classifier}) is the so-called \emph{log-linear model},
\begin{equation} \label{eq:log-linear-model}
p^*(C_k|x) = \frac{1}{Z_\lambda(x)} e^{\sum_m \lambda_m \phi_m(x,C_k)}, 
\end{equation}
where $\lambda = [ \lambda_1, \ldots, \lambda_M ]'$ are the Lagrange multipliers associated with the linear constraints (\ref{eq:maxent-classifier-2}) in the above constrained optimisation problem, and $Z_\lambda(x)$ is a normalisation factor.  

The classification function associated with (\ref{eq:log-linear-model}) is
\[
y(x) = \arg\max_k \sum_m \lambda_m \phi_m(x,C_k),
\]
i.e., $x$ is assigned to the class which maximises the linear combination of the feature functions with coefficients $\lambda$.

\paragraph{Generalised maximum entropy optimisation problem}

When generalising the maximum entropy optimisation problem (Definition \ref{def:maxent-classifier}) to the case of belief functions, we need to (i) choose an appropriate measure of entropy for belief functions as the objective function, and (ii) revisit the constraints that the (theoretical) expectations of the feature maps are equal to the empirical ones computed over the training set.

As for (i), 
manifold generalisations of the Shannon entropy to the case of belief measures exist (\cite{cuzzolin2021springer}, Chapter 4). 
As for (ii), it is sensible to require that the empirical expectation of the feature functions is bracketed by the lower and upper expectations associated with the sought belief function $Bel: 2^{X \times \mathcal{C}} \rightarrow [0,1]$, namely
\begin{equation} \label{eq:maxent-lower-upper-expectations}
\sum_{(x,C_k) \in X \times \mathcal{C}} Bel(x,C_k) \phi_m(x,C_k) \leq \hat{E}[\phi_m] \leq \sum_{(x,C_k) \in X \times \mathcal{C}} Pl(x,C_k) \phi_m(x,C_k).
\end{equation}
Note that this makes use only of the 2-monotonicity of belief functions, as only probability intervals on singleton elements of $X \times \mathcal{C}$ are considered. 

\begin{question} \label{que:maxent-events}
Should constraints of this form be enforced on all possible subsets $A \subset X \times \mathcal{C}$, rather than just singleton pairs $(x,C_k)$? This is linked to the question of what information a training set actually carries (see Section \ref{sec:future-slt}).
\end{question}

More general constraints would require extending the domain of feature functions to set values -- we will investigate this idea in the near future.

We can thus pose the generalised maximum-entropy optimisation problem as follows \cite{cuzzolin18belief-maxent}. 

\begin{definition} \label{def:maxent-classifier-generalised}
Given a training set (\ref{eq:maxent-training-set}) related to the problem of classifying $x \in X$ as belonging to one of the classes $\mathcal{C} = \{ C_1, \ldots, C_K \}$, the \emph{maximum random set entropy classifier} is the joint belief measure $Bel^*(x,C_k) : 2^{X \times \mathcal{C}} \rightarrow [0,1]$ which solves the following constrained optimisation problem:
\begin{equation} \label{eq:maxent-classifier-generalised-1}
Bel^*(x,C_k) \doteq \arg\max_{Bel(x,C_k)} H(Bel),
\end{equation}
subject to the constraints
\begin{equation} \label{eq:maxent-classifier-generalised-2}
\sum_{x,k} Bel(x,C_k) \phi_m(x,C_k) \leq \hat{E}[\phi_m] \leq \sum_{x,k} Pl(x,C_k) \phi_m(x,C_k) \;\; \forall m=1, \ldots, M,
\end{equation}
where $H$ is an appropriate measure of entropy for belief measures.
\end{definition}

\begin{question} \label{que:least-committed-classifier}
An alternative course of action is to pursue the \emph{least committed}, rather than the maximum-entropy, belief function satisfying the constraints (\ref{eq:maxent-classifier-generalised-2}). This is left for future work.
\end{question}

\paragraph{KKT conditions and sufficiency} 

As the optimisation problem in Definition \ref{def:maxent-classifier-generalised} involves inequality constraints (\ref{eq:maxent-classifier-generalised-2}), as opposed to the equality constraints of traditional max-entropy classifiers (\ref{eq:maxent-classifier-2}), we need to analyse the KKT (recall Definition \ref{def:kkt}) \cite{Karush1939} necessary conditions for a belief function $Bel$ to be an optimal solution to the problem.

Interestingly, the KKT conditions are also sufficient whenever the objective function $f$ is concave, the inequality constraints $g_i$ are continuously differentiable convex functions and the equality constraints $h_j$ are affine.\footnote{More general sufficient conditions can be given in terms of \emph{invexity} \cite{Ben-Israel1986invexity} requirements. A function $f:\mathbb{R}^n \rightarrow \mathbb{R}$ is `invex' if there exists a vector-valued function $g$ such that $f(x) - f(u) \geq g(x,u) \nabla f(u)$. Hanson \cite{HANSON1981545} proved that if the objective function and constraints are invex with respect to the same function $g$, then the KKT conditions are sufficient for optimality.}
This provides us with a way to tackle the generalised maximum entropy problem.

\paragraph{Concavity of the entropy objective function}

As for the objective function, 
we recalled
that several generalisations of the standard Shannon entropy to random sets are possible \cite{Jirousek2016entropy}.
It is a well-known fact that the Shannon entropy is a concave function of probability distributions, represented as vectors of probability values.\footnote{{\url{http://projecteuclid.org/euclid.lnms/1215465631}}}
A number of other properties of concave functions are well known: any linear combination of concave functions is concave, a monotonic and concave function of a concave function is still concave, and the logarithm is a concave function.

Relevantly,
the transformations which map mass vectors to vectors of belief (and commonality) values are linear, as they can be expressed in the form of matrices.
In particular, $\vec{bel} = BfrM \vec{m}$, where $BfrM$ is a matrix whose $(A,B)$ entry is
\[
BfrM (A,B) =  \left \{ \begin{array}{ll} 1 & B \subseteq A, 
\\ 
0 & \text{otherwise}. \end{array} \right .
\]
The same can be said of the mapping $\vec{q} = QfrM \vec{m}$ between a mass vector and the associated commonality vector.
As a consequence, belief, plausibility and commonality are all linear (and therefore concave) functions of a mass vector.

Using this matrix representation, it is easy to conclude that several of the entropies proposed by various authors 
for belief measures
are indeed concave. In particular, Smets's specificity measure 
\[
H_t = \sum_A \log ({1} / {Q(A)})
\]
is concave, as a linear combination of concave functions. Nguyen's entropy 
\[
H_n = - \sum_A m(A) \log(m(A)) = H_s(m) 
\]
is also concave as a function of $m$, as the Shannon entropy of a mass assignment. Dubois and Prade's measure $H_d = \sum_A m(A) \log(|A|)$ 
is also concave with respect to $m$, as a linear combination of mass values.
Straighforward applications of the Shannon entropy function to $Bel$ and $Pl$,
\begin{equation} \label{eq:shannon-entropy-belief}
\begin{array}{c}
\displaystyle H_{Bel}[m] = H_s[Bel] = \sum_{A \subseteq \Theta} Bel(A) \log \left ( \frac{1}{Bel(A)} \right)
\end{array}
\end{equation}
and
\[
\begin{array}{c} \displaystyle
H_{Pl}[m] = H_s[Pl] = \sum_{A \subseteq \Theta} Pl(A) \log \left ( \frac{1}{Pl(A)} \right ),
\end{array}
\]
are also trivially concave, owing to the concavity of the entropy function and to the linearity of the mapping from $m$ to $Bel, Pl$.

Drawing conclusions about other entropy measures 
is less immediate, as they involve products of concave functions (which are not, in general, guaranteed to be concave). 

\paragraph{Convexity of the interval expectation constraints}

As for the constraints (\ref{eq:maxent-classifier-generalised-2}) of the generalised maximum entropy problem, we first note that (\ref{eq:maxent-classifier-generalised-2}) can be decomposed into the following pair of constraints:
\[
\begin{array}{lll}
g_m^1(m) & \doteq & \displaystyle \sum_{x,k} Bel(x,C_k) \phi_m(x,C_k) - \hat{E}[\phi_m] \leq 0,
\\ \\
g_m^2(m) & = & \displaystyle \sum_{x,k} \phi_m(x,C_k) [\hat{p}(x,C_k) - Pl(x,C_k)]  \leq 0,
\end{array}
\]
for all $m = 1, \ldots, M$.
The first inequality constraint is a linear combination of linear functions of the sought mass assignment $m^* : 2^{X \times \mathcal{C}} \rightarrow [0,1]$ (since $Bel^*$ results from applying a matrix transformation to $m^*$). 
As 
\[
\vec{pl} = 1 - J \vec{bel} = 1 - J BfrM \vec{m},
\]
the constraint on $g_m^2$ is also a linear combination of mass values. Hence, as linear combinations, the constraints on $g_m^1$ and $g_m^2$ are both concave and convex.

\paragraph{KKT conditions for the generalised maximum entropy problem} 

We can conclude the following.
\begin{theorem} \label{the:generalised-max-entropy}
If either $H_t, H_n,H_d,H_{Bel}$ or $H_{Pl}$ is adopted as the measure of entropy, the generalised maximum entropy optimisation problem (Definition \ref{def:maxent-classifier-generalised}) has a concave objective function and convex constraints. Therefore, the KKT conditions are sufficient for the optimality of its solution(s).
\end{theorem}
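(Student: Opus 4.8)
The plan is to reduce the statement to the standard converse of the Karush--Kuhn--Tucker theorem: a feasible point at which the KKT conditions of Definition~\ref{def:kkt} hold is globally optimal whenever the problem maximises a concave objective over a region cut out by continuously differentiable convex inequality constraints and affine equality constraints. So the proof will consist of verifying those three structural hypotheses using the linear-algebraic representations already set up in the paragraphs above, and then invoking the sufficiency theorem; there is essentially nothing new to compute.

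First I would fix the free variable to be the mass vector $\vec m$ ranging over the probability simplex $\{ \vec m \geq 0,\ \sum_A m(A) = 1 \}$ on $2^{X \times \mathcal{C}}$, and record that $\vec{bel} = BfrM\, \vec m$, $\vec{pl} = 1 - J\, BfrM\, \vec m$ and $\vec q = QfrM\, \vec m$ are affine (hence simultaneously concave and convex) functions of $\vec m$. For the objective I would then cite the concavity facts already argued above, case by case: $H_n = H_s(\vec m)$ is concave, being the Shannon entropy of a mass assignment; $H_d$ is a linear functional of $\vec m$, hence concave; $H_{Bel}$ and $H_{Pl}$ are the concave Shannon function precomposed with the affine maps $\vec m \mapsto \vec{bel}$ and $\vec m \mapsto \vec{pl}$, hence concave; and $H_t = \sum_A \log(1/Q(A))$ is a linear combination of concave functions of $\vec m$, hence concave. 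This exhausts the five admissible entropies, so in every case $f = H$ is concave on the feasible domain.

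Next I would dispatch the constraints. Splitting each interval constraint (\ref{eq:maxent-classifier-generalised-2}) into $g_m^1(\vec m) \doteq \sum_{x,k} Bel(x,C_k)\,\phi_m(x,C_k) - \hat E[\phi_m] \leq 0$ and $g_m^2(\vec m) \doteq \sum_{x,k} \phi_m(x,C_k)\,[\hat p(x,C_k) - Pl(x,C_k)] \leq 0$, and using that $Bel$ and $Pl$ are affine in $\vec m$ while the feature values $\phi_m(x,C_k)$ are fixed scalars, I would observe that each $g_m^1$ and $g_m^2$ is an affine functional of $\vec m$, in particular continuously differentiable and convex. The normalisation constraint is affine, the sign constraints $m(A) \geq 0$ are affine, and there are no nonlinear equality constraints. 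Hence the feasible set is a convex polytope and every inequality constraint is convex, so the hypotheses of KKT sufficiency are met and any $\vec m^*$ (equivalently $Bel^*$) satisfying the KKT system is a global maximiser — which is the assertion.

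The genuinely delicate point is the concavity of the objective: the cases $H_n$ and $H_d$ are immediate, but $H_{Bel}$, $H_{Pl}$ and especially $H_t$ rely on the fact that precomposing a concave function with an affine (matrix) map preserves concavity, together with the explicit forms of the matrices $BfrM$, $QfrM$ and $J$ — which is precisely what the preceding discussion establishes. Everything else, namely the decomposition of the interval constraints into two affine pieces and the appeal to the KKT sufficiency theorem, is routine. I would therefore present the proof simply as an orderly invocation of those already-proved ingredients.
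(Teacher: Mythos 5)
Your proposal follows essentially the same route as the paper's own argument: the same case-by-case concavity check for the five entropies via the linearity of the maps $\vec{m}\mapsto \vec{bel},\vec{pl},\vec{q}$, the same splitting of each interval constraint (\ref{eq:maxent-classifier-generalised-2}) into the two affine pieces $g_m^1$ and $g_m^2$, and the same appeal to KKT sufficiency for a concave objective over convex constraints. One caveat you inherit from the paper rather than introduce yourself: since $Q(A)$ is linear in $\vec{m}$, the summand $\log(1/Q(A))=-\log Q(A)$ is convex rather than concave in $\vec{m}$, so the asserted concavity of $H_t$ is the one step in both arguments that deserves a second look.
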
 

Let us then compute the KKT conditions (see Definition \ref{def:kkt}) for the Shannon-like entropy (\ref{eq:shannon-entropy-belief}). Condition 1 (stationarity), applied to the sought optimal belief function $Bel^* : 2^{X \times \mathcal{C}} \rightarrow [0,1]$, reads as
\[
\nabla H_{Bel}(Bel^*) = \sum_{m=1}^M \mu_m^1 \nabla g_m^1(Bel^*) +
\mu_m^2 \nabla g_m^2 (Bel^*).
\]
The components of $\nabla H_{Bel}(Bel)$ are the partial derivatives of the entropy with respect to the mass values $m(\overline{B})$, for all $\overline{B} \subseteq \Theta = X \times \mathcal{C}$. They read as
\[
\begin{array}{lll}
\displaystyle \frac{\partial  H_{Bel}}{\partial  m(\overline{B})} 
& = & \displaystyle 
\frac{\partial}{\partial  m(\overline{B})} 
\sum_{A \supseteq \overline{B}} 
\left [ - \left (  \sum_{B \subseteq A} m(B) \right ) \log \left (  \sum_{B \subseteq A} m(B) \right ) \right ] 
\\ \\
& = & \displaystyle
- \sum_{A \supseteq \overline{B}}
\left [
1 \cdot \log \left (  \sum_{B \subseteq A} m(B) \right ) +  \left (  \sum_{B \subseteq A} m(B) \right ) \frac{1}{ \left (  \sum_{B \subseteq A} m(B) \right )}
\right ]
\\ \\
& = & \displaystyle
- \sum_{A \supseteq \overline{B}}
\left [1 + 
\log \left (  \sum_{B \subseteq A} m(B) \right )
\right ]
=
- \sum_{A \supseteq \overline{B}} [1 + \log Bel(A)].
\end{array} 
\]
For $\nabla g_m^1(Bel)$, we easily have
\begin{equation} \label{eq:deriv-g1m}
\begin{array}{lll}
\displaystyle 
\frac{\partial g_m^1 }{\partial  m(\overline{B})} 
& = & \displaystyle
\frac{\partial }{\partial  m(\overline{B})} \sum_{(x,C_k) \in \Theta} Bel(x,C_k) \phi_m(x,C_k) - \hat{E}[\phi_m]
\\ \\
& = & \displaystyle
\frac{\partial }{\partial  m(\overline{B})} \sum_{(x,C_k) \in \Theta} m(x,C_k) \phi_m(x,C_k) - \hat{E}[\phi_m]
\\ \\
& = & \displaystyle
\left \{ 
\begin{array}{ll}
\phi_m(x,C_k) & \overline{B} = \{(x,C_k)\},
\\
0 & \text{otherwise}.
\end{array}
\right.
\end{array}
\end{equation} 
Note that, if we could define feature functions over non-singleton subsets $A \subseteq \Theta$, (\ref{eq:deriv-g1m}) would simply generalise to
\[
\frac{\partial g_m^1}{\partial  m(\overline{B})}  
=
\phi(\overline{B}) \quad \forall \overline{B} \subseteq \Theta.
\]
As for the second set of constraints,
\[
\begin{array}{lll}
\displaystyle \frac{\partial g_m^2}{\partial  m(\overline{B})} & = &
\displaystyle
\frac{\partial }{\partial  m(\overline{B})} \sum_{(x,C_k) \in \Theta} \phi_m (x,C_k) [\hat{p}(x,C_k) - Pl(x,C_k)] 
\\ \\
& = & \displaystyle 
\frac{\partial }{\partial  m(\overline{B})} \sum_{(x,C_k) \in \Theta} \phi_m (x,C_k) \left [ \hat{p}(x,C_k) - \sum_{B \cap \{(x,C_k)\} \neq \emptyset} m(B) \right ]
\\ \\
& = & \displaystyle 
 \frac{\partial }{\partial  m(\overline{B})} 
 \left (
 - \sum_{(x,C_k) \in \Theta} \phi_m (x,C_k) \sum_{B \supseteq \{(x,C_k)\}} m(B) 
 \right )
\\ \\
& = & \displaystyle  
 - \sum_{(x,C_k) \in \overline{B}} \phi_m (x,C_k).
 \end{array}
\] 

Putting everything together, the KKT stationarity conditions for the generalised maximum entropy problem amount to the following system of equations:
\begin{equation} \label{eq:kkt-stationarity-maxent}
\left \{
\begin{array}{ll}
\displaystyle
- \sum_{A \supseteq \overline{B}} [1 + \log Bel(A)]
=
\sum_{m=1}^M \phi_m (\overline{x},\overline{C}_k) [\mu_m^1 - \mu_m^2],
&
\overline{B} = \{ (\overline{x},\overline{C}_k) \},
\\
\displaystyle
- \sum_{A \supseteq \overline{B}} [1 + \log Bel(A)]
=
\sum_{m=1}^M \mu_m^2 \sum_{(x,C_k) \in \overline{B}} \phi_m (x,C_k),
&
\overline{B} \subset X \times \mathcal{C},
|\overline{B}| > 1.
\end{array}
\right .
\end{equation}
The other conditions are (\ref{eq:maxent-classifier-generalised-2}) (primal feasibility), $\mu_m^1, \mu_m^2 \geq 0$ (dual feasibility) and complementary slackness,
\begin{equation} \label{eq:kkt-slackness-maxent}
\begin{array}{c}
\displaystyle
\mu_m^1 \left [
\sum_{(x,C_k) \in \Theta} Bel(\{(x,C_k)\}) \phi_m(x,C_k) - \hat{E}[\phi_m]
\right ] = 0,
\\ \\
\displaystyle
\mu_m^2 \left [
\sum_{(x,C_k) \in \Theta} \phi_m(x,C_k) \Big [ \hat{p}(x,C_k) - Pl(\{(x,C_k)\}) \Big ]
\right ] = 0
\end{array}
\end{equation}
In the near future we will address the analytical form of the solution of the above system of constraints, and its alternative versions associated with different entropy measures.

\begin{question}
Derive the analytical form of the solution to the above generalised maximum entropy classification problem.
\end{question}

\begin{question}
How does this analytical solution compare with the log-linear model solution to the traditional maximum entropy problem?
\end{question}

\begin{question}
Derive and compare the constraints and analytical solutions for the alternative generalised maximum entropy frameworks obtained by plugging in other concave generalised entropy functions.
\end{question}

\subsection{Generalising statistical learning theory}  \label{sec:future-slt}

Current machine learning paradigms focus on explaining the observable outputs in the training data (`overfitting'), which may, for instance, lead an autonomous driving system to perform well in validation training sessions but fail catastrophically when tested in the real world. Methods are often designed to handle a specific set of testing conditions, and thus are unable to predict how a system will behave in a radically new setting (e.g., how would a smart car \cite{Geiger:2012:WRA:2354409.2354978} cope with driving in extreme weather conditions?). With the pervasive deployment of machine learning algorithms in `mission-critical' artificial intelligence systems for which failure is not an option (e.g. smart cars navigating a complex, dynamic environment, robot surgical assistants capable of predicting the surgeon's needs even before they are expressed, or smart warehouses monitoring the quality of the work conducted: see Fig. \ref{fig:ml-wild}), it is imperative to ensure that these algorithms behave predictably `in the wild'.

Vapnik's classical \emph{statistical learning theory} \cite{vapnik1998statistical,vapnik2013nature,vapnik1999overview} is effectively useless for model selection, as the bounds on generalisation errors that it predicts are too wide to be useful, and rely on the assumption that the training and testing data come from the same (unknown) distribution. As a result, practitioners regularly resort to cross-validation\footnote{{\url{https://en.wikipedia.org/wiki/Cross-validation_(statistics)}}.} to approximate the generalisation error and identify the parameters of the optimal model. 

\begin{figure}[ht!]
\begin{tabular}{cc}
\includegraphics[height=3.8cm]{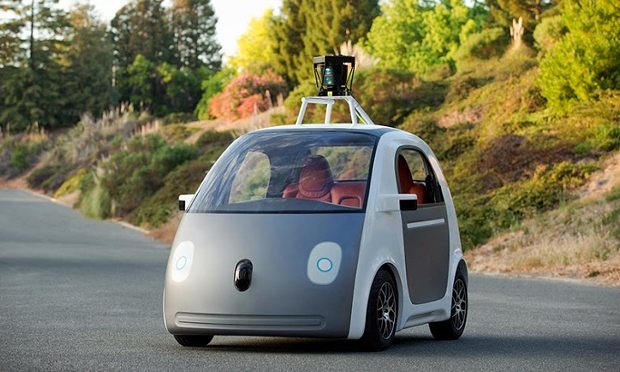}
&
\includegraphics[height=3.8cm]{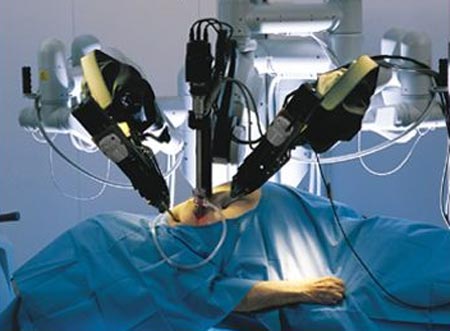}
\end{tabular}
\caption{New applications of machine learning to unconstrained environments (e.g. smart vehicles, left, or robotic surgical assistants, right) require new, robust foundations for statistical learning theory.} \label{fig:ml-wild}
\end{figure}

\subsubsection{PAC learning} \label{sec:future-pac} 

Statistical learning theory \cite{vapnik1998statistical} considers the problem of predicting an output $y \in \mathcal{Y}$ given an input $x \in \mathcal{X}$, by means of a mapping $h : \mathcal{X} \rightarrow \mathcal{Y}$, $h \in \mathcal{H}$ called a \emph{model} or \emph{hypothesis}, which lives in a specific model space $\mathcal{H}$. The error committed by a model is measured by a \emph{loss function} $l:(\mathcal{X} \times \mathcal{Y}) \times \mathcal{H} \rightarrow \mathbb{R}$, for instance the zero--one loss $l((x, y), h) = \mathbb{I}[y \neq h(x)]$.
The input--output pairs are assumed to be generated a probability distribution $p^*$. 

The \emph{expected risk} of a model $h$,
\begin{equation} \label{eq:expected-risk}
L(h) \doteq \mathbb{E}_{(x,y)\sim p^*} [l((x,y), h)],
\end{equation}
is measured as its expected loss $l$ assuming that the pairs $(x_1,y_1), (x_2,y_2), \ldots$ are sampled i.i.d. from the probability distribution $p^*$. The  \emph{expected risk minimiser},
\begin{equation} \label{eq:expected-risk-minimiser}
h^* \doteq \arg \min_{h \in \mathcal{H}} L(h),
\end{equation}
is any hypothesis in the given model space that minimises the expected risk (not a random quantity at all). 
Given $n$ training examples, also drawn i.i.d. from $p^*$, the \emph{empirical risk} of a hypothesis $h$ is the average loss over the available training set $\mathcal{D} = \{(x_1,y_1), \ldots, (x_n,y_n) \}$:
\begin{equation}
\hat{L}(h) \doteq \frac{1}{n} \sum_{i=1}^n l((x_i,y_i),h).
\end{equation}
We can then define the \emph{empirical risk minimiser} (ERM), namely
\begin{equation} \label{eq:empirical-risk-minimiser}
\hat{h} \doteq \arg \min_{h \in \mathcal{H}} \hat{L}(h),
\end{equation}
which is instead a random variable which depends on the collection of training data.

Statistical learning theory is interested in the expected risk of the ERM (which is the only thing we can compute from the available training set). 
The core notion is that of \emph{probably approximately correct} algorithms. 
\begin{definition}
A learning algorithm is \emph{probably approximately correct} (PAC) if it finds with probability at least $1-\delta$ a model $h \in \mathcal{H}$ which is `approximately correct', i.e., it makes a training error of no more than $\epsilon$.
\end{definition}

According to this definition, PAC learning aims at providing bounds of the kind
\[
P[L(\hat{h}) - L(h^*) > \epsilon] \leq \delta,
\]
on the difference between the loss of the ERM and the minimum theoretical loss for that class of models.  
This is to account for the generalisation problem, due to the fact that the error we commit when training a model is different from the error one can expect in general cases in which data we have not yet observed are presented to the system. 

As we will see in more detail later on, statistical learning theory makes predictions about the reliability of a training set, based on simple quantities such as the number of samples $n$. 
In particular, 
the main result of PAC learning, in the case of finite model spaces $\mathcal{H}$, is that we can relate the required size $n$ of a training sample to the size of the model space $\mathcal{H}$, namely
\[
\log{|\mathcal{H}|} \leq n \epsilon - \log{\frac{1}{\delta}},
\]
so that the minimum number of training examples given $\epsilon$, $\delta$ and $|\mathcal{H}|$ is
\[
n \geq \frac{1}{\epsilon} \left ( \log{|\mathcal{H}|} + \log{\frac{1}{\delta}} \right ).
\]

\subsubsection{Vapnik--Chervonenkis dimension}

Obviously, for infinite-dimensional hypothesis spaces $\mathcal{H}$, the previous relation does not make sense. However, a similar constraint on the number of samples can be obtained after we introduce the following notion.

\begin{definition}
The \emph{Vapnik--Chervonenkis (VC) dimension}\footnote{{\url{http://www.svms.org/vc-dimension/}.}} of a model space $\mathcal{H}$ is the maximum number of points that can be successfully `shattered' by a model $h \in \mathcal{H}$ (i.e., they can be correctly classified by some $h \in \mathcal{H}$ for all possible binary labellings of these points).
\end{definition}

Unfortunately, the VC dimension dramatically overestimates the number of training instances required. Nevertheless, arguments based on the VC dimension provide the only justification for max-margin linear SVMs, the dominant force in classification for the last two decades, before the advent of deep learning.
As a matter of fact,
for the space $\mathcal{H}_m$ of linear classifiers with margin $m$, the following expression holds:
\[
VC_\text{SVM} = \min \left \{ d, \frac{4R^2}{m^2} \right \} + 1,
\]
where $d$ is the dimensionality of the data and $R$ is the radius of the smallest hypersphere enclosing all the training data.
As the VC dimension of $\mathcal{H}_m$ decreases as $m$ grows, it is then desirable to select the linear boundaries which have maximum margin, as in support vector machines. 

\subsubsection{Towards imprecise-probabilistic foundations for machine learning} \label{sec:future-foundations}

These issues with Vapnik's traditional statistical learning theory have recently been recognised by many scientists.

There is exciting research exploring a variety of options, for instance 
risk-sensitive reinforcement learning \cite{DBLP:journals/corr/ShenTSO13}; approaches based on situational awareness, i.e., the ability to perceive the current status of an environment in terms of time and space, comprehend its meaning and behave with a corresponding risk attitude; robust optimisation using minimax learning \cite{Zhou:2012:LWC:2999325.2999380}; and budgeted adversary models \cite{DBLP:journals/corr/KhaniRL16}. The latter, rather than learning models that predict accurately on a target distribution, use minimax optimisation to learn models that are suitable for any target distribution within a `safe' family. A portfolio of models have been proposed \cite{Caruana:2004:ESL:1015330.1015432}, including Satzilla \cite{Xu:2008:SPA:1622673.1622687} and IBM Watson (\url{http://www.ibm.com/watson/}), which combines more than 100 different techniques for analysing natural language, identifying sources, finding and generating hypotheses, finding and scoring evidence, and merging and ranking hypotheses. 
By an ensemble of models, however, these methods mean sets of models of different nature (e.g. SVMs, random forests) or models of the same kind learned from different slices of data (e.g. boosting). Exciting recent progress in domain adaptation includes multiview learning \cite{AISTATS2011_BlitzerKF11}, multifaceted understanding \cite{Blum:1998:CLU:279943.279962} and learning of transferable components \cite{GonZhaLiuTaoSch16,DBLP:journals/corr/Long0J16a}. 

Some consensus exists that robust approaches should provide worst-case guarantees, as it is not possible to rule out completely unexpected behaviours or catastrophic failures.
The minimax approach, in particular, evokes the concept of imprecise probability, at least in its credal, robust-statistical incarnation. 
As we know, imprecise probabilities naturally arise whenever the data are insufficient to allow the estimation of a probability distribution. 
Indeed, training sets in virtually all applications of machine learning constitute a glaring example of data which is both:
\begin{itemize}
\item
\emph{insufficient in quantity} (think of a Google routine for object detection from images, trained on a few million images, compared with the thousands of billions of images out there), and
\item
\emph{insufficient in quality} (as they are selected based on criteria such as cost, availability or mental attitudes, therefore biasing the whole learning process).
\end{itemize}

Uncertainty theory therefore has the potential to address model adaptation in new, original ways, so providing a new paradigm for robust statistical learning.
A sensible programme for future research could then be based on the following developments (see Fig. \ref{fig:leverhulme}):
\begin{enumerate}
\item 
Addressing the domain adaptation problem by allowing the test data to be sampled from a different probability distribution than the training data, under the weaker assumption that both belong to the same convex set of distributions (credal set) (Fig. \ref{fig:leverhulme}(a)).
\item 
Extending further the statistical-learning-theory framework by moving away from the selection of a single model from a class of hypotheses to the identification of a convex set of models induced by the available training data (Fig. \ref{fig:leverhulme}(b)).
\item 
Applying the resulting convex-theoretical learning theory (with respect to both the data-generating probabilities and the model space) to the functional spaces associated with convolutional neural networks (e.g., series of convolutions and max-pooling operations, Fig. \ref{fig:leverhulme}(c)) in order to lay solid theoretical foundations for deep learning.
\end{enumerate}

\begin{figure}[ht!]
\begin{center}
\begin{tabular}{ccc}
\includegraphics[height=3.6cm]{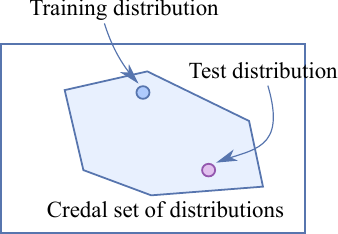}
&
\includegraphics[height=4.5cm]{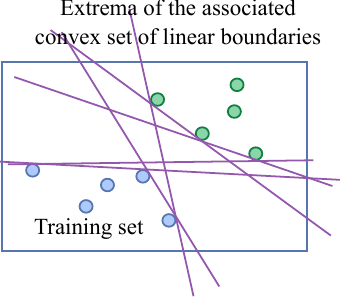}
\\
(a) & (b) 
\end{tabular}
\\
\begin{tabular}{c}
\includegraphics[height=4.5cm]{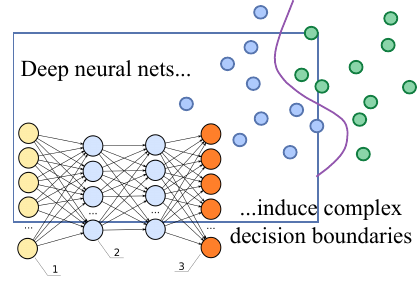}
\\
(c)
\end{tabular}
\end{center}
\caption{(a) Training and test distributions from a common random set. (b) A training set of separable points determines only a convex set of admissible linear boundaries. (c) Going against conventional wisdom, deep neural models have very good generalisation power while generating, at the same time, complex decision boundaries.} \label{fig:leverhulme}
\end{figure}

\subsubsection{Bounds for realisable finite hypothesis classes} 

To conclude this essay, we consider the domain adaptation issue in the case of realisable finite hypothesis classes, and outline a credal solution for it.

Under the assumption that (1) the model space $\mathcal{H}$ is finite, and (2) there exists a hypothesis $h^* \in \mathcal{H}$ that obtains zero expected risk, that is,
\begin{equation} \label{eq:realisability}
L(h^*) = \mathbb{E}_{(x,y)\sim p^*} [l((x,y), h^*)] = 0,
\end{equation}
a property called \emph{realisability}, the following result holds\footnote{{\url{https://web.stanford.edu/class/cs229t/notes.pdf}.}}. 
\begin{theorem} \label{the:the-4}
Let $\mathcal{H}$ be a hypothesis class, where each hypothesis $h \in \mathcal{H}$ maps some $\mathcal{X}$ to $\mathcal{Y}$; let $l$ be the zero--one loss, $l((x, y), h) = \mathbb{I}[y \neq h(x)]$; let $p^*$ be any distribution over $\mathcal{X} \times \mathcal{Y}$; and let $\hat{h}$ be the empirical risk minimiser (\ref{eq:empirical-risk-minimiser}). Assume that (1) and (2) hold.
Then the following two equivalent statements hold, with probability at least $1-\delta$:
\begin{equation} \label{eq:th4-1}
L(\hat{h}) \leq \frac{\log |\mathcal{H}| + \log (1/\delta)}{n},
\end{equation}
\begin{equation} \label{eq:th4-2}
n \geq \frac{\log |\mathcal{H}| + \log (1/\delta)}{\epsilon} \Rightarrow L(\hat{h}) \leq \epsilon.
\end{equation}
\end{theorem}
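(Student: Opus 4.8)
The plan is to run the classical union-bound argument over ``bad'' hypotheses, using realisability to pin down the empirical risk of the empirical risk minimiser exactly. First I would fix $\epsilon > 0$ and call a hypothesis $h \in \mathcal{H}$ \emph{bad} if $L(h) > \epsilon$, writing $\mathcal{B} = \{ h \in \mathcal{H} : L(h) > \epsilon \}$. Statement (\ref{eq:th4-2}) is then equivalent to the assertion that, with probability at least $1-\delta$, the ERM $\hat h$ of (\ref{eq:empirical-risk-minimiser}) does not lie in $\mathcal{B}$; and (\ref{eq:th4-1}) follows from (\ref{eq:th4-2}) by substituting $\epsilon = (\log|\mathcal{H}| + \log(1/\delta))/n$ and reading the implication in the other direction, so it suffices to prove one of the two.

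Second, I would invoke assumption (2). Since $L(h^*) = \mathbb{E}_{(x,y)\sim p^*}[\mathbb{I}[y \neq h^*(x)]] = 0$, the hypothesis $h^*$ errs with probability zero under $p^*$, hence almost surely $\hat L(h^*) = 0$ on the i.i.d.\ sample $\mathcal{D} = \{(x_1,y_1),\ldots,(x_n,y_n)\}$, and therefore $\hat L(\hat h) \le \hat L(h^*) = 0$. Consequently the event that $\hat h$ is bad is contained in the event that \emph{some} bad hypothesis achieves zero empirical risk:
\[
\{ \hat h \in \mathcal{B} \} \ \subseteq\ \bigcup_{h \in \mathcal{B}} \{ \hat L(h) = 0 \}.
\]
This inclusion is the crucial step: it lets us replace the data-dependent object $\hat h$ by a fixed (though arbitrary) $h \in \mathcal{B}$ before using independence.

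Third, I would bound each term on the right. For a fixed bad $h$, the $n$ training pairs are i.i.d.\ from $p^*$ and $h$ classifies each one correctly with probability $1 - L(h) < 1 - \epsilon$, so $P[\hat L(h) = 0] = (1 - L(h))^n < (1-\epsilon)^n \le e^{-\epsilon n}$, using $1 - t \le e^{-t}$. A union bound over $|\mathcal{B}| \le |\mathcal{H}|$ yields
\[
P[\hat h \in \mathcal{B}] \ \le\ |\mathcal{H}|\, e^{-\epsilon n}.
\]
Setting the right-hand side equal to $\delta$ and taking logarithms gives $\epsilon = (\log|\mathcal{H}| + \log(1/\delta))/n$, which is exactly the bound (\ref{eq:th4-1}), and rearranging for $n$ gives (\ref{eq:th4-2}).

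The argument is elementary, so there is no deep obstacle; the only place where care is genuinely needed is the logical bookkeeping — one must resist bounding $P[\hat L(\hat h) = 0]$ for the random $\hat h$ directly (it is trivially $1$ here and useless), and instead pass through the event-inclusion above so that independence is applied to a \emph{fixed} hypothesis. I would also note explicitly that finiteness of $\mathcal{H}$ enters only through the union bound, which motivates the Vapnik--Chervonenkis refinement needed when $\mathcal{H}$ is infinite.
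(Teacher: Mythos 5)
Your proposal is correct and follows essentially the same route as the paper's proof: the union bound over the set $B$ of $\epsilon$-bad hypotheses, the observation that realisability forces $\hat L(\hat h)=0$ so that a bad ERM implies some fixed bad hypothesis has zero empirical risk, the bound $(1-\epsilon)^n \le e^{-\epsilon n}$, and the final rearrangement. Your explicit remark that independence must be applied to a fixed hypothesis rather than to the data-dependent $\hat h$ is exactly the point the paper's proof also relies on.
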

\begin{proof}
Let $B = \{h \in \mathcal{H} : L(h) > \epsilon \}$ be the set of bad hypotheses. We wish to upper-bound the probability\footnote{Note that this is a probability measure on the space of models $\mathcal{H}$, completely distinct from the data-generating probability $p^*$.} of selecting a bad hypothesis:
\begin{equation} \label{eq:proof1}
P[L(\hat{h}) > \epsilon] = P[\hat{h} \in B].
\end{equation}
Recall that the empirical risk of the ERM is always zero, i.e., $\hat{L}(\hat{h}) =0$, since at least $\hat{L}(h^*) = L(h^*) = 0$. So if we have selected a bad hypothesis ($\hat{h} \in B$), then some
bad hypothesis must have zero empirical risk: $\hat{h} \in B$ implies $\exists h \in B : \hat{L}(h) = 0$, which is equivalent to saying that
\begin{equation} \label{eq:proof2}
P[\hat{h} \in B] \leq P[\exists h \in B : \hat{L}(h) = 0].
\end{equation}

Firstly, we need to bound $P[\hat{L}(h) = 0]$ for any fixed $h \in B$. For each example, hypothesis $h$ does not err, with probability $1 - L(h)$. Since the training examples are i.i.d. and since $L(h) > \epsilon$ for $h \in B$,
\[
P[\hat{L}(h) = 0] = (1 - L(h))^n \leq (1- \epsilon)^n \leq e^{-\epsilon n} \quad \forall h \in B,
\]
where the first equality comes from the examples being i.i.d., and the last step follows since $1- a \leq e^{-a}$. Note that the probability $P[\hat{L}(h) = 0]$ decreases exponentially with $n$.

Secondly, we need to show that the above bound holds simultaneously for all $ h \in B$. Recall that $P(A_1 \cup \cdots \cup A_K) \leq P(\cup_{k=1}^k A_i)$. When applied to the (non-disjoint) events $A_h = \{\hat{L}(h) =0\}$, the union bound yields
\[
P[\exists h \in B : \hat{L}(h) = 0] \leq \sum_{h \in B} P[\hat{L}(h)=0].
\]

Finally,
\[
\begin{array}{lll}
P[L(\hat{h}) \leq \epsilon] = P[\hat{h} \in B] & \leq & P[\exists h \in B : \hat{L}(h) = 0] \leq \displaystyle \sum_{h \in B} P[\hat{L}(h)=0]
\\
& \leq & |B| e^{-en} \leq |\mathcal{H}| e^{-en} \doteq \delta,
\end{array}
\]
and by rearranging we have $\epsilon  = ( {\log |\mathcal{H}| + \log (1/\delta)} ) / {n}$, i.e., (\ref{eq:th4-1}).
\end{proof}

The inequality (\ref{eq:th4-1}) should be interpreted in the following way: with probability at least $1-\delta$, the expected loss of the empirical risk minimiser $\hat{h}$ (the particular model selected after training on $n$ examples) is bounded by the ratio on the right-hand side, so that $L(\hat{h})  = O({\log |\mathcal{H}|} / {n})$.
The inequality (\ref{eq:th4-1}) amounts to saying that if I need to obtain an expected risk of at most $\epsilon$ with confidence at least $1- \delta$, I need to train the model on at least $ ( {\log |\mathcal{H}| + \log (1/\delta)} ) / {\epsilon}$ examples.

The result is \emph{distribution-free}, as it is independent of the choice of $p^*(x,y)$. However, it does rely on the assumption that the training and test distributions are the same.

\subsubsection{Bounds under credal generalisation} 

A credal generalisation of Theorem \ref{the:the-4} would thus read as follows.
\begin{theorem} \label{the:the-4-credal}
Let $\mathcal{H}$ be a finite hypothesis class, let
$\mathcal{P}$ be a credal set over $\mathcal{X} \times \mathcal{Y}$ and let a training set of samples $\mathcal{D} = \{(x_i,y_i), i=1, \ldots, n\}$ be drawn from one of the distributions $\hat{p} \in \mathcal{P}$ in this credal set. Let
\[
\hat{h} \doteq \arg \min_{h \in \mathcal{H}} \hat{L}(h) =  \arg \min_{h \in \mathcal{H}} \frac{1}{n} \sum_{i=1}^n l((x_i,y_i),h)
\]
be the empirical risk minimiser, where $l$ is the 0--1 loss. 
Assume that
\begin{equation}  \label{eq:credal-realisability-1}
\exists h^* \in \mathcal{H}, p^* \in \mathcal{P} : \mathbb{E}_{p^*}[l] = L_{p^*}(h^*) = 0
\end{equation}
(\emph{credal realisability}) holds.

Then, with probability at least $1-\delta$, the following bound holds:
\begin{equation} \label{eq:th4-credal}
P \left [ \max_{p \in \mathcal{P}} L_p(\hat{h})> \epsilon \right ] \leq \epsilon(\mathcal{H},\mathcal{P},\delta),
\end{equation}
where $\epsilon$ is a function of the size of the model space $\mathcal{H}$, the size of the credal set $\mathcal{P}$ and $\delta$.
\end{theorem}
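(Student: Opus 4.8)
\emph{Proof strategy.} I read the statement as: with probability at least $1-\delta$ over the draw of the training sample, $\max_{p\in\mathcal{P}}L_p(\hat{h})\le\epsilon(\mathcal{H},\mathcal{P},\delta)$ (equivalently, a large enough $n$ forces this worst-case risk below any prescribed target). The plan is to reduce the claim to the classical realisable bound of Theorem~\ref{the:the-4}, applied to the (unknown but fixed) distribution $\hat{p}\in\mathcal{P}$ that actually generates the data, and then to pay an additive penalty for the ``width'' of $\mathcal{P}$. First I would pass to the \emph{error event} $E_h\doteq\{(x,y):h(x)\neq y\}$: since $l$ is the zero--one loss, $L_p(h)=\mathbb{E}_{(x,y)\sim p}\big[\mathbb{I}[(x,y)\in E_h]\big]=p(E_h)$, so that $\max_{p\in\mathcal{P}}L_p(h)=\overline{P}(E_h)$ and $\min_{p\in\mathcal{P}}L_p(h)=\underline{P}(E_h)$ are the upper and lower probabilities of $E_h$ under the credal set (the plausibility $Pl(E_h)$ and belief $Bel(E_h)$ when $\mathcal{P}$ is the credal set of a belief function). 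The quantity to be controlled is therefore an \emph{upper} probability of error.

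The key observation is that the classical bound is \emph{distribution-free}: the right-hand side of (\ref{eq:th4-1}) does not involve $p^*$. Hence it applies to $\hat{p}$ directly, provided $\hat{p}$ itself admits a zero-risk hypothesis so that $\hat{L}(\hat{h})=0$ almost surely. This is where the realisability hypothesis needs care: the weak form (\ref{eq:credal-realisability-1}), in which the realising $p^*$ may differ from $\hat{p}$, does not by itself force $\hat{L}(\hat{h})=0$. I would therefore work under the \emph{uniform} version $\exists h^*\in\mathcal{H}:\overline{P}(E_{h^*})=0$, which gives $L_{\hat{p}}(h^*)\le\overline{P}(E_{h^*})=0$ for every admissible $\hat{p}$, hence $\hat{L}(h^*)=0$ and $\hat{L}(\hat{h})=0$ a.s. Running the union-bound argument of Theorem~\ref{the:the-4} over the bad set $B=\{h\in\mathcal{H}:L_{\hat{p}}(h)>\epsilon_0\}$ then yields, with probability at least $1-\delta$,
\[
L_{\hat{p}}(\hat{h})\;\le\;\epsilon_0\;\doteq\;\frac{\log|\mathcal{H}|+\log(1/\delta)}{n}.
\]

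The last step bridges from the sampling-distribution risk to the worst-case risk. Setting $\Delta(h)\doteq\overline{P}(E_h)-\underline{P}(E_h)$ and using $L_{\hat{p}}(\hat{h})=\hat{p}(E_{\hat{h}})\ge\underline{P}(E_{\hat{h}})$, we get
\[
\max_{p\in\mathcal{P}}L_p(\hat{h})=\overline{P}(E_{\hat{h}})\;\le\;L_{\hat{p}}(\hat{h})+\Delta(\hat{h})\;\le\;\epsilon_0+\max_{h\in\mathcal{H}}\Delta(h),
\]
so the theorem holds with $\epsilon(\mathcal{H},\mathcal{P},\delta)=\big(\log|\mathcal{H}|+\log(1/\delta)\big)/n+\max_{h\in\mathcal{H}}\big(\overline{P}(E_h)-\underline{P}(E_h)\big)$, equivalently any $n\ge\big(\log|\mathcal{H}|+\log(1/\delta)\big)/\big(\epsilon-\max_h\Delta(h)\big)$ suffices. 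When $\mathcal{P}$ is generated by a mass function $m$, the penalty reads $\max_h\sum_{B\cap E_h\neq\emptyset,\,B\not\subseteq E_h}m(B)$, i.e.\ the largest total mass carried by focal elements straddling a decision boundary — a natural measure of how much $\mathcal{P}$ can ``disagree'' about the error rate.

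The main obstacle is conceptual rather than technical: this imprecision penalty is irreducible, since data drawn from a single $\hat{p}\in\mathcal{P}$ cannot pin down $\overline{P}(E_{\hat{h}})$ to better than the spread of $\mathcal{P}$ on the relevant events, so, unlike the classical case, the bound does not vanish as $n\to\infty$. The secondary difficulty is the realisability assumption discussed above; if one instead works agnostically (no perfect classifier for $\hat{p}$), the same scheme goes through but with the classical realisable step replaced by the agnostic uniform-convergence bound, inheriting the slower $O(\sqrt{\log|\mathcal{H}|/n})$ rate and an extra additive $\inf_{h\in\mathcal{H}}L_{\hat{p}}(h)$ term, while the credal correction $\max_h\Delta(h)$ is unchanged.
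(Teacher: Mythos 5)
You should know at the outset that the paper does not actually prove Theorem~\ref{the:the-4-credal}: it states it as a target, sketches only the first step (defining the bad set $B$ with respect to the worst-case risk), and then explicitly observes that the stated hypothesis (\ref{eq:credal-realisability-1}) is too weak --- since the realising $p^*$ need not coincide with the sampling distribution $\hat{p}$, one cannot conclude $\hat{L}(\hat{h})=0$, so the classical argument of Theorem~\ref{the:the-4} breaks at the analogue of (\ref{eq:proof2}). The paper then proposes two possible repairs (assume $\hat{p}=p^*$, or replace (\ref{eq:credal-realisability-1}) by the uniform version (\ref{eq:credal-realisability-2})) and defers the completion to future work. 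Your proposal independently identifies exactly the same obstruction, which is the substantive content of the paper's discussion, so on that point you are fully aligned. Where you differ is that you actually finish the argument: you adopt a uniform realisability hypothesis (yours, $\exists h^*$ with $\overline{P}(E_{h^*})=0$, is slightly stronger than the paper's (\ref{eq:credal-realisability-2}), which lets $h^*_p$ depend on $p$; either suffices to recover $\hat{L}(\hat{h})=0$ a.s.), apply the distribution-free classical bound to $\hat{p}$, and then bridge from $L_{\hat{p}}(\hat{h})$ to $\max_{p\in\mathcal{P}}L_p(\hat{h})$ via the additive credal-width penalty $\max_h\bigl(\overline{P}(E_h)-\underline{P}(E_h)\bigr)$, using $L_{\hat{p}}(\hat h)\ge\underline{P}(E_{\hat h})$. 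That bridging step is correct and gives a concrete $\epsilon(\mathcal{H},\mathcal{P},\delta)$ of the form the theorem promises, which the paper never supplies; your observation that the penalty is irreducible as $n\to\infty$ is also a genuine insight the paper does not record. Two caveats: what you prove is a corrected theorem (stronger hypothesis, explicit non-vanishing credal term), not the theorem as literally stated under (\ref{eq:credal-realisability-1}) --- which, as the paper itself concedes, does not admit this proof strategy and may simply be false as written; and your sample-complexity reformulation only makes sense when the target $\epsilon$ exceeds $\max_h\Delta(h)$, a restriction worth stating explicitly.
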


How does the proof of Theorem \ref{the:the-4} generalise to the credal case? We can note that, if we define
\[
B \doteq \left \{h : \max_{p \in \mathcal{P}} L_p(h) > \epsilon \right \}
\]
as the set of models which do not guarantee the bound \emph{in the worst case}, we get, just as in the classical result (\ref{eq:proof1}),
\[
P\left [ \max_{p \in \mathcal{P}} L_p(\hat{h}) > \epsilon \right ] = P[\hat{h} \in B].
\]
However, whereas in the classical case $\hat{L}(\hat{h}) =0$, in the credal case we can only say that
\[
\hat{L}(h^*) = L_{\hat{p}} (h^*) \geq \min_{p \in \mathcal{P}} L_p (h^*) = 0,
\]
by the credal realisability assumption (\ref{eq:credal-realisability-1}). Thus, (\ref{eq:proof2}) does not hold, and the above argument which tries to bound $P[\hat{L}(h) = 0]$ does not apply here.

One option is to assume that $\hat{p} = p^* = \arg \min_p L_p(h^*)$. This would allow (\ref{eq:proof2})  to remain valid.

A different generalisation of realisability can also be proposed, in place of (\ref{eq:credal-realisability-1}):
\begin{equation}  \label{eq:credal-realisability-2}
\forall p \in \mathcal{P}, \exists h_p^* \in \mathcal{H}  : \mathbb{E}_{p}[l(h_p^*)] = L_{p}(h_p^*) = 0,
\end{equation}
which we can call \emph{uniform credal realisability}. Under the latter assumption, we would get
\[
\hat{L}(\hat{h}) = \min_{h \in \mathcal{H}} \hat{L}(h) = \min_{h \in \mathcal{H}} L_{\hat{p}} (h) =
L_{\hat{p}}(h^*_{\hat{p}}) = 0.
\]

We will complete this analysis and extend it to the case of infinite, non-realisable model spaces in the near future.

\section{Conclusions}

In this essay, we briefly touched upon a number of important open questions that concern the application of the theory of random set to statistics, machine learning and other areas of applied science. Much remains to be done, but the direction is clear and the potential for scaling up the impact achieved so far significant.

\bibliographystyle{plain}
\bibliography{agenda-for-the-future}

\end{document}